\documentclass[10pt,letterpaper]{amsart}

\usepackage{xcolor}
\usepackage[letterpaper, margin=1in]{geometry}
\usepackage{multicol}
\usepackage{listings}
\usepackage{amsmath, amsthm, amssymb, wasysym, verbatim, bbm, color}
\usepackage{graphicx}
\usepackage{caption}
\usepackage{subcaption}
\usepackage{url}
\usepackage{mathtools}
\usepackage{enumerate}
\usepackage{thmtools}
\usepackage{esint}
\makeatletter
\renewcommand\thmt@autorefsetup{%
  \@xa\def\csname\thmt@envname autorefname\@xa\endcsname\@xa{\thmt@thmname}%
}
\makeatother
\usepackage[final, colorlinks,allcolors=blue!75!black]{hyperref}

\usepackage{xfrac}
\usepackage{todonotes}

\usepackage{booktabs} 

\newcommand{\R}{\mathbb{R}}

\newcommand{\N}{\mathbb{N}}
\newcommand{\Q}{\mathbb{Q}}
\newcommand{\HH}{\mathcal{H}}
\newcommand{\tu}{\widetilde{u}}
\newcommand{\hu}{\widehat{u}}
\newcommand{\hQ}{\widehat{Q}}

\DeclareMathOperator{\tr}{\text{tr}}

\newcommand\norm[1]{\left\lVert#1\right\rVert}
\newcommand{\Grad}{\nabla}
\newcommand{\Div}{\operatorname{div}}
\newcommand{\dom}{\Omega}

\newcommand{\Ph}{\mathbb{P}_h}
\newcommand{\Mh}{\mathbb{M}_h}
\newcommand{\Mhz}{\mathbb{M}_{h,0}}
\newcommand{\Sh}{\mathbb{S}_h}
\newcommand{\Shz}{\mathbb{S}_{h,0}}
\newcommand{\Xh}{\mathbb{X}_h}
\newcommand{\Yh}{\mathbb{Y}_h}
\newcommand{\Ih}{\mathcal{I}_h}
\newcommand{\Kh}{\mathbb{K}_h}
\newcommand{\Uh}{\mathbb{U}_h}
\newcommand{\weak}{\rightharpoonup}

\newcommand{\weakstar}{\overset{\star}\rightharpoonup}
\newcommand{\PUh}{\mathcal{P}_{\Uh}}

\newcommand{\PMh}{\mathcal{P}_{\Mh}}

\newcommand{\Sym}{\mathcal{S}_0}

\newtheorem{theorem}{Theorem}[section]

\newtheorem{definition}[theorem]{Definition}

\newtheorem{remark}[theorem]{Remark}
\newtheorem{lemma}[theorem]{Lemma}

\allowdisplaybreaks

\numberwithin{equation}{section}

\title[Numerics for the Beris--Edwards system]{Convergence of a fully discrete finite element method for the Beris--Edwards system of liquid crystal dynamics}
\author[G.A. Benavides]{Gonzalo A. Benavides}
\address[Gonzalo A. Benavides]{\newline Department of Mathematics \newline University of Maryland \newline College Park, MD 20742, USA.}
\email[]{gonzalob@umd.edu}
\author[F. Weber]{Franziska Weber}
\address[Franziska Weber]{\newline Department of Mathematics \newline University of California, Berkeley \newline Berkeley, CA 94720, USA.}
\email[]{fweber@berkeley.edu}
\date{\today}
\thanks{G.B.~was partially supported by NSF DMS 2512392.
F.W.~was partially supported by NSF DMS 2438083.}

\begin{document}
\begin{abstract}
    We propose and analyze a fully discrete finite element scheme for the Beris–Edwards system of nematic liquid crystal dynamics, in which the incompressible Navier--Stokes equations are coupled to a gradient flow for the Landau--de Gennes Q-tensor. The scheme combines a linearly implicit formally second-order accurate backward differentiation formula in time with an incremental Chorin projection step for the incompressibility constraint, conforming finite elements in space, and the invariant energy quadratization approach with mass lumping for the nonlinear bulk potential. Each time step requires the solution of one linear system and one Poisson problem. We show that the scheme is uniquely solvable, that it preserves the symmetry and trace-free structure of the discrete Q-tensor and molecular field, and that it satisfies a discrete energy law without any restriction on the time step. Our main result is that, as the mesh size $h$ and the time step $\Delta t$ tend to zero subject to {$h^{2} = o(\Delta t)$},
    the approximations converge along a subsequence to a weak solution of the Beris–Edwards system. The convergence proof addresses two difficulties: the projection method produces two velocity approximations, only one of which is uniformly bounded in $L^2(0,T;H^1_0(\dom))$, and the coupling term $\mathcal{H}\Grad Q$ requires strong convergence of $\Grad Q$, which we obtain from the structure of the equation for $\mathcal{H}$ rather than from any discrete $H^2$-bound.
    Numerical experiments in two dimensions exhibit approximately second-order convergence in space and time, and reproduce the splitting of a $+1$ point defect into two $+1/2$ defects and the transport and deformation of a skyrmion induced by a constant pressure gradient.
\end{abstract}
\maketitle

\section{Introduction}\label{sec:intro}
Liquid crystals represent a unique state of matter that occupies the middle ground between conventional liquids and solid crystals. While they possess the fluidity of a liquid, they maintain a degree of long-range orientational order characteristic of crystals, typically due to the anisotropic shape of their constituent molecules~\cite{Stewart2004,Ball2017}. This dual nature allows liquid crystals to respond sensitively to external electric and magnetic fields, a property that has made them indispensable in modern display technologies and biological sensors~\cite{Castellano2005,Stewart2004}.

To describe the complex phase transitions and defect structures within these materials, the Landau--de Gennes theory provides a robust phenomenological framework. Unlike vector-based models (such as the Oseen--Frank theory), which are unable to describe all types of physically appearing defects, the Landau--de Gennes model utilizes a symmetric, traceless second order tensor known as the Q-tensor~\cite{Ball2017,Virga1995}.

The state of the system is characterized by the free energy functional, typically expressed as~\cite{Mottram2014}:
\begin{equation*}
E_{LG}(Q,\Grad Q)= \int_{\dom} \mathcal{F}_B(Q)+\mathcal{F}_E(\Grad Q) dx.
\end{equation*}
Here $\dom$ is a polytopal connected domain in $\R^d$, $d=2,3$. $\mathcal{F}_B$ is the bulk potential energy density and $\mathcal{F}_E$ is the elastic energy density. In the model by Landau and de Gennes, in the so-called one-constant approximation, they are given by
\begin{equation}
\label{eq:FBFE}
\mathcal{F}_B(Q) = \frac{a}{2}\tr(Q^2)-\frac{b}{3}\tr{(Q^3)}+\frac{c}{4}(\tr(Q^2))^2,\quad \mathcal{F}_E(\Grad Q) = \frac{L}{2}|\Grad Q|^2.
\end{equation}
Here $a,b,c,L$ are material parameters and $c,L>0$, $a,b\in\R$. $c>0$ guarantees a lower bound on the bulk potential energy, see \cite{Majumdar2010,Majumdar2010b}.
In a non-equilibrium scenario, the liquid crystal molecules are advected by the flow. The resulting model, termed Beris--Edwards system, is a system of partial differential equations coupling the incompressible Navier--Stokes equations to the Q-tensor gradient flow~\cite{Beris1994}:
\begin{subequations}
	\label{seq:BerisEdwards}
	\begin{align}
	\label{eq:momentum}
	\partial_t u +(u\cdot \Grad)u+\Grad p & = \mu\Delta u +\Div\Sigma-\HH\Grad Q+f,\\
	\label{eq:mass}
	\Div u & = 0,\\
	\label{eq:Qtensorflow}
	\partial_t Q +(u\cdot\Grad)Q-S & = M\HH,\\
	\label{eq:defH}
	\HH=-\frac{\delta E_{LG}}{\delta Q}& = L\Delta Q -\left[aQ-b\left(Q^2-\frac{1}{d}\tr(Q^2) I\right)+c\tr(Q^2)Q\right],
	\end{align}
\end{subequations}
where $u:[0,T]\times\dom\to \R^d$ is the fluid velocity, and $Q:[0,T]\times\dom\to \R^{d\times d}$ is the Q-tensor modeling the orientation of the liquid crystal molecules, and $f:[0,T]\times\dom\to \R^d$ is an external force, for which we assume that $f\in L^2(0,T;L^2(\dom))$.
Here $I$ denotes the $d\times d$ identity matrix, $\mu>0$ is the viscosity, $(\HH\Grad Q)_k = \sum_{i,j=1}^d \HH_{ij}\partial_k Q_{ij}$ and $((u\cdot\Grad)Q)_{ij} = \sum_{k=1}^d u_k\partial_k Q_{ij}$. 
We also denoted  the divergence $\Div u = \sum_{j=1}^d \partial_j u^j$ and the gradient $\Grad u = (\partial_j u^i)_{ij=1}^d$. 
$M>0$ is the mobility constant.
The tensor $S$ and the stress tensor $\Sigma$ are given by
\begin{equation}
\label{eq:S}
S = S(u,Q)= WQ-QW+\xi(QD+DQ)+\frac{2\xi}{d}D -2\xi(D:Q)\left(Q+\frac{1}{d}I\right)
\end{equation}
and 
\begin{equation}
\label{eq:Sigma}
\Sigma = \Sigma(Q,\HH) = Q\HH-\HH Q-\xi(\HH Q+Q\HH)-\frac{2\xi}{d}\HH + 2 \xi (Q:\HH)\left(Q+\frac{1}{d}I\right),
\end{equation}
where
\begin{equation}
\label{eq:DandW}
D = \frac{1}{2}(\Grad u+(\Grad u)^\top),\quad W = \frac12(\Grad u - (\Grad u)^T).
\end{equation}
We denoted the contraction of two $d\times d$-matrices $A$ and $B$ by $A:B = \tr(AB^T) = \sum_{i,j=1}^d A_{ij} B_{ij}$. The constant $\xi\in\R$, whose value
is contingent upon the specific molecular characteristics of a given liquid crystal, quantifies the proportion
between the tumbling effect and the aligning effect that a shear flow would exert on the liquid crystal director~\cite{Paicu2011}. $\Sigma$ is an elastic stress tensor term~\cite{Cavaterra2016}.
We note that the isotropic part in the last term in $\Sigma$, $2\xi (Q:\HH)\frac{1}{d}I$, results in a gradient term when applying the divergence operator in~\eqref{eq:momentum}. Hence we can modify the pressure $p$ to include this term as the conservation of mass~\eqref{eq:mass} constrains $u$ to be a divergence-free field. Therefore, we will be working with $\Sigma$ in~\eqref{eq:momentum} replaced by $\sigma$ as defined by
\begin{equation}\label{eq:sigmaused}
\sigma  = \sigma(Q,\HH) = Q\HH-\HH Q-\xi(\HH Q+Q\HH)-\frac{2\xi}{d}\HH + 2 \xi (Q:\HH)Q,
\end{equation}
and a correspondingly modified pressure $p$, in what follows. Several equivalent formulations of the Beris--Edwards system are available, see~\cite[Section 2.1]{Abels2014}.

We supplement these equations  with homogeneous Dirichlet boundary conditions for $u$ and $Q$ and for given initial data
\begin{equation*}
Q(0,\cdot)=Q_0\in H^1_0(\dom),\quad u(0,\cdot)=u_0\in L^2_{\text{div}}(\dom),
\end{equation*}
and boundary conditions
\begin{equation*}
	\left. Q(t,x)\right|_{\partial\dom} = 0,\quad \left. u(t,x)\right|_{\partial\dom} = 0,\quad t\in [0,T].
\end{equation*}
We require $Q_0$ to be trace-free and symmetric.
 
Our goal in this article is to develop a fully discrete numerical scheme for~\eqref{seq:BerisEdwards} and show its convergence to a weak solution of the system as the discretization parameters go to zero under no additional regularity assumptions besides those that follow from the energy inequality for system \eqref{seq:BerisEdwards}.
Global existence of weak solutions of this system and regularity results were shown in~\cite{Paicu2011,Paicu2012} and the long time behavior of solutions was studied in~\cite{ClimentEzquerra2022}. Existence and uniqueness of local strong solutions was shown in~\cite{Abels2016}. Time discretizations using the IEQ-method were proposed in~\cite{Zhao2017,Yue2023} and convergence of a first order time discretization was shown in~\cite{Yue2023}.
Fully discrete convergence results exist for the related Ericksen-Leslie and Ericksen models~\cite{Becker2008,Walkington2011,Liu2000,Liu2002,Barrett2006}, however, to the best of our knowledge, not for the Beris--Edwards Q-tensor system~\eqref{seq:BerisEdwards}.

In~\cite{Yue2023}, Y.~Yue and the second author designed and analyzed a linearly implicit semi-discrete in time first order accurate numerical scheme for the Beris--Edwards system and showed the energy stability and convergence of the numerical method. That scheme is based on introducing an auxiliary variable to deal with the nonlinear bulk energy potential; the method is known as the Invariant Energy Quadratization (IEQ) approach and was introduced for the Cahn--Hilliard equation in~\cite{GuillenGonzalez2013} and extended to various nonlinear gradient flows in~\cite{JiangIEQ, YANG2017691, IEQ_Cahn-Hilliard, YANG201880, YANG2017104}.
A second order semi-discrete scheme for~\eqref{seq:BerisEdwards} was proposed in~\cite{Zhao2017}. However, no spatial discretization was proposed and convergence or error estimates for the second order scheme have not been established.
In the IEQ approach, one uses the fact that the bulk energy density $\mathcal{F}_B$ is bounded from below. One then introduces the auxiliary variable
\begin{equation}
\label{eq:defr}
r(Q) =\sqrt{2\left(\frac{a}{2}\tr(Q^2)-\frac{b}{3}\tr(Q^3)+\frac{c}{4}\tr^2(Q^2)+A_0\right)},
\end{equation}
where $A_0>0$ is a sufficiently large constant ensuring that $r$ is positive for any choice of symmetric $Q\in\R^{d\times d}$. Defining
\begin{equation*}
V(Q)=aQ-b\left[Q^2-\frac{1}{d}\tr(Q^2)I\right] + c\tr(Q^2)Q,
\end{equation*}
it follows that, differentiating with respect to symmetric and traceless matrices,
\begin{equation}\label{eq:def:P}
\frac{\delta r(Q)}{\delta Q}  =\frac{V(Q)}{r(Q)}:= P(Q),
\end{equation}
Then the Beris--Edwards system can be recast as
\begin{subequations}
	\label{seq:BerisEdwardsIEQ}
	\begin{align}
			\label{eq:momentumIEQ}
		\partial_t u +(u\cdot \Grad)u+\Grad p & = \mu\Delta u +\Div\sigma-\HH\Grad Q+f,\\
		\label{eq:massIEQ}
		\Div u & = 0,\\
		\label{eq:QtensorflowIEQ}
		\partial_t Q +(u\cdot\Grad)Q-S & = M\HH,\\
		\label{eq:req}
		\partial_t r &= P(Q):\partial_t Q,\\
		\label{eq:defHIEQ}
		\HH=-\frac{\delta E_{LG}}{\delta Q}& = L\Delta Q -rP(Q),
	\end{align}
\end{subequations}
with $r(0)=r(Q_0)$.
In the current work, we propose a fully-discrete numerical scheme for system \eqref{seq:BerisEdwardsIEQ} based on a modification of the time discretization in~\cite{Yue2023,Zhao2017} and spatial discretization with finite elements and mass-lumping. The time discretization is based on a Backward Difference Formula (BDF2) and formally second-order accurate. We use the incremental version of Chorin's projection method to deal with the incompressibility constraint in the Navier--Stokes equations.
We show that this scheme is energy stable, preserves the trace-free and symmetry constraint for $Q$ and converges up to a subsequence to a weak solution of~\eqref{seq:BerisEdwards} as the discretization parameters go to zero, under the (inverse CFL-type) condition that the time step size and the spatial discretization parameter $h$ satisfy {$h^{2}=o({\Delta t})$}.
This condition is only needed to show convergence, not for solvability or stability of the scheme. Specifically, it is needed in estimates~\eqref{eq:pressuretimecont} and~\eqref{eq:pressureestimate2} since only a relatively weak bound on the pressure variable, namely $\norm{\Grad p_h}_{L^\infty(0,T;L^2(\dom))}\leq C \Delta t^{-1}$, is available.
Furthermore, it is needed to pass to the limit in the approximation of the term $P(Q):\partial_t Q$ on the right-hand side of~\eqref{eq:req} since $\partial_t Q$ has relatively low integrability.
Because uniqueness of weak solutions to this system is open, only convergence along a subsequence can be shown.

Challenges are caused by the various nonlinear and coupling terms in the system. Therefore the discretizations have to be carefully chosen in order to obtain a discrete version of the energy balance~\eqref{eq:energyineq} that yields stability.
Furthermore, the use of Chorin's projection method results in two approximations of the velocity field: an intermediate velocity that is not divergence free but satisfies a uniform $L^2(0,T;H^1_0(\dom))$-bound in the discretization parameters  thanks to the discrete energy balance; and a final step velocity field that is approximately weakly divergence free but satisfies no uniform $L^2(0,T;H^1_0(\dom))$-bound. To obtain precompactness of the velocity field approximations these have to be combined in a meaningful way, which we do using some of the tools developed in~\cite{Eymard2024,Weber2025,chorinprojection}. Next, we note that the a priori estimates coming from the energy inequality~\eqref{eq:energyineq} yield $L^2([0,T]\times\dom)$-bounds for the variables $\HH$ and $\Grad Q$.
However, to obtain convergence of a product term such as $\HH\Grad Q$ appearing in the equations, we require precompactness of at least one of the two in $L^2([0,T]\times\dom)$. Formally, considering~\eqref{eq:defH}, we see that $\HH$ being in $L^2$ implies $\Delta Q$ being in $L^2$ and therefore by elliptic regularity arguments, $Q$ should have at least (local) $H^2$ spatial regularity. However, in a finite element context, obtaining a conforming discrete analogue of $\Delta Q\in L^2$ and its consequence $Q\in H^2$ would require using $C^1$-finite elements which are prohibitive.
In a standard weak formulation $\Delta Q$ is replaced by the Dirichlet term $(\Grad Q,\Grad Z)$ for test functions $Z$, which does not immediately imply the $H^2$-regularity. Therefore, we do not attempt to prove an $H^2$-bound or a discrete version of it for the variable $Q$.
Rather, by exploiting the structure of the equation for $\HH$, we show that the $L^2([0,T]\times\dom)$-norms of the approximations of $\Grad Q$ converge, which then combined with the weak convergence of $\Grad Q$ (that follows in a straightforward manner from the discrete energy balance), yields precompactness of the approximations of $\Grad Q$ in $L^2([0,T]\times\dom)$. 

Finally, for the equation for the auxiliary variable $r$, we use mass-lumping which is helpful in obtaining the discrete energy inequality.
In addition, it makes the finite element formulation of the equation for $r$,~\eqref{eq:rdisc} a nodal identity (cf. Remark~\ref{rem:eqforr}) and so the approximation for $r$ can be eliminated pointwise in the equations.
Furthermore, it results in the last terms in the discretizations of equations~\eqref{eq:req} and~\eqref{eq:defHIEQ} canceling out exactly, just as in the continuous setting, when deriving the energy balance. 

Using these insights, we show convergence of the approximations to weak solutions of the reformulated system~\eqref{seq:BerisEdwardsIEQ}. Weak solutions of~\eqref{seq:BerisEdwardsIEQ} are equivalent to weak solutions of~\eqref{seq:BerisEdwards} using an argument from the earlier work~\cite{Gudibanda2022}.

A closely related alternative to the IEQ-method is the scalar auxiliary variable (SAV) approach~\cite{Shen2019}, which replaces the field-valued auxiliary variable of IEQ by a single scalar. This yields linear systems with constant coefficients, at the price of a nonlocal coupling; see~\cite{Shen2018,Shen2020} for a comparison of the two approaches.
In this work, we opt for IEQ because the field-valued variable $r$ enters the energy balance locally, which is what our convergence argument exploits.

We demonstrate the performance of the numerical scheme in several numerical experiments.
First we confirm the temporal and spatial accuracy of the scheme when the solution is smooth.
Then we use the scheme to simulate the splitting of a $+1$ defect into two $+1/2$ defects, and to visualize the transport and deformation of a skyrmion under the flow induced by a constant gradient pressure.
Both phenomena are captured qualitatively well by the scheme.

The rest of this article is organized as follows: In \autoref{sec:prelim} we introduce commonly used notation and the definition of weak solutions.
In \autoref{sec:numscheme}, we describe the numerical scheme, prove its solvability, various properties and the discrete energy estimate.
In \autoref{sec:convergence}, we present the convergence proof of the scheme and in \autoref{sec:computations} several numerical experiments that illustrate the spatial and temporal convergence of the scheme and show its capability to capture interesting physical phenomena.
We conclude this work with auxiliary results in the appendices \ref{app:gronwall}, \ref{app:masslumped} and \ref{app:rellich}.

\section{Preliminaries}\label{sec:prelim}
We start by introducing notation that will be used frequently in the following, and the definition of weak solutions for~\eqref{seq:BerisEdwards}.

\subsection{Notation}
We denote the norm of a Banach space $X$ as $\|\cdot\|_X$ and its dual space by $X^*$. We will denote $L^p$ spaces (for example, $L^2(\dom)$ for square integrable functions defined over the polytopal domain $\dom$), Sobolev spaces and Bochner spaces in standard ways, and will not distinguish between scalar, vector-valued and tensor-valued function spaces when it is clear from the context. In particular, we use $L^p(0,T; X)$ to denote the space of strongly measurable functions $f:[0,T]\to X$ which are $L^p$-integrable in the time variable $t\in [0,T]$.  The inner product on $L^2$ will be denoted by $(\cdot, \cdot)$ and the duality pairing between $v\in X$ and $u\in X^*$ will be denoted by $\langle u,v\rangle$.  For matrix-valued mappings $A,B:\dom\to\R^{d\times d}$, we denote the gradient $\Grad A =(\partial_i A^{jk})_{i,j,k=1}^d$, $\Grad A:\Grad B=\sum_{i,j,k=1}^d \partial_i A^{jk}\partial_i B^{jk}$ and $|\Grad A|^2 =\Grad A:\Grad A= \sum_{i,j,k=1}^d (\partial_i A^{jk})^2$.

We will use the subscript $\Div$  to indicate the divergence-free vector spaces, for example, 
\begin{equation*}
\label{eq:sigma_space}
\begin{gathered}
C_{c,\Div}^\infty(\dom)=\{{\phi}\in C_c^\infty(\dom); \Div {\phi}=0\}, \quad L^2_{\Div}(\dom)=\{ {\phi}\in L^2(\dom):\Div  {\phi}=0,  {\phi}\cdot {n}|_{\partial\dom}=0\}=\overline{C_{c,\Div}^\infty(\dom)}^{L^2(\dom)},\\
H^1_{\Div}(\dom)=H^1_0(\dom)\cap L^2_{\Div}(\dom),
\end{gathered}
\end{equation*}
where $n$ denotes the outward unit normal vector of the domain $\dom$,~\cite[Chapter I]{Temam1977}.
\begin{equation*}
	L^2_0(\dom)=\left\{\phi\in L^2(\dom); \, \int_{\dom} \phi dx =0\right\}
\end{equation*}
will be used to denote the space of $L^2$-functions that have zero average over the domain.
We denote the Leray projector by $\mathcal{P}: L^2(\dom)\to L^2_{\Div}(\dom)$, which is an orthogonal projection induced by the Helmholtz--Hodge decomposition $ {f}=\nabla g+ {h}$ for any $ {f}\in L^2(\dom)$~\cite{Temam1977}. Here, $g\in H^1(\dom)$ is a scalar field, and $ {h}\in L^2_{\Div}(\dom)$ is a divergence-free vector field. Then for all $ {f}\in L^2(\dom)$, it holds that $\mathcal{P}{f}={h}$.  

\subsection{Weak solutions}
Next, we define a notion of weak solutions. It is motivated by the notion of Leray--Hopf solutions for the incompressible Navier--Stokes equations. We let $T>0$ be an arbitrary time horizon.
\begin{definition}[Weak solutions of~\eqref{seq:BerisEdwards}]
	\label{def:weaksol} 
Assume that $f\in L^2(0,T;L^2(\dom))$.	Let $Q,\HH :[0,T]\times\dom\to\R^{d\times d}$ be matrix-valued mappings that are trace-free and symmetric for almost every $(t,x)\in [0,T]\times\dom$ and $u:[0,T]\times\dom\to \R^d$ be a vector-field that is divergence-free a.e., i.e., $\Div u=0$ for almost every $(t,x)\in [0,T]\times\dom$; and which satisfy
	\begin{equation}
	\label{eq:regularity}
\begin{split}
	&u\in L^\infty(0,T;L^2_{\Div}(\dom))\cap L^2(0,T;H^1_{\Div}(\dom)),\quad Q\in L^\infty(0,T;H^1_0(\dom)),\quad \HH \in L^2([0,T]\times\dom)\\
	&\partial_t u \in L^2(0,T;(X_6)^*),\quad \partial_t Q \in L^2(0,T;L^{6/5}(\dom)), 
\end{split}
	\end{equation}
where
\begin{equation*}
	X_6:= \{v\in W^{1,6}_0(\dom)\, : \, \Div v=0\},
\end{equation*}
	and the distributional version of~\eqref{seq:BerisEdwards}:
		\begin{subequations}
		\label{eq:weak_formulation_solution}
		\begin{equation}
		\label{eq:weakformu}
		\begin{split}
		&-\int_0^T  \left({u}, \partial_tv\right) dt  +\int_0^T\int_\dom ((u\cdot\Grad )u) \cdot v dxdt\\
		&=-\int_0^T\int_\dom \sigma(Q,\HH): \Grad v dxdt-\mu\int_0^T\int_\dom \Grad u:\Grad v dxdt -\int_0^T\int_\dom (\HH\Grad{Q})\cdot v dxdt+\int_0^T\int_{\dom} f\cdot v dx dt,
		\end{split}
		\end{equation}	
		\begin{equation}
		\label{eq:weakformQr}
		\begin{split}
		&\int_0^T\int_\dom \partial_t{Q} :Y  dx dt  + \int_0^T\int_\dom ((u\cdot \Grad ) Q):   Ydxdt - \int_0^T\int_\dom S(u,Q): Y dxdt \\
		& =M \int_0^T\int_\dom\HH: Y dxdt,
		\end{split}
		\end{equation}
		\begin{equation}
		\label{eq:weakformH}
		\begin{aligned}
		\int_0^T\int_\dom \HH: Zdxdt=&-L\int_0^T\int_\dom \Grad Q:\Grad Z    dxdt\\
		&-\int_0^T\int_\dom \left( a{Q}-b\left( {Q}^2-\frac{1}{d}\tr( {Q}^2)I \right)+c\tr({Q}^2){Q} \right):Z  dxdt,
		\end{aligned}
		\end{equation}
	\end{subequations}
for test functions $v\in C_c^\infty((0,T);C^\infty_{c,\Div}(\dom))$, and $Y,Z\in C^\infty_c((0,T)\times\dom)$. 
Moreover, assume that $(u,Q,\HH)$ satisfy the energy inequality:
\begin{multline}
\label{eq:energyineq}
\frac12\norm{u(t)}_{L^2(\dom)}^2+\frac{L}{2}\norm{\Grad Q(t)}_{L^2(\dom)}^2 + \int_{\dom} \mathcal{F}_B(Q(t)) dx  +\mu\int_0^t\norm{\Grad u(\tau)}_{L^2}^2 d\tau
+ M\int_0^t \norm{\HH(\tau)}^2_{L^2} d\tau \\
\leq \frac12\norm{u_0}_{L^2(\dom)}^2+\frac{L}{2}\norm{\Grad Q_0}_{L^2(\dom)}^2 + \int_{\dom} \mathcal{F}_B(Q_0) dx+ \int_0^t\int_{\dom} f(\tau)\cdot u(\tau) dx d\tau,
\end{multline}
for a.e. $t\in [0,T]$, and assume that $Q$ and $u$ are right-continuous in $H^1$ and $L^2$ respectively at $t=0$:
\begin{equation}
	\label{eq:contatzero}
	\lim_{t\to 0}\norm{Q(t)-Q_0}_{H^1(\dom)}=0,\quad \lim_{t\to 0} \norm{u(t)-u_0}_{L^2(\dom)}=0,
\end{equation}
for given $u_0\in L^2_{\Div}(\dom)$ and $Q_0\in H^1_0(\dom)$ that is trace-free and symmetric.
Then we call $(u,Q,\HH)$ a weak solution of~\eqref{seq:BerisEdwards}.
\end{definition}
\begin{remark}
	This definition of weak solutions is based on the notion of Leray--Hopf weak solutions for the incompressible Navier--Stokes equations found in, e.g.,~\cite{Berselli2021}.
It is weaker than the notions in~\cite{Yue2023,Paicu2011} in the sense that it does not require $Q\in L^2(0,T;H^2(\dom))$ and stronger than those notions in the sense that it requires the energy inequality~\eqref{eq:energyineq} and the continuity at zero~\eqref{eq:contatzero}. However, a local $H^2$ bound follows a posteriori from the integrability of $\HH$ and equation~\eqref{eq:weakformH} and elliptic regularity, and a global bound follows under stronger assumptions on the domain $\dom$, for example if it is convex.
\end{remark}

\section{Numerical scheme}\label{sec:numscheme}
Our scheme will be based on a linearly implicit discretization in time and mass-lumped finite elements in space. To deal with the incompressibility constraint, we will use a version of the projection method introduced by Chorin and Temam~\cite{Chorin1967,Chorin1968,Temam1969,Temam1977}. For the bulk energy we use the IEQ -method as described in the introduction \autoref{sec:intro}.
We start by describing the time discretization.
\subsection{Time discretization} 
We use a formally second-order accurate Backward Difference Formula (BDF2) for the time discretization combined with Chorin's projection method to deal with the incompressibility constraint for the velocity field variable~\cite{Chorin1967,Chorin1968}.
The idea of the projection method (or fractional step method) for the incompressible Navier--Stokes equations is to split the evolution of the Navier--Stokes equations~\eqref{eq:momentumIEQ} into two steps: In the first step the velocity field is evolved according to the convection, the dissipation and the forcing terms, which may violate the divergence constraint. An intermediate velocity field $\tu$ is computed. In the second step the intermediate velocity field is projected onto the space of divergence free fields. In order to preserve the trace-free property of $Q$ numerically, we will have to modify $S$ as follows:
\begin{equation}
\label{eq:littles}
s=s(u,Q)= WQ-QW+\xi(QD+DQ)+\frac{2\xi}{d}D-\frac{2\xi}{d^2}\Div u I -2\xi(D:Q)\left(Q+\frac{1}{d}I\right).
\end{equation} 
This modification is necessary because the intermediate velocity field $\tu$ is not divergence free, cf.~the upcoming Lemma~\ref{lem:tracefreediscrete}.
Note that $s(u,Q)=S(u,Q)$ when $u$ is divergence-free.
Now, we let $\Delta t>0$ be the time step size and discretize the time interval $[0,T]$ into time levels $t^m=m\Delta t$, $m=0,1,\dots, N$ (we pick $\Delta t$ such that $T/\Delta t=N\in \N$). At each time level $t^m$, $m= 1,2,3,\dots$, we seek discrete approximations $(u^m(x), Q^m(x), \HH^m(x), r^m(x), p^m(x))$ to $(u(t^m,x), Q(t^m,x), \HH(t^m,x), r(t^m,x), p(t^m,x))$. We denote the initial data $(u^0,Q^0,r^0, p^0)=(u_0,Q_0,r(Q_0),0)$. Then at every time level, given $(u^m,Q^m,r^m, p^m)$, for $m=0,1,\dots, N-1$, we update $(u^{m+1},Q^{m+1},\HH^{m+1},r^{m+1}, p^{m+1})$ according to the following two steps: \begin{enumerate}
	\item[{\bf Step 1}]
		\begin{subequations}\label{eq:step1semi}
		\begin{align}
		\frac{3\tu^{m+1}-4u^m+u^{m-1}}{2\Delta t}+ (\widehat{u}^{m+1}\cdot\Grad)\tu^{m+1}+\frac12\Div\hu^{m+1} \tu^{m+1}&=-\Grad p^m+ \mu \Delta \tu^{m+1}+\Div \sigma^{m+1}\\
		&\quad -\HH^{m+1}\Grad \hQ^{m+1}+ f^{m+1},\label{eq:fluidsemi}\\
		\frac{3Q^{m+1}-4Q^m+Q^{m-1}}{2\Delta t}+(\tu^{m+1}\cdot\Grad) \widehat{Q}^{m+1} &= s^{m+1}+ M \HH^{m+1},\\
	 3r^{m+1}-4r^m +r^{m-1} & = P(\widehat{Q}^{m+1}):(3Q^{m+1}-4Q^m+Q^{m-1}),\\
		\HH^{m+1} &= L\Delta Q^{m+1}-r^{m+1} P(\widehat{Q}^{m+1}),
		\end{align}
		\end{subequations}
	where
	\begin{equation}
	\label{eq:smSigmamsemi}
	s^{m+1} = s(\tu^{m+1},\widehat{Q}^{m+1}),\quad \sigma^{m+1}=\sigma(\widehat{Q}^{m+1},\HH^{m+1}),\quad f^m(x)=\frac{1}{\Delta t}\int_{t^{m-1/2}}^{t^{m+1/2}}f(\tau,x) d\tau
	\end{equation}
	and
	\begin{equation*}
		\widehat{Q}^{m+1} =2Q^m-Q^{m-1} ,\quad \widehat{u}^{m+1} =2 \tu^m-\tu^{m-1},\quad t^{m\pm 1/2}:=t^m\pm\frac{\Delta t}{2} ,
	\end{equation*}
	with boundary conditions 
	\begin{equation*}
	\left.\tu^{m+1}\right|_{\partial\dom}=0,\quad \left.Q^{m+1}\right|_{\partial\dom}=0.
	\end{equation*}
	\item[{\bf Step 2}] (Projection step): Next we define $(u^{m+1},p^{m+1})$ via
	\begin{subequations}
	\label{eq:projection}
	\begin{align}
	3\frac{u^{m+1}-\tu^{m+1}}{2\Delta t }&= -\Grad (p^{m+1}-p^m),\\
	\Div u^{m+1} & = 0,
		\end{align}	
	\end{subequations}
	with boundary condition
	\begin{equation}
	\label{eq:bcstep2}
	\left.u^{m+1}\cdot n\right|_{\partial \dom}=0.
	\end{equation}
\end{enumerate}

\subsection{Spatial discretization}\label{sec:spatial-discretization}
Next we describe the spatial discretization for the problem. 
We let $\mathcal{T}_h=\{K\}$ be a conforming shape-regular and quasi-uniform triangulation of $\dom$ made of simplices with mesh size $h>0$. Here $h=\max_{K\in \mathcal{T}_h}\text{diam}(K)$.
We let $\Uh\subset H^1_0(\dom)^d$, $\Ph\subset H^1(\dom)\cap L^2_0(\dom)$, $\Yh = \Uh + \Grad\Ph$, $\Mh\subset H^1(\dom)^{d\times d}$, $\Mhz\subset H^1_0(\dom)^{d\times d}$, $\Xh\subset L^2(\dom)$ be finite dimensional subspaces scaled by a meshsize $h>0$ that we will use for the spatial approximation of the velocity, the pressure, the Q-tensor and the auxiliary variable $r$ respectively. $\HH^{m}$ will be approximated in the space $\Mh$ and $Q^m$ in the space $\Mhz$ which is $\Mh$ with vanishing boundary values.

We use $\Uh$ as the space in which we seek the intermediate velocity $\tu^{m+1}_h$ and $\Ph$ as the space in which we seek the approximation of the pressure $p^{m+1}_h$. The final velocity $u^{m+1}_h$ is sought in the space $\Yh = \Uh + \Grad \Ph$. Note that since $\Ph\subset H^1(\dom)$, $\Grad \Ph$ is well-defined, though it may contain discontinuous functions.  Also note that the final velocity $u_h^{m+1}$ may not satisfy homogeneous Dirichlet boundary conditions.
We assume that $\Uh$ and $\Ph$ are based on piecewise polynomial spaces such as
\begin{align*}
	\Uh & = \{v\in H^1_0(\dom)\, | \, \left.v\right|_K\in P_k(K),\, \forall \, K\in \mathcal{T}_h \},\\
    \Ph & = \{q \in H^1(\dom) \cap L^2_0(\dom): \left.q\right|_K \in P_l(K), \, \forall \, K \in \mathcal{T}_h\},
\end{align*}
where $l \geq 1$, and $P_k(K)$ is the space of polynomials of degree $\leq k\in \N$ on $K$.
Other choices may be possible, but we have not checked this in detail.
For the rest of this article, we will use $k$ to denote the maximal degree of the polynomials in the space $\Uh$ and $\ell$ to denote the maximal degree of the elements in $\Ph$.

For $\Xh$, the space for the auxiliary variable $r$, we take the space of piecewise linear continuous functions on $\mathcal{T}_h$, i.e., $\Xh=\mathcal{S}^1(\mathcal{T}_h)=\{v_h\in C(\overline{\dom})\, :\, \left. v_h\right|_{K}\in P_1(K),\, \forall\, K\in \mathcal{T}_h\}$. We also assume that elements $A\in\Mh$, the space for $\mathcal{H}$, can be written as 
\begin{equation*}
A = \sum_{i,j=1}^d\sum_{k=1}^{N_h} a_{ij,k}E_{ij} \varphi_k,
\end{equation*}
where $a_{ij,k}\in\R$, $N_h\in\N$,  $E_{ij}$ is the matrix with a $1$ in position $(i,j)$ and zeros otherwise and $\varphi_k$ are the scalar-valued basis functions of $\Xh$. So $\Mh$ is the matrix-valued version of $\Xh$. $\Mhz$ is the span of the basis functions of the space $\Mh$ associated with interior nodes.
Next, we define the skew-symmetric trilinear form $b$ for functions $u,v,w\in H^1(\dom)$ with $u\cdot n=0$ on $\partial\dom$ as
\begin{equation}
\label{eq:defb}
b(u,v,w) = \int_{\dom} (u\cdot\Grad) v \cdot w dx +\frac12 \int_{\dom} \Div u (v\cdot w) dx= \frac12\int_{\dom} (u\cdot\Grad) v\cdot w -(u\cdot\Grad )w\cdot v dx .
\end{equation}
We observe that $b(u,w,v)= - b(u,v,w)$ and hence $b(u,v,v)=0$ for $u,v\in H^1(\dom)$ with $u\cdot n =0$ on $\partial\dom$ or $v=0$ on $\partial\dom$. Lastly, we need the mass-lumped inner product, which is defined for $f,g\in C(\overline{\dom})$ as
\begin{equation}
\label{eq:masslumped}
(f,g)_h:= \int_{\dom} \mathcal{I}_h (f\cdot g) dx = \sum_{z\in \mathcal{N}_h} f(z) g(z)\int_{\dom} \varphi_z dx,
\end{equation}
where $\mathcal{I}_h$ is the Lagrangian nodal interpolation operator and $\varphi_z$ the Lagrangian basis function associated with node $z\in\mathcal{N}_h$ that has value $1$ at $z$ and $0$ at all other nodes, see e.g.~\cite[Def. 3.13]{Bartels2015book}. 
 This discrete inner product has the following useful properties:
 \begin{lemma}{\cite[Lem. 3.9]{Bartels2015book},\cite{Bartels2015}}\label{lem:masslumped}
We have
\begin{equation}\label{eq:masslumpingnormequivalence}
 \norm{f}_{L^2}^2\leq \norm{f}_h^2 := (f,f)_h\leq (d+2)\norm{f}_{L^2}^2
 \end{equation}
for all $f\in \mathcal{S}^1(\mathcal{T}_h)$, the space of linear Lagrangian finite elements.
Moreover, for $f,g\in\mathcal{S}^1(\mathcal{T}_h)$, we have
\begin{equation}
\label{eq:masslumpingestimate1}
\left|(f,g)_h-(f,g)\right|\leq C h^{1+\ell}\norm{\Grad f}_{L^2}\norm{\Grad^\ell g}_{L^2},
\end{equation}
for $\ell\in\{0,1\}$ and with the convention that $\Grad^0 g=g$ and $\Grad^1 g =\Grad g$. In addition,
\begin{equation}
\label{eq:masslumpingestimate}
\left|(f,\phi)-(f,\phi)_h\right|\leq C h\norm{f}_{L^2} \norm{\phi}_{H^2}
\end{equation} 
for a constant $C>0$, $f\in \mathcal{S}^1(\mathcal{T}_h)$ and $\phi\in H^2(\dom)$. Furthermore, for any $\phi\in C(\overline{\dom})$, it holds
\begin{equation}\label{eq:masslumpinginterpolation}
(f,\phi)_h =(f,\mathcal{I}_h\phi)_h.
\end{equation}
\end{lemma}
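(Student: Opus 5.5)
The plan is to work element by element, writing each $f\in\mathcal{S}^1(\mathcal{T}_h)$ on a simplex $K$ in barycentric coordinates, $f|_K=\sum_{i=0}^d f_i\lambda_i$. All claims then reduce to computations on a single simplex, combined with an affine map to the reference element and shape regularity.

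\textbf{Norm equivalence \eqref{eq:masslumpingnormequivalence}.} I would use the exact formula $\int_K\lambda_i\lambda_j\,dx=|K|(1+\delta_{ij})/((d+1)(d+2))$. This gives the local mass matrix $M_K=\frac{|K|}{(d+1)(d+2)}(I+\mathbf{1}\mathbf{1}^\top)$. The lumped matrix is $\frac{|K|}{d+1}I$, since $\int_K\lambda_i=|K|/(d+1)$. The eigenvalues of $I+\mathbf{1}\mathbf{1}^\top$ are $1$ and $d+2$. Hence
\[
\frac{|K|}{(d+1)(d+2)}|F|^2\le F^\top M_KF\le \frac{|K|}{d+1}|F|^2 ,
\]
which reads $\norm{f}_{L^2(K)}^2\le\norm{f}_{h,K}^2\le(d+2)\norm{f}_{L^2(K)}^2$. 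Summing over $K$ gives \eqref{eq:masslumpingnormequivalence}.

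\textbf{Interpolation identity \eqref{eq:masslumpinginterpolation}.} This is immediate from the definition \eqref{eq:masslumped}, because $(f,\phi)_h$ only involves nodal values and $\mathcal{I}_h\phi(z)=\phi(z)$.

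\textbf{Estimate \eqref{eq:masslumpingestimate1}.} On each $K$, define the local error functional $E_K(f,g)=\int_K\mathcal{I}_h(fg)-fg\,dx$. This functional vanishes whenever $f$ or $g$ is constant on $K$, since then $fg\in P_1$ is interpolated exactly. I would then show $E_K(f,g)=E_K(f-\bar f,g)$, where $\bar f$ is the mean of $f$ on $K$. Combining Cauchy--Schwarz in the nodal values, Poincaré on $K$, and the equivalence of norms on $P_1(K)$ (scaled to $K$) gives
\[
|E_K|\le C\norm{f-\bar f}_{L^2(K)}\norm{g}_{L^2(K)}\le Ch\norm{\Grad f}_{L^2(K)}\norm{g}_{L^2(K)} .
\]
This is the case $\ell=0$. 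For $\ell=1$, I would subtract the mean of $g$ as well, which gives $Ch^2\norm{\Grad f}_{L^2(K)}\norm{\Grad g}_{L^2(K)}$. Summing over $K$ with Cauchy--Schwarz yields \eqref{eq:masslumpingestimate1}.

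\textbf{Estimate \eqref{eq:masslumpingestimate} for $\phi\in H^2$.} Since $H^2\subset C(\overline{\dom})$ for $d\le3$, the interpolant is defined. Using \eqref{eq:masslumpinginterpolation}, I would split
\[
(f,\phi)-(f,\phi)_h=(f,\phi-\mathcal{I}_h\phi)+\big[(f,\mathcal{I}_h\phi)-(f,\mathcal{I}_h\phi)_h\big].
\]
For the first term I would use the interpolation estimate $\norm{\phi-\mathcal{I}_h\phi}_{L^2}\le Ch^2\norm{\phi}_{H^2}$. For the second term I would apply the $\ell=0$ estimate with the roles of $f$ and $g$ swapped. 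This puts the gradient on $\mathcal{I}_h\phi$, giving the bound $Ch\norm{f}_{L^2}\norm{\Grad\mathcal{I}_h\phi}_{L^2}\le Ch\norm{f}_{L^2}\norm{\phi}_{H^2}$, by $H^1$-stability of $\mathcal{I}_h$ on $H^2$.

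\textbf{Main obstacle.} The only delicate point is the $\ell=1$ case of \eqref{eq:masslumpingestimate1}. The simultaneous subtraction of both means needs care: one must check that the cross terms vanish, i.e. that $E_K(f-\bar f,\bar g)=0$, and that the scaling constant depends only on shape regularity.
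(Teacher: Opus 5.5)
Your proof is correct. For \eqref{eq:masslumpingestimate1}, however, you take a different route from the one the paper relies on. The paper does not prove this lemma itself; it cites Bartels. Its proof of the $L^p$-analogue \eqref{eq:masslumpingLp} in Appendix~\ref{app:masslumped} shows the intended argument. For $\ell=1$, it applies the nodal interpolation estimate elementwise to the piecewise quadratic $fg$, whose Hessian on $K$ is $\Grad f\otimes\Grad g+\Grad g\otimes\Grad f$. This gives $\norm{\Ih(fg)-fg}_{L^1(K)}\le Ch^2\int_K|\Grad f||\Grad g|\,dx$. The case $\ell=0$ then follows from an inverse estimate $\norm{\Grad g}_{L^2}\le Ch^{-1}\norm{g}_{L^2}$.

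You instead use only that $E_K$ annihilates constants in either argument, which is a hands-on Bramble--Hilbert argument. So $\ell=0$ follows from one mean subtraction, with no inverse inequality. Your local norm equivalence gives $|E_K(f,g)|\le (d+3)\norm{f-\bar f}_{L^2(K)}\norm{g}_{L^2(K)}$. Since $f-\bar f=\Grad f\cdot(x-x_K)$ with $x_K$ the barycenter, you also get $\norm{f-\bar f}_{L^2(K)}\le h_K\norm{\Grad f}_{L^2(K)}$. Your version of \eqref{eq:masslumpingestimate1} therefore holds with constants depending only on $d$, with local mesh sizes, and without shape regularity or quasi-uniformity; the inverse-estimate step needs them. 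The paper's route is shorter for $\ell=1$ once the interpolation estimate is taken for granted.

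The step you flag as the main obstacle is immediate: $E_K(f-\bar f,\bar g)=0$ is the same constant-annihilation property you already used.

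Your element mass-matrix computation for \eqref{eq:masslumpingnormequivalence} is the standard argument, and it is what produces the sharp constant $d+2$. The Jensen/barycenter argument in the appendix only yields a cruder, mesh-dependent upper constant.

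Your splitting for \eqref{eq:masslumpingestimate}, which the paper does not prove, is sound. This is where shape regularity genuinely enters, through $\norm{\phi-\Ih\phi}_{L^2}\le Ch^2\norm{\phi}_{H^2}$ and the $H^1$-stability of $\Ih$ on $H^2(\dom)$.
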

Additionally, we have the following $L^p$-variants of the above properties:
\begin{restatable}{lemma}{lplumping}
	\label{lem:masslumpedLp}
	Assume that the mesh is quasi-uniform. Let $f,g\in  \mathcal{S}^1(\mathcal{T}_h)$, the space of linear Lagrangian finite elements. Let $p\in [1,\infty]$ and $p'$ the conjugate exponent, i.e., it satisfies $p^{-1}+ (p')^{-1}=1$. Define
	\begin{equation*}
		\norm{f}_{h,p}^p: = \sum_{z\in \mathcal{N}_h} |f(z)|^p \int_{\dom}\varphi_z dx = \int_{\dom} \Ih(|f|^p) dx, \quad \text{if $p \neq \infty$}
	\end{equation*}
	and
	\begin{equation*}
		\norm{f}_{h,\infty}: = \max_{z\in \mathcal{N}_h} |f(z)| = \|\Ih(f)\|_{L^\infty} = \|f\|_{L^\infty};
	\end{equation*}
	where $\varphi_z$ is the Lagrangian basis function associated with node $z\in \mathcal{N}_h$. Then we have
	\begin{equation}\label{eq:hhoelder}
				|(f,g)_h | \leq \norm{f}_{h,p}\norm{g}_{h,p'}\leq C \norm{f}_{L^p}\norm{g}_{L^{p'}}.
	\end{equation}
	Furthermore,
	\begin{equation}\label{eq:masslumpedstability}
		\norm{f}_{L^p}\leq 	\norm{f}_{h,p}\leq C \norm{f}_{L^p}
	\end{equation}
	and for $\ell\in \{0,1\}$,
			\begin{equation}
			\label{eq:masslumpingLp}
			\left|(f,g)_h-(f,g)\right|\leq \norm{\Ih(f\cdot g)-f\cdot g}_{L^1}\leq C h^{1+\ell}\norm{\Grad f}_{L^p}\norm{\Grad^\ell g}_{L^{p'}}.
		\end{equation}
\end{restatable}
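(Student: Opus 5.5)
The proof is elementwise. It rests on three facts for a shape-regular, quasi-uniform mesh. First, the weights satisfy $\int_\dom\varphi_z\,dx=\sum_{K\ni z}|K|/(d+1)$. Second, $\Ih$ is exact on $P_1$. Third, on a reference simplex all norms on finite-dimensional polynomial spaces are equivalent, and scaling transfers this to each $K$ with constants depending only on shape regularity.

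\textbf{Hölder-type bound \eqref{eq:hhoelder}.} Write $(f,g)_h=\sum_z f(z)g(z)\,w_z$ with $w_z=\int_\dom\varphi_z\,dx>0$. The discrete Hölder inequality with weights $w_z$ gives $|(f,g)_h|\le \norm{f}_{h,p}\norm{g}_{h,p'}$; for $p=\infty$ use $\sum_z w_z=|\dom|$ and $\max_z|f(z)|$. The second inequality in \eqref{eq:hhoelder} then follows from the upper bound in \eqref{eq:masslumpedstability}, which is proved next.

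\textbf{Norm equivalence \eqref{eq:masslumpedstability}.} For the lower bound, fix $x$ and write $f(x)=\sum_z f(z)\varphi_z(x)$, a convex combination since $\varphi_z\ge0$ and $\sum_z\varphi_z=1$ for $P_1$. By Jensen, $|f(x)|^p\le\sum_z|f(z)|^p\varphi_z(x)=\Ih(|f|^p)(x)$. Integrating gives $\norm{f}_{L^p}\le\norm{f}_{h,p}$, and the case $p=\infty$ is trivial. For the upper bound, on each $K$ we have $\sum_{z\in K}|f(z)|^p|K|/(d+1)\le C\norm{f}_{L^p(K)}^p$. This holds by norm equivalence on $P_1(\hat K)$ together with the affine scaling $|K|$, which appears on both sides. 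Summing over $K$ gives the bound; no inverse estimate is needed, so quasi-uniformity only enters through uniform constants.

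\textbf{Quadrature estimate \eqref{eq:masslumpingLp}.} The first inequality is immediate: $(f,g)_h-(f,g)=\int_\dom(\Ih(fg)-fg)\,dx$. For the second, work on each $K$ with $fg\in P_2(K)$. The $L^1$ interpolation estimate gives $\norm{\Ih(fg)-fg}_{L^1(K)}\le Ch_K^2\norm{\Grad^2(fg)}_{L^1(K)}$. Since $f,g$ are affine on $K$, $\Grad^2(fg)=\Grad f\otimes\Grad g+\Grad g\otimes\Grad f$, so elementwise Hölder yields $Ch_K^2\norm{\Grad f}_{L^p(K)}\norm{\Grad g}_{L^{p'}(K)}$. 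This is the case $\ell=1$ after summing over $K$ and applying Hölder for sums.

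\textbf{Main obstacle: the case $\ell=0$.} Here we must trade one derivative of $g$ for a factor $h$. I would write $\Ih(fg)-fg=\Ih\bigl((f-\bar f_K)g\bigr)-(f-\bar f_K)g$ on $K$, where $\bar f_K$ is the value of $f$ at the barycenter. This identity holds because $\bar f_K g\in P_1$ is reproduced exactly by $\Ih$. Then $L^1$-stability of $\Ih$ on $P_2(K)$, which again follows from norm equivalence and scaling, bounds the term by $C\norm{(f-\bar f_K)g}_{L^1(K)}$. Since $\norm{f-\bar f_K}_{L^\infty(K)}\le h_K|\Grad f|_K$ and $\Grad f$ is constant on $K$, we get $\le Ch_K\norm{\Grad f}_{L^p(K)}\norm{g}_{L^{p'}(K)}$, using $|\Grad f|_K\,\norm{g}_{L^1(K)}\le\norm{\Grad f}_{L^p(K)}\norm{g}_{L^{p'}(K)}$. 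Summing over $K$ with the discrete Hölder inequality, and taking the maximum over $h_K\le h$, finishes the case $\ell=0$. This is the only step where care is needed, to keep constants independent of $p$ in the endpoint cases $p\in\{1,\infty\}$.
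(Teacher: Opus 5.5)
Your proposal is correct. The H\"older bound, the Jensen lower bound in \eqref{eq:masslumpedstability}, and the $\ell=1$ case of \eqref{eq:masslumpingLp} are essentially the paper's argument. You depart from the paper in two places.

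\textbf{Upper bound $\norm{f}_{h,p}\le C\norm{f}_{L^p}$.} The paper uses no reference-element argument. It uses the explicit barycentric identity $f(z_0)=(d+2)f(b)-(d+1)f(c)$, where $b$ is the barycenter of $K$ and $c$ that of the sub-simplex obtained by replacing $z_0$ with $b$. This gives $|f(z_0)|\le[(d+2)+(d+1)^2]\fint_K|f|\,dx$ with an explicit constant. The paper then assigns a single element $K_z$ to each node, which forces it to bound the patch ratio $|\omega_z|/|K_z|$; that is where it uses shape regularity and quasi-uniformity. You instead split $\int_\dom\varphi_z\,dx=\sum_{K\ni z}|K|/(d+1)$ and compare nodal values with $\norm{f}_{L^p(K)}$ element by element via affine equivalence. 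Both sides scale exactly by $|K|$, so you need no patch-ratio bound and in fact no mesh regularity at all. The price is a non-explicit constant.

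\textbf{The case $\ell=0$.} The paper applies the inverse estimate $\norm{\Grad g}_{L^{p'}}\le Ch^{-1}\norm{g}_{L^{p'}}$ to the $\ell=1$ bound. That is a one-line argument, but it relies on quasi-uniformity, or at least on elementwise shape regularity. Your argument subtracts $\bar f_K g\in P_1(K)$, uses $L^1$-stability of the local interpolant on $P_2(K)$, and uses $\norm{f-\bar f_K}_{L^\infty(K)}\le h_K|\Grad f|_K$. It is longer, but it avoids inverse inequalities entirely. It also gives the sharper elementwise bound $C\sum_K h_K\norm{\Grad f}_{L^p(K)}\norm{g}_{L^{p'}(K)}$, with constants visibly independent of $p$, including the endpoints $p\in\{1,\infty\}$.
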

The proof of~\eqref{eq:masslumpingLp} follows roughly~\cite[Lemma 3.9]{Bartels2015book} and can be found in Appendix~\ref{app:masslumped}.
\begin{restatable}{lemma}{lplumperror}
	\label{lem:masslump2}
	Let $v_h, w_h\in \mathcal{S}^1(\mathcal{T}_h)$ and let $F$ be Lipschitz continuous. Let $p\in [1,\infty]$ and $p'$ such that $p^{-1}+(p')^{-1}=1$. Then 
	\begin{equation}
		\label{eq:nonlinearmasslumping}
		\norm{v_h F(w_h)-\Ih(v_hF(w_h))}_{L^1(\dom)}\leq C h \norm{v_h}_{L^p}\norm{\Grad w_h}_{L^{p'}}.
	\end{equation}
Naturally, the same estimate extends to vector- and tensor-valued functions with $v_hF(w_h)$ replaced by the appropriate contraction.
\end{restatable}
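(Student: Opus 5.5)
We argue element by element. Fix $K\in\mathcal{T}_h$ and write $g:=v_hF(w_h)$ on $K$. The interpolant $\Ih g$ is the affine function on $K$ with nodal values $v_h(z)F(w_h(z))$ at the vertices $z$ of $K$. Since $v_h|_K$ is affine, we can write
\begin{equation*}
\Ih(v_hF(w_h)) = \sum_{z\in K} v_h(z)F(w_h(z))\varphi_z, \qquad v_h=\sum_{z\in K} v_h(z)\varphi_z \quad\text{on } K .
\end{equation*}
It follows that on $K$
\begin{equation*}
v_hF(w_h)-\Ih(v_hF(w_h)) = \sum_{z\in K} v_h(z)\varphi_z\,\bigl(F(w_h)-F(w_h(z))\bigr).
\end{equation*}
This identity is the key step. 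It avoids differentiating $F$, which is only Lipschitz, and so no second derivatives are needed.

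Next we estimate each term pointwise on $K$. Since $w_h$ is affine on $K$ and $F$ is Lipschitz with constant $L_F$, for $x\in K$ we have
\begin{equation*}
|F(w_h(x))-F(w_h(z))|\le L_F|w_h(x)-w_h(z)|\le L_F h_K|\Grad w_h|_K| .
\end{equation*}
Here $\Grad w_h|_K$ is constant. Using $0\le\varphi_z\le1$, we get
\begin{equation*}
\int_K|g-\Ih g|\,dx\le C L_F h_K\,|\Grad w_h|_K|\sum_{z\in K}|v_h(z)|\,|K| .
\end{equation*}
By norm equivalence on the finite-dimensional space $P_1$ on the reference element, combined with scaling, $\sum_{z\in K}|v_h(z)|\,|K|\le C\norm{v_h}_{L^1(K)}$. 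The constant depends only on shape regularity. This gives
\begin{equation*}
\int_K|g-\Ih g|\,dx\le C h_K\,\norm{v_h}_{L^1(K)}\,|\Grad w_h|_K| .
\end{equation*}

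We then convert this into a product of $L^p$ and $L^{p'}$ norms. Since $\Grad w_h$ is constant on $K$, we have $|\Grad w_h|_K| = |K|^{-1/p'}\norm{\Grad w_h}_{L^{p'}(K)}$. Hölder's inequality on $K$ gives $\norm{v_h}_{L^1(K)}\le |K|^{1/p'}\norm{v_h}_{L^p(K)}$. Multiplying, the powers of $|K|$ cancel, and we obtain
\begin{equation*}
\int_K|g-\Ih g|\,dx\le C h\norm{v_h}_{L^p(K)}\norm{\Grad w_h}_{L^{p'}(K)} .
\end{equation*}
Summing over $K$ and applying the discrete Hölder inequality $\sum_K a_Kb_K\le(\sum a_K^p)^{1/p}(\sum b_K^{p'})^{1/p'}$ yields \eqref{eq:nonlinearmasslumping}. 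The endpoint cases $p=1$ and $p=\infty$ follow in the same way, using max and sum. Quasi-uniformity is not needed, since shape regularity and $h_K\le h$ suffice.

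For vector- and tensor-valued functions, the same argument applies componentwise to the contraction $\sum v_h^{(i)}F_i(w_h)$. Each $F_i$ is Lipschitz in all of its arguments, and the pointwise bound uses $|w_h(x)-w_h(z)|\le h_K|\Grad w_h|_K|$ for the whole vector $w_h$. The main care point in the proof is the first step: we must avoid writing the error as $\Ih$-interpolation of a product and then invoking a second-derivative interpolation estimate, since $F(w_h)$ has no $W^{2,1}$ regularity. The nodal rewriting above is what sidesteps this.
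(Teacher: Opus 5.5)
Your proof is correct and follows the paper's approach: both rest on the nodal identity $v_hF(w_h)-\Ih(v_hF(w_h))=\sum_z v_h(z)\varphi_z\bigl(F(w_h)-F(w_h(z))\bigr)$ and the elementwise Lipschitz bound $|F(w_h(x))-F(w_h(z))|\le Ch|\Grad w_h|$. The only difference is bookkeeping. The paper applies H\"older globally with the weights $\varphi_z$ and then uses $\norm{v_h}_{h,p}\le C\norm{v_h}_{L^p}$ from Lemma~\ref{lem:masslumpedLp}, which is stated under quasi-uniformity. You do the $P_1$ norm equivalence and H\"older element by element, which makes your argument self-contained and removes that assumption.
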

The proof can also be found in Appendix~\ref{app:masslumped}.

We start by approximating the initial data for $u$, $Q$ and $r$:
\begin{equation}
\label{eq:initdataapprox}
\tu^0_h = \PUh u_0,\quad Q^0_h =\Pi^{SZ}_h Q_0,\quad r_h^0=\Ih(r(Q^0_h))
\end{equation} 
where $\mathcal{I}_h$ is the Lagrangian interpolation operator,  $\PUh:L^2(\dom)^d\to \Uh$ the $L^2$-orthogonal projection onto the finite element space $\Uh$, e.g.,~\cite[Chapter 22]{Ern2021},~\cite{Douglas1975}, and  $\Pi^{SZ}_h: H^1_0(\dom)^{d\times d}\to \Mhz$ the Scott--Zhang interpolation operator~\cite{Scott1990} used component-wise.

Since $\tu_h^0$ is not necessarily divergence free, we project it onto the space of discretely divergence free functions using the following projection step: Find $(u_h^0,p_h^0)\in \Yh\times\Ph$ such that
\begin{align}\label{eq:zerothstep}\begin{split}					
		\left(\frac{u^{0}_h-\tu^{0}_h}{\Delta t }, v\right)&= -(\Grad p^{0}_h, v),\\
		\left( u^{0}_h,\Grad q\right)&=0,
	\end{split}	
\end{align}
for all $(v,q)\in \Yh\times \Ph$.

Then we define the following iterative scheme: 
\begin{enumerate}
	\item[{\bf Step 1}] 
	For any $m\geq 1$, given $(u_h^{m},p_h^m,Q^{m}_h,r_h^{m})\in \Yh\times \Ph\times\Mhz\times\Xh$,   $(u_h^{m-1},Q^{m-1}_h,r_h^{m-1})\in \Yh\times\Mhz\times\Xh$ and $\tu^m_h,\tu_h^{m-1}\in \Uh$, find $(\tu_h^{m+1},Q^{m+1}_h,\HH_{h}^{m+1},r_h^{m+1})\in \Uh\times \Mhz\times \Mh\times\Xh$, such that for all $(v,Y,Z,w)\in \Uh\times \Mh\times\Mhz\times\Xh$,
	\begin{subequations}\label{eq:step1fully}
	\begin{align}
	\label{eq:udisc}
	\left(\frac{3\tu^{m+1}_h-4u^m_h+u^{m-1}_h}{2\Delta t},v\right)&=-b(\hu^{m+1}_h,\tu^{m+1}_h,v)-(\Grad p^m_h,v)- \mu(\Grad \tu_h^{m+1},\Grad v)  -(\sigma_h^{m+1},\Grad v)\\
	&\quad -(\HH_h^{m+1}\Grad \hQ^{m+1}_h,v)+(\PUh f^{m+1},v),\notag\\
	\label{eq:Qdisc}
	\left(\frac{3Q^{m+1}_h-4Q^m_h+Q^{m-1}_h}{2\Delta t},Y\right) &= -((\tu^{m+1}_h\cdot\Grad) \hQ^{m+1}_h,Y) + (s_h^{m+1},Y)+M (\HH_h^{m+1},Y),\\
	\label{eq:rdisc}
	(3r^{m+1}_h-4r^m_h+r^{m-1}_h,w)_h & = (P(\hQ^{m+1}_h):(3Q_h^{m+1}-4Q_h^m+Q^{m-1}_h),w)_h,\\
	\label{eq:Hdisc}
	(\HH_h^{m+1},Z) &= -L(\Grad Q_h^{m+1},\Grad Z)-(r_h^{m+1} P(\hQ_h^{m+1}),Z)_h,
	\end{align}
\end{subequations}
where
\begin{equation}
\label{eq:smSigmam}
s^{m+1}_h = s(\tu_h^{m+1},\hQ_h^{m+1}),\quad \sigma_h^{m+1}=\sigma(\hQ_h^{m+1},\HH_h^{m+1}),
\end{equation}
\item[{\bf Step 2}] (Projection step): Next we seek $(u^{m+1}_h,p^{m+1}_h)\in \Yh\times\Ph$ which satisfies for all $(v,q)\in \Yh\times\Ph$, 
\begin{subequations}
	\label{eq:projectionfullydiscrete}
	\begin{align}
	\label{eq:projection1}
\left(3\frac{u^{m+1}_h-\tu^{m+1}_h}{2\Delta t }, v\right)&= -( \Grad (p^{m+1}_h-p^m_h), v),\\
\label{eq:projection2}
\left(u^{m+1}_h,\Grad q\right)&=0.
	\end{align}	
\end{subequations}

\end{enumerate}
Here again $\hu_h^{m+1} = 2\tu^{m}_h - \tu_h^{m-1}$ and $\hQ_h^{m+1} = 2 Q_h^m-Q_h^{m-1}$.

\begin{remark}[Formulation of projection step as a Poisson problem]
	The projection step can be rewritten as a Poisson problem as follows: Taking $\Grad q\in \Grad\Ph$ as a test function in~\eqref{eq:projection1} and using the weak divergence constraint~\eqref{eq:projection2}, we see that $p_h^{m+1}\in \Ph$ can be computed from
	\begin{equation}\label{eq:poisson}
		(\Grad p^{m+1}_h,\Grad q) = (\Grad p^m_h,\Grad q)-\frac32\left(\frac{\Div \tu^{m+1}_h}{\Delta t},q\right),
	\end{equation}
	and then $u_h^{m+1}$ can be computed via~\eqref{eq:projection1}. Therefore, no inf-sup/LBB compatibility condition is required for the spaces $\Ph$ and $\Uh$.
	\end{remark}

\begin{remark}
\label{rem:eqforr}
We note that the equation for $r^{m+1}_h$,~\eqref{eq:rdisc}, can be rewritten as a pointwise equation for each $r_z^{m+1}$ where $r^{m+1}_h:= \sum_{z\in\mathcal{N}_h} r_z^{m+1}\varphi_z$ and $\varphi_z$ are the basis functions of $\Xh$:
\begin{multline*}
3\sum_{z\in\mathcal{N}_h} r^{m+1}_z w(z)\int_{\dom} \varphi_z dx  = 4\sum_{z\in\mathcal{N}_h} r^{m}_z w(z)\int_{\dom} \varphi_z dx-\sum_{z\in\mathcal{N}_h} r^{m-1}_z w(z)\int_{\dom} \varphi_z dx \\
+\sum_{z\in \mathcal{N}_h} P(\hQ^{m+1}_z):(3Q^{m+1}_z-4Q^m_z+Q^{m-1}_z)w(z)\int_{\dom} \varphi_z dx,
\end{multline*} 	
i.e., by choosing $w=\varphi_y$, we obtain
\begin{equation}\label{eq:rsimplified}
r^{m+1}_y  = \frac{1}{3}\left( 4 r^{m}_y - r^{m-1}_y +  P(\hQ^{m+1}_y):(3Q^{m+1}_y-4Q^m_y+Q^{m-1}_y)\right).
\end{equation}
	
\end{remark}
\begin{remark}\label{rem:1ststep}
	Since for the first step at $m=0$, the approximations at $m-1=-1$ are not defined, one can either use extrapolation to define the solution at $t=-\Delta t$, or use a first order method to obtain $\tu^1_h,u^1_h,p^1_h,Q^1_h,r^1_h,\HH^1_h$ as follows, as it was done for example in~\cite[Section 3]{Guermond2006}:
	First compute $(\tu^1_h,Q_h^1,\HH_h^1,r_h^1)\in \Uh\times \Mhz\times\Mh\times\Xh$ by
	\begin{subequations}
		\label{eq:step1fully1step}
		\begin{align}
			&	\left(\frac{\tu^{1}_h-u^0_h}{\Delta t},v\right)+ b(\tu_h^{0},\tu^{1}_h,v)+(\Grad p^0_h, v)+\mu(\Grad \tu_h^{1},\Grad v)=-(\sigma_h^1,\Grad v)\\
			& \qquad \qquad \qquad -(\HH_h^1\Grad Q_h^0,v)+ (\PUh f^{1}, v),\quad \forall v\in \Uh,\notag \\
			& \left(\frac{Q^1_h-Q^0_h}{\Delta t},Y\right) + ((\tu^1_h\cdot\Grad)Q^0_h,Y) - (s^1_h,Y) = M(\HH^1_h,Y),\quad \forall Y\in \Mh,\label{eq:Qfirststep}\\
		&	(r^1_h-r^0_h,w)_h =  (P(Q_h^0):(Q^1_h-Q^0_h),w)_h,\quad \forall w\in \Xh,\label{eq:rfirststep}\\
		&(\HH^1_h,Z) = - L ( \Grad Q^1_h,\Grad Z)-(r^1_hP(Q_h^0),Z)_h,\quad \forall Z\in \Mhz,\label{eq:Hfirststep}
		\end{align}
	\end{subequations}
	where
	\begin{equation*}
		s^1_h = s(\tu^1_h,Q^0_h),\quad \sigma_h^1 = \sigma(Q_h^0,\HH_h^1).
	\end{equation*}
	Then obtain $u^1_h\in \Yh$ and $p_h^1\in \Ph$ via the following projection step
	\begin{subequations}
		\label{eq:projectionfullydiscretestep1}
		\begin{align}
			\label{eq:projection1step1}
			\left(\frac{u^{1}_h-\tu^{1}_h}{\Delta t }, v\right)&= -( \Grad (p^{1}_h-p^0_h), v),\quad \forall v\in \Yh\\
			\label{eq:projection2step1}
			\left(u^{1}_h,\Grad q\right)&=0,	\quad \forall q\in \Ph.
		\end{align}	
	\end{subequations}
	In the following analysis, we will assume that this approach has been taken, though we believe that the analysis can be adapted to different approaches for the first time step. 
	
\end{remark}

	\begin{remark}[Nonhomogeneous boundary conditions for $Q$]
		\label{rem:inhomo}
		
		The scheme~\eqref{eq:step1fully} - \eqref{eq:projectionfullydiscrete} is for homogeneous Dirichlet boundary conditions for the Q-tensor variable, but it can be extended to nonhomogeneous non-time dependent Dirichlet conditions in a straightforward manner: Denote $Q_b(x):\partial\dom\to \R^{d\times d}$ the (symmetric and trace-free) boundary condition and $\widetilde{Q}_b$ an extension of $Q_b$ to the whole domain (which is possible as long as $Q_b$ is sufficiently regular and the boundary is uniformly Lipschitz~\cite{Necas2012}). If the boundary data $Q_b$ is trace-free and symmetric, then the extension can be chosen trace-free and symmetric.
		We can then decompose $Q = \widetilde{Q}_b + \breve{Q}$ where $\breve{Q}$ has vanishing boundary values. Denote $\widetilde{Q}_{b,h}=\Ih \widetilde{Q}_b$ the Lagrangian interpolation of  $\widetilde{Q}_b$ on  the finite element space $\Mh$. Then the approximations $\breve{Q}_h\in \Mhz$ of $\breve{Q}$ solve
		\begin{equation*}
			\left(\frac{3\breve{Q}^{m+1}_h-4\breve{Q}^m_h+\breve{Q}^{m-1}_h}{2\Delta t},Y\right)+((\tu^{m+1}_h\cdot\Grad) \hQ^{m+1}_h,Y) =   (s_h^{m+1},Y)+M (\HH_h^{m+1},Y),
		\end{equation*}
		where $Q_h = \breve{Q}_h +\widetilde{Q}_{b,h}$ and $\widehat{Q}_h^{m+1} = 2 \breve{Q}_h^m-\breve{Q}_h^{m-1}+\widetilde{Q}_{b,h}$. 
		All the upcoming analysis can be extended to this case with minor modifications (additional source terms would appear that can be bounded). 
	\end{remark}

\subsection{Solvability and properties of the scheme}

Next, we show that the scheme~\eqref{eq:step1fully}--\eqref{eq:projectionfullydiscrete} is well-posed, i.e., given approximations at time $m$, the approximations at time $m+1$ can be computed in a unique manner.
No relation between the time step $\Delta t$ and the mesh size $h$ will be required for the solvability and stability of the scheme, however, we will need an inverse CFL condition of the form {$h^{2}=o(\Delta t)$} in the next section to show convergence of the scheme to a weak solution.
We start by introducing some notation. We denote
\begin{equation*}
	\mathcal{S}_0 = \left\{Q\in \R^{d\times d}\, | \, Q_{ij}= Q_{ji},\, i,j=1,\dots,d;\, \tr Q =0\right\},
\end{equation*}
the set of symmetric, trace-free tensors and denote by $\Pi:\R^{d\times d}\to \Sym$ the orthogonal projection (with respect to the Frobenius norm) onto the space of symmetric and trace-free tensors. One has $\Pi A = \frac{A+A^\top}{2} - \frac{1}{d}\tr (A) I$. If $A:\dom\to \R^{d\times d}$ is a matrix-valued mapping then by $\Pi A$ we mean the pointwise in $x\in \dom$ action of $\Pi$. We also denote $\Sh = \Pi \Mh$ and $\Shz = \Pi\Mhz$.
We collect the following basic properties of $\Pi$:
\begin{lemma}
	\label{lem:propertiesofPi}
	The orthogonal projection $\Pi:\R^{d\times d}\to \Sym$ satisfies
	\begin{enumerate}[{\rm(a)}]
		\item $\Pi$ is self-adjoint with respect to $(\cdot,\cdot)$ and $(\cdot,\cdot)_h$
		\item $\Pi$ commutes with $\partial_t$ and $\partial_i$, $i=1,\dots, d$ and therefore $\Grad \Pi A \perp \Grad ((I-\Pi)A)$.
		
		\item If $Q\in \Sym$, then $s(u,Q)\in\Sym$ for any vector field $u:\dom\to \R^d$.
		\item If $Q\in \Sym$, then $V(Q),P(Q)\in \Sym$.
		\item If $Q:\dom\to\Sym$, then $(u\cdot \Grad )Q\in \Sym$ for any $u\in \R^d$. 
		\item $\Pi$ commutes with finite element  interpolation operators  and $L^2$-projection operators.
	\end{enumerate}
\end{lemma}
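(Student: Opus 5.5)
Each of the six properties follows from the explicit formula $\Pi A = \frac{A+A^\top}{2}-\frac1d\tr(A)I$ together with linearity, so I would verify them one by one. For (a), I check that $\Pi$ is symmetric with respect to the Frobenius product pointwise: $\Pi A:B = \frac12(A+A^\top):B-\frac1d\tr(A)\tr(B)$, and this expression is symmetric in $A,B$ because $A^\top:B = A:B^\top$. Integrating over $\dom$ gives self-adjointness in $(\cdot,\cdot)$. For $(\cdot,\cdot)_h$ I use that the mass-lumped product is a weighted sum of pointwise Frobenius products at the nodes, and that $(\Pi A)(z)=\Pi(A(z))$.

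For (b), $\Pi$ is a constant linear map on $\R^{d\times d}$, so it commutes with $\partial_t$ and $\partial_i$. Then $\Grad\Pi A:\Grad(I-\Pi)A = \sum_i \Pi\partial_iA:(I-\Pi)\partial_iA$, and each summand vanishes pointwise because $\Pi$ is an orthogonal projection. For (f), interpolation and $L^2$-projection act componentwise in the scalar basis, and $\Pi$ mixes components by constant coefficients. Interpolation commutes with $\Pi$ since $\Pi$ acts pointwise at nodes. For the $L^2$-projection $\mathcal{P}$ onto the tensorized space $\Mh$, the space is invariant under $\Pi$, and (a) gives $(\Pi\mathcal{P}A,Y)=(\mathcal{P}A,\Pi Y)=(A,\Pi Y)=(\Pi A,Y)$ for $Y\in\Mh$. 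Since $\Pi\mathcal{P}A\in\Mh$, this identifies $\Pi\mathcal{P}A$ as $\mathcal{P}\Pi A$.

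For (e), $(u\cdot\Grad)Q=\sum_k u_k\partial_kQ$. Each $\partial_kQ$ is symmetric and trace-free because $\Pi$ commutes with derivatives and $\Pi Q=Q$. A linear combination of elements of $\Sym$ stays in $\Sym$. For (d), $Q^2$ is symmetric with trace $\tr(Q^2)$, so $Q^2-\frac1d\tr(Q^2)I\in\Sym$. The other terms of $V$ are scalar multiples of $Q$, hence $V(Q)\in\Sym$, and $P(Q)=V(Q)/r(Q)$ with $r>0$ scalar.

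The main computation is (c). Symmetry: $(WQ-QW)^\top = Q^\top W^\top - W^\top Q^\top = -QW+WQ$, using $W^\top=-W$ and $Q^\top=Q$. The term $QD+DQ$ is symmetric, and $D$, $I$ and $Q$ are symmetric. For the trace:
\begin{itemize}
\item $\tr(WQ-QW)=0$;
\item $\tr(QD+DQ)=2\,D:Q$;
\item $\tr(\frac{2}{d}D)=\frac2d\Div u$;
\item $\tr(-\frac{2}{d^2}\Div u\,I)=-\frac2d\Div u$;
\item $\tr\bigl(-2(D:Q)(Q+\frac1dI)\bigr)=-2D:Q$, using $\tr Q=0$.
\end{itemize}
Multiplying the relevant terms by $\xi$, the total is $2\xi D:Q+\frac{2\xi}{d}\Div u-\frac{2\xi}{d}\Div u-2\xi D:Q=0$. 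The point to stress is that the correction term $-\frac{2\xi}{d^2}\Div u\,I$ cancels the trace of $\frac{2\xi}{d}D$ exactly, which is why $s$ replaces $S$ when $u$ is not divergence free. I expect no genuine obstacle here, only careful bookkeeping of the traces.
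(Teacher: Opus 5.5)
Your proposal is correct and matches the paper's proof: (a) rests on the same pointwise Frobenius-product identities plus the nodal structure of mass lumping, and (c) is the same explicit symmetry and trace computation, in which $-\frac{2\xi}{d^2}\Div u\, I$ cancels the trace of $\frac{2\xi}{d}D$. Your arguments for (b), (d), (e) and (f) fill in correctly the ``direct computation'' the paper leaves implicit.
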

\begin{proof}
	Property (a) follows from
	\begin{equation}\label{eq:skewsym}
		A:(B-B^\top)=(A-A^\top):B,\quad A:(A-A^\top)=\frac12 (A-A^\top):(A-A^\top),
	\end{equation}
	for matrices $A,B\in\R^{d\times d}$, 
	\begin{equation*}
		A:(\tr(B) I) = \tr A \tr B = (\tr (A) I):B,
	\end{equation*}
	and because the mass-lumping is nodal and $\Pi$ acts nodally. Property (b) follows from direct computation. For property (c), we compute (recall~\eqref{eq:DandW})
		\begin{multline*}
		\tr s(u,Q)= \tr(W Q)-\tr(QW)+\xi(\tr(Q D)+\tr(DQ))+\frac{2\xi}{d}\Div u\\
		-\frac{2\xi}{d}\Div u -2\xi(D:Q)\tr\left(Q+\frac{1}{d}I\right)\\
		=\tr(WQ)-\tr(QW)+\xi(\tr(QD)+\tr(DQ))-2\xi(D:Q).
	\end{multline*}
	Since $D$ and $Q$ are  symmetric, we have $\tr(QD)=\tr(DQ)=D:Q$.
	Moreover, 
	\begin{equation*}
		\tr(WQ)=\frac12\tr(\Grad u Q)-\frac12\tr((\Grad u)^\top Q)
		=\frac12\tr(\Grad u Q)-\frac12\tr(Q(\Grad u)^\top )
		= \frac12\tr(\Grad u Q)-\frac12\tr(\Grad u Q^\top )=0, 
	\end{equation*}
	using trace identities and that $Q$ is symmetric. Hence $\tr s(u,Q)=0$. We continue to show the symmetry property of $s$:
	\begin{equation*}
	\begin{split}
		s(u,Q)^\top& =(WQ)^\top-(QW)^\top+\xi((QD)^\top+(DQ)^\top)+\frac{2\xi}{d}D^\top-\frac{2\xi}{d^2}\Div u I- 2\xi (D:Q)\left(Q^\top+\frac1{d}I\right)\\
		&=Q^\top W^\top-W^\top Q^\top+\xi( D^\top Q^\top+Q^\top D^\top)+\frac{2\xi}{d}D-\frac{2\xi}{d^2}\Div u I- 2\xi (D:Q)\left(Q+\frac1{d}I\right)\\
		&=-Q W+W Q+\xi( D Q+Q D)+\frac{2\xi}{d}D-\frac{2\xi}{d^2}\Div u I- 2\xi (D:Q)\left(Q+\frac1{d}I\right)\\
		&=s(u,Q)
		\end{split}
	\end{equation*}
		where we used the symmetry of $D$ and the skew-symmetry of $W$. 
	Properties (d), (e) and (f) follow by direct computation.
\end{proof}

We will also need the following technical lemma from~\cite{Zhao2017,Yue2023} in the fully discrete setting:
\begin{lemma}
	\label{lem:Zhaoetalidentity}
	Assume that $\hQ^{m+1}_h$ and $\HH^{m+1}_h$ are symmetric and trace-free. Then
	the functions $\sigma$ and $s$ defined in~\eqref{eq:sigmaused} and~\eqref{eq:littles} satisfy
	\begin{equation}\label{eq:ssigma}
		(s^{m+1}_h,\HH^{m+1}_h)+(\sigma^{m+1}_h,\Grad\tu^{m+1}_h)=0.
	\end{equation}
\end{lemma}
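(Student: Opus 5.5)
The plan is to prove \eqref{eq:ssigma} as a \emph{pointwise} algebraic identity and then integrate over $\dom$. Fix $x\in\dom$ and write $Q=\hQ^{m+1}_h(x)$, $\HH=\HH^{m+1}_h(x)$ and $\Grad u=\Grad\tu^{m+1}_h(x)=D+W$, with $D$ and $W$ as in \eqref{eq:DandW}. By assumption $Q,\HH\in\Sym$. I will show
\begin{equation*}
s(u,Q):\HH+\sigma(Q,\HH):\Grad u=0 .
\end{equation*}
Integrating this over $\dom$ gives \eqref{eq:ssigma}. Only two tools are needed: $A:B=\tr(AB^\top)$ together with cyclicity of the trace, and the fact that a symmetric matrix contracted with a skew matrix gives zero. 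The latter lets me replace $\Grad u$ by $W$ when it is contracted with a skew matrix, and by $D$ when it is contracted with a symmetric one.

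I will pair the terms of $s$ in \eqref{eq:littles} with the corresponding terms of $\sigma$ in \eqref{eq:sigmaused}.

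\emph{Rotational terms.} Since $Q$ and $\HH$ are symmetric, $Q\HH-\HH Q$ is skew, so $(Q\HH-\HH Q):\Grad u=(Q\HH-\HH Q):W$. Using $W^\top=-W$ and cyclicity, this equals $-\tr(WQ\HH)+\tr(W\HH Q)$. On the other side, $(WQ-QW):\HH=\tr(WQ\HH)-\tr(W\HH Q)$, so the two cancel.

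\emph{Aligning terms.} The matrix $\HH Q+Q\HH$ is symmetric, so $-\xi(\HH Q+Q\HH):\Grad u=-\xi\bigl(\tr(\HH QD)+\tr(Q\HH D)\bigr)$. By cyclicity this cancels $\xi(QD+DQ):\HH=\xi\bigl(\tr(QD\HH)+\tr(DQ\HH)\bigr)$.

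\emph{Linear terms.} The term $\frac{2\xi}{d}D:\HH$ cancels $-\frac{2\xi}{d}\HH:\Grad u=-\frac{2\xi}{d}\HH:D$, again because $\HH$ is symmetric.

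\emph{Isotropic and trace terms.} The correction term gives $-\frac{2\xi}{d^2}\Div u\,(I:\HH)=-\frac{2\xi}{d^2}\Div u\,\tr\HH=0$. Likewise the piece $-\frac{2\xi}{d}(D:Q)\tr\HH$ vanishes, leaving $-2\xi(D:Q)(Q:\HH)$. This cancels $2\xi(Q:\HH)\,Q:\Grad u=2\xi(Q:\HH)(Q:D)$, where the last equality uses that $Q$ is symmetric.

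There is no real obstacle here; the only care needed is bookkeeping of transposes in the trace identities. It is worth noting that tracelessness of $\HH$ is exactly what kills the extra $\Div u$ term in $s$. That term is present because $\tu^{m+1}_h$ is not divergence free, so without it the identity would fail for the intermediate velocity. Finally, all functions involved are piecewise polynomial, so the pointwise identity holds almost everywhere and integration yields \eqref{eq:ssigma}.
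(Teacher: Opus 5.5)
Your pointwise, term-by-term cancellation is correct and is essentially the paper's argument. The paper likewise drops the $\Div u\, I$ and $\frac{1}{d}I$ contributions using $\tr\HH=0$, then matches the remaining terms of $s:\HH$ with $-\sigma:\Grad u$ via trace identities and symmetry; it expands $D$ and $W$ in terms of $\Grad u$, where you split $\Grad u$ into symmetric and skew parts.

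One correction to your closing remark. Since $I:\HH=\tr\HH=0$, the term $-\frac{2\xi}{d^2}\Div u\, I$ contributes nothing, so the identity would hold equally with $S$ in place of $s$. That correction is needed to make $s$ trace-free (Lemma~\ref{lem:propertiesofPi}(c), used in Lemma~\ref{lem:tracefreediscrete}), not for this identity.
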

\begin{remark}\label{rem:hQ}
	$\hQ^{m+1}_h=2Q^m_h - Q^{m-1}_h$ is symmetric and trace-free if $Q^m_h$ and $Q^{m-1}_h$ are.
\end{remark}
\begin{proof}
	We have, using that $\HH_h^{m+1}$ is trace-free and $\hQ^{m+1}_h$ is symmetric,
	\begin{align*}
		&(s^{m+1}_h,\HH^{m+1}_h) = \Bigg( \widetilde{W}^{m+1}_h \hQ^{m+1}_h-\hQ^{m+1}_h \widetilde{W}^{m+1}_h+\xi(\hQ^{m+1}_h \widetilde{D}^{m+1}_h+\widetilde{D}^{m+1}_h\hQ^{m+1}_h)+\frac{2\xi}{d}\widetilde{D}^{m+1}_h\\
		&\qquad\qquad \qquad\qquad  -\frac{2\xi}{d^2}\Div \tu^{m+1}_h I -2\xi(\widetilde{D}^{m+1}_h:\hQ^{m+1}_h)\left(\hQ^{m+1}_h+\frac{1}{d}I\right) ,  \HH^{m+1}_h  \Bigg)\\
		&=  (\widetilde{W}^{m+1}_h \hQ^{m+1}_h-\hQ^{m+1}_h \widetilde{W}^{m+1}_h,\HH^{m+1}_h)+\xi(\hQ^{m+1}_h \widetilde{D}^{m+1}_h+\widetilde{D}^{m+1}_h \hQ^{m+1}_h,\HH_h^{m+1})+\frac{2\xi}{d}(\widetilde{D}^{m+1}_h,\HH_h^{m+1})\\
		&\quad-\frac{2\xi}{d^2}(\Div \tu^{m+1}_h,\tr \HH_h^{m+1}) -2\xi((\widetilde{D}^{m+1}_h:\hQ^{m+1}_h)\hQ^{m+1}_h,\HH_h^{m+1})-\frac{2\xi}{d}((\widetilde{D}^{m+1}_h:\hQ^{m+1}_h),\tr\HH^{m+1}_h )\\
		&=  (\widetilde{W}^{m+1}_h \hQ^{m+1}_h-\hQ^{m+1}_h \widetilde{W}^{m+1}_h,\HH^{m+1}_h)+\xi(\hQ^{m+1}_h \widetilde{D}^{m+1}_h+\widetilde{D}^{m+1}_h \hQ^{m+1}_h,\HH_h^{m+1})\\
		&\quad+\frac{2\xi}{d}(\widetilde{D}^{m+1}_h,\HH_h^{m+1})
		-2\xi((\widetilde{D}^{m+1}_h:\hQ^{m+1}_h)\hQ^{m+1}_h,\HH_h^{m+1})\\
		&=\int_{\dom} \tr(\widetilde{W}^{m+1}_h \hQ^{m+1}_h\HH_{h}^{m+1})-\tr(\hQ^{m+1}_h \widetilde{W}^{m+1}_h \HH_{h}^{m+1}) +\xi\tr(\hQ^{m+1}_h \widetilde{D}^{m+1}_h\HH_{h}^{m+1})dx\\ 
		&\quad+\int_{\dom}\xi\tr(\widetilde{D}^{m+1}_h \hQ^{m+1}_h\HH_{h}^{m+1})+\frac{2\xi}{d}\tr(\widetilde{D}^{m+1}_h\HH_h^{m+1})-2\xi\tr(\widetilde{D}^{m+1}_h \hQ^{m+1}_h)\tr(\hQ^{m+1}_h\HH_h^{m+1}) dx\\
		&=\frac{1}{2}\int_{\dom} \tr(\Grad \tu^{m+1}_h \hQ^{m+1}_h\HH_{h}^{m+1})-\tr((\Grad \tu^{m+1}_h)^\top \hQ^{m+1}_h\HH_{h}^{m+1})-\tr(\hQ^{m+1}_h \Grad\tu^{m+1}_h \HH_{h}^{m+1}) dx \\
		&\quad +\frac12\int_{\dom}\tr(\hQ^{m+1}_h (\Grad\tu^{m+1}_h)^\top \HH_{h}^{m+1})dx+\frac{\xi}{2}\int_{\dom}\tr(\hQ^{m+1}_h \Grad \tu^{m+1}_h\HH_{h}^{m+1})+\tr(\hQ^{m+1}_h (\Grad \tu^{m+1}_h)^\top\HH_{h}^{m+1})dx\\ &\quad+\frac{\xi}{2}\int_{\dom}\tr(\Grad\tu^{m+1}_h\hQ^{m+1}_h\HH_{h}^{m+1})+\tr((\Grad\tu^{m+1}_h)^\top \hQ^{m+1}_h\HH_{h}^{m+1}) dx\\
		&\quad +\frac{\xi}{d}\int_{\dom}\tr(\Grad\tu^{m+1}_h\HH_h^{m+1})+\tr((\Grad\tu^{m+1}_h)^\top\HH_h^{m+1}) dx\\
		&\quad-\xi\int_{\dom}\left(\tr(\Grad\tu^{m+1}_h \hQ^{m+1}_h)+\tr((\Grad\tu^{m+1}_h)^\top \hQ^{m+1}_h)\right)\tr(\hQ^{m+1}_h\HH_h^{m+1}) dx\\
		&=\int_{\dom} \tr(\HH_{h}^{m+1}\hQ^{m+1}_h(\Grad \tu^{m+1}_h)^\top)-\tr(\hQ^{m+1}_h\HH_{h}^{m+1}(\Grad \tu^{m+1}_h)^\top )dx\\
		&\quad+\xi\int_{\dom}\tr(\hQ^{m+1}_h\HH_{h}^{m+1}  (\Grad \tu^{m+1}_h)^\top)+\tr(\HH_{h}^{m+1} \hQ^{m+1}_h (\Grad \tu^{m+1}_h)^\top)dx\\
		&\quad +\frac{2\xi}{d}\int_{\dom}\tr(\HH_h^{m+1} (\Grad\tu^{m+1}_h)^\top) dx-2\xi\int_{\dom}\left( \tr( \hQ^{m+1}_h (\Grad\tu^{m+1}_h)^\top)\right)\tr(\hQ^{m+1}_h\HH_h^{m+1}) dx\\
		& = \left(\HH^{m+1}_h\hQ^{m+1}_h-\hQ^{m+1}_h\HH^{m+1}_h,\Grad \tu^{m+1}_h\right) + \xi\left(\hQ^{m+1}_h\HH_h^{m+1}+\HH^{m+1}_h\hQ^{m+1}_h,\Grad\tu^{m+1}_h\right)\\
		&\quad + \frac{2\xi}{d}(\HH^{m+1}_h,\Grad\tu^{m+1}_h)-2\xi \left((\hQ^{m+1}_h:\HH^{m+1}_h)\hQ^{m+1}_h,\Grad\tu^{m+1}_h\right)\\
		& = -(\sigma^{m+1}_h,\Grad\tu^{m+1}_h),
	\end{align*}
	where we used trace identities and the symmetry of $\hQ^{m+1}_h$ and $\HH_h^{m+1}$ for the last steps.		
\end{proof}

Next, we show that the scheme~\eqref{eq:step1fully} - \eqref{eq:projectionfullydiscrete} propagates the symmetry and trace-free constraint of $Q^m_h$ and $\HH^m_h$ through the time stepping.
\begin{lemma}
	\label{lem:tracefreediscrete}
	If $Q_h^m$ and $Q_h^{m-1}$ are trace-free and symmetric, then so are $Q^{m+1}_h$ and $\HH_h^{m+1}$ computed by scheme \eqref{eq:step1fully} - \eqref{eq:projectionfullydiscrete}. If $Q_0$ is trace-free and symmetric, then so is $Q_h^0$, and any solution $Q_h^1$ and $\HH_h^1$ of~\eqref{eq:step1fully1step} - \eqref{eq:projectionfullydiscretestep1}.
\end{lemma}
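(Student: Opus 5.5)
The plan is to split each unknown into its symmetric trace-free part and the remainder, and to show that the remainder satisfies a homogeneous linear system with only the zero solution. Set $Q^{m+1}_h = \Pi Q^{m+1}_h + R^{m+1}$ and $\HH^{m+1}_h = \Pi\HH^{m+1}_h + G^{m+1}$, where $R^{m+1} = (I-\Pi)Q^{m+1}_h \in \Mhz$ and $G^{m+1} = (I-\Pi)\HH^{m+1}_h\in\Mh$. By Lemma~\ref{lem:propertiesofPi}(f), $\Pi$ maps $\Mh$ into $\Mh$ and $\Mhz$ into $\Mhz$, because it acts nodally on the coefficients. Hence $R^{m+1}$ and $G^{m+1}$ are admissible test functions, and so are $R^{m+1}$ as $Y$ in \eqref{eq:Qdisc} and $G^{m+1}$ as $Z$ in \eqref{eq:Hdisc}. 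I would prove $R^{m+1}=0$ and $G^{m+1}=0$.

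\textbf{Testing \eqref{eq:Qdisc} with $Y = R^{m+1}$.} By assumption and Remark~\ref{rem:hQ}, $Q^m_h$, $Q^{m-1}_h$ and $\hQ^{m+1}_h$ are pointwise in $\Sym$.
\begin{itemize}
\item Since $(I-\Pi)$ is the orthogonal complement projection and $Q^m_h, Q^{m-1}_h$ lie in $\Sym$, the time-derivative term reduces to $\frac{3}{2\Delta t}\norm{R^{m+1}}_{L^2}^2$.
\item By Lemma~\ref{lem:propertiesofPi}(e), $(\tu^{m+1}_h\cdot\Grad)\hQ^{m+1}_h\in\Sym$. This holds pointwise even though $\tu^{m+1}_h$ is not divergence free, so this term is orthogonal to $R^{m+1}$.
\item By Lemma~\ref{lem:propertiesofPi}(c), $s^{m+1}_h\in\Sym$. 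This is exactly where the modified $s$ in \eqref{eq:littles} is needed, so this term vanishes as well.
\item What remains is $M(\HH^{m+1}_h,R^{m+1}) = M(G^{m+1},R^{m+1})$.
\end{itemize}
This gives the identity $\frac{3}{2\Delta t}\norm{R^{m+1}}^2 = M(G^{m+1},R^{m+1})$.

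\textbf{Testing \eqref{eq:Hdisc} with $Z=R^{m+1}$.}
\begin{itemize}
\item On the left, $(\HH^{m+1}_h,R^{m+1}) = (G^{m+1},R^{m+1})$.
\item By Lemma~\ref{lem:propertiesofPi}(b), $L(\Grad Q^{m+1}_h,\Grad R^{m+1}) = L\norm{\Grad R^{m+1}}^2$.
\item The lumped term is $\sum_z r_z P(\hQ_z):R_z\int\varphi_z$. Here $P(\hQ_z)\in\Sym$ by Lemma~\ref{lem:propertiesofPi}(d) and $R_z\perp\Sym$ at every node, so it vanishes. This is where mass lumping matters: with the consistent inner product, $r_hP$ is not a nodal function in $\Sym$, and the cancellation would need more care.
\end{itemize}
This gives $(G^{m+1},R^{m+1}) = -L\norm{\Grad R^{m+1}}^2$.

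\textbf{Conclusion for $Q^{m+1}_h$ and $\HH^{m+1}_h$.} Combining the two identities yields $\frac{3}{2\Delta t}\norm{R^{m+1}}^2 + ML\norm{\Grad R^{m+1}}^2=0$, so $R^{m+1}=0$. Next test \eqref{eq:Hdisc} with $Z = G^{m+1}$; this is allowed only if $G^{m+1}\in\Mhz$. The gradient term then vanishes by $\Grad\Pi Q\perp\Grad(I-\Pi)$ and $R^{m+1}=0$, the lumped term vanishes as before, and $\norm{G^{m+1}}^2=0$.

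The main obstacle is that $\HH^{m+1}_h\in\Mh$ has boundary degrees of freedom, while \eqref{eq:Hdisc} is tested only on $\Mhz$. I would handle this in one of two ways. The first is to note that the scheme implicitly determines $\HH_h$ only on $\Mhz$-type test functions, and to check whether the paper intends $\Mh$ test functions, or uniqueness, to fix the boundary values. The second is to use uniqueness of the solution of the linear system: apply $\Pi$ to a solution, verify using the same properties (a)–(f) and the self-adjointness of $\Pi$ in both $(\cdot,\cdot)$ and $(\cdot,\cdot)_h$ that the projected quadruple $(\tu,\Pi Q,\Pi\HH,r)$ again satisfies \eqref{eq:step1fully}, and conclude by uniqueness. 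The momentum equation requires no change, since $\sigma$ and $\HH\Grad\hQ$ depend on $\HH$ only through contractions with symmetric trace-free tensors, except for the antisymmetric and $\tr$ parts of $\sigma$, which I would check to be unaffected. I expect the second route to be the cleaner final argument.

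\textbf{Initial data and first step.} $Q^0_h=\Pi^{SZ}_hQ_0$ is symmetric and trace-free because Scott--Zhang acts componentwise with the same linear functional for each component, so it commutes with $\Pi$ by (f). For $Q^1_h$ and $\HH^1_h$ from \eqref{eq:step1fully1step}, the argument above repeats verbatim with $\hQ$ replaced by $Q^0_h$ and the BDF2 difference replaced by $Q^1_h-Q^0_h$.
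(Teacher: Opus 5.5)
Your argument for $R^{m+1}=0$ is correct, and it pairs the equations differently from the paper. You test both \eqref{eq:Qdisc} and \eqref{eq:Hdisc} with $R^{m+1}$ and get $\frac{3}{2\Delta t}\norm{R^{m+1}}_{L^2}^2+ML\norm{\Grad R^{m+1}}_{L^2}^2=0$. The paper instead tests \eqref{eq:Qdisc} with the non-$\Sym$ part of $\HH^{m+1}_h$ (via $I\tr\HH^{m+1}_h$ and $\HH^{m+1}_h-(\HH^{m+1}_h)^\top$), and \eqref{eq:Hdisc} with the non-$\Sym$ part of $Q^{m+1}_h$.

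The genuine gap is the step $G^{m+1}=0$. As you note, $Z=G^{m+1}$ is not admissible in \eqref{eq:Hdisc}, and neither of your fallbacks closes this.
\begin{itemize}
\item Route 1 is not an argument: the scheme tests \eqref{eq:Hdisc} only with $Z\in\Mhz$.
\item Route 2 is circular. In the paper, uniqueness of Step 1 on the full space $\Uh\times\Mhz\times\Mh\times\Xh$ is deduced in Lemma~\ref{lem:solvability} \emph{from} the present lemma. Lax--Milgram coercivity is only available on the constrained space, because Lemma~\ref{lem:Zhaoetalidentity} requires $\HH$ symmetric and trace-free. On the full space the cross term $(\sigma(\hQ^{m+1}_h,G),\Grad\tu^{m+1}_h)$ does not cancel.
\item Independently of that, the projected quadruple $(\tu,\Pi Q,\Pi\HH,r)$ need not satisfy \eqref{eq:udisc}, because $\sigma(\hQ,\cdot)$ is not a function of $\Pi\HH$ alone. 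For a scalar field $c$ one gets $\sigma(\hQ,cI)=-2\xi c\hQ-\frac{2\xi}{d}cI$. An antisymmetric $A$ contributes $\hQ A-A\hQ-\xi(A\hQ+\hQ A)-\frac{2\xi}{d}A$. So the trace and antisymmetric parts you planned to check \emph{are} affected, and $(\sigma(\hQ,G^{m+1}),\Grad v)\neq 0$ in general.
\end{itemize}

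The missing idea is to use the equation whose test space is all of $\Mh$: test \eqref{eq:Qdisc} with $Y=G^{m+1}\in\Mh$. The same pointwise orthogonality you already used applies. The transport term and $s^{m+1}_h$ are $\Sym$-valued, and the time difference tested against $G^{m+1}$ reduces to $\frac{3}{2\Delta t}(R^{m+1},G^{m+1})$. This gives $\frac{3}{2\Delta t}(R^{m+1},G^{m+1})=M\norm{G^{m+1}}_{L^2}^2$, hence $G^{m+1}=0$ once $R^{m+1}=0$.

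Combined with your identity $(G^{m+1},R^{m+1})=-L\norm{\Grad R^{m+1}}_{L^2}^2$, this is exactly the paper's relation $-\frac{3L}{2\Delta t}\norm{\Grad R^{m+1}}_{L^2}^2=M\norm{G^{m+1}}_{L^2}^2$, which disposes of both parts at once. The same fix applies to $\HH^1_h$ in the Euler step.

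A minor side remark: the vanishing of the $r$-term does not depend on mass lumping. Since $r_h(x)P(\hQ^{m+1}_h(x))\in\Sym$ and $R^{m+1}(x)\perp\Sym$ for every $x$, the consistent inner product would vanish too.
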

\begin{proof}
	We first assume $m\geq 1$. We choose $Y=I\tr \HH^{m+1}_h$ as a test function in~\eqref{eq:Qdisc}, where $I$ is the $d\times d$ identity matrix:
	\begin{equation*}
	\begin{split}
	&\left(\frac{\textcolor{blue}{3}\tr Q^{m+1}_h-4\tr Q^m_h+\tr Q^{m-1}_h}{2\Delta t},\tr \HH^{m+1}_h\right)+(\tr [(\tu^{m+1}_h\cdot\Grad) \hQ^{m+1}_h],\tr \HH^{m+1}_h)- (\tr s_h^{m+1},\tr \HH^{m+1}_h)\\
	&\qquad = M (\tr\HH_h^{m+1},\tr \HH^{m+1}_h).
	\end{split}
	\end{equation*}
	By assumption, we have $\tr Q^m_{h}=\tr Q^{m-1}_h=0$. Therefore, $\tr \hQ^{m+1}_h=0$, cf. Remark~\ref{rem:hQ}. Then, by Lemma~\ref{lem:propertiesofPi} (c) and (e),
	\begin{equation*}
	\tr[(\tu^{m+1}_h\cdot\Grad) \hQ^{m+1}_h]= \tr s_h^{m+1}=0.
	\end{equation*}
	Hence
	\begin{equation}\label{eq:tracepart1}
	\frac{3}{2\Delta t}(\tr Q^{m+1}_h,\tr \HH_h^{m+1}) = M (\tr\HH_h^{m+1},\tr \HH^{m+1}_h).
	\end{equation}
	Next we take $Z=I\tr Q^{m+1}_h$ as a test function in~\eqref{eq:Hdisc}:
	\begin{equation*}
		(\tr \HH_h^{m+1},\tr Q^{m+1}_h) = -L(\Grad Q_h^{m+1},\Grad (I\tr Q^{m+1}_h))-(\tr(r_h^{m+1} P(\hQ_h^{m+1})),\tr Q^{m+1}_h)_h.
	\end{equation*}
	We have
	\begin{equation*}
	\tr(r_h^{m+1} P(\hQ_h^{m+1}))= r_h^{m+1} \tr (P(\hQ_h^{m+1}))=0,
	\end{equation*}
	since $P(\hQ^{m+1}_h)$ is trace-free, cf. Lemma~\ref{lem:propertiesofPi} (d). Moreover,
	\begin{equation*}
	\begin{split}
	(\Grad Q_h^{m+1},\Grad (I\tr Q^{m+1}_h))&=\int_{\dom}\sum_{i,j,k,\ell=1}^d \partial_k (Q^{m+1}_h)_{ij}\partial_k(\delta_{ij}(Q^{m+1}_h)_{\ell\ell})dx\\
	&=\int_{\dom}\sum_{j,k,\ell=1}^d \partial_k (Q^{m+1}_h)_{jj}\partial_k(Q^{m+1}_h)_{\ell\ell}dx\\
	&=\norm{\Grad\tr Q^{m+1}_h}_{L^2}^2.
	\end{split}
	\end{equation*}
	Thus,
	\begin{equation*}
	(\tr \HH_h^{m+1},\tr Q^{m+1}_h) = -L\norm{\Grad\tr( Q^{m+1}_h)}^2_{L^2}.
	\end{equation*}
	Hence,~\eqref{eq:tracepart1} becomes
	\begin{equation*}
		-\frac{3L}{2\Delta t}\norm{\Grad\tr( Q^{m+1}_h)}^2_{L^2} = M\norm{\tr( \HH^{m+1}_h)}^2_{L^2},
	\end{equation*}
	which clearly implies that $\HH_h^{m+1}$ is trace free and by Poincar\'e inequality, $Q^{m+1}_h$ as well. 
We now show the symmetry property of $Q^{m+1}_h$. To do so, we take $V^{m+1}_h:= \HH^{m+1}_h-(\HH^{m+1}_h)^\top$ as a test function in~\eqref{eq:Qdisc} and use~\eqref{eq:skewsym}.  Hence we obtain in~\eqref{eq:Qdisc},
\begin{align*}
	\left(\frac{3Q^{m+1}_h-4Q^m_h+Q^{m-1}_h}{2\Delta t},V^{m+1}_h\right)+((\tu^{m+1}_h\cdot\Grad) \hQ^{m+1}_h,V^{m+1}_h)- (s_h^{m+1},V^{m+1}_h) &= M (\HH_h^{m+1},V^{m+1}_h),\\
\Leftrightarrow\qquad	&\\
	\frac{3}{2\Delta t}\left(Q^{m+1}_h,V^{m+1}_h\right)- (s_h^{m+1}-(s_h^{m+1})^\top,\HH^{m+1}_h) &= \frac{M}{2} (V_h^{m+1},V^{m+1}_h).
\end{align*}
Now, $s$ is symmetric since $Q=\hQ^{m+1}_h$ is by Lemma~\ref{lem:propertiesofPi}, (c). Similarly $(\tu_h^{m+1}\cdot\Grad)\hQ^{m+1}_h$ is symmetric by Lemma~\ref{lem:propertiesofPi}, (e).
Thus, we get
\begin{equation}
\label{eq:sympart1}
	\frac{3}{2\Delta t}(Q^{m+1}_h,V^{m+1}_h) = \frac{M}{2} (V_h^{m+1},V^{m+1}_h).
\end{equation}
Taking $Z=Q^{m+1}_h-(Q^{m+1}_h)^\top$ as a test function in~\eqref{eq:Hdisc} and using the same tricks and the symmetry of $P(\hQ^{m+1}_h)$, we obtain
\begin{equation*}
	(Q_h^{m+1},V^{m+1}_h) = -\frac{L}{2}(\Grad (Q_h^{m+1}-(Q_h^{m+1})^\top),\Grad (Q_h^{m+1}-(Q_h^{m+1})^\top)),
\end{equation*}
thus, plugging this into~\eqref{eq:sympart1}, we get
\begin{equation*}
	-\frac{3L}{4\Delta t}\norm{\Grad (Q_h^{m+1}-(Q_h^{m+1})^\top)}^2_{L^2} = \frac{M}{2} \norm{V_h^{m+1}}^2_{L^2},
	\end{equation*}
hence $\HH^{m+1}_h$ and $\Grad Q_h^{m+1}$ are symmetric. By the Poincar\'e inequality, $Q_h^{m+1}$ is as well.
If $Q_0$ is symmetric and trace-free, then so is $Q_h^0$ by Lemma~\ref{lem:propertiesofPi}, (f). Now it follows along the lines of the general step $m\geq 1$ above that any solution of the first step with the backward Euler  step~\eqref{eq:step1fully1step} - \eqref{eq:projectionfullydiscretestep1} is trace-free and symmetric since $Q_h^0$ is.

\end{proof}

Now we attempt to show the solvability of the scheme:
\begin{lemma}
	\label{lem:solvability}
	For every $\Delta t, h>0$ and every $m=1,2\dots$, given $(u^{m-1}_h,Q^{m-1}_h,r^{m-1}_h)\in \Yh\times\Shz\times\Xh$ and $(u^m_h,p^m_h,Q^m_h,r^m_h)\in \Yh\times\Ph\times\Shz\times\Xh$, and $\tu_h^{m-1},\tu_h^m\in \Uh$, there exists a unique $(u^{m+1}_h,p^{m+1}_h,Q^{m+1}_h,\HH^{m+1}_h,r^{m+1}_h)\in \Yh\times\Ph\times\Mhz\times\Mh\times\Xh$ solving~\eqref{eq:step1fully} -- \eqref{eq:projectionfullydiscrete}. Moreover, $Q^{m+1}_h\in \Shz$ and $\HH_h^{m+1}\in \Sh$.
\end{lemma}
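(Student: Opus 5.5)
The plan is to treat Step 1 and Step 2 separately. Step 1, \eqref{eq:step1fully}, is a square linear system in the unknowns $(\tu^{m+1}_h,Q^{m+1}_h,\HH^{m+1}_h,r^{m+1}_h)$: the extrapolated quantities $\hu^{m+1}_h$ and $\hQ^{m+1}_h$ are known, and the unknowns enter linearly. Step 2 is the Poisson problem \eqref{eq:poisson}, followed by an explicit update via \eqref{eq:projection1}. For a finite-dimensional square system, existence is equivalent to uniqueness, so it suffices to show that the homogeneous problem has only the trivial solution. First I eliminate $r^{m+1}_h$ using Remark~\ref{rem:eqforr}. By \eqref{eq:rsimplified}, $r^{m+1}_h = R + \Ih\big(P(\hQ^{m+1}_h):Q^{m+1}_h\big)$, where $R$ depends only on known data. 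In the homogeneous problem all data vanish, so $r^{m+1}_h=\Ih(P(\hQ^{m+1}_h):Q^{m+1}_h)$. The counting then reduces to $\Uh\times\Mhz\times\Mh$ against test spaces $\Uh\times\Mh\times\Mhz$, which have equal dimensions. I also take the test space for $Y$ to be $\Mh$ and for $Z$ to be $\Mhz$, matching the unknowns $\HH\in\Mh$ and $Q\in\Mhz$.

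For the homogeneous system, I mimic the energy estimate. I test \eqref{eq:udisc} with $v=\tu$, \eqref{eq:Qdisc} with $Y=\HH$, and \eqref{eq:Hdisc} with $Z=\frac{3}{2\Delta t}Q$, dropping superscripts. Skew-symmetry gives $b(\hu,\tu,\tu)=0$. For the coupling terms I need $\hQ$ and $\HH$ to be in $\Sym$. For $\hQ$ this is Remark~\ref{rem:hQ}. For $\HH$, I run the trace and symmetry argument of Lemma~\ref{lem:tracefreediscrete} on the homogeneous system, which has exactly the same structure, so $\HH\in\Sh$ and $Q\in\Shz$. Lemma~\ref{lem:Zhaoetalidentity} then cancels $(s,\HH)+(\sigma,\Grad\tu)$. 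The convective coupling terms $-(\HH\Grad\hQ,\tu)$ and $-((\tu\cdot\Grad)\hQ,\HH)$ cancel after I subtract the $Q$-equation from the $u$-equation, because $(\HH\Grad\hQ)\cdot\tu=\HH:((\tu\cdot\Grad)\hQ)$ pointwise. The lumped term, with $Z=Q$, becomes $\frac{3}{2\Delta t}(r\,P(\hQ),Q)_h=\frac{3}{2\Delta t}\|\Ih(P(\hQ):Q)\|_h^2\ge 0$; this uses that $r$ is nodal and that $(\cdot,\cdot)_h$ is evaluated at nodes. Combining everything gives
\begin{equation*}
\frac{3}{2\Delta t}\|\tu\|_{L^2}^2+\mu\|\Grad\tu\|_{L^2}^2+M\|\HH\|_{L^2}^2+\frac{3L}{2\Delta t}\|\Grad Q\|_{L^2}^2+\frac{3}{2\Delta t}\|r\|_h^2=0,
\end{equation*}
so $\tu=0$, $\HH=0$, and $Q=0$ by Poincaré, which in turn gives $r=0$.

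Step 2 is straightforward. The bilinear form $(\Grad p,\Grad q)$ is coercive on $\Ph\subset L^2_0\cap H^1$ by the Poincaré--Wirtinger inequality on the connected domain $\dom$, so Lax--Milgram yields a unique $p^{m+1}_h$. I then define $u^{m+1}_h=\tu^{m+1}_h-\frac{2\Delta t}{3}\Grad(p^{m+1}_h-p^m_h)\in\Yh$. This function satisfies \eqref{eq:projection1}, because the equation holds with an element of $\Yh$ on both sides. It satisfies \eqref{eq:projection2} because of \eqref{eq:poisson}. Uniqueness follows because the difference of two solutions satisfies $u=-\tfrac{2\Delta t}{3}\Grad p$ and $(u,\Grad q)=0$, which forces $\Grad p=0$, hence $p=0$ and $u=0$. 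The final claim $Q^{m+1}_h\in\Shz$, $\HH^{m+1}_h\in\Sh$ follows from Lemma~\ref{lem:tracefreediscrete}.

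I expect the main obstacle to be the careful bookkeeping in the homogeneous energy argument. The symmetric trace-free property of the homogeneous $\HH$ has to be established before Lemma~\ref{lem:Zhaoetalidentity} can be applied. The signs and test-function scalings also need to be chosen so that the convective coupling and the lumped $r$-term cancel or become nonnegative, exactly as in the continuous energy law.
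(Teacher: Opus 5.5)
Your proof is correct. Its core is the same as the paper's: you eliminate $r$ nodally, then test with $(\tu,\HH,Q)$. You cancel $(s,\HH)+(\sigma,\Grad\tu)$ by Lemma~\ref{lem:Zhaoetalidentity}, cancel the two convective couplings against each other, and turn the lumped term into $\|\Ih(P(\hQ):Q)\|_h^2$. This is exactly the paper's coercivity computation.

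Where you differ is the organization of the existence argument and the handling of the symmetric trace-free constraint.

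\emph{The paper's route.}
\begin{itemize}
\item It poses Step 1 on the constrained space $\Kh=\Uh\times\Shz\times\Sh$, where Lemma~\ref{lem:Zhaoetalidentity} holds for every element.
\item It applies Lax--Milgram there, which is why it writes out boundedness estimates for $a$ and $F^m$.
\item It then shows that the constrained solution satisfies the equations for arbitrary test functions in $\Mh$ and $\Mhz$, by splitting $Y=\Pi Y+(I-\Pi)Y$ and using Lemma~\ref{lem:propertiesofPi}.
\item Finally it invokes Lemma~\ref{lem:tracefreediscrete} for uniqueness in the unconstrained space.
\end{itemize}

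\emph{Your route.} You stay in the unconstrained space, note that the system is square, and prove only injectivity. The constraint enters by rerunning the trace and symmetry argument of Lemma~\ref{lem:tracefreediscrete} on the kernel, which is what makes Lemma~\ref{lem:Zhaoetalidentity} applicable there.

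This buys some economy. Existence in the full space follows from rank--nullity. The boundedness estimates, which hold automatically in finite dimensions, are no longer needed, and neither is the test-function extension step. Lax--Milgram adds nothing here beyond what injectivity already gives.

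One wording point: in the homogeneous problem, ``all data vanish'' must mean that the right-hand sides vanish. The coefficients $\hu^{m+1}_h$ and $\hQ^{m+1}_h=2Q^m_h-Q^{m-1}_h$ stay fixed, since you are looking at the difference of two solutions. That is how you actually use it, but as written it could be misread as setting $\hQ^{m+1}_h=0$.

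Your Step 2 agrees with the paper's Poisson reformulation. Your explicit uniqueness check is a bit more complete than the paper's: from $u=-\tfrac{2\Delta t}{3}\Grad p$ and $(u,\Grad q)=0$ you get $\Grad p=0$, hence $p=0$ and $u=0$.
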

\begin{proof}
	We first show that~\eqref{eq:step1fully} - \eqref{eq:projectionfullydiscrete} has a unique solution when constraining to symmetric trace-free $Q$ and $\HH$.
	So we start by showing that if $(u^{m-1}_h,Q^{m-1}_h,r^{m-1}_h)\in \Yh\times\Shz\times\Xh$ and $(u^m_h,p^m_h,Q^m_h,r^m_h)\in \Yh\times\Ph\times\Shz\times\Xh$, and $\tu_h^m,\tu_h^{m-1}\in \Uh$, then there exists a unique $(\tu^{m+1}_h,Q^{m+1}_h,\HH^{m+1}_h,r^{m+1}_h)\in \Uh\times\Shz\times\Sh\times \Xh$ satisfying~\eqref{eq:step1fully}, i.e., the first step of the algorithm is well-posed. To do so, we write~\eqref{eq:step1fully} as a linear variational problem after using that $r^{m+1}_h$ is defined via~\eqref{eq:rsimplified}: Find $(\tu,Q,\HH )\in \Uh\times\Shz\times\Sh=:\Kh $ such that for all $(v,Z,Y )\in \Kh $,
	\begin{equation}
	\label{eq:elliptic}
	a((\tu,Q,\HH),(v,Z,Y))= F^m((v,Z,Y)),
	\end{equation}
	where 
	\begin{multline}
	\label{eq:defa}
	a((\tu,Q,\HH),(v,Z,Y)) =  \frac{3}{2}	\left(\tu,v\right)+ \Delta t b(\hu^{m+1}_h,\tu ,v)+\Delta t\mu(\Grad \tu ,\Grad v)+\Delta t(\sigma_h,\Grad v)\\
	+\Delta t(\HH\Grad \hQ^{m+1}_h,v)
	-\frac32 (Q,Y)-\Delta t((\tu \cdot\Grad) \hQ^{m+1}_h,Y)	+\Delta t (s_h ,Y)\\
+\Delta t M(\HH,Y) + \frac32(\HH,Z)+\frac32 L(\Grad Q,\Grad Z)+\frac32( (P(\hQ^{m+1}_h):Q)  P(\hQ^{m+1}_h),Z)_h
	\end{multline}
	where $s_h = s(\tu,\hQ^{m+1}_h)$ and $\sigma_h =\sigma(\hQ^{m+1}_h,\HH)$ and 
	\begin{multline}
	\label{eq:deffn}
	F^m((v,Z,Y)) = 2(u^m_h,v)-\frac12(u^{m-1}_h,v)-2(Q^m_h,Y)+\frac12(Q^{m-1}_h,Y)-2(r^m_h P(
	\hQ^{m+1}_h),Z)_h +\frac12(r^{m-1}_h P(\hQ^{m+1}_h),Z)_h\\
	+2 ((P(\hQ^{m+1}_h):Q^m_h)P(\hQ^{m+1}_h),Z)_h-\frac12((P(\hQ^{m+1}_h):Q^{m-1}_h)P(\hQ^{m+1}_h),Z)_h-\Delta t (\Grad p^m_h,v)+\Delta t(\PUh f^{m+1},v).
		\end{multline}
We show that $a$ is coercive and bounded on $\Kh$ and that $F^m:\Kh\to \R$ is bounded, and then use the Lax--Milgram theorem to conclude well-posedness on $\Kh$ with the norm $\norm{(v,Z,Y)}_{\Kh}:=\left(\norm{v}_{H^1_0}^2+\norm{Z}_{H^1_0}^2+\norm{Y}_{L^2}^2\right)^{1/2}$.
	To show coercivity, we plug in $(v,Z,Y)=(\tu,Q,\HH)$:
	\begin{multline*}
		a((\tu,Q,\HH),(\tu,Q,\HH)) =  	\frac32\norm{\tu}_{L^2}^2+ \Delta t b(\hu^{m+1}_h,\tu ,\tu)+\Delta t\mu(\Grad \tu ,\Grad \tu)+\Delta t(\sigma_h,\Grad \tu)\\
	+\Delta t(\HH\Grad \hQ^{m+1}_h,\tu)
	-\frac32(Q,\HH)-\Delta t((\tu \cdot\Grad) \hQ^{m+1}_h,\HH)	+\Delta t (s_h ,\HH)\\
	+\Delta t M\norm{\HH}_{L^2}^2 +\frac32 (\HH,Q)+\frac32L(\Grad Q,\Grad Q)+\frac32( (P(\hQ^{m+1}_h):Q)  P(\hQ^{m+1}_h),Q)_h\\
= \frac32\norm{\tu}_{L^2}^2+ \Delta t\mu\norm{\Grad \tu}_{L^2}^2+\Delta t M\norm{\HH}_{L^2}^2  +\frac32L\norm{\Grad Q}_{L^2}^2+\frac32\norm{P(\hQ^{m+1}_h):Q}^2_h\\
 \geq c\left(\norm{\tu}_{H^1_0}^2+\norm{\HH}_{L^2}^2+\norm{Q}_{H^1_0}^2\right),
	\end{multline*}
	where we used the skew-symmetry of $b$ and Lemma~\ref{lem:Zhaoetalidentity}. This proves the coercivity on $\Kh$. For the boundedness, we have
	\begin{align*}
	\left| 	a((\tu,Q,\HH),(v,Z,Y))\right| \leq  &  	\frac32\norm{\tu}_{L^2}\norm{v}_{L^2}+ \frac{\Delta t}{2}\left( \norm{\hu^{m+1}_h}_{L^4}\norm{\tu}_{H^1_0}\norm{v}_{L^4}+ \norm{\hu^{m+1}_h}_{L^4}\norm{v}_{H^1_0}\norm{\tu}_{L^4} \right)\\
	&+\Delta t\mu\norm{\Grad \tu}_{L^2}\norm{\Grad v}_{L^2}+\Delta t C \left(1+\norm{\hQ_h^{m+1}}_{L^\infty}^2\right)\norm{\HH}_{L^2}\norm{\Grad v}_{L^2} \\
	&+\Delta t\norm{\HH}_{L^2}\norm{\Grad \hQ^{m+1}_h}_{L^{3}}\norm{v}_{L^6}+\frac32\norm{Q}_{L^2}\norm{Y}_{L^2}+\Delta t \norm{\tu}_{L^6}\norm{\Grad \hQ^{m+1}_h}_{L^{3}}\norm{Y}_{L^2}\\
&+\Delta t C (1+\norm{\hQ_h^{m+1}}_{L^\infty}^2)\norm{\Grad \tu}_{L^2}\norm{Y}_{L^2}	+\Delta t M \norm{\HH}_{L^2}\norm{Y}_{L^2} +\frac32 \norm{\HH}_{L^2}\norm{Z}_{L^2}\\
&+\frac32 L\norm{Q}_{H^1_0}\norm{Z}_{H^1_0}+\frac32 \norm{P(\hQ^{m+1}_h):Q}_{h} \norm{ P(\hQ^{m+1}_h):Z}_h\\
\leq & \frac32\norm{\tu}_{L^2}\norm{v}_{L^2}+ \Delta t( \norm{\hu^{m+1}_h}_{L^4}+\mu)\norm{\tu}_{H^1_0}\norm{v}_{H^1_0} \\
&+\Delta t C \left(1+\norm{\hQ_h^{m+1}}_{L^\infty}^2+  \norm{\Grad \hQ^{m+1}_h}_{L^{3}} \right)\norm{\HH}_{L^2}\norm{ v}_{H^1_0}+\frac32\norm{Q}_{L^2}\norm{Y}_{L^2} \\
& 	+\Delta t C \left(1+\norm{\hQ_h^{m+1}}_{L^\infty}^2+\norm{\Grad \hQ^{m+1}_h}_{L^{3}}\right)\norm{  \tu}_{H^1_0}\norm{Y}_{L^2}	+\Delta t M \norm{\HH}_{L^2}\norm{Y}_{L^2}\\
& + \frac32\norm{\HH}_{L^2}\norm{Z}_{L^2}+\frac32L\norm{ Q}_{H^1_0}\norm{Z}_{H^1_0} + C\norm{\hQ^{m+1}_h}_{L^\infty}^2\norm{ Q}_{L^2} \norm{  Z}_{L^2} 
	\end{align*}
	where we used the Sobolev embedding theorem, the Poincar\'e inequality, the Lipschitz continuity of $P$ (see e.g.~\cite[Thm 4.11]{Gudibanda2022}),   the properties of the discrete inner product, Lemma~\ref{lem:masslumped} and that $Q^m_h$ and $Q^{m-1}_h$ are piecewise polynomial and continuous. Thus for every fixed $h>0$,
	\begin{equation*}
	\left| 	a((\tu,Q,\HH),(v,Z,Y))\right|\leq C \left(\norm{\tu}_{H^1_0}+\norm{Q}_{H^1_0}+\norm{\HH}_{L^2}\right)\left(\norm{v}_{H^1_0}+\norm{Z}_{H^1_0}+\norm{Y}_{L^2}\right),
	\end{equation*}
	and hence $a$ is bounded on $\Kh$. 
	To show boundedness of $F^m$ (for every $m$), we estimate:
	\begin{align*}
	|F^m((v,Z,Y))|&\leq 2\norm{u^m_h}_{L^2}\norm{v}_{L^2}+\frac12\norm{u^{m-1}_h}_{L^2}\norm{v}_{L^2}+2\norm{Q^m_h}_{L^2}\norm{Y}_{L^2}+\frac12\norm{Q^{m-1}_h}_{L^2}\norm{Y}_{L^2}\\
	&\quad +2\norm{r^m_h P(\hQ^{m+1}_h)}_h\norm{Z}_h+\frac12\norm{r^{m-1}_h P(\hQ^{m+1}_h)}_h\norm{Z}_h+2\norm{P(\hQ^{m+1}_h):Q^m_h}_h\norm{P(\hQ^{m+1}_h):Z}_h\\
	&\quad +\frac12\norm{P(\hQ^{m+1}_h):Q^{m-1}_h}_h\norm{P(\hQ^{m+1}_h):Z}_h +\Delta t \norm{\Grad p^m_h}_{L^2}\norm{v}_{L^2}+\Delta t \norm{\PUh f^{m+1}}_{L^2}\norm{v}_{L^2}\\
	&\leq 2 \norm{u^m_h}_{L^2}\norm{v}_{L^2}+\frac12\norm{u^{m-1}_h}_{L^2}\norm{v}_{L^2}+2\norm{Q^m_h}_{L^2}\norm{Y}_{L^2}+\frac12\norm{Q^{m-1}_h}_{L^2}\norm{Y}_{L^2}\\
	&\quad +C\left(\norm{r^m_h}_{L^2}+\norm{r^{m-1}_h}_{L^2}\right) \norm{\hQ^{m+1}_h}_{L^\infty}\norm{Z}_{L^2}+C\norm{\hQ^{m+1}_h}^2_{L^\infty}\left(\norm{Q^m_h}_{L^2}+\norm{Q_h^{m-1}}_{L^2}\right)\norm{Z}_{L^2}\\
	&\quad +\Delta t \norm{\Grad p^m_h}_{L^2}\norm{v}_{L^2}+\Delta t\norm{ f^{m+1}}_{L^2}\norm{v}_{L^2},
	\end{align*}
	where we have used the properties of the discrete inner product, that $Q^m_h$ and $Q^{m-1}_h$ are piecewise polynomial and continuous, and that $\Grad p^m_h$ is a piecewise polynomial and bounded. Thus, with the Poincar\'e inequality, we obtain boundedness of $F^m$ on $\Kh$. 
	Using the Lax--Milgram theorem and Remark~\ref{rem:eqforr}, this implies the solvability of Step 1 of the algorithm on the constrained space $\Kh$. 
	Now let $(\tu,Q,\HH)$ be the solution of~\eqref{eq:elliptic}. We claim that it satisfies~\eqref{eq:step1fully} for any $(v,Z,Y)\in \Uh\times\Mhz\times\Mh$.  Given $Y\in \Mh$, we write $Y=\Pi Y +(I-\Pi)Y$ where $\Pi Y$ is admissible in~\eqref{eq:elliptic}. Then since $Q,Q_h^m,Q_h^{m-1}\in\Shz$ and $\HH\in \Sh$, and by Lemma~\ref{lem:propertiesofPi} (c) and (e),  $(\tu\cdot\Grad )\hQ^{m+1}_h$ and $s_h^{m+1}$ take values in $\Sym$ a.e., tested against $(I-\Pi)Y$ they are all zero due to the self-adjointness of $\Pi$, Lemma~\ref{lem:propertiesofPi}. Thus  the equation for $Q$,~\eqref{eq:Qdisc} holds for general $Y\in \Mh$. Similarly for $Z\in \Mhz$, we have $\Pi Z\in \Shz$ and $(\HH,(I-\Pi)Z)=0$ and $(\Grad Q,\Grad(I-\Pi)Z)=0$ by Lemma~\ref{lem:propertiesofPi} (a) and (b). We also have
$(r^{m+1}P(\hQ^{m+1}_h),(I-\Pi) Z)_h=0$ by Lemma~\ref{lem:propertiesofPi} (a) and (d) (and the same applies to the terms involving $r$ in $F^m$). Thus we can take arbitrary $Z\in \Mhz$ as test functions in~\eqref{eq:Hdisc}. The equations for $\tu$ and $r$ are unchanged as~\eqref{eq:elliptic} already uses test functions in $\Uh$ and the equation for $r$ was directly derived from~\eqref{eq:rdisc}, see Remark~\ref{rem:eqforr}.
	Now by Lemma~\ref{lem:tracefreediscrete}, any solution of~\eqref{eq:step1fully} has $Q^{m+1}_h$ and $\HH_h^{m+1}$ trace-free and symmetric, thus the solution of the prediction step must be unique on $\Uh\times \Mhz\times \Mh\times \Xh$.

		For Step 2, we proceed similarly. We use the formulation of~\eqref{eq:projection1}--\eqref{eq:projection2} as a Poisson problem,~\eqref{eq:poisson}. So we seek $p\in \Ph$ such that for any $q\in \Ph$,
	\begin{equation*}
		\widetilde{a}(p,q) = \widetilde{{F}}^m(q),
	\end{equation*}
	where $\widetilde{a}:\Ph\times\Ph\to \R$ is given by
	\begin{equation*}
		\widetilde{a}(p,q) = (\Grad p,\Grad q),
	\end{equation*}
	and $\widetilde{{F}}^m:\Ph\to \R$ is given by
	\begin{equation*}
		\widetilde{{F}}^m(q) = (\Grad p^m_h,\Grad q)-\left(\frac{3\Div \tu^{m+1}_h}{2\Delta t},q\right).
	\end{equation*}
 $\widetilde{a}$ is coercive and bounded on $H^1(\dom)\cap L^2_0(\dom)$ and therefore on $\Ph$, and $\widetilde{{F}}^m$ is bounded from $\Ph$ to $\R$ since $\tu^{m+1}_h \in H^1_0(\dom)$ and $\Grad p^m_h\in L^2(\dom)$ since $p_h^m\in \Ph$. Thus, applying the Lax--Milgram theorem again, $p^{m+1}_h$ exists and is unique, and therefore $u^{m+1}_h$ defined through~\eqref{eq:projection1} exists and is unique too. Moreover, the fact that $p^{m+1}_h$ satisfies~\eqref{eq:poisson} for all $q\in \Ph$ implies that $u^{m+1}_h$ satisfies the weak divergence constraint~\eqref{eq:projection2}.
\end{proof}
\begin{remark}
	\label{rem:firsteulerstep}
	 The proof of the solvability of the first (Euler) step~\eqref{eq:step1fully1step} - \eqref{eq:projectionfullydiscretestep1} follows analogously with slightly different coefficients.
\end{remark}
\subsection{Energy stability}
Next, we show the discrete energy inequality which will result in a priori estimates on the approximations that will allow us to derive pre-compactness of the approximations. 
 
\begin{lemma}
	\label{lem:discenergyestimate}
	Assume $\Delta t\leq \frac16$. 
Then the approximations computed by the scheme~\eqref{eq:step1fully}--\eqref{eq:projectionfullydiscrete} satisfy the energy estimate
		\begin{multline*}
		E^N_h + \sum_{m=1}^{N-1}\norm{u^{m+1}_h-2u^m_h+ u^{m-1}_h}_{L^2}^2+(3-2\Delta t) \sum_{m=1}^{N-1}\norm{\tu^{m+1}_h-u^{m+1}_h}_{L^2}^2 + 4\mu\Delta t \sum_{m=0}^{N-1}\norm{\Grad\tu_h^{m+1}}_{L^2}^2\\
		+ 4M\Delta t \sum_{m=0}^{N-1}\norm{\HH_h^{m+1}}_{L^2}^2 + L\sum_{m=1}^{N-1}\norm{\Grad Q_h^{m+1}-2\Grad Q^m_h+\Grad Q^{m-1}_h}_{L^2}^2+\sum_{m=1}^{N-1}\norm{r_h^{m+1}-2r^m_h+r^{m-1}_h}_{h}^2\\
		+ 7(1-2\Delta t)\norm{\tu^1_h-u^0_h}_{L^2}^2+7\norm{r_h^1 - r_h^0}_h^2 +7 L\norm{\Grad (Q^1_h-Q_h^0)}_{L^2}^2\\
		\leq C\left(\norm{f}^2_{L^2([0,N\Delta t]\times\dom)}
		+\norm{u_0}_{L^2}^2
		+\norm{Q_0}_{L^4}^4+1 +\norm{\Grad Q_0}_{L^2}^2  
		\right)\exp(3N \Delta t)
	\end{multline*}

for any $N\geq 1$,
	where
 	\begin{multline*}
	E^N_h :=  \norm{u^{N}_h}_{L^2}^2+\norm{2 u^N_h - u^{N-1}_h}_{L^2}^2+L\norm{\Grad Q_h^{N}}_{L^2}^2+L\norm{2\Grad Q_h^N-\Grad Q_h^{N-1}}^2_{L^2}\\
	+	 \norm{r_h^{N}}_h^2+\norm{2r_h^N-r_h^{N-1}}_h^2+\frac{4\Delta t^2}{3}\norm{\Grad p^N_h}_{L^2}^2.
	\end{multline*}

\end{lemma}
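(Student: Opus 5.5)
The proof tests each equation of the scheme with the natural energy multiplier and uses the BDF2 identity
\begin{equation*}
2(3a^{m+1}-4a^m+a^{m-1},a^{m+1}) = |a^{m+1}|^2+|2a^{m+1}-a^m|^2-|a^m|^2-|2a^m-a^{m-1}|^2+|a^{m+1}-2a^m+a^{m-1}|^2,
\end{equation*}
valid in any inner product (in particular in $(\cdot,\cdot)$ and $(\cdot,\cdot)_h$). For $m\geq1$ we take $v=4\Delta t\,\tu^{m+1}_h$ in~\eqref{eq:udisc}, $Y=4\Delta t\,\HH^{m+1}_h$ in~\eqref{eq:Qdisc}, $Z=-2(3Q^{m+1}_h-4Q^m_h+Q^{m-1}_h)$ in~\eqref{eq:Hdisc}, and $w=2r^{m+1}_h$ in~\eqref{eq:rdisc}, and add the results. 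The cancellations are as follows. The trilinear term vanishes by skew-symmetry of $b$. The terms $(\sigma^{m+1}_h,\Grad\tu^{m+1}_h)$ and $(s^{m+1}_h,\HH^{m+1}_h)$ cancel by Lemma~\ref{lem:Zhaoetalidentity}, whose hypotheses hold by Lemma~\ref{lem:tracefreediscrete} and Remark~\ref{rem:hQ}. The coupling terms $(\HH^{m+1}_h\Grad\hQ^{m+1}_h,\tu^{m+1}_h)$ and $((\tu^{m+1}_h\cdot\Grad)\hQ^{m+1}_h,\HH^{m+1}_h)$ are identical and cancel. The term $(\HH^{m+1}_h,3Q^{m+1}_h-\dots)$ from the $Q$- and $\HH$-equations cancels. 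Finally, the mass-lumped terms $(r^{m+1}_hP(\hQ^{m+1}_h),3Q^{m+1}_h-4Q^m_h+Q^{m-1}_h)_h$ cancel exactly against the $r$-equation tested with $r^{m+1}_h$. What remains is the BDF2 identity for $\Grad Q_h$ and $r_h$, the dissipation terms $4\mu\Delta t\norm{\Grad\tu^{m+1}_h}^2+4M\Delta t\norm{\HH^{m+1}_h}^2$, and the velocity/pressure terms.

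The velocity and pressure terms are the delicate part, and we expect them to be the main obstacle. The time derivative term involves $\tu^{m+1}_h$ against the old $u^m_h,u^{m-1}_h$. We write $\tu^{m+1}_h=u^{m+1}_h+(\tu^{m+1}_h-u^{m+1}_h)$ and use~\eqref{eq:projection1}, which gives $\tu^{m+1}_h-u^{m+1}_h=\tfrac{2\Delta t}{3}\Grad(p^{m+1}_h-p^m_h)$. Testing~\eqref{eq:projection1} with $u^{m+1}_h$ kills the gradient by~\eqref{eq:projection2}, so $(\tu^{m+1}_h-u^{m+1}_h,u^{m+1}_h)=0$. Hence $(3\tu^{m+1}_h-4u^m_h+u^{m-1}_h,\tu^{m+1}_h)$ splits into $(3u^{m+1}_h-4u^m_h+u^{m-1}_h,u^{m+1}_h)+3\norm{\tu^{m+1}_h-u^{m+1}_h}^2$ plus cross terms, and the cross terms vanish since $u^m_h,u^{m-1}_h$ are discretely divergence free and $\tu-u\in\Grad\Ph$. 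The pressure term $4\Delta t(\Grad p^m_h,\tu^{m+1}_h)$ is handled, following the standard incremental-projection argument (Guermond--Shen), by testing~\eqref{eq:projection1} with $\Grad p^m_h\in\Yh$ and $\Grad p^{m+1}_h$. This gives the telescoping quantity $\frac{4\Delta t^2}{3}(\norm{\Grad p^{m+1}_h}^2-\norm{\Grad p^m_h}^2)$ plus $\frac{4\Delta t^2}{3}\norm{\Grad(p^{m+1}_h-p^m_h)}^2=3\norm{\tu^{m+1}_h-u^{m+1}_h}^2$. Tracking the coefficients so that a positive multiple $(3-\dots)\norm{\tu^{m+1}_h-u^{m+1}_h}^2$ survives is the bookkeeping step we expect to be most error-prone. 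The forcing term is bounded by Young's inequality, $4\Delta t(\PUh f^{m+1},\tu^{m+1}_h)\le 2\Delta t\norm{f^{m+1}}^2+2\Delta t\norm{\tu^{m+1}_h}^2$. We then control $\norm{\tu^{m+1}_h}^2\le 2\norm{u^{m+1}_h}^2+2\norm{\tu^{m+1}_h-u^{m+1}_h}^2$, absorbing the second piece (this produces the factor $3-2\Delta t$) and leaving the first for Gronwall.

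Summing over $m=1,\dots,N-1$ telescopes the energy to $E^N_h$ minus $E^1_h$. The first step $E^1_h$ is bounded by the analogous backward Euler energy identity for~\eqref{eq:step1fully1step}--\eqref{eq:projectionfullydiscretestep1}, tested in the same way with the identity $2(a-b,a)=|a|^2-|b|^2+|a-b|^2$. This yields the terms with coefficient $7$ after multiplying by a suitable constant so that $E^1_h$ (which contains $2u^1_h-u^0_h$, etc.) is controlled. The initial energy is bounded via~\eqref{eq:zerothstep} ($\norm{u^0_h}\le\norm{\tu^0_h}\le\norm{u_0}$), $H^1$-stability of Scott--Zhang, and $\norm{r^0_h}_h^2\le C\int\Ih(\mathcal{F}_B(Q^0_h)+A_0)\le C(\norm{Q^0_h}_{L^4}^4+1)$, using Lemma~\ref{lem:masslumpedLp} and the polynomial growth of $\mathcal{F}_B$. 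Finally, the discrete Gronwall inequality (Appendix~\ref{app:gronwall}) applied to the term $4\Delta t\sum\norm{u^{m+1}_h}^2$, which is dominated by $E^{m+1}_h$, gives the exponential factor. The assumption $\Delta t\le 1/6$ ensures that the implicit Gronwall coefficient is invertible and that the constants $1-2\Delta t$ and $3-2\Delta t$ stay positive.
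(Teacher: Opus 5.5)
Your proposal follows the paper's proof essentially step for step: the same multipliers up to scaling, the same cancellations via Lemma~\ref{lem:Zhaoetalidentity} and the mass-lumped $r$-equation, the backward Euler first step with the factor $7$, and discrete Gronwall. Your handling of the pressure through the exact relation $\tu^{m+1}_h-u^{m+1}_h=\frac{2\Delta t}{3}\Grad(p^{m+1}_h-p^m_h)$ is an equivalent rewriting of the paper's test function $v=2(u^{m+1}_h+\tu^{m+1}_h)+\frac{4\Delta t}{3}\Grad(p^{m+1}_h+p^m_h)$ in \eqref{eq:projection1}. It gives $6-3=3$ copies of $\norm{\tu^{m+1}_h-u^{m+1}_h}_{L^2}^2$, because the pressure identity contributes $-3$, not $+3$.

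To get the stated constants, use the exact orthogonality $\norm{\tu^{m+1}_h}_{L^2}^2=\norm{u^{m+1}_h}_{L^2}^2+\norm{\tu^{m+1}_h-u^{m+1}_h}_{L^2}^2$ in the forcing estimate instead of your factor-$2$ bound. The factor-$2$ bound would give $3-4\Delta t$ and a Gronwall constant $4$; the exact identity gives $3-2\Delta t$ and Gronwall constant $\nu=2$. The hypothesis $\Delta t\le\frac16$ is then exactly the condition for $(1-2\Delta t)^{-1}\le 1+3\Delta t\le e^{3\Delta t}$, which is what produces the factor $\exp(3N\Delta t)$.
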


\begin{proof}
	We start by noting that the following identity holds (cf. proof of Lemma 3.5 in~\cite{Weber2025}):
	\begin{multline*}
		2 (3 \tu_h^{m+1}-4 u^m_h + u^{m-1}_h,\tu^{m+1}_h)\\
		= 2 (3 u^{m+1}_h-4 u^m_h + u^{m-1}_h,\tu^{m+1}_h - u^{m+1}_h) + 2 (3 u^{m+1}_h-4u^m_h + u^{m-1}_h,u^{m+1}_h)+ 6 (\tu^{m+1}_h-u^{m+1}_h,\tu^{m+1}_h).
	\end{multline*}
	Taking $v = 3u^{m+1}_h-4u^m_h+ u^{m-1}_h$ as a test function in~\eqref{eq:projection1} and using~\eqref{eq:projection2}, we see that
	\begin{equation*}
		2 (3u^{m+1}_h-4u^m_h + u^{m-1}_h,\tu^{m+1}_h-u^{m+1}_h) = 0.
	\end{equation*}
	Thus the previous identity simplifies to
	\begin{align*}
			2 (3 \tu_h^{m+1}-4 u^m_h + u^{m-1}_h,\tu^{m+1}_h)&
		=  2 (3 u^{m+1}_h-4u^m_h + u^{m-1}_h,u^{m+1}_h)+ 6 (\tu^{m+1}_h-u^{m+1}_h,\tu^{m+1}_h)\\
		& = \norm{u^{m+1}_h}_{L^2}^2 - \norm{u^m_h}_{L^2}^2 +\norm{2u^{m+1}_h-u^m_h}_{L^2}^2-\norm{2 u^m_h - u^{m-1}_h}_{L^2}^2\\
		& +\norm{u^{m+1}_h-2u^m_h+ u^{m-1}_h}_{L^2}^2+ 3\norm{\tu^{m+1}_h}_{L^2}^2 - 3\norm{u^{m+1}_h}_{L^2}^2 + 3\norm{\tu^{m+1}_h-u^{m+1}_h}_{L^2}^2.
	\end{align*}
	Therefore, taking $v= 4 \tu^{m+1}_h$ as a test function in~\eqref{eq:udisc} and using this identity as well as the skew-symmetry of $b$, we obtain
	\begin{multline}
		\label{eq:upartialenergy}
		\frac{1}{\Delta t}\bigg( \norm{u^{m+1}_h}_{L^2}^2 - \norm{u^m_h}_{L^2}^2 +\norm{2u^{m+1}_h-u^m_h}_{L^2}^2-\norm{2 u^m_h - u^{m-1}_h}_{L^2}^2\\
	  +\norm{u^{m+1}_h-2u^m_h+ u^{m-1}_h}_{L^2}^2+ 3\norm{\tu^{m+1}_h}_{L^2}^2 - 3\norm{u^{m+1}_h}_{L^2}^2 + 3\norm{\tu^{m+1}_h-u^{m+1}_h}_{L^2}^2\bigg)\\
	  + 4(\Grad p^m_h,\tu^{m+1}_h)+ 4 \mu(\Grad\tu^{m+1}_h,\Grad\tu^{m+1}_h)+4(\sigma^{m+1}_h,\Grad\tu^{m+1}_h)+4(\HH^{m+1}_h\Grad\hQ^{m+1}_h,\tu^{m+1}_h)\\
	  =4(\PUh f^{m+1},\tu^{m+1}_h).
	\end{multline}
	To find an expression for the term $4(\Grad p^m_h,\tu^{m+1}_h)$, we use the second, projection step of the scheme: We take $v = 2(u^{m+1}_h+\tu^{m+1}_h)+\frac{4\Delta t}{3}(\Grad p^{m+1}_h+\Grad p^m_h)$ as a test function in~\eqref{eq:projection1} and simplify:
	\begin{align*}
		0& = \frac{3}{4\Delta t}\left(2(u^{m+1}_h-\tu^{m+1}_h)+\frac{4\Delta t}{3}\Grad(p^{m+1}_h-p^m_h),2(u^{m+1}_h+\tu^{m+1}_h)+\frac{4\Delta t}{3}\Grad(p^{m+1}_h+p^m_h)\right)\\
		&=\frac{3}{4\Delta t}\norm{2u^{m+1}_h+\frac{4\Delta t}{3}\Grad p^{m+1}_h}_{L^2}^2 - \frac{3}{4\Delta t}\norm{2 \tu^{m+1}_h + \frac{4\Delta t}{3}\Grad p^{m}_h}_{L^2}^2\\
		&= \frac{3}{\Delta t}\norm{u^{m+1}_h}_{L^2}^2+\frac{4\Delta t}{3}\norm{\Grad p^{m+1}_h}_{L^2}^2 -  \frac{3}{\Delta t}\norm{\tu^{m+1}_h}_{L^2}^2-\frac{4\Delta t}{3}\norm{\Grad p^{m}_h}_{L^2}^2 - 4(\tu^{m+1}_h,\Grad p^m_h)
	\end{align*}
	where we also used~\eqref{eq:projection2} in the last line. Plugging this into~\eqref{eq:upartialenergy}, we obtain
	\begin{multline}
		\label{eq:upartialenergy2}
		\frac{1}{\Delta t}\bigg( \norm{u^{m+1}_h}_{L^2}^2 - \norm{u^m_h}_{L^2}^2 +\norm{2u^{m+1}_h-u^m_h}_{L^2}^2-\norm{2 u^m_h - u^{m-1}_h}_{L^2}^2\\
		+\norm{u^{m+1}_h-2u^m_h+ u^{m-1}_h}_{L^2}^2 + 3\norm{\tu^{m+1}_h-u^{m+1}_h}_{L^2}^2\bigg)\\
		+\frac{4\Delta t}{3}\left(\norm{\Grad p^{m+1}_h}_{L^2}^2-\norm{\Grad p^{m}_h}_{L^2}^2\right) \\
		+ 4 \mu(\Grad\tu^{m+1}_h,\Grad\tu^{m+1}_h)+4(\sigma^{m+1}_h,\Grad\tu^{m+1}_h)+4(\HH^{m+1}_h\Grad\hQ^{m+1}_h,\tu^{m+1}_h)\\
		=4(\PUh f^{m+1},\tu^{m+1}_h).
	\end{multline}
	Next, we take $Y = 4\HH^{m+1}_h$ as a test function in~\eqref{eq:Qdisc} and $Z = 2\frac{3Q^{m+1}_h - 4 Q^m_h+Q^{m-1}_h}{\Delta t}$ as a test function in~\eqref{eq:Hdisc} and subtract the first from the second resulting identity:
	\begin{multline}
		\label{eq:Qpartialenergy}
		\frac{2 L}{\Delta t}(\Grad Q^{m+1}_h,3 \Grad Q^{m+1}_h-4 \Grad Q^m_h +\Grad Q^{m-1}_h)+4(s_h^{m+1},\HH^{m+1}_h)+ 4 M \norm{\HH^{m+1}_h}^2_{L^2}\\
		=4((\tu^{m+1}_h\cdot\Grad)\hQ^{m+1}_h,\HH^{m+1}_h) -\frac{2}{\Delta t}(r^{m+1}_h P(\hQ^{m+1}_h),3Q^{m+1}_h - 4 Q^m_h+Q^{m-1}_h)_h.
			\end{multline} 
			Testing~\eqref{eq:rdisc} with $w=\frac{2 r^{m+1}_h}{\Delta t}$, we have
			\begin{equation}\label{eq:rpartialenergy}
				\frac{2}{\Delta t}(3r^{m+1}_h -4r^m_h+r^{m-1}_h,r^{m+1}_h)_h = \frac{2}{\Delta t}(P(\hQ^{m+1}_h):(3 Q^{m+1}_h-4 Q^m_h+Q^{m-1}_h),r^{m+1}_h)_h.
			\end{equation}
	We use, along similar lines as above, that
	\begin{multline*}
		2(\Grad Q^{m+1}_h,3 \Grad Q^{m+1}_h-4 \Grad Q^m_h +\Grad Q^{m-1}_h)\\
		 = \norm{\Grad Q^{m+1}_h}_{L^2}^2 -\norm{\Grad Q^m_h}_{L^2}^2 + \norm{2 \Grad Q^{m+1}_h-\Grad Q^m_h}_{L^2}^2- \norm{2 \Grad Q^m_h-\Grad Q^{m-1}_h}_{L^2}^2+ \norm{\Grad Q_h^{m+1}-2\Grad Q^m_h+\Grad Q^{m-1}_h}_{L^2}^2.
	\end{multline*}
	and 
	\begin{multline*}
			2(r^{m+1}_h,3r^{m+1}_h-4 r^m_h +r^{m-1}_h)_h\\
		= \norm{r^{m+1}_h}_{h}^2 -\norm{r^m_h}_{h}^2 + \norm{2 r^{m+1}_h-r^m_h}_{h}^2- \norm{2r^m_h-r^{m-1}_h}_{h}^2+ \norm{r_h^{m+1}-2  r^m_h+r^{m-1}_h}_{h}^2,
	\end{multline*}
	which we use in~\eqref{eq:Qpartialenergy} and~\eqref{eq:rpartialenergy} and add the two equations, 
	\begin{multline}
		\label{eq:Qrpartialenergy}
		\frac{L}{\Delta t}\bigg(\norm{\Grad Q^{m+1}_h}_{L^2}^2 -\norm{\Grad Q^m_h}_{L^2}^2 + \norm{2 \Grad Q^{m+1}_h-\Grad Q^m_h}_{L^2}^2\\
		- \norm{2 \Grad Q^m_h-\Grad Q^{m-1}_h}_{L^2}^2+ \norm{\Grad Q_h^{m+1}-2\Grad Q^m_h+\Grad Q^{m-1}_h}_{L^2}^2
		\bigg)\\
		+ \frac{1}{\Delta t}\left(\norm{r^{m+1}_h}_{h}^2 -\norm{r^m_h}_{h}^2 + \norm{2 r^{m+1}_h-r^m_h}_{h}^2- \norm{2r^m_h-r^{m-1}_h}_{h}^2+ \norm{r_h^{m+1}-2  r^m_h+r^{m-1}_h}_{h}^2\right)\\
		= -4 (s^{m+1}_h,\HH^{m+1}_h)- 4 M \norm{\HH^{m+1}_h}_{L^2}^2+ 4 ((\tu^{m+1}_h\cdot\Grad)\hQ^{m+1}_h,\HH^{m+1}_h).
	\end{multline}
	Combining this last identity with~\eqref{eq:upartialenergy2} and using Lemma~\ref{lem:Zhaoetalidentity}, we obtain
	\begin{multline}
		\label{eq:energypertimestep}
			\frac{1}{\Delta t}\bigg( \norm{u^{m+1}_h}_{L^2}^2 - \norm{u^m_h}_{L^2}^2 +\norm{2u^{m+1}_h-u^m_h}_{L^2}^2-\norm{2 u^m_h - u^{m-1}_h}_{L^2}^2\\
		+\norm{u^{m+1}_h-2u^m_h+ u^{m-1}_h}_{L^2}^2 + 3\norm{\tu^{m+1}_h-u^{m+1}_h}_{L^2}^2\bigg)\\
		+\frac{4\Delta t}{3}\left(\norm{\Grad p^{m+1}_h}_{L^2}^2-\norm{\Grad p^{m}_h}_{L^2}^2\right) 
		+ 4 \mu(\Grad\tu^{m+1}_h,\Grad\tu^{m+1}_h)  \\
		+ 	\frac{L}{\Delta t}\bigg(\norm{\Grad Q^{m+1}_h}_{L^2}^2 -\norm{\Grad Q^m_h}_{L^2}^2 + \norm{2 \Grad Q^{m+1}_h-\Grad Q^m_h}_{L^2}^2\\
		- \norm{2 \Grad Q^m_h-\Grad Q^{m-1}_h}_{L^2}^2+ \norm{\Grad Q_h^{m+1}-2\Grad Q^m_h+\Grad Q^{m-1}_h}_{L^2}^2
		\bigg)\\
		+ \frac{1}{\Delta t}\left(\norm{r^{m+1}_h}_{h}^2 -\norm{r^m_h}_{h}^2 + \norm{2 r^{m+1}_h-r^m_h}_{h}^2- \norm{2r^m_h-r^{m-1}_h}_{h}^2+ \norm{r_h^{m+1}-2  r^m_h+r^{m-1}_h}_{h}^2\right)\\
		=  - 4 M \norm{\HH^{m+1}_h}_{L^2}^2 +4(\PUh f^{m+1},\tu^{m+1}_h).
	\end{multline}
	Now we multiply by $\Delta t$ and sum over $m = 1,\dots, N-1$ to obtain
	\begin{multline}\label{eq:prelimenergy}
		E^N_h + \sum_{m=1}^{N-1}\norm{u^{m+1}_h-2u^m_h+ u^{m-1}_h}_{L^2}^2+3 \sum_{m=1}^{N-1}\norm{\tu^{m+1}_h-u^{m+1}_h}_{L^2}^2 + 4\mu\Delta t \sum_{m=1}^{N-1}\norm{\Grad\tu_h^{m+1}}_{L^2}^2\\
		+ 4M\Delta t \sum_{m=1}^{N-1}\norm{\HH_h^{m+1}}_{L^2}^2 + L\sum_{m=1}^{N-1}\norm{\Grad Q_h^{m+1}-2\Grad Q^m_h+\Grad Q^{m-1}_h}_{L^2}^2+\sum_{m=1}^{N-1}\norm{r_h^{m+1}-2r^m_h+r^{m-1}_h}_{h}^2\\
		= E_h^1 +4 \Delta t \sum_{m=1}^{N-1}(f^{m+1},\tu_h^{m+1}).
	\end{multline}
	In order to derive the final estimate from this, we need to estimate $E_h^1$ which is given by
	\begin{multline*}
		E^1_h =  \norm{u^{1}_h}_{L^2}^2+\norm{2 u^1_h - u^{0}_h}_{L^2}^2+L\norm{\Grad Q_h^{1}}_{L^2}^2+L\norm{2\Grad Q_h^1-\Grad Q_h^{0}}^2_{L^2}\\
		+	 \norm{r_h^{1}}_h^2+\norm{2r_h^1-r_h^{0}}_h^2+\frac{4\Delta t^2}{3}\norm{\Grad p^1_h}_{L^2}^2\\
		\leq 7\left(\norm{u^1_h}_{L^2}^2+L\norm{\Grad Q_h^{1}}_{L^2}^2 + \norm{r_h^{1}}_h^2 \right)  + 3\left(\norm{u^0_h}_{L^2}^2 + L\norm{\Grad Q_h^{0}}_{L^2}^2+\norm{r_h^{0}}_h^2 \right) +\frac{4\Delta t^2}{3}\norm{\Grad p^1_h}_{L^2}^2.
	\end{multline*}
		Thus we need an estimate on the $L^2$-norms of $u^1_h, u^0_h$, $\Grad Q^1_h$, $r_h^1$ and $\Delta t \Grad p^1_h$ which can be obtained from analyzing the first and the zeroth time step, that were computed by using a backward Euler method, cf. Remark~\ref{rem:1ststep}, and by projection~\eqref{eq:zerothstep}. We start with the zeroth step: Taking $v=\Delta t (u^0_h+\Delta t \Grad p_h^0+\tu_h^0)$ as a test function in the first equation in~\eqref{eq:zerothstep} and using that $u^0_h$ satisfies the weak divergence constraint, we find
	\begin{equation*}
		0=(u^0_h-\tu^0_h+\Delta t \Grad p_h^0,u^0_h+\tu^0_h +\Delta t \Grad p_h^0)  =\norm{u_h^0}_{L^2}^2+\Delta t^2 \norm{\Grad p_h^0}_{L^2}^2 -\norm{\tu_h^0}_{L^2}^2
	\end{equation*}
	which implies that
	\begin{equation}\label{eq:u0estimate}
		\norm{u^0_h}_{L^2}^2+\Delta t^2\norm{\Grad p_h^0}_{L^2}^2=\norm{\tu^0_h}_{L^2}^2 = \norm{\PUh u_0}_{L^2}^2 \leq \norm{u_0}_{L^2}^2 \leq C.
	\end{equation}
		Next, we obtain a uniform in $h$ and $\Delta t$ bound on $u^1_h$ and $\Delta t \Grad p^1_h$ by using the first step~\eqref{eq:step1fully1step}--\eqref{eq:projection2step1}.
	This is done in a very similar way. Taking $v=2\tu^1_h$ as a test function in~\eqref{eq:step1fully1step}, we have
	\begin{multline}\label{eq:firststepenergy}
		\frac{1}{\Delta t}\left(\norm{\tu^1_h}_{L^2}^2 + \norm{\tu^1_h-u^0_h}_{L^2}^2 - \norm{u^0_h}_{L^2}^2\right) - 2(p^0_h,\Div \tu^1_h)\\
		+2\mu \norm{\Grad \tu^1_h}_{L^2}^2  + 2(\sigma_h^1,\Grad \tu^1_h)+2(\HH^1_h\Grad Q^0_h,\tu_h^1)= 2(f^1,\tu^1_h).
	\end{multline}
	Then, taking $v=\frac{u^1_h+\tu^1_h}{\Delta t}+ \Grad(p^1_h + p^0_h)$ as a test function in~\eqref{eq:projection1step1} and using that $u^1_h$ is weakly divergence free, i.e., satisfies~\eqref{eq:projection2step1}, we have
	\begin{equation*}
		0 = \frac{1}{\Delta t^2}\left(\norm{u^1_h}_{L^2}^2 - \norm{\tu^1_h}_{L^2}^2\right)+\norm{\Grad p^1_h}_{L^2}^2 -\norm{\Grad p^0_h}_{L^2}^2 -\frac{2}{\Delta t}(\tu^1_h,\Grad p^0_h).
	\end{equation*}
	Plugging this into~\eqref{eq:firststepenergy} (after multiplying by $\Delta t$), we obtain
	\begin{multline}\label{eq:firststepenergy2}
		\frac{1}{\Delta t}\left(\norm{u^1_h}_{L^2}^2 + \norm{\tu^1_h-u^0_h}_{L^2}^2 - \norm{u^0_h}_{L^2}^2\right) +\Delta t\left(\norm{\Grad p^1_h}_{L^2}^2 -\norm{\Grad p^0_h}_{L^2}^2\right) \\
		+2\mu \norm{\Grad \tu^1_h}_{L^2}^2 + 2(\sigma_h^1,\Grad \tu^1_h)+2(\HH^1_h\Grad Q^0_h,\tu_h^1) 
		= 2(f^1,\tu^1_h).
	\end{multline}
	Next, we take $Y = 2 \HH^{1}_h$ as a test function in~\eqref{eq:Qfirststep}, $Z=2\frac{Q^1_h-Q^0_h}{\Delta t}$ as a test function in~\eqref{eq:Hfirststep} and $w=\frac{2}{\Delta t}r^1_h$ as a test function in~\eqref{eq:rfirststep}, then add the resulting~\eqref{eq:rfirststep} and~\eqref{eq:Hfirststep} and subtract~\eqref{eq:Qfirststep} to obtain
	\begin{multline}
		\label{eq:firststepenergyrQH}
		\frac{1}{\Delta t}\left(\norm{r_h^1}_h^2 - \norm{r_h^0}_h^2+\norm{r_h^1 - r_h^0}_h^2\right)+2M\norm{\HH^1_h}_{L^2}^2\\
		+\frac{L}{\Delta t}\left(\norm{\Grad Q_h^1}_{L^2}^2 -\norm{\Grad Q_h^0}_{L^2}^2+\norm{\Grad Q_h^1-\Grad Q_h^0}_{L^2}^2\right)= 2 ((\tu^1_h\cdot\Grad) Q_h^0,\HH_h^1)-2(s^1_h,\HH_h^1).
	\end{multline}
	Adding~\eqref{eq:firststepenergyrQH} and~\eqref{eq:firststepenergy2}, we obtain
	\begin{multline}
		\label{eq:totalenergyfirststep}
			\frac{1}{\Delta t}\left(\norm{u^1_h}_{L^2}^2 + \norm{\tu^1_h-u^0_h}_{L^2}^2 - \norm{u^0_h}_{L^2}^2\right) +\Delta t\left(\norm{\Grad p^1_h}_{L^2}^2 -\norm{\Grad p^0_h}_{L^2}^2\right)+2\mu \norm{\Grad \tu^1_h}_{L^2}^2 \\
		 +
			\frac{1}{\Delta t}\left(\norm{r_h^1}_h^2 - \norm{r_h^0}_h^2+\norm{r_h^1 - r_h^0}_h^2\right)+2M\norm{\HH^1_h}_{L^2}^2+\frac{L}{\Delta t}\left(\norm{\Grad Q_h^1}_{L^2}^2-\norm{\Grad Q_h^0}_{L^2}^2+\norm{\Grad (Q_h^1-Q^0_h)}_{L^2}^2 \right)\\
		= 2(f^1,\tu^1_h).
	\end{multline}
	We add this $7\Delta t$ times to~\eqref{eq:prelimenergy} to obtain
	\begin{multline}
		\label{eq:almostdone}
			E^N_h + \sum_{m=1}^{N-1}\norm{u^{m+1}_h-2u^m_h+ u^{m-1}_h}_{L^2}^2+3 \sum_{m=1}^{N-1}\norm{\tu^{m+1}_h-u^{m+1}_h}_{L^2}^2 + 4\mu\Delta t \sum_{m=0}^{N-1}\norm{\Grad\tu_h^{m+1}}_{L^2}^2\\
		+ 4M\Delta t \sum_{m=0}^{N-1}\norm{\HH_h^{m+1}}_{L^2}^2 + L\sum_{m=1}^{N-1}\norm{\Grad Q_h^{m+1}-2\Grad Q^m_h+\Grad Q^{m-1}_h}_{L^2}^2+\sum_{m=1}^{N-1}\norm{r_h^{m+1}-2r^m_h+r^{m-1}_h}_{h}^2\\
		+ 7\norm{\tu^1_h-u^0_h}_{L^2}^2+7\norm{r_h^1 - r_h^0}_h^2 +7 L\norm{\Grad (Q^1_h-Q_h^0)}_{L^2}^2\\
		\leq 4 \Delta t \sum_{m=0}^{N-1}(f^{m+1},\tu_h^{m+1}) 
	  +10 \norm{u^0_h}_{L^2}^2+7\Delta t^2\norm{\Grad p^0_h}_{L^2}^2 
+10\norm{r_h^0}_h^2+10 L\norm{\Grad Q_h^0}_{L^2}^2 
	+ 10\Delta t (f^1,\tu^1_h)\\
		\leq 4 \Delta t \sum_{m=0}^{N-1}(f^{m+1},\tu_h^{m+1})
 +10 \norm{u_0}_{L^2}^2
	+10\norm{r_h^0}_h^2 +10 L\norm{\Grad Q_h^0}_{L^2}^2  
	+ 10\Delta t (f^1,\tu^1_h).
	\end{multline}
 	It remains to estimate the source term involving $f$ which can be done using a discrete version of Gr\"onwall's inequality, Lemma~\ref{lem:discretegronwall}.
	We first estimate with Young's inequality
	\begin{equation*}
		\left|(f^{m+1},\tu_h^{m+1})\right|\leq \frac{1}{2}\left(\norm{f^{m+1}}_{L^2}^2+\norm{\tu_h^{m+1}}_{L^2}^2\right),
	\end{equation*}
	and then use $v=u^{m+1}_h$ as a test function in~\eqref{eq:projection1} which yields with the weak divergence constraint~\eqref{eq:projection2},
	\begin{equation}\label{eq:utildebound}
		0= (u^{m+1}_h - \tu^{m+1}_h,u^{m+1}_h) = \frac{1}{2}\norm{u^{m+1}_h}_{L^2}^2 -\frac{1}{2}\norm{\tu^{m+1}_h}_{L^2}^2 + \frac12\norm{u^{m+1}_h - \tu^{m+1}_h}_{L^2}^2.
	\end{equation}
	Using this in the previous estimate, we obtain,
	\begin{equation*}
		\left|(f^{m+1},\tu_h^{m+1})\right|\leq \frac{1}{2}\left(\norm{f^{m+1}}_{L^2}^2+\norm{u_h^{m+1}}_{L^2}^2+ \norm{u^{m+1}_h-\tu^{m+1}_h}_{L^2}^2\right).
	\end{equation*}
	For the $m=0$ term, we estimate instead using the triangle inequality,
	\begin{equation*}
		\left|(f^{1},\tu_h^{1})\right|\leq \frac{1}{2}\left(\norm{f^{1}}_{L^2}^2+\norm{\tu_h^{1}}_{L^2}^2\right)\leq \frac{1}{2}\norm{f^{1}}_{L^2}^2+\norm{u_h^{0}}_{L^2}^2 +\norm{u^0_h-\tu_h^1}_{L^2}^2.
	\end{equation*}
	Thus, we can estimate in~\eqref{eq:almostdone}
	\begin{multline}
		E^N_h + \sum_{m=1}^{N-1}\norm{u^{m+1}_h-2u^m_h+ u^{m-1}_h}_{L^2}^2+(3-2\Delta t) \sum_{m=1}^{N-1}\norm{\tu^{m+1}_h-u^{m+1}_h}_{L^2}^2 + 4\mu\Delta t \sum_{m=0}^{N-1}\norm{\Grad\tu_h^{m+1}}_{L^2}^2\\
	+ 4M\Delta t \sum_{m=0}^{N-1}\norm{\HH_h^{m+1}}_{L^2}^2 + L\sum_{m=1}^{N-1}\norm{\Grad Q_h^{m+1}-2\Grad Q^m_h+\Grad Q^{m-1}_h}_{L^2}^2+\sum_{m=1}^{N-1}\norm{r_h^{m+1}-2r^m_h+r^{m-1}_h}_{h}^2\\
	+ 7(1-2\Delta t)\norm{\tu^1_h-u^0_h}_{L^2}^2+7\norm{r_h^1 - r_h^0}_h^2 +7 L\norm{\Grad (Q^1_h-Q_h^0)}_{L^2}^2\\
	\leq 2\Delta t \sum_{m=1}^{N-1}(\norm{f^{m+1}}^2_{L^2}+\norm{u_h^{m+1}}_{L^2}^2) 
	+10 \norm{u_0}_{L^2}^2
	+10\norm{r_h^0}_h^2 +10 L\norm{\Grad Q_h^0}_{L^2}^2  
	+ 7\Delta t (\norm{f^1}_{L^2}^2+2\norm{u_0}_{L^2}^2)\\
		\leq 2\Delta t \sum_{m=2}^{N}(\norm{f^{m}}^2_{L^2}+E^m_h) 
	+10 \norm{u_0}_{L^2}^2
	+10\norm{r_h^0}_h^2 +10 L\norm{\Grad Q_h^0}_{L^2}^2  
	+ 7\Delta t (\norm{f^1}_{L^2}^2+2\norm{u_0}_{L^2}^2).
	\end{multline}
	Now we use the discrete Gr\"onwall inequality, Lemma~\ref{lem:discretegronwall}, with $\nu = 2$ and
	\begin{equation*}
		a_n = E^n_h,\quad b_n = 2\Delta t \sum_{j=2}^{n}\norm{f^{j}}^2_{L^2}
		+10 \norm{u_0}_{L^2}^2
		+10\norm{r_h^0}_h^2 +10 L\norm{\Grad Q_h^0}_{L^2}^2  
		+ 7\Delta t (\norm{f^1}_{L^2}^2+2\norm{u_0}_{L^2}^2)
	\end{equation*}
	to obtain
		\begin{multline}
		E^N_h + \sum_{m=1}^{N-1}\norm{u^{m+1}_h-2u^m_h+ u^{m-1}_h}_{L^2}^2+(3-2\Delta t) \sum_{m=1}^{N-1}\norm{\tu^{m+1}_h-u^{m+1}_h}_{L^2}^2 + 4\mu\Delta t \sum_{m=0}^{N-1}\norm{\Grad\tu_h^{m+1}}_{L^2}^2\\
		+ 4M\Delta t \sum_{m=0}^{N-1}\norm{\HH_h^{m+1}}_{L^2}^2 + L\sum_{m=1}^{N-1}\norm{\Grad Q_h^{m+1}-2\Grad Q^m_h+\Grad Q^{m-1}_h}_{L^2}^2+\sum_{m=1}^{N-1}\norm{r_h^{m+1}-2r^m_h+r^{m-1}_h}_{h}^2\\
		+ 7(1-2\Delta t)\norm{\tu^1_h-u^0_h}_{L^2}^2+7\norm{r_h^1 - r_h^0}_h^2 +7 L\norm{\Grad (Q^1_h-Q_h^0)}_{L^2}^2\\
		\leq \left(2\Delta t \sum_{m=2}^{N}\norm{f^{m}}^2_{L^2}
		+10 \norm{u_0}_{L^2}^2
		+10\norm{r_h^0}_h^2 +10 L\norm{\Grad Q_h^0}_{L^2}^2  
		+ 7\Delta t (\norm{f^1}_{L^2}^2+2\norm{u_0}_{L^2}^2)\right)(1-2\Delta t)^{-N}.
	\end{multline}
	For $\Delta t>0$ small enough, we have $(1-2\Delta t)^{-1}\leq 1 + 3 \Delta t$ and $2\Delta t <1$ and hence, we can bound the right-hand side (using also that $1+x\leq \exp(x)$) by
	\begin{multline}
			E^N_h + \sum_{m=1}^{N-1}\norm{u^{m+1}_h-2u^m_h+ u^{m-1}_h}_{L^2}^2+(3-2\Delta t) \sum_{m=1}^{N-1}\norm{\tu^{m+1}_h-u^{m+1}_h}_{L^2}^2 + 4\mu\Delta t \sum_{m=0}^{N-1}\norm{\Grad\tu_h^{m+1}}_{L^2}^2\\
		+ 4M\Delta t \sum_{m=0}^{N-1}\norm{\HH_h^{m+1}}_{L^2}^2 + L\sum_{m=1}^{N-1}\norm{\Grad Q_h^{m+1}-2\Grad Q^m_h+\Grad Q^{m-1}_h}_{L^2}^2+\sum_{m=1}^{N-1}\norm{r_h^{m+1}-2r^m_h+r^{m-1}_h}_{h}^2\\
		+ 7(1-2\Delta t)\norm{\tu^1_h-u^0_h}_{L^2}^2+7\norm{r_h^1 - r_h^0}_h^2 +7 L\norm{\Grad (Q^1_h-Q_h^0)}_{L^2}^2\\
		\leq C\left(1+ \norm{f}^2_{L^2([0,T]\times\dom)}
		+\norm{u_0}_{L^2}^2
		+\norm{Q_0}_{L^4}^4 +\norm{\Grad Q_0}_{L^2}^2  
	\right)\exp(3N \Delta t)
	\end{multline}
	which proves the result.
\end{proof}

\section{Convergence analysis of the scheme}\label{sec:convergence}

Next, we will show that interpolations of the approximations defined by the scheme~\eqref{eq:step1fully}--\eqref{eq:projectionfullydiscrete} converge up to a subsequence to a weak solution of~\eqref{seq:BerisEdwards}. For this purpose, we define the  piecewise constant interpolants in time:
\begin{alignat}{2}
u_h(t)& = u_h^{m+1},\qquad &t\in (t^m,t^{m+1}],\label{eq:defuh}\\
\bar{u}_h(t) &= 2u_h^m-u^{m-1}_h,\qquad &t\in (t^m,t^{m+1}],\\
\widetilde{u}_h(t) &= \widetilde{u}_h^{m+1},\qquad &t\in (t^m,t^{m+1}],\\
\widehat{u}_h(t) &= 2\tu_h^m-\tu^{m-1}_h,\qquad &t\in (t^m,t^{m+1}],\\
Q_h(t)& = Q_h^{m+1},\qquad &t\in (t^m,t^{m+1}],\\
\bar{Q}_h(t) &=2 Q_h^m-Q^{m-1}_h,\qquad &t\in (t^m,t^{m+1}],\\
r_h(t) &= r_h^{m+1},\qquad &t \in (t^m,t^{m+1}],\\
\mathcal{H}_h(t)& =\mathcal{H}_h^{m+1},\qquad &t\in (t^m,t^{m+1}],\\
p_h(t)& = p_h^{m+1},\qquad & t\in (t^m,t^{m+1}],\label{eq:defph}\\
f_h(t)& = \frac{1}{\Delta t}\int_{t^{m+1/2}}^{t^{m+3/2}}\PUh f(\tau)d\tau,\qquad &t\in (t^m,t^{m+1}],
\end{alignat}
for $m=1,2,\dots$ and for $m=0$,
\begin{align*}
	u_h(t)= u^1_h,&\quad t\in (t^0,t^1],\\
	\tu_h(t)= \tu^1_h,&\quad t\in (t^0,t^1],\\
	\hu_h(t) = \tu_h^0,&\quad t\in (t^0,t^1],\\
		\bar{u}_h(t)= u_h^0,&\quad t\in (t^0,t^1],\\
		Q_h(t) = Q_h^1,&\quad t\in (t^0,t^1],\\
				r_h(t) = r_h^1,&\quad t\in (t^0,t^1],\\
		\bar{Q}_h(t)=Q_h^0,&\quad t\in (t^0,t^1],\\
		\HH_h(t)=\HH_h^1,&\quad t\in (t^0,t^1],\\	
	p_h(t)=p^1_h,&\quad   t\in (t^0,t^1],\\
	f_h(t) = \frac{1}{\Delta t}\int_{t^{1/2}}^{t^{3/2}}\PUh f(\tau)d\tau,&\quad t\in (t^0,t^{1}],
\end{align*}
and
\begin{equation*}
\tu_h(\tau)=\hu_h(\tau)=\tu^0_h,\quad u_h(\tau)=\bar{u}_h(\tau)=u^0_h,\quad  r_h(\tau)=r^0_h,\quad Q_h(\tau)=Q^0_h,\quad p_h(\tau)=p_h^0,\quad \tau\leq 0
\end{equation*} 
From the energy inequality in the last section, Lemma~\ref{lem:discenergyestimate}, we obtain the following uniform a priori estimates for the sequences $\{u_h\}_{h>0}$, $\{\bar{u}_h\}_{h>0}$, etc.:
\begin{align*}
&\{u_h\}_{h>0},\{\bar{u}_h\}_{h>0},\{\hu_h\}_{h>0}\subset L^\infty(0,T;L^2(\dom)),\\
&\{\widetilde{u}_h\}_{h>0}\subset L^\infty(0,T;L^2(\dom))\cap L^2(0,T;H^1_0(\dom)),\\
& \{Q_h\}_{h>0}, \{\bar{Q}_h\}_{h>0}\subset L^\infty(0,T;H^1_0(\dom)),\\
&\{\mathcal{H}_h\}_{h>0}\subset L^2([0,T]\times \dom),\\
& \{r_h\}_{h>0}\subset L^\infty(0,T;L^2(\dom))
\end{align*}
These uniform estimates imply, using the Banach--Alaoglu theorem that there exist weakly convergent subsequences, which, for the ease of notation, we still denote by $h\to 0$,
\begin{align}
u_h\weakstar u,\quad \bar{u}_h\weakstar \bar{u},\quad\hu_h\weakstar \hu,&\quad  \text{in }\, L^\infty(0,T;L^2(\dom)),\label{eq:uhweakconv}\\
\widetilde{u}_h\weakstar \widetilde{u},&\quad \text{in }\, L^\infty(0,T;L^2(\dom))\cap L^2(0,T;H^1_0(\dom)),\\
Q_h\weakstar Q,\quad\bar{Q}_h\weakstar \bar{Q},&\quad \text{in }\, L^\infty(0,T;H^1_0(\dom)),\\
r_h\weakstar r,&\quad \text{in }\, L^\infty(0,T;L^2(\dom)),\\
\mathcal{H}_h\weak \mathcal{H},&\quad \text{in }\, L^2([0,T]\times\dom).\label{eq:Hhweakconv}
\end{align}
Due to the nonlinearities in system~\eqref{seq:BerisEdwards}, in order to prove convergence of the scheme, we will need to derive strong convergence of the sequences approximating $u$ and $Q$ as well as $\Grad Q$ in $L^2([0,T]\times\dom)$. To derive strong convergence of the first two, we will use an idea from~\cite{Doering1994,Berselli2021} where weak continuity in time was combined with $H^1$-spatial regularity to obtain precompactness in $L^2([0,T]\times\dom)$.
However, the approximations in~\cite{Doering1994,Berselli2021} are uniformly weakly continuous, so we still need to verify that the numerical error does not obstruct things. Furthermore, we will prove the approximate equicontinuity for the sequence $\{u_h\}_{h>0}$ whereas the spatial regularity is available for the sequence $\{\tu_h\}_{h>0}$, so we need to combine the two via the following lemma, where we show that the sequences $u_h$, $\bar{u}_h$, $\hu_h$ and $\tu_h$, and $Q_h$, $\bar{Q}_h$ respectively, have the same limits. The proof is similar to Lemma 4.1 in~\cite{Weber2025}:
\begin{lemma}
	\label{lem:samelimits}
	Assume that {$\Delta t = o_{h\to 0}(1)$}. Then, we have that $Q=\bar{Q}$ a.e. in $[0,T]\times \dom$ and
	\begin{equation}\label{eq:QQbarconv}
		\lim_{h\to 0}\norm{Q_h-\bar{Q}_h}_{L^2(0,T;H^1(\dom))} = 0,
	\end{equation}
	as well as $u=\bar{u}=\tu=\hu$ a.e. in $[0,T]\times\dom$ and in $L^2([0,T]\times\dom)$, and
	\begin{align}
		\label{eq:uubarconv}
		\lim_{h\to 0}\norm{u_h-\bar{u}_h}_{L^2([0,T]\times\dom)} &= 0,\\
	\label{eq:uhatubarconv}
\lim_{h\to 0}\norm{\hu_h-\bar{u}_h}_{L^2([0,T]\times\dom)} &= 0,\\
		\label{eq:utildeuconv}
		\lim_{h\to 0}\norm{{u}_h-\tu_h}_{L^2([0,T]\times\dom)}& = 0.
	\end{align}
	
\end{lemma}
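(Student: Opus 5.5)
Everything follows from the numerical-dissipation terms in the discrete energy estimate, Lemma~\ref{lem:discenergyestimate}, combined with elementary telescoping identities relating the shifted interpolants. The common mechanism is that each difference of interpolants is, on $(t^m,t^{m+1}]$, either a second difference or a first difference of the discrete sequence. Second differences and the projection increments $\tu^{m+1}_h-u^{m+1}_h$ are bounded in $\ell^2$ (not merely in $\Delta t$-weighted $\ell^2$) by Lemma~\ref{lem:discenergyestimate}. Multiplying by $\Delta t$ in the time integral therefore gives an $O(\Delta t)$ bound, which vanishes since $\Delta t=o(1)$.

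\textbf{Step 1: $u_h-\tu_h$.} We have $\norm{u_h-\tu_h}_{L^2([0,T]\times\dom)}^2=\Delta t\sum_{m=0}^{N-1}\norm{u^{m+1}_h-\tu^{m+1}_h}_{L^2}^2$. The terms $m\ge1$ are bounded by $C\Delta t$ through the term $(3-2\Delta t)\sum\norm{\tu^{m+1}_h-u^{m+1}_h}^2$ in the energy estimate. For $m=0$ we use $\norm{u^1_h-\tu^1_h}\le \norm{\tu^1_h}\le \norm{\tu^1_h-u^0_h}+\norm{u^0_h}$, by~\eqref{eq:utildebound} and~\eqref{eq:u0estimate}, which is bounded by the term $7(1-2\Delta t)\norm{\tu^1_h-u^0_h}^2$. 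This gives~\eqref{eq:utildeuconv}.

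\textbf{Step 2: $u_h-\bar u_h$.} On $(t^m,t^{m+1}]$, $m\ge1$, we have $u_h-\bar u_h=u^{m+1}_h-2u^m_h+u^{m-1}_h$, a second difference, so its contribution is $\Delta t\sum\norm{\cdot}^2\le C\Delta t$. The first interval contributes $\Delta t\norm{u^1_h-u^0_h}^2\le C\Delta t$ by the $L^\infty(L^2)$ bound. This gives~\eqref{eq:uubarconv}.

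\textbf{Step 3: $\hu_h-\bar u_h$.} On $(t^m,t^{m+1}]$, $\hu_h-\bar u_h=2(\tu^m_h-u^m_h)-(\tu^{m-1}_h-u^{m-1}_h)$, which is controlled by Step 1 shifted by one or two time steps. Boundary terms ($m\le1$, where $\tu^0_h-u^0_h$ appears) contribute $O(\Delta t)$ by the uniform $L^2$ bounds of~\eqref{eq:u0estimate}. This gives~\eqref{eq:uhatubarconv}.

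\textbf{Step 4: $Q_h-\bar Q_h$.} Here $Q_h-\bar Q_h=Q^{m+1}_h-2Q^m_h+Q^{m-1}_h$ for $m\ge1$, and its gradient is bounded in $\ell^2$ by the term $L\sum\norm{\Grad(Q^{m+1}_h-2Q^m_h+Q^{m-1}_h)}^2$. The first interval uses $7L\norm{\Grad(Q^1_h-Q^0_h)}^2$, or simply the $L^\infty(H^1)$ bound times $\Delta t$. Poincar\'e's inequality, valid since $Q_h\in\Mhz$, upgrades gradient control to the full $H^1$ norm, giving~\eqref{eq:QQbarconv}.

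\textbf{Identification of limits.} Since the differences converge strongly to zero in $L^2$ while each sequence converges weakly-$*$, the weak limits coincide. Hence $u=\bar u=\tu=\hu$ and $Q=\bar Q$ a.e.

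\textbf{Main obstacle.} The only delicate point is bookkeeping at the first one or two time steps and at negative times, where the interpolants are defined via the Euler start-up step, the zeroth projection~\eqref{eq:zerothstep} and the extension convention for $\tau\le0$. There the differences are not second differences. These intervals have total length $O(\Delta t)$, however, and every quantity on them is uniformly bounded in $L^2$ (respectively $H^1$) by Lemma~\ref{lem:discenergyestimate} and~\eqref{eq:u0estimate}, so their contributions are $O(\Delta t)$ as well.
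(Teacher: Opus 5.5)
Your proposal is correct and takes essentially the same approach as the paper. The paper bounds $\norm{\Grad Q_h-\Grad\bar Q_h}_{L^2([0,T]\times\dom)}^2\le C\Delta t$ using the $\ell^2$-bounded second-difference dissipation from Lemma~\ref{lem:discenergyestimate} (plus the first interval), applies the Poincar\'e inequality and identifies the weak limits, while for the velocity it only says the argument is similar (citing~\cite[Lemma 4.1]{Weber2025}); you have written that part out explicitly via the projection increments $\tu^{m+1}_h-u^{m+1}_h$ and the second differences of $u^m_h$, and the extension convention for $\tau\le 0$ that you mention plays no role, since all norms are taken over $[0,T]$.
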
 
\begin{proof}
	We have
	\begin{align*}
		&\norm{\Grad Q_h-\Grad \bar{Q}_h}_{L^2([0,T]\times\dom)}^2\\
		&=\sum_{m=1}^{N-1}\int_{t^m}^{t^{m+1}}\!\!\int_{\dom}\left|\Grad Q^{m+1}_h-(2\Grad Q^m_h-\Grad Q^{m-1}_h) \right|^2dxdt + \int_{t^0}^{t^1}\int_{\dom} |\Grad Q^1_h -\Grad Q^0_h|^2 dx dt\\
		&\leq  \Delta t \sum_{m=1}^{N-1} \int_{\dom}\left|\Grad Q^{m+1}_h-2\Grad Q^m_h+\Grad Q^{m-1}_h \right|^2dx + \int_{t^0}^{t^1}\int_{\dom} |\Grad Q^1_h -\Grad Q^0_h|^2 dx dt\\
		&\leq C \Delta t,
	\end{align*}
	where we used the energy inequality, Lemma~\ref{lem:discenergyestimate}. Clearly, as $\Delta t, h\to 0$, this term vanishes. Hence, by the Poincar\'e inequality,~\eqref{eq:QQbarconv} holds. Using this, it then follows easily that for any test function $\varphi\in L^2([0,T]\times\dom)^{d\times d}$,
	\begin{multline*}
		\int_0^T\!\!\int_{\dom}\bar{Q}:\varphi\, dx dt = \lim_{h\to 0} \int_0^T\!\!\int_{\dom}\bar{Q}_h:\varphi\, dx dt\\
		= \lim_{h\to 0}\int_0^T\!\!\int_{\dom}(\bar{Q}_h-Q_h):\varphi\, dx dt+ \lim_{h\to 0}\int_0^T\!\!\int_{\dom}Q_h:\varphi \,dx dt =\int_0^T\!\!\int_{\dom}Q:\varphi\, dx dt,
	\end{multline*}
	hence $Q=\bar{Q}$ a.e. in $[0,T]\times\dom$. The estimates for the variables approximating $u$ follow in a similar way, see also~\cite[Lemma 4.1]{Weber2025}.
\end{proof}
As in~\cite[Lemma 4.2]{Weber2025}, it also follows that the weak limit of the sequence $\{u_h\}_{h>0}$ is weakly divergence free which implies that $u$ is divergence free a.e.\ since it lies in $L^2(0,T;H^1_0(\dom))$. Due to Lemma~\ref{lem:samelimits}, it is sufficient to prove strong convergence in $L^2$ of one of the approximations for each $u$ and $Q$ respectively, therefore we will show this for $\{Q_h\}_{h>0}$ and $\{\tu_h\}_{h>0}$.
We start by proving the approximate weak equicontinuity, for which we need the following technical lemma:
\begin{lemma}
	\label{lem:timecontinuityutilde}
	Assume that $h^\sigma\leq C {\Delta t}$ for some $2\leq  \sigma\leq k+1$, where $k$ is the polynomial degree of $\Uh$. Then the approximations $u_h$ computed by~\eqref{eq:step1fully}--\eqref{eq:projectionfullydiscrete} satisfy for any $0< \tau <T$,  any $\Delta t\leq t\leq T-\tau$ and $v\in C^\infty_{c,\Div}(\dom)$,
	\begin{equation}
		\label{eq:timecontutildeweak}
		\left| \left(3(u_h(t+\tau)-u_h(t))-(u_h(t+\tau-\Delta t)-u_h(t-\Delta t)),v\right)\right|\leq C_v( \sqrt{\Delta t}+h^\sigma + \sqrt{\tau}).
	\end{equation}
	The approximations $Q_h$ satisfy  
	\begin{equation}
		\label{eq:timecontQweak}
	\norm{3(Q_h(t+\tau)-Q_h(t))-(Q_h(t+\tau-\Delta t)-Q_h(t-\Delta t))}_{L^{6/5}(\dom)}\leq C   \left(\sqrt{\tau}+\sqrt{\Delta t}\right),
	\end{equation}
	and
	\begin{equation}\label{eq:timederQ}
	\norm{D_t^- Q_h}_{L^2(0,T;L^{6/5}(\dom))} \leq C,
	\end{equation}
	where $C$ does not depend on $h,\Delta t$, and
	where we denoted
	\begin{equation}\label{eq:timediffdef1}
		D_t^- Q_h(t): = \frac{3 Q_h(t)-4 Q_h(t-\Delta t)+ Q_h(t-2\Delta t)}{2\Delta t},
	\end{equation} 
	for $t>\Delta t$ and
	\begin{equation}\label{eq:timediffdef2}
		D_t^- Q_h(t) := \frac{Q_h(t)-Q_h(t-\Delta t)}{\Delta t},
	\end{equation}
	for $t\in [0,\Delta t]$. Finally, the approximations $r_h$ satisfy for any $w\in H^{2}(\dom)$,
	\begin{equation}\label{eq:rcontinuity}
		\left|(3(r_h(t+\tau)-r_h(t))-(r_h(t+\tau-\Delta t)-r_h(t-\Delta t)),w)\right|\leq C_w \left(\sqrt{\tau}+\sqrt{\Delta t}+ h\right).
	\end{equation}
\end{lemma}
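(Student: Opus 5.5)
The common idea is to telescope the BDF2 difference quotient over the time steps between $t$ and $t+\tau$. If $t\in(t^{n-1},t^n]$ and $t+\tau\in(t^{n+k-1},t^{n+k}]$, then
\begin{equation*}
3(u_h(t+\tau)-u_h(t))-(u_h(t+\tau-\Delta t)-u_h(t-\Delta t))=\sum_{m=n}^{n+k-1}\left(3u_h^{m+1}-4u_h^m+u_h^{m-1}\right).
\end{equation*}
The same identity holds for $Q_h$ and $r_h$. There are $k\lesssim \tau/\Delta t+1$ terms. The scheme gives an expression for each summand, and we bound the resulting sum with Cauchy--Schwarz in time using the energy bounds of Lemma~\ref{lem:discenergyestimate}. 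A sum over $k$ steps of $\Delta t\, a_m$ with $\Delta t\sum a_m^2\le C$ is bounded by $\sqrt{k\Delta t}\,C\lesssim\sqrt{\tau}+\sqrt{\Delta t}$. Steps with $m\le 1$, where the Euler start is used, are treated separately using the first-step bounds in Lemma~\ref{lem:discenergyestimate}.

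\textbf{Velocity.} Write $3u^{m+1}_h-4u^m_h+u^{m-1}_h=3(u^{m+1}_h-\tu^{m+1}_h)+(3\tu^{m+1}_h-4u^m_h+u^{m-1}_h)$. Since $v\in C^\infty_{c,\Div}$ does not lie in $\Uh$, we insert a discrete approximation $v_h$: the Stokes/Ritz projection, or $\PUh v$, with $\norm{v-v_h}_{L^2}\le Ch^{\sigma}\norm{v}_{H^{\sigma}}$ for $\sigma\le k+1$.
\begin{itemize}
\item[(i)] Test \eqref{eq:projection1} with $v$. The pressure difference gives $(\Grad(p^{m+1}_h-p^m_h),v)=-(p^{m+1}_h-p^m_h,\Div v)=0$, so $(u^{m+1}_h-\tu^{m+1}_h,v)=0$ exactly.
\item[(ii)] Test \eqref{eq:udisc} with $v_h$ and write the leftover as $(3\tu^{m+1}_h-4u^m_h+u^{m-1}_h,v-v_h)$. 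After summation this is bounded by $\sum_m\norm{\cdot}_{L^2}h^\sigma$.
\end{itemize}
A priori the number of summands times $\norm{\cdot}_{L^2}$ is only of size $k$, so we need $k h^\sigma\lesssim \tau h^\sigma/\Delta t$. This is where $h^\sigma\le C\Delta t$ enters, giving a bound $C(\tau+\Delta t)$. I expect this bookkeeping to be the main obstacle and would try the Stokes projection first.

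The remaining terms are all multiplied by $2\Delta t$:
\begin{itemize}
\item $\mu(\Grad\tu_h,\Grad v_h)$: bounded by $\norm{\Grad\tu_h}_{L^2}\norm{v}_{H^1}$, square-summable in time.
\item $b(\hu,\tu,v_h)$: bounded by $\norm{\hu}_{L^2}\norm{\tu}_{H^1}\norm{v_h}_{W^{1,\infty}}$ after the skew form.
\item $(\sigma_h,\Grad v_h)$: with $\sigma$ quadratic in $\hQ$ times $\HH$, bounded by $\norm{\HH_h}_{L^2}(1+\norm{\bar Q_h}_{L^6}^2)\norm{\Grad v_h}_{L^6}$.
\item $(\HH_h\Grad\bar Q_h,v_h)$: bounded by $\norm{\HH_h}_{L^2}\norm{\Grad\bar Q_h}_{L^2}\norm{v_h}_{L^\infty}$.
\item $\Grad p^m_h$: tested against $v_h$ rather than $v$, this gives $(\Grad p^m_h,v_h-v)$. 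Using the weak bound $\Delta t\norm{\Grad p_h}\le C$, the contribution is $\lesssim k h^\sigma$, controlled as in (ii).
\end{itemize}
Each of these is $L^2$ in time by the energy bounds, which yields \eqref{eq:timecontutildeweak}.

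\textbf{Q-tensor.} The estimate \eqref{eq:timederQ} comes from the equation \eqref{eq:Qdisc} read in $L^{6/5}$. The main difficulty is that $Y\in\Mh$ only, so the equation identifies the $L^2$-projection $\PMh$ of the right-hand side. I would use $L^{6}$-stability of $\PMh$ on quasi-uniform meshes, which by duality gives $\norm{\PMh g}_{L^{6/5}}\le C\norm{g}_{L^{6/5}}$. The right-hand side consists of:
\begin{itemize}
\item $(\tu_h\cdot\Grad)\bar Q_h$, bounded in $L^{6/5}$ by $\norm{\tu_h}_{L^3}\norm{\Grad\bar Q_h}_{L^2}$, hence by $\norm{\tu_h}_{H^1}$;
\item $s_h$, bounded by $\norm{\Grad\tu_h}_{L^2}(1+\norm{\bar Q_h}_{L^6}^2)$, which lies in $L^{6/5}$ via Hölder with exponents $1/2+1/3$;
\item $M\HH_h$, which lies in $L^2$.
\end{itemize}
All three are in $L^2(0,T;L^{6/5})$, and telescoping then gives \eqref{eq:timecontQweak}.

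\textbf{Auxiliary variable $r$.} Use \eqref{eq:rdisc} with $w$ replaced by $\Ih w$, which is legitimate by \eqref{eq:masslumpinginterpolation}. The mass-lumping error is handled by \eqref{eq:masslumpingestimate}: $|(f,w)-(f,\Ih w)_h|\lesssim h\norm{f}_{L^2}\norm{w}_{H^2}$, with $\norm{r_h}_{L^2}$ bounded. The main term is $(P(\bar Q_h):2\Delta t D_t^-Q_h,\Ih w)_h$. Since $P$ is bounded ($|P(Q)|\lesssim 1+|Q|$ by its Lipschitz property), we use \eqref{eq:hhoelder} with $L^{6/5}$–$L^6$ pairing, after passing $D_t^-Q_h$ from the lumped to the standard norm via \eqref{eq:masslumpedstability}. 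Together with \eqref{eq:timederQ} and $\norm{\Ih w}_{L^\infty}\lesssim\norm{w}_{H^2}$ in $d\le3$, this yields $\sqrt{\tau}+\sqrt{\Delta t}$, and the lumping error contributes the $h$ term in \eqref{eq:rcontinuity}.
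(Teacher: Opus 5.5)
Your proof is correct and follows the paper's route. You telescope the BDF2 combination, insert the scheme, and apply Cauchy--Schwarz in time against the energy bounds; for $Q_h$ you use the $L^p$-stability of $\PMh$ by duality, and for $r_h$ the mass-lumping error plus the lumped H\"older inequality.

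The only deviation is in the velocity bookkeeping. You use that \eqref{eq:projection1} is an identity in $\Yh$, hence holds pointwise, so $(u^{m+1}_h-\tu^{m+1}_h,v)=0$ exactly. You then pay per-step interpolation and old-pressure errors totalling $C(\tau/\Delta t+1)h^\sigma\le C(\tau+\Delta t)$. The paper instead tests the full update with $\Ih^u v$, bounds $v-\Ih^u v$ once on the telescoped sum, and needs $h^\sigma\le C\Delta t$ only for $(\Grad p^{m+1}_h,\Ih^u v-v)$. Both work; taking $v_h=\Ih^u v$ guarantees the $W^{1,\infty}$ bounds on $v_h$ that you use.
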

\begin{proof}
	We let $\tau>0$ and $t\in [\Delta t,T-\tau]$.
	Then we let $m_1\in \N$ such that $u_h(t) = u_h^{m_1+1}$ and $m_2\in \N$ such that $u_h(t+\tau) = u_h^{m_2+1}$.  Since $t+\tau>\Delta t$, we must have $m_2\geq 1$. Since $t\geq \Delta t$, we have $m_1\geq  0$. We then write
	\begin{align*}
		3(u_h(t+\tau)-u_h(t))-(u_h(t+\tau-\Delta t)-u_h(t-\Delta t)) &= 3 u_h^{m_2+1}-u_h^{m_2}-3 u_h^{m_1+1}+ u_h^{m_1} \\
		&= \sum_{m=m_1+1}^{m_2}(3 u^{m+1}_h-4u_h^m+ u_h^{m-1}).
	\end{align*}
	And similarly,
	\begin{equation*}
		3(Q_h(t+\tau)-Q_h(t))-(Q_h(t+\tau-\Delta t)-Q_h(t-\Delta t)) = \sum_{m=m_1+1}^{m_2}(3 Q^{m+1}_h-4 Q_h^m+ Q_h^{m-1}),
	\end{equation*}
	and
		\begin{equation*}
		3(r_h(t+\tau)-r_h(t))-(r_h(t+\tau-\Delta t)-r_h(t-\Delta t)) = \sum_{m=m_1+1}^{m_2}(3 r^{m+1}_h-4 r_h^m+ r_h^{m-1}).
	\end{equation*}
	Thus, we can take a test function $v\in C^\infty_{c,\Div}(\dom)$ and plug in the scheme~\eqref{eq:step1fully}--\eqref{eq:projectionfullydiscrete},
	\begin{align*}
		&(3(u_h(t+\tau)-u_h(t))-(u_h(t+\tau-\Delta t)-u_h(t-\Delta t)) ,v) \\
		& = (3(u_h(t+\tau)-u_h(t))-(u_h(t+\tau-\Delta t)-u_h(t-\Delta t)) ,\Ih^u v)\\ 
		&\qquad +(3(u_h(t+\tau)-u_h(t))-(u_h(t+\tau-\Delta t)-u_h(t-\Delta t)) ,v-\Ih^u v) \\
		&\quad  =  \sum_{m=m_1+1}^{m_2}(3 u^{m+1}_h-4u_h^m+ u_h^{m-1},\Ih^u v)\\
		&\qquad +(3(u_h(t+\tau)-u_h(t))-(u_h(t+\tau-\Delta t)-u_h(t-\Delta t)) ,v-\Ih^u v) \\
		&\quad  = -2\Delta t   \sum_{m=m_1+1}^{m_2}\underbrace{b(\hu_h^{m+1},\tu_h^{m+1},\Ih^u v)}_{\text{I}} - 2\Delta t\sum_{m=m_1+1}^{m_2}\underbrace{\mu(\Grad \tu_h^{m+1},\Grad \Ih^u v)}_{\text{II}}  \\
		&\quad  \quad -2\Delta t\sum_{m=m_1+1}^{m_2} \underbrace{\left(\Grad p^{m+1}_h,\Ih^u v\right)}_{\text{III}}+2\Delta t \sum_{m=m_1+1}^{m_2}\underbrace{(f^{m+1},\Ih^u v)}_{\text{IV}}\\
		&\qquad \quad -2\Delta t \sum_{m=m_1+1}^{m_2}\underbrace{(\sigma_h^{m+1},\Grad \Ih^u v)}_{\text{V}} - 2\Delta t \sum_{m=m_1+1}^{m_2}\underbrace{(\HH_h^{m+1}\Grad \hQ^{m+1}_h, \Ih^u v)}_{\text{VI}} \\
		&\qquad \quad +\underbrace{(3(u_h(t+\tau)-u_h(t))-(u_h(t+\tau-\Delta t)-u_h(t-\Delta t)) ,v-\Ih^u v) }_{\text{VII}},
	\end{align*}
	where $\Ih^u$ is the interpolation operator on $\Uh$, as for example constructed in~\cite[Section 9.1]{Ern2021}.
	We estimate each of the terms I -- VII:
	For the first, we use the stability of the interpolation operator~\cite[Theorem 11.13]{Ern2021}:
	\begin{align*}
		|\text{I}|&=\left|b(\hu_h^{m+1},\tu_h^{m+1},\Ih^u v)\right|\\
		& = \left|\frac12\int_{\dom}\left((\hu^{m+1}_h\cdot\Grad )\tu_h^{m+1}\cdot \Ih^u v -(\hu_h^{m+1}\cdot\Grad)\Ih^u v \cdot\tu^{m+1}_h\right) dx \right|\\
		& \leq\frac12 \Big(\norm{\hu^{m+1}_h}_{L^2} \norm{\Grad\tu_h^{m+1}}_{L^2}\norm{\Ih^u v}_{L^\infty}+   \norm{\hu^{m+1}_h}_{L^2}\norm{\tu_h^{m+1}}_{L^2}\norm{\Grad \Ih v}_{L^\infty}\Big)\\
		& \leq C\Big(\norm{\hu^{m+1}_h}_{L^2} \norm{\Grad\tu_h^{m+1}}_{L^2}\norm{ v}_{L^\infty}+   \norm{\hu^{m+1}_h}_{L^2}\norm{\tu_h^{m+1}}_{L^2}\norm{v}_{W^{1,\infty}}\Big).
	\end{align*}
	For the second term, we have
	\begin{align*}
		|\text{II}|
		& \leq\mu \norm{\Grad \tu^{m+1}_h}_{L^2(\dom)}\norm{\Grad\Ih^u v}_{L^2(\dom)}\\
		& \leq C \norm{\Grad \tu^{m+1}_h}_{L^2(\dom)}\norm{ v}_{H^2(\dom)},
	\end{align*}
	using the stability of the interpolation operator $\Ih^u$.
	For the third term, III, we have, using that $v$ is divergence free, the approximation property of the interpolation operator~\cite[Theorem 11.13]{Ern2021}, and that by the energy estimate $\norm{\Grad p}_{L^\infty(0,T;L^2(\dom))}\leq C \Delta t^{-1}$,
	\begin{align}\label{eq:pressuretimecont}
		|\text{III}|&\leq C \norm{\Grad p^{m+1}_h}_{L^2(\dom)}\norm{\Ih^u v-v}_{L^2( \dom)}\\
		& \leq C\norm{\Grad p^{m+1}_h}_{L^2(\dom)}h^\sigma\norm{ v}_{H^\sigma( \dom)}\notag\\
		& \leq C \frac{h^\sigma}{\Delta t}\norm{ v}_{H^\sigma( \dom)}\notag\\
		& \leq  C \norm{ v}_{H^\sigma( \dom)},\notag
	\end{align}
	under the condition that $h^\sigma\leq C {\Delta t}$.
	For term IV, we have
	\begin{equation*}
		|\text{IV}|  \leq C\norm{f^{m+1}}_{ L^2(\dom)}\norm{v}_{ H^2(\dom)} .
	\end{equation*}
	For the fifth term, we estimate using the Sobolev inequality and the stability of the interpolation operator $\Ih^u$,
	\begin{align*}
		|\text{V}|& \leq C\left(\norm{\hQ^{m+1}_h}_{L^2}\norm{\HH^{m+1}_h}_{L^2}\norm{\Grad \Ih v}_{L^\infty}+ \norm{\hQ_h^{m+1}}_{L^4}^2 \norm{\HH_h^{m+1}}_{L^2} \norm{\Grad \Ih v}_{L^\infty}+ \norm{\HH^{m+1}_h}_{L^2}\norm{\Grad \Ih v}_{L^2}\right) \\
		& \leq C \left(\norm{\hQ_h^{m+1}}_{H^1}^2+1\right)\norm{\HH^{m+1}_h}_{L^2}\norm{ v}_{W^{1,\infty}}.
	\end{align*}
	For the sixth term, we have using the discrete energy estimate
	\begin{align*}
		|\text{VI}|& \leq \norm{\HH^{m+1}_h}_{L^2(\dom)}\norm{\Grad\hQ_h^{m+1}}_{L^2(\dom)} \norm{\Ih^u v}_{L^\infty}\\
		& \leq C \norm{\HH^{m+1}_h}_{L^2(\dom)}\norm{\hQ_h^{m+1}}_{H^1(\dom)} \norm{ v}_{L^\infty}.
	\end{align*}
	For term VII, we use the discrete energy estimate, Lemma~\ref{lem:discenergyestimate}, and interpolation estimates:
	\begin{equation*}
		|\text{VII}|\leq C \norm{u_h}_{L^\infty(0,T;L^2(\dom))} h^\sigma \norm{v}_{H^\sigma(\dom)}\leq C h^\sigma \norm{v}_{H^\sigma(\dom)}
	\end{equation*}
	
	Combining the estimates I -- VII, and using that $\sigma\geq 2$, $d\leq 3$, we have, 
	\begin{align*}
		&\left|\left(3(u_h(t+\tau)-u_h(t))-(u_h(t+\tau-\Delta t)-u_h(t-\Delta t)),v\right)\right|\\
		&\quad  \leq C \Delta t  \sum_{m=m_1+1}^{m_2} \Bigg( \norm{\hu^{m+1}_h}_{L^2} \norm{\tu_h^{m+1}}_{H^1}+ \norm{\Grad \tu^{m+1}_h}_{L^2(\dom)}\\
		&\qquad \hphantom{\leq C \Delta t  \sum_{m=m_1+1}^{m_2} \Big(}+ 1+ \norm{f^{m+1}}_{ L^2(\dom)}+ \left(\norm{\hQ_h^{m+1}}_{H^1}^2+1\right)\norm{\HH^{m+1}_h}_{L^2} \Bigg)\norm{v}_{H^\sigma\cap W^{1,\infty}}\\
		&\qquad + C h^\sigma \norm{v}_{H^\sigma}\\
		& \leq C \Bigg(\left(1+\norm{\tu_h}_{L^\infty(0,T;L^2(\dom))}\right)\Delta t\sum_{m=m_1+1}^{m_2} \norm{\Grad \tu_h^{m+1}}_{L^2} + C \Delta t (m_2-m_1)+Ch^\sigma\\
		&\qquad + C\Delta t \sum_{m=m_1+1}^{m_2}\norm{f^{m+1}}_{L^2} + C\left(1+\norm{Q_h}_{L^\infty(0,T;H^1(\dom))}^2\right)\Delta t\sum_{m=m_1+1}^{m_2}\norm{\HH^{m+1}_h}_{L^2}		\Bigg) \norm{v}_{H^\sigma\cap W^{1,\infty}}.
	\end{align*}
Using that $\Delta t (m_2-m_1) \leq \tau +\Delta t$, we can estimate this, using the discrete energy estimate:
			\begin{align*}
			&\left|\left(3(u_h(t+\tau)-u_h(t))-(u_h(t+\tau-\Delta t)-u_h(t-\Delta t)),v\right)\right|\\
			& \leq C \Bigg(\left(1+\norm{\tu_h}_{L^\infty(0,T;L^2(\dom))}\right)\sqrt{(m_2-m_1)\Delta t}\left(\Delta t\sum_{m=m_1+1}^{m_2} \norm{\Grad \tu_h^{m+1}}_{L^2}^2\right)^{1/2} \\
			&\qquad + C (\tau+\Delta t)+C h^\sigma\\
			&\qquad + C\sqrt{(m_2-m_1)\Delta t}\left(\Delta t \sum_{m=m_1+1}^{m_2}\norm{f^{m+1}}_{L^2}^2\right)^{1/2} \\
			&\qquad + C\left(1+\norm{Q_h}_{L^\infty(0,T;H^1(\dom))}^2\right)\sqrt{(m_2-m_1)\Delta t}\left(\Delta t\sum_{m=m_1+1}^{m_2}\norm{\HH^{m+1}_h}_{L^2}^2\right)^{1/2}		\Bigg) \norm{v}_{H^\sigma\cap W^{1,\infty}}\\
			& \leq C (\sqrt{\Delta t} + \sqrt{\tau}+h^\sigma) \norm{v}_{H^\sigma\cap W^{1,\infty}},
	\end{align*}
	which proves the estimate for $u_h$.

Next, we prove the estimate for the variable $Q_h$. This is similar, except that we use the $L^2$-orthogonal projection $\PMh:L^2(\dom)\to \Mh$ to project the test function onto the finite element space. The projection $\PMh$ is defined via
\begin{equation}\label{eq:L2projectionMh}
	(\PMh Y,Z) =(Y,Z),\quad \forall \, Z\in \Mh,
\end{equation} 
and satisfies
\begin{equation}
	\label{eq:L2stability}
	\norm{\PMh Y}_{L^r(\dom)}\leq C \norm{Y}_{L^r(\dom)},\quad \forall Y\in L^r(\dom).\quad r\in [1,\infty].
\end{equation}
This was proved for example in~\cite{Douglas1975} for quasi-uniform meshes, and see also~\cite{Diening2021} for a more recent result with fewer assumptions.
So we take a test function $Y\in L^6(\dom)$ and compute, using~\eqref{eq:L2projectionMh},
	\begin{align*}
	&(3(Q_h(t+\tau)-Q_h(t))-(Q_h(t+\tau-\Delta t)-Q_h(t-\Delta t)) ,Y) \\
	& \quad = (3(Q_h(t+\tau)-Q_h(t))-(Q_h(t+\tau-\Delta t)-Q_h(t-\Delta t)) ,\PMh Y)\\
	&\quad  =  \sum_{m=m_1+1}^{m_2}(3 Q^{m+1}_h-4Q_h^m+ Q_h^{m-1},\PMh Y)\\
	& = -2\Delta t \sum_{m=m_1+1}^{m_2}\underbrace{((\tu^{m+1}_h\cdot\Grad)\hQ_h^{m+1},\PMh Y)}_{\text{I}}+2\Delta t  \sum_{m=m_1+1}^{m_2}\underbrace{(s_h^{m+1},\PMh Y)}_{\text{II}}\\
	&\qquad\quad +2M\Delta t \sum_{m=m_1+1}^{m_2}\underbrace{(\HH_h^{m+1},\PMh Y)}_{\text{III}}.
\end{align*}
	We estimate the three terms:
		\begin{align}\label{eq:termeis}
	\begin{split}
			|\text{I}|& \leq C\norm{\tu_h^{m+1}}_{L^6}\norm{\Grad \hQ_h^{m+1}}_{L^2}\norm{\PMh Y}_{L^3}\\
		& \leq C \norm{\tu_h^{m+1}}_{H^1}\norm{Q_h}_{L^\infty(0,T;H^1(\dom))}\norm{Y}_{L^3},
	\end{split}
	\end{align}
	using~\eqref{eq:L2stability}.
	For the second term,
		\begin{align}\label{eq:termzwei}
	\begin{split}
			|\text{II}|& \leq C \left(\norm{\Grad\tu_h^{m+1}}_{L^2}\left(1+  \norm{\hQ_h^{m+1}}_{L^6}^2\right)\right)\norm{\PMh Y}_{L^6}\\
		&\leq C \norm{\Grad\tu_h^{m+1}}_{L^2}\left(1 + \norm{ Q_h}_{L^\infty(0,T;H^1(\dom))}^2\right)\norm{Y}_{L^6},
	\end{split}
	\end{align}
	again using~\eqref{eq:L2stability}.
	The third term, we estimate using~\eqref{eq:L2projectionMh},
		\begin{equation}\label{eq:termdrue}
		|\text{III}| =\left|\left(\HH_h^{m+1},Y\right)\right| \leq  \norm{\HH_h^{m+1}}_{L^2}\norm{ Y}_{L^2}.
	\end{equation}
	Thus, we obtain after combining these estimates
	\begin{align*}
		& \norm{3(Q_h(t+\tau)-Q_h(t))-(Q_h(t+\tau-\Delta t)-Q_h(t-\Delta t))}_{L^{6/5}(\dom)}\\
		& = \sup_{0\neq Y\in L^{6}(\dom)}\frac{\left|\left( 3(Q_h(t+\tau)-Q_h(t))-(Q_h(t+\tau-\Delta t)-Q_h(t-\Delta t)) ,Y \right)\right|}{\norm{Y}_{L^{6}(\dom)}}\\
	& \leq C \Delta t \sum_{m=m_1+1}^{m_2} \left(\norm{\tu_h^{m+1}}_{H^1}\norm{Q_h}_{L^\infty(0,T;H^1(\dom))} + \norm{\Grad \tu_h^{m+1}}_{L^2}\left(1+\norm{Q_h}_{L^\infty(0,T;H^1(\dom))}^2\right)+\norm{\HH^{m+1}_h}_{L^2} \right) \\
	&\leq C \left(\sqrt{\tau}+\sqrt{\Delta t}\right), 
\end{align*}
	which proves the second estimate. The estimate on the time difference $D_t^-Q_h$,~\eqref{eq:timederQ} is proved in a very similar way: We first observe
	\begin{align*}
		\norm{D_t^- Q_h}_{L^2(0,T;L^{6/5}(\dom))}^2 =\Delta t \sum_{m=0}^{N-1} \norm{D_t^- Q_h^{m+1}}_{L^{6/5}(\dom)}^2=\Delta t \sum_{m=0}^{N-1}  \left(\sup_{0\neq Y\in L^{6}(\dom)}\frac{\left|(D_t^- Q_h^{m+1},Y)\right|}{\norm{Y}_{L^6(\dom)}}\right)^2.
	\end{align*}
	We use the orthogonality property of the $L^2$-projection \eqref{eq:L2projectionMh} and plug in the scheme for $D_t^- Q_h^{m+1}$ and then use estimates~\eqref{eq:termeis},~\eqref{eq:termzwei} and~\eqref{eq:termdrue} at level $m+1$ for $m\geq 1$ (and almost identical estimates for $m=0$, plugging in~\eqref{eq:Qfirststep}), to obtain
	\begin{align*}
			&\norm{D_t^- Q_h}_{L^2(0,T;L^{6/5}(\dom))}^2 \\
			&\leq C \Delta t  \sum_{m=0}^{N-1} \left(\norm{\tu_h^{m+1}}_{H^1}^2\norm{Q_h}^2_{L^\infty(0,T;H^1(\dom))} + \norm{\Grad \tu_h^{m+1}}^2_{L^2}\left(1+\norm{Q_h}_{L^\infty(0,T;H^1(\dom))}^4\right)+\norm{\HH^{m+1}_h}^2_{L^2} \right)\\
			& \leq C,
	\end{align*}
	which is bounded uniformly in $h,\Delta t$ thanks to the discrete energy estimate, Lemma~\ref{lem:discenergyestimate}. We continue to proving the estimate for $r_h$. We have for $w\in H^2(\dom)$, using Lemmas~\ref{lem:masslumped} and~\ref{lem:masslumpedLp}, the Lipschitz continuity of $P$, Sobolev inequalities, and the energy estimate, Lemma~\ref{lem:discenergyestimate},
\begin{align*}
	&\left|(3(r_h(t+\tau)-r_h(t))-(r_h(t+\tau-\Delta t)-r_h(t-\Delta t)),w)\right|\\
	& \leq \big|(3(r_h(t+\tau)-r_h(t))-(r_h(t+\tau-\Delta t)-r_h(t-\Delta t)),w)\\
	&\qquad \qquad -(3(r_h(t+\tau)-r_h(t))-(r_h(t+\tau-\Delta t)-r_h(t-\Delta t)),w)_h\big|\\
	& \quad + \left|(3(r_h(t+\tau)-r_h(t))-(r_h(t+\tau-\Delta t)-r_h(t-\Delta t)),w)_h\right|\\
	&\leq  C h \norm{r_h}_{L^\infty(0,T;L^2(\dom))}\norm{  w}_{H^2}+ 2\Delta t \sum_{m=m_1+1}^{m_2} |(D_t^- r_h^{m+1},w)_h |\\
	& = C h \norm{r_h}_{L^\infty(0,T;L^2(\dom))}\norm{w}_{H^2}+ 2\Delta t \sum_{m=m_1+1}^{m_2} |(P(\hQ_h^{m+1}):D_t^- Q_h^{m+1},w)_h |\\
	& \leq C  h \norm{r_h}_{L^\infty(0,T;L^2(\dom))}\norm{w}_{H^2}+ 2\Delta t \sum_{m=m_1+1}^{m_2} \norm{\Ih (P(\hQ_h^{m+1}):D_t^- Q_h^{m+1})}_{h,1}\norm{\Ih w}_{h,\infty}\\
	& \leq C  h \norm{r_h}_{L^\infty(0,T;L^2(\dom))}\norm{w}_{H^2}+ C \Delta t \sum_{m=m_1+1}^{m_2}\norm{\Ih (P(\hQ_h^{m+1}):D_t^- Q_h^{m+1})}_{h,1}\norm{ w}_{L^\infty}\\
	& \leq C  h \norm{r_h}_{L^\infty(0,T;L^2(\dom))}\norm{w}_{H^2}+ C \Delta t \sum_{m=m_1+1}^{m_2}\left(\int_{\dom}\sum_{z\in \mathcal{N}_h} |P(\hQ_h^{m+1}(z))|^6\varphi_z dx\right)^{1/6}\norm{D_t^- Q_h^{m+1}}_{L^{6/5}}\norm{ w}_{L^\infty}\\
		& \leq C  h \norm{r_h}_{L^\infty(0,T;L^2(\dom))}\norm{w}_{H^2}+ C \Delta t \sum_{m=m_1+1}^{m_2}\left(\int_{\dom}\sum_{z\in \mathcal{N}_h} |\hQ_h^{m+1}(z)|^6\varphi_z dx\right)^{1/6}\norm{D_t^- Q_h^{m+1}}_{L^{6/5}}\norm{ w}_{L^\infty}\\
	&  \leq C  h \norm{r_h}_{L^\infty(0,T;L^2(\dom))}\norm{w}_{H^2}+ C \Delta t \sum_{m=m_1+1}^{m_2}\norm{\hQ_h^{m+1}}_{L^6}\norm{D_t^- Q_h^{m+1}}_{L^{6/5}}\norm{ w}_{L^\infty}\\
&  \leq C  h \norm{r_h}_{L^\infty(0,T;L^2(\dom))}\norm{w}_{H^2}+ C \Delta t \norm{Q_h}_{L^\infty(0,T;H^1(\dom))} \sum_{m=m_1+1}^{m_2}\norm{D_t^- Q_h^{m+1}}_{L^{6/5}}\norm{ w}_{L^\infty}\\
&  \leq C  h \norm{r_h}_{L^\infty(0,T;L^2(\dom))}\norm{w}_{H^2}+ C \sqrt{\Delta t (m_2-m_1)} \norm{Q_h}_{L^\infty(0,T;H^1(\dom))} \norm{D_t^- Q_h}_{L^2(0,T;L^{6/5}(\dom))}\norm{ w}_{L^\infty}\\
& \leq C(h +\sqrt{\Delta t}+\sqrt{\tau})\norm{w}_{H^2}.
		\end{align*} 
This proves the last estimate,~\eqref{eq:rcontinuity}.
\end{proof}

Next, we use this lemma to show uniform in $\Delta t$ and $h$ weak equicontinuity in time for $Q_h$ and $u_h$. Specifically, we show:
\begin{lemma}
	\label{lem:weakapproxcontinuity}
	Assume that $h^\sigma\leq C \Delta t$, for some $2\leq \sigma\leq k+1$ where $k$ is the maximal polynomial degree of $\Uh$. Let $\ell$ be the maximal polynomial degree of the elements in $\Ph$.
	Then the approximations $u_h$ computed  by~\eqref{eq:step1fully} - \eqref{eq:projectionfullydiscrete} satisfy for any $v\in C^\infty_{c,\Div}(\dom)$ and any $\tau>0$ and any $t\in [0,T-\tau]$
	\begin{equation}
		\label{eq:weakcontu}
		\left|\left(u_h(t+\tau)-u_h(t),v\right)\right| \leq C_v\left(\sqrt{\tau}+ \sqrt{\Delta t}+h^\sigma\right)
	\end{equation}
	and for any $g\in C^\infty(\overline{\dom})$,
		\begin{equation}
		\label{eq:weakcontu2}
		\left|\left(u_h(t+\tau)-u_h(t),\Grad g\right)\right| \leq C_g h^\ell
	\end{equation}
	where $C_v, C_g$ are constants independent of $h,\Delta t>0$. The approximations $Q_h$ satisfy,
	\begin{equation}
		\label{eq:weakcontQ}
			\norm{Q_h(t+\tau)-Q_h(t)}_{L^{6/5}(\dom)} \leq C\left(\sqrt{\tau}+ \sqrt{\Delta t}\right),
	\end{equation}
	and the approximations $r_h$ satisfy for any $w\in H^2(\dom)$,
		\begin{equation}
		\label{eq:weakcontr}
		\left|\left(r_h(t+\tau)-r_h(t),w\right)\right| \leq C_w\left(\sqrt{\tau}+ \sqrt{\Delta t}+h\right),
	\end{equation}
	In particular, the maps $t\mapsto (u_h(t),v)$ and $t\mapsto (Q_h(t),Y)$ (for $Y\in L^6(\dom)$), and $t\mapsto (r_h(t),w)$ are equicontinuous up to errors in $\Delta t$ and $h$ that vanish as $h,\Delta t\to 0$.
\end{lemma}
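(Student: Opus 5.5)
The plan is to deduce Lemma~\ref{lem:weakapproxcontinuity} from Lemma~\ref{lem:timecontinuityutilde} by inverting the BDF2 difference operator. Fix $\tau$ and set $a_h(t):=u_h(t+\tau)-u_h(t)$, and similarly for $Q_h$ and $r_h$. Lemma~\ref{lem:timecontinuityutilde} controls the combination $3a_h(t)-a_h(t-\Delta t)$ for $t\ge\Delta t$. Writing $g(t):=3a_h(t)-a_h(t-\Delta t)$, we get $a_h(t)=\tfrac13 a_h(t-\Delta t)+\tfrac13 g(t)$. Iterating $n$ times, where $n$ is the number of steps needed to bring $t$ back into $[0,\Delta t)$, gives
\begin{equation*}
a_h(t)=3^{-n}a_h(t-n\Delta t)+\sum_{j=0}^{n-1}3^{-j-1}g(t-j\Delta t).
\end{equation*}
The geometric weights sum to at most $1/2$. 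Hence $|(a_h(t),v)|\le \tfrac12\sup_s|(g(s),v)|+|(a_h(t_0),v)|$ with $t_0\in[0,\Delta t)$, and the first term is bounded by $C_v(\sqrt{\tau}+\sqrt{\Delta t}+h^\sigma)$. The same argument in the $L^{6/5}$ norm, using the triangle inequality, gives \eqref{eq:weakcontQ}. The pairing with $w\in H^2$ gives \eqref{eq:weakcontr}, with the extra $h$ term.

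It remains to bound the starting term, that is, $a_h(t_0)$ for $t_0\in[0,\Delta t)$, where $u_h(t_0)=u_h^1$ and $u_h(t_0+\tau)=u_h^{m_2+1}$. I would handle this by a direct telescoping from step $1$ to step $m_2+1$. I would write $u_h^{m_2+1}-u_h^1$ as a sum of BDF2 increments plus the term $u_h^1-u_h^0$. Alternatively, I would run the same inversion starting from $t_0+\Delta t$ and estimate $(u_h^1-u_h^0,v)$ and $(u_h^{2}-u_h^1,v)$ separately. The first-step Euler equations \eqref{eq:step1fully1step}--\eqref{eq:projectionfullydiscretestep1}, tested with $\Ih^u v$ exactly as in the proof of Lemma~\ref{lem:timecontinuityutilde}, give $|(u_h^1-u_h^0,v)|\le C_v(\sqrt{\Delta t}+h^\sigma)$. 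This uses the energy bounds on $\Grad\tu_h^1$, $\HH_h^1$, the bound $\Delta t\norm{\Grad p_h^1}\le C$, and $h^\sigma\le C\Delta t$. The analogous estimate holds for $Q_h^1-Q_h^0$ in $L^{6/5}$, and for $r$ via \eqref{eq:rfirststep}. Since the weight $3^{-n}\le1$, these $O(\sqrt{\Delta t}+h^\sigma)$ contributions are acceptable.

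For \eqref{eq:weakcontu2}, I would use the projection constraint \eqref{eq:projection2}: $(u_h^m,\Grad q)=0$ for all $q\in\Ph$ and all $m\ge0$. Hence
\begin{equation*}
(u_h(t+\tau)-u_h(t),\Grad g)=(u_h(t+\tau)-u_h(t),\Grad(g-\Ih^p g)),
\end{equation*}
where $\Ih^p$ is an interpolant into $\Ph$. I would adjust it by a constant to keep zero mean, which does not change the gradient. Then the energy bound on $\norm{u_h}_{L^\infty(0,T;L^2)}$ together with $\norm{\Grad(g-\Ih^p g)}_{L^2}\le Ch^\ell\norm{g}_{H^{\ell+1}}$ gives $C_g h^\ell$. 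The equicontinuity statement then follows immediately.

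The main obstacle is the bookkeeping near $t<\Delta t$, together with the correct accounting of the initial increment, since the BDF2 recursion is not available for the Euler step. I expect this to be settled by the separate first-step estimate described above. The other steps are routine consequences of the geometric-series inversion.
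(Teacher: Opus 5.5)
Your argument for \eqref{eq:weakcontu2} matches the paper's. Shifting $\Ih^p g$ by a constant to stay in $\Ph\subset L^2_0$ is a correct detail.

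The gap is the base case of your inversion for \eqref{eq:weakcontu}, \eqref{eq:weakcontQ} and \eqref{eq:weakcontr}. Iterating $a_h(t)=\tfrac13a_h(t-\Delta t)+\tfrac13 g(t)$ with the lag $\tau$ held fixed never shortens the lag. The leftover $a_h(t_0)=u_h(t_0+\tau)-u_h(t_0)=u_h^{m_2+1}-u_h^1$ is again a difference over the full lag $\tau$. It is the claim itself, merely moved to $t_0\in(0,\Delta t]$; continuing the recursion to $t\le 0$ only produces $u_h(\tau)-u_h^0$, which is still long-lag. So it is not an $O(\sqrt{\Delta t}+h^\sigma)$ quantity. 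Bounds on $u_h^1-u_h^0$ or $u_h^2-u_h^1$ say nothing about it, because it involves the far endpoint $u_h^{m_2+1}$. Your telescoping does not produce it either, since the BDF2 increments sum to
\[
\sum_{m=1}^{m_2}\bigl(3u_h^{m+1}-4u_h^m+u_h^{m-1}\bigr)=3\bigl(u_h^{m_2+1}-u_h^1\bigr)-\bigl(u_h^{m_2}-u_h^0\bigr).
\]
Isolating $u_h^{m_2+1}-u_h^1$ therefore leaves the one-step difference $u_h^{m_2+1}-u_h^{m_2}$ at an arbitrary time, and nothing in your plan controls it. The same defect affects the arguments for $Q_h$ and $r_h$.

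The missing ingredient is a uniform bound on one-step differences, and this is how the paper proceeds. Set $a=u_h(t+\tau)$, $a'=u_h(t+\tau-\Delta t)$, $b=u_h(t)$, $b'=u_h(t-\Delta t)$. Then
\[
2(a-b)=[3(a-b)-(a'-b')]-(a-a')+(b-b').
\]
The bracket is controlled by Lemma~\ref{lem:timecontinuityutilde}, and the two remaining terms are one-step differences.

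For these, run your geometric inversion with lag $\Delta t$ instead of $\tau$. Write $\delta^m=u_h^m-u_h^{m-1}$ and $g^{m+1}=3\delta^{m+1}-\delta^m=3u_h^{m+1}-4u_h^m+u_h^{m-1}$. Lemma~\ref{lem:timecontinuityutilde} with $\tau=\Delta t$ gives $|(g^{m+1},v)|\le C_v(\sqrt{\Delta t}+h^\sigma)$. The recursion $\delta^{m+1}=\tfrac13\delta^m+\tfrac13 g^{m+1}$ now terminates at the genuinely short-lag quantity $\delta^1=u_h^1-u_h^0$, which your Euler-step estimate does handle.

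Alternatively, you could expand $u_h^{m_2+1}-u_h^1=\sum_m\delta^m$ into a combination of the individual BDF2 increments and $\delta^1$, each with weight at most $\tfrac12$. You would then bound the increments one by one, absorbing the accumulated interpolation errors $m_2h^\sigma\lesssim\tau+\Delta t$ via $h^\sigma\le C\Delta t$. That route cannot be read off from Lemma~\ref{lem:timecontinuityutilde} as stated, since the lemma only controls unit-weight contiguous sums.
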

\begin{proof}
	If $t,t+\tau\leq \Delta t$, the proof of these facts follows exactly along the lines of Lemma~\ref{lem:timecontinuityutilde} with the scheme~\eqref{eq:step1fully} - \eqref{eq:projectionfullydiscrete} replaced by the backward Euler step~\eqref{eq:step1fully1step} - \eqref{eq:projectionfullydiscretestep1}. So we can assume without loss of generality that $t+\tau>\Delta t$. We start by proving~\eqref{eq:weakcontu}. Using the triangle inequality, we write 
	\begin{align}\label{eq:firsttriangleineq}
	\begin{split}
			\left|\left(u_h(t+\tau)-u_h(t),v\right)\right|& \leq \frac12 \left|\left(3(u_h(t+\tau)-u_h(t))-(u_h(t+\tau-\Delta t)-u_h(t-\Delta t)),v \right)\right| \\
		&\quad +\frac12 \left|(u_h(t+\tau)-u_h(t+\tau-\Delta t),v)\right|+ \frac12 \left|\left(u_h(t)-u_h(t-\Delta t),v \right)\right|.
	\end{split}
	\end{align}
	To estimate the first term on the right-hand side, we use Lemma~\ref{lem:timecontinuityutilde}.
	The second and the third term on the right-hand side can be estimated using the triangle inequality once more as follows (replace $t$ by $t+\tau$ for the second term):
	\begin{align}\label{eq:triangle2}
		\begin{split}
			\left|\left(u_h(t)-u_h(t-\Delta t),v \right)\right|&\leq \frac13\left|\left(3(u_h(t)-u_h(t-\Delta t))-(u_h(t-\Delta t)-u_h(t-2\Delta t)),v\right)\right|\\
			&\quad +\frac13 \left|\left(u_h(t-\Delta t)-u_h(t-2\Delta t),v\right)\right|.
		\end{split}
	\end{align}
	Writing $t=m\Delta t +\theta$ for some $m\in \N$ and $\theta\in [0,\Delta t)$ (and similarly $t+\tau=m'\Delta t+\theta'$ for the second term on the right-hand side of~\eqref{eq:firsttriangleineq}), we can iteratively use~\eqref{eq:triangle2} to write
	\begin{multline*}
			\left|\left(u_h(t)-u_h(t-\Delta t),v \right)\right|\\
			\leq \sum_{i=1}^m\frac{1}{3^i}\left|\left(3(u_h((m-i+1)\Delta t +\theta)-u_h((m-i)\Delta t +\theta))-(u_h((m-i)\Delta t +\theta)-u_h((m-i-1)\Delta t +\theta)),v\right)\right|\\
			\quad + \frac{1}{3^m} \left|\left(u_h(\theta)-u_h(0),v\right)\right|.
	\end{multline*}
	Using Lemma~\ref{lem:timecontinuityutilde}, we can estimate the right-hand side by
	\begin{equation}\label{eq:onestepbound}
			\left|\left(u_h(t)-u_h(t-\Delta t),v \right)\right|
		\leq C_v(\sqrt{\Delta t}+ h^\sigma)\sum_{i=1}^m\frac{1}{3^i} +\frac{1}{3^m}\left|\left(u_h^1-u_h^0,v\right)\right|\leq C_v (\sqrt{\Delta t}+ h^\sigma).
	\end{equation}
	Here we also used,
	\begin{equation*}
		\left|\left(u_h(\theta)-u_h(0),v\right)\right| =	\left|\left(u_h^1-u_h^0,v\right)\right|\leq C_v(\sqrt{\Delta t}+h^\sigma).
	\end{equation*}
	This is proved in the same way as~\eqref{eq:timecontutildeweak} by replacing the scheme~\eqref{eq:step1fully} - \eqref{eq:projectionfullydiscrete} by the backward Euler step~\eqref{eq:step1fully1step} - \eqref{eq:projectionfullydiscretestep1} and estimating the terms, therefore we omit the proof here. Using~\eqref{eq:onestepbound} for the second and third term and~\eqref{eq:timecontutildeweak} for the first term in~\eqref{eq:firsttriangleineq}, we obtain
	\begin{equation*}
		\left|\left(u_h(t+\tau)-u_h(t),v \right)\right| \leq C_v\left(\sqrt{\tau}+\sqrt{\Delta t}+ h^\sigma\right),
	\end{equation*}
	which proves the first estimate~\eqref{eq:weakcontu}.
	To prove~\eqref{eq:weakcontu2}, we take $g\in C^\infty(\overline{\dom})$ and use that $u_h$ is weakly discretely divergence free by~\eqref{eq:projection2}. Then we have, denoting $\Ih^p$ the interpolation operator on $\Ph$ as constructed, e.g., in~\cite[Section 9.1]{Ern2021},
	\begin{align*}
		\left|\left(u_h(t+\tau)-u_h(t),\Grad g\right)\right|& \leq 	\left|\left(u_h(t+\tau)-u_h(t),\Grad g - \Grad \Ih^p g\right)\right| + 	\left|\left(u_h(t+\tau)-u_h(t),\Grad \Ih^p g\right)\right|\\
		& = 	\left|\left(u_h(t+\tau)-u_h(t),\Grad g - \Grad \Ih^p g\right)\right| \\
		& \leq C \norm{u_h}_{L^\infty(0,T;L^2(\dom))} \norm{\Grad g - \Grad \Ih^p g}_{L^2}\\
		& \leq C \norm{u_h}_{L^\infty(0,T;L^2(\dom))} h^\ell \norm{g }_{H^{\ell+1}}\\
		& \leq C_g h^\ell,
	\end{align*} 
	by the discrete energy estimate, Lemma~\ref{lem:discenergyestimate} and interpolation estimates~\cite[Theorem 11.13]{Ern2021}. The proof of the third and the fourth estimates,~\eqref{eq:weakcontQ} and~\eqref{eq:weakcontr}, is identical to the proof of~\eqref{eq:weakcontu}, replacing $u_h$ by $Q_h$ and $r_h$ respectively, and using~\eqref{eq:timecontQweak} and~\eqref{eq:rcontinuity} instead, and $w\in H^2(\dom)$ for~\eqref{eq:rcontinuity}.	
\end{proof}

Before proving precompactness of the sequences $\{\tu_h\}_{h>0}$ and $\{Q_h\}_{h>0}$, we recall the following standard Lions lemma:
\begin{lemma}\label{lem:lionslike}
	Let $X_1\subset X_2\subset X_3$ be Banach spaces, such that the embedding $X_1\subset X_2$ is compact and the embedding $X_2\subset X_3$ is continuous. Then for every $\eta>0$ there exists a constant $C_\eta$ depending on $\eta$ and the spaces $X_1,X_2,X_3$ such that
	\begin{equation}
		\label{eq:compactembedding}
		\norm{v}_{X_2}\leq \eta \norm{v}_{X_1}+C_\eta \norm{v}_{X_3},\quad \forall\, v\in X_1.
	\end{equation}
\end{lemma}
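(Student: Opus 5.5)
The argument is by contradiction, the standard Ehrling-type proof. Suppose the claim fails for some $\eta>0$. Then for every $n\in\N$ the constant $C_\eta=n$ does not work, so there is $v_n\in X_1$ with
\begin{equation*}
\norm{v_n}_{X_2} > \eta\norm{v_n}_{X_1} + n\norm{v_n}_{X_3}.
\end{equation*}
In particular $v_n\neq 0$. Since both sides are positively homogeneous of degree one, we may normalize so that $\norm{v_n}_{X_1}=1$. This gives $\norm{v_n}_{X_2}>\eta + n\norm{v_n}_{X_3}$.

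The continuous embedding $X_1\subset X_2$ gives $\norm{v_n}_{X_2}\leq C$ uniformly in $n$. Hence $n\norm{v_n}_{X_3}< C$, so $\norm{v_n}_{X_3}\to 0$. The sequence $\{v_n\}$ is bounded in $X_1$ and the embedding $X_1\subset X_2$ is compact, so a subsequence (not relabeled) converges in $X_2$ to some $v\in X_2$. By continuity of $X_2\subset X_3$, $v_n\to v$ in $X_3$ as well. Since also $v_n\to 0$ in $X_3$, uniqueness of limits in $X_3$ forces $v=0$ as an element of $X_3$, hence of $X_2$.

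Uniqueness of limits is where the embeddings must be injective, which is implicit in writing them as inclusions $X_1\subset X_2\subset X_3$. This identification of the two limits is the only point needing care. It follows that $\norm{v_n}_{X_2}\to 0$. This contradicts $\norm{v_n}_{X_2}>\eta>0$ for all $n$, and the lemma follows.

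In the application that follows we will use the triple $X_1=H^1_0(\dom)$, $X_2=L^2(\dom)$, and $X_3$ a suitable dual or negative-order space. There the estimate will combine the uniform $L^2(0,T;H^1_0(\dom))$ bound with the approximate weak equicontinuity of Lemma~\ref{lem:weakapproxcontinuity}.
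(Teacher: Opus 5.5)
Your proof is correct. It is the standard Ehrling contradiction argument that the paper invokes by citing Temam (Ch.~III, Lem.~2.1): normalize in $X_1$, use compactness to get an $X_2$-convergent subsequence, and identify its limit as $0$ through the $X_3$ bound. You also correctly flag that the inclusions must be injective for this last identification.
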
	
\begin{proof}
A proof of this result can be found for example in~\cite[Ch.III, Lem. 2.1]{Temam1977}.
\end{proof}
We also need the following related technical lemma which follows from the compact embedding $H^1_0(\dom)\subset L^2(\dom)\subset H^{-1}(\dom)$:
\begin{restatable}{lemma}{rellich}
	\label{lem:Rellich2}
	For any $\eta>0$ there exists $N\in \N$ and $\phi_1,\dots, \phi_N\in L^2(\dom)$ such that for any $v\in L^2(\dom)$, we have
	\begin{equation}
		\label{eq:rellich}
		\norm{v}_{H^{-1}(\dom)}\leq \sqrt{\sum_{k=1}^N (v,\phi_k)^2} + \eta \norm{v}_{L^2(\dom)}.
	\end{equation}	
\end{restatable}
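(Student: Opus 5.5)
We argue by contradiction, combining the compactness of $H^1_0(\dom)\subset L^2(\dom)$ with the Hilbert structure of $L^2(\dom)$. Equivalently, we could use that the embedding $L^2(\dom)\subset H^{-1}(\dom)$ is compact, since it is the adjoint of the compact embedding $H^1_0(\dom)\subset L^2(\dom)$ (Schauder's theorem). Throughout, $H^{-1}(\dom)=(H^1_0(\dom))^*$, so that
\begin{equation*}
\norm{v}_{H^{-1}(\dom)}=\sup_{\norm{w}_{H^1_0}\leq 1}(v,w)\quad\text{for } v\in L^2(\dom).
\end{equation*}

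The direct route is as follows. Let $B=\{w\in H^1_0(\dom):\norm{w}_{H^1_0}\leq 1\}$. By Rellich, $B$ is precompact in $L^2(\dom)$, so for the given $\eta$ there are finitely many $\phi_1,\dots,\phi_N\in B\subset L^2(\dom)$ such that every $w\in B$ satisfies $\norm{w-\phi_k}_{L^2}\leq\eta$ for some $k$. Then for $v\in L^2(\dom)$ and $w\in B$, choosing such a $k$, we get
\begin{equation*}
(v,w)=(v,\phi_k)+(v,w-\phi_k)\leq |(v,\phi_k)|+\eta\norm{v}_{L^2}\leq \sqrt{\sum_{j=1}^N (v,\phi_j)^2}+\eta\norm{v}_{L^2}.
\end{equation*}
Taking the supremum over $w\in B$ gives \eqref{eq:rellich}.

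So the key steps are: (i) identify the $H^{-1}$ norm of an $L^2$ function as a supremum of $L^2$ pairings over the $H^1_0$ unit ball; (ii) invoke Rellich--Kondrachov on the bounded domain $\dom$ to obtain a finite $\eta$-net of that ball in $L^2$; (iii) split the pairing as above and bound $\max_k|(v,\phi_k)|$ by the $\ell^2$ sum. None of these steps is a real obstacle. The only point needing care is step (i), namely the convention that the $H^{-1}$ norm is dual to the norm on $H^1_0$ in which $B$ is the unit ball (full $H^1$ norm or gradient seminorm, equivalent by Poincaré). Changing the convention only rescales $B$, and the net argument is unchanged.
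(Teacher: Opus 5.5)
Your proof is correct, but it takes a different route from the paper's. (One cosmetic point: your first sentence says you argue by contradiction, but the argument you give is direct, so drop that phrase.)

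The paper expands $v$ in an $L^2$-orthonormal basis $\{e_k\}$ of Dirichlet Laplacian eigenfunctions. Using the full $H^1$ norm on $H^1_0(\dom)$, it computes the dual norm exactly as $\norm{v}_{H^{-1}}^2=\sum_k (v,e_k)^2/(1+\lambda_k)$. It then splits this series at an index $N$ with $(1+\lambda_N)^{-1}<\eta^2$: the head is bounded by $\sum_{k\le N}(v,e_k)^2$ and the tail by $\eta^2\norm{v}_{L^2}^2$, so $\phi_k=e_k$.

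You instead take a finite $\eta$-net of the $H^1_0$ unit ball in $L^2$ and split each pairing as $(v,\phi_k)+(v,w-\phi_k)$.

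Your route uses only total boundedness of that ball (Rellich) and duality. It therefore works unchanged for any compact embedding into a Hilbert space, and it does not need the spectral theorem. It even gives the slightly sharper bound with $\max_k|(v,\phi_k)|$ in place of the $\ell^2$ sum.

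The paper's route costs more machinery: the eigenbasis with $\lambda_k\to\infty$ itself rests on Rellich, via compactness of the inverse Dirichlet Laplacian. In return it gives explicit, orthonormal, smooth test functions and an exact formula for the $H^{-1}$ norm. Its $N$ is quantitative in terms of eigenvalue growth, whereas yours is a non-explicit covering number.

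Either version suffices for Step 2 of Lemma~\ref{lem:convofutilde}, which only needs finitely many fixed $\phi_k\in L^2(\dom)$.
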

The proof of this result is standard and postponed to the Appendix~\ref{app:rellich}.

	Using these results, we can now show the strong convergence in $L^2([0,T]\times\dom)$ of a subsequence of $\{\tu_h,Q_h\}_{h>0}$ to the limits $(u,Q)$:
\begin{lemma}
	\label{lem:convofutilde}
	
	Under the assumptions of Lemma~\ref{lem:timecontinuityutilde},
	we have that a subsequence of $\{\tu_h\}_{h>0}$ converges strongly in $L^2([0,T]\times\dom)$ to $u$ and  a subsequence of $\{Q_h\}_{h>0}$ converges strongly in $L^2([0,T]\times\dom)$ to $Q$. Furthermore, the limits are weakly continuous, i.e., $u,Q\in C([0,T];L^2(\dom)_w)$.
	
\end{lemma}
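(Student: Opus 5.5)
We follow the Doering--Berselli strategy: combine the approximate weak equicontinuity of Lemma~\ref{lem:weakapproxcontinuity} with the uniform $H^1$ spatial bounds through Lemma~\ref{lem:lionslike} and Lemma~\ref{lem:Rellich2}, and prove that the relevant sequences are Cauchy in $L^2([0,T]\times\dom)$.

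\textbf{The sequence $\{Q_h\}$.} Here the argument is direct. Apply Lemma~\ref{lem:lionslike} with $X_1=H^1_0(\dom)$, $X_2=L^2(\dom)$ and $X_3=L^{6/5}(\dom)$; the embedding $L^2\subset L^{6/5}$ is continuous on a bounded domain. Using the uniform bound $\norm{Q_h}_{L^\infty(0,T;H^1)}\leq C$ and~\eqref{eq:weakcontQ}, this gives
\[
\norm{Q_h(t+\tau)-Q_h(t)}_{L^2}\leq C\eta+C_\eta(\sqrt{\tau}+\sqrt{\Delta t}).
\]
Hence the time translates of $Q_h$ in $L^2([0,T-\tau]\times\dom)$ are uniformly small up to $o(1)$ errors. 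Together with the $L^2(0,T;H^1_0)$ bound, which controls space translates, a Simon/Kolmogorov--Riesz type argument yields precompactness. The $o(1)$ defect is harmless because it vanishes as $h,\Delta t\to 0$. Concretely, we take $\eta$ small, then $\tau$ small, then $h$ small.

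Alternatively, we can prove the Cauchy property by hand, which is the route we use for $u$. For $h,h'$, split $[0,T]$ into intervals of length $\tau$ and compare $Q_h-Q_{h'}$ with its time averages on each interval. Use Lemma~\ref{lem:lionslike} to bound $\norm{Q_h-Q_{h'}}_{L^2}\leq\eta\norm{\cdot}_{H^1}+C_\eta\norm{\cdot}_{H^{-1}}$. Then apply Lemma~\ref{lem:Rellich2}: the $H^{-1}$ norm is controlled by finitely many pairings $(Q_h-Q_{h'},\phi_k)$, plus $\eta$ times the $L^2$ norm. Approximate each $\phi_k$ by smooth $Y_k$. The averaged pairings converge by weak-$*$ convergence, and equicontinuity controls the deviation from the averages.

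\textbf{The velocity.} This is the main obstacle, since equicontinuity holds only for $u_h$ while $H^1$ bounds hold only for $\tu_h$. We write
\[
\norm{\tu_h-\tu_{h'}}_{L^2_{t,x}}\leq \eta\,\norm{\tu_h-\tu_{h'}}_{L^2(H^1)}+C_\eta\norm{\tu_h-\tu_{h'}}_{L^2(H^{-1})}.
\]
By~\eqref{eq:utildeuconv} we may replace $\tu_h$ by $u_h$ in the $H^{-1}$ term at an $o(1)$ cost. We then apply Lemma~\ref{lem:Rellich2} to $u_h-u_{h'}$, which is uniformly bounded in $L^2$, reducing to finitely many pairings $(u_h-u_{h'},\phi_k)$.

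Next, Helmholtz-decompose each $\phi_k$ as $\phi_k=\mathcal P\phi_k+\Grad g_k$ and approximate both parts by smooth functions: $v_k\in C^\infty_{c,\Div}$ and $g_k\in C^\infty(\overline{\dom})$. The gradient pairings are small up to $h^\ell$. This follows from the discrete divergence constraint~\eqref{eq:projection2}, as in the proof of~\eqref{eq:weakcontu2}, together with the weak limit $u$ being divergence free. The solenoidal pairings are handled as for $Q$, using~\eqref{eq:weakcontu}: compare with time averages over intervals of length $\tau$, use that these averages converge by~\eqref{eq:uhweakconv}, and bound the deviation by $C_v(\sqrt\tau+\sqrt{\Delta t}+h^\sigma)$. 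Choosing $\eta$, then the approximations, then $\tau$, and finally $h,h'$ small gives the Cauchy property. The limit is $u$ by Lemma~\ref{lem:samelimits}.

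\textbf{Weak continuity of the limits.} The maps $t\mapsto(u_h(t),v)$ and $t\mapsto(Q_h(t),Y)$ are equicontinuous up to vanishing errors and uniformly bounded. An Arzel\`a--Ascoli argument on a countable dense set of test functions shows they converge uniformly along a subsequence, and the limits are continuous in $t$. Since $u$ and $Q$ are bounded in $L^\infty(0,T;L^2)$, density then gives $u,Q\in C([0,T];L^2(\dom)_w)$.
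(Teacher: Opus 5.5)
Your proof is correct, but it is organized differently from the paper's, and for $Q$ it follows a genuinely different route.

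\textbf{The velocity.} The paper treats $Q_h$ exactly like $u_h$, in two steps. Step~1 proves $(u_h(t),\psi)\to(u(t),\psi)$ uniformly in $t\in[0,T]$ for every $\psi\in L^2(\dom)$. It does this by a diagonal extraction over rational times, the approximate equicontinuity of Lemma~\ref{lem:weakapproxcontinuity} on $C^\infty_{c,\Div}(\dom)\oplus\Grad C^\infty(\overline{\dom})$, and Riesz representation to obtain a representative defined for every $t$. Step~2 uses Lemma~\ref{lem:Rellich2} to upgrade this to $\sup_t\norm{u_h(t)-u(t)}_{H^{-1}}\to 0$, and then applies Lemma~\ref{lem:lionslike} with $X_3=H^{-1}$ together with~\eqref{eq:utildeuconv}.

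Your velocity argument uses the same ingredients. The difference is that you compare with averages over time intervals of length $\tau$ instead of using the diagonal argument, and you prove a Cauchy property. This is a bit more economical for the $L^2$ compactness. The cost is that the weak continuity of $u$, and the convergence $u_h(t)\weak u(t)$ for \emph{every} $t$ (used later in~\eqref{eq:uattainmentofinitiladata} and in the energy inequality), must be produced separately. In the paper both come for free from Step~1. Your last paragraph is too terse on this point. You should state explicitly that the uniform limits of $(u_h(\cdot),v)$ on a countable dense set define, via boundedness and Riesz representation, some $\check u(t)$ for every $t$, and that $\check u=u$ almost everywhere.

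\textbf{The Q-tensor.} For $Q_h$ you exploit the fact that~\eqref{eq:weakcontQ} is a \emph{norm} estimate in $L^{6/5}(\dom)$, not merely a weak estimate against smooth test functions. Lemma~\ref{lem:lionslike} with $X_3=L^{6/5}(\dom)$ turns it into $\sup_t\norm{Q_h(t+\tau)-Q_h(t)}_{L^2}\le C\eta+C_\eta(\sqrt{\tau}+\sqrt{\Delta t})$. Simon's compactness theorem then applies, once the vanishing defect is absorbed by treating finitely many members of the sequence separately.

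This is the standard discrete Aubin--Lions--Simon route. It is shorter than the paper's argument and even yields more: each $Q_h(t)$ is bounded in $H^1_0(\dom)$, so Arzel\`a--Ascoli in the strong topology gives $Q\in C([0,T];L^2(\dom))$. The paper's uniform argument, in turn, has the advantage of handling both unknowns with one mechanism. That mechanism is needed for $u$ in any case, because $u_h$ has no spatial regularity and its equicontinuity is available only weakly.
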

\begin{proof}
	We prove the statement for the sequence $\{\tu_h\}_{h>0}$, the proof of the statement for $\{Q_h\}_{h>0}$ is similar, and simpler because the complication of having the spatial regularity estimates only for $\tu_h$ and the time continuity estimates only for $u_h$ is removed.
	We use an idea from~\cite{Doering1994} combining uniform bounds in $C(0,T;L^2(\dom)_w)\cap L^2(0,T;H^1(\dom))$. We cannot use it directly since, on one hand, we only have equicontinuity in time up to an error depending on $\Delta t$ and $h$ for $u_h$ and only with respect to smooth test functions, and on the other hand, we only have spatial regularity for the variable $\tu_h$.
	We divide the proof into the following steps:
	
	\smallskip
	
	{\bf Step 1:} We show that for any given $\psi\in L^2(\dom)$, $(u_h(t),\psi)\to  (u(t),\psi)$ uniformly with respect to $t\in [0,T]$ for a subsequence.
	
	\smallskip
	
	First, the bound $\norm{u_h(t)}_{L^2(\dom)}\leq C_0$ for all $t\in [0,T]$ by Lemma~\ref{lem:discenergyestimate} implies that $u_h\weakstar u$ in $L^\infty(0,T;L^2(\dom))$ hence $\norm{u}_{L^\infty(0,T;L^2(\dom))}\leq C_0$.  Next, we fix a dense set $\mathcal{T}:= ([0,T]\cap \Q)\cup \{T\}:= \{t_j\}_{j\in \N}$. By the Banach--Alaoglu theorem, and the uniform bound $\norm{u_h(t)}_{L^2(\dom)}\leq C_0$ for all $t\in [0,T]$, we have $u_h(t_j)\weak w_j$ in $L^2(\dom)$ for all $j\in \N$. In other words, $(u_h(t_j),\varphi)\to (w_j,\varphi)$ for all $j\in \N$ and any $\varphi\in L^2(\dom)$.  We pick a diagonal subsequence, for convenience still denoted by $h$ such that $u_h(t_j)\weak w_j$ for all $t_j\in \mathcal{T}$. 
	
	Next, we let $v\in C^\infty_{c,\Div}(\dom) \oplus \Grad C^\infty(\overline{\dom})$ and define $f^v_h(t): = (u_h(t),v)$. We claim that $f^v_h$ is uniformly Cauchy on $[0,T]$: Given $0<\epsilon <1$ we pick a finite subset $\mathcal{T}_\epsilon= \{t_j\}_{j=1}^{N_\epsilon}$ such that for any $t\in [0,T]$, there exists $t_j\in \mathcal{T}_\epsilon$ such that $|t-t_j|<\epsilon^2/(9 C_v)^2$.  Then we choose $\overline{h},\overline{\Delta t}>0$ small enough such that for any $h_1,h_2<\overline{h}$ and any corresponding $\Delta t_1,\Delta t_2 <\overline{\Delta t}$, we have
	\begin{equation*}
		h_i^{\min\{\sigma,\ell\}} < \frac{\epsilon}{9 C_v},\quad \Delta t_i^{1/2}<\frac{\epsilon}{9 C_v},\quad i=1,2;\quad \max_{t_j\in \mathcal{T}_\epsilon} |f^v_{h_1}(t_j)-f_{h_2}^v(t_j)|<\frac{\epsilon}{9}.
	\end{equation*}
	Then we have for any $t\in [0,T]$ and any $h_1,h_2<\overline{h}$, and $t_j$ such that $|t-t_j|<\epsilon^2/(9 C_v)$ by Lemma~\ref{lem:weakapproxcontinuity},
	\begin{align*}
		|f^v_{h_1}(t)-f^v_{h_2}(t)| &\leq |(u_{h_1}(t)-u_{h_1}(t_j),v)| + |(u_{h_1}(t_j)-u_{h_2}(t_j),v)|+|(u_{h_2}(t_j)-u_{h_2}(t),v)| \\
		& \leq C_v\left(\sqrt{|t-t_j|}+\Delta t_1^{1/2} + h_1^\sigma+h_1^\ell\right) + \frac{\epsilon}{9} + C_v\left(\sqrt{|t-t_j|}+\Delta t_2^{1/2} + h_2^\sigma+h_2^\ell\right) \\
		& \leq \epsilon.
	\end{align*}
	Thus $f^v_h(t)$ is Cauchy, and we have $f^v_h(t)\to f^v(t)$ uniformly in $t\in [0,T]$. Moreover, passing to the limit in~\eqref{eq:weakcontu} and~\eqref{eq:weakcontu2}, we obtain that the limit $f_v$ satisfies
	\begin{equation*}
		|f^v(t)-f^v(s)|\leq C_v \sqrt{|t-s|}.
	\end{equation*} 
	Now we fix $t_0\in [0,T]$ arbitrary. We have now shown that $(u_h(t_0),v)\to f^v(t_0)$ for any $v\in C^\infty_{c,\Div}(\dom) \oplus \Grad C^\infty(\overline{\dom})$. For a general $\psi\in L^2(\dom)$, we have
	\begin{equation}\label{eq:outofideas}
		\limsup_{h_1,h_2}|(u_{h_1}(t_0)-u_{h_2}(t_0),\psi)|\leq 	\limsup_{h_1,h_2}|(u_{h_1}(t_0)-u_{h_2}(t_0),v)|+2C_0 \norm{v-\psi}_{L^2} = 2 C_0 \norm{v-\psi}_{L^2}. 
			\end{equation}
			$C^\infty_{c,\Div}(\dom)$ is dense in $L^2_{\Div}(\dom)$ and $\Grad C^\infty(\overline{\dom})$ is dense in $\Grad H^1(\dom) = \{u\in L^2(\dom),\, u=\Grad p,\, p\in H^1(\dom)\}$ which together span $L^2(\dom)$, i.e., $\overline{\Grad H^1(\dom)}^{L^2}  = L^2_{\Div}(\dom)^\perp$ (cf. Theorem 1.4 in Chapter 1 of~\cite{Temam1977}). Thus the right-hand side of~\eqref{eq:outofideas} can be made arbitrarily small. Thus $f^\psi(t)= \lim_{h\to 0}(u_h(t),\psi)$ exists for any $t\in [0,T]$, is linear in $\psi$ and satisfies $|f^\psi(t)|\leq C_0 \norm{\psi}_{L^2}$. By the Riesz representation theorem, there exists a unique $\check{u}(t)\in L^2(\dom)$ such that $u_h(t)\weak \check{u}(t)$ and $\norm{\check{u}(t)}_{L^2}\leq C_0$ for all $t\in [0,T]$. Moreover, $t\mapsto (\check{u}(t),v)$ is weakly continuous in time for all $v\in C^\infty_{c,\Div}(\dom) \oplus \Grad C^\infty(\overline{\dom})$, and by the same argument as in~\eqref{eq:outofideas} for any $v\in L^2(\dom)$. Thus $\check{u}\in C([0,T];L^2(\dom)_w)$. On the other hand we have, passing to a further subsequence if needed, $u_h\weakstar u$ in $L^\infty(0,T;L^2(\dom))$, i.e., $\int_0^T (u_h(t),v(t)) dt \to \int_0^T (u(t),v(t))dt$ for any $v\in L^1(0,T;L^2(\dom))$. By the previously proved uniform in $t$ convergence, we must also have $\int_0^T (u_h(t),v(t)) dt \to \int_0^T (\check{u}(t),v(t))dt$. Since $v\in L^1(0,T;L^2(\dom))$ is arbitrary, the two limits must agree, i.e., $\check{u} = u$ a.e. in $[0,T]\times \dom$. So $(u_h(t),\psi)\to (u(t),\psi)$ for any $\psi\in L^2(\dom)$ uniformly in $t\in [0,T]$ along the chosen subsequence, and $u\in C([0,T];L^2(\dom)_w)$.
	
	\smallskip
	
{\bf Step 2:} We show that $\tu_h\to u$ in $L^2([0,T]\times\dom)$.

	\smallskip
	
	Due to the compact embedding $L^2(\dom)\subset \subset H^{-1}(\dom)$, we can upgrade the weak convergence $u_h(t)\weak u(t)$ in $L^2(\dom)$ to strong convergence in $H^{-1}(\dom)$. We use Lemma~\ref{lem:Rellich2}. Let $\eta>0$ arbitrary, then there exists $N\in \N$ and $\phi_1,\dots, \phi_N$ such that
	\begin{equation*}
		\norm{u_h(t)-u(t)}_{H^{-1}}\leq \sqrt{\sum_{k=1}^N (u_h(t)-u(t),\phi_k)^2}	+ \eta \norm{u_h(t)-u(t)}_{L^2(\dom)}.
	\end{equation*}
	We take the supremum over $t\in [0,T]$ and use that $\norm{u(t)}_{L^2}\leq C_0$ and $\norm{u_h(t)}_{L^2}\leq C_0$ for all $t\in [0,T]$ and $(u_h(t),\psi)\to (u(t),\psi)$ for any $\psi\in L^2(\dom)$ uniformly in $t\in [0,T]$ by Step 1, thus
	\begin{equation*}
		\sup_{t\in [0,T]} \norm{u_h(t)-u(t)}_{H^{-1}} \leq \sup_{t\in [0,T]} \sqrt{\sum_{k=1}^N (u_h(t)-u(t),\phi_k)^2}	+ \eta 2C_0.
	\end{equation*} 
	Letting $h\to 0$, we obtain 
		\begin{equation*}
		\lim_{h\to 0}\sup_{t\in [0,T]} \norm{u_h(t)-u(t)}_{H^{-1}} \leq  \eta 2C_0. 
	\end{equation*} 
	Since $\eta>0$ was arbitrary, this proves that $\sup_{t\in [0,T]} \norm{u_h(t)-u(t)}_{H^{-1}} \to 0$ as $h\to 0$ and therefore also 
$u_h\to u$ in $L^2(0,T;H^{-1}(\dom))$. Now we have by the classical Lemma~\ref{lem:lionslike} by Lions with $X_2=L^2(\dom)$, $X_1=H^1(\dom)$ and $X_3 = H^{-1}(\dom)$, for any $\eta>0$,
\begin{align*}
	&\int_0^T \norm{\tu_h-u}^2_{L^2(\dom)}dt \\
	&\leq \eta^2 \int_0^T \norm{\tu_h-u}^2_{H^1(\dom)} dt + C_\eta^2\int_0^T \norm{\tu_h-u}^2_{H^{-1}(\dom)} dt\\
	&\leq \eta^2 C  + 2C_\eta^2\int_0^T \norm{\tu_h-u_h}^2_{H^{-1}(\dom)} dt+ 2C_\eta^2\int_0^T \norm{u-u_h}^2_{H^{-1}(\dom)} dt\\
	&\leq \eta^2 C  + 2C_\eta^2\int_0^T \norm{\tu_h-u_h}^2_{L^2(\dom)} dt+ 2C_\eta^2\int_0^T \norm{u-u_h}^2_{H^{-1}(\dom)} dt,
\end{align*}
where we used the discrete energy estimate, Lemma~\ref{lem:discenergyestimate} in the second line and Lemma~\ref{lem:samelimits} for the last line.
Thus,
\begin{equation*}
\limsup_{h,\Delta t \to 0}\int_0^T \norm{\tu_h-u}^2_{L^2(\dom)}dt \leq C\eta^2.
\end{equation*}
Since $\eta>0$ was arbitrary, this implies the result.
\end{proof}
\begin{remark}
	\label{rem:continuityr}
	From Step 1 of the proof of Lemma~\ref{lem:convofutilde}, the a priori energy estimate, Lemma~\ref{lem:discenergyestimate}, and the approximate equicontinuity, Lemma~\ref{lem:weakapproxcontinuity}, it follows that $(r_h(t),w)\to (r(t),w)$ for $w\in L^2(\dom)$ and any $t\in [0,T]$, and that the limit satisfies $r\in C([0,T];L^2(\dom)_w)$.
\end{remark}

Using these preliminary estimates, we can now prove: 
\begin{theorem}
	The sequences $\{u_h,\bar{u}_h,\widetilde{u}_h,\hu_h,Q_h,\bar{Q}_h,\mathcal{H}_h,r_h\}_{h>0}$ converge, up to a subsequence, as $\Delta t, h \to 0$ to a weak solution $(u,Q,\mathcal{H})$ of \eqref{seq:BerisEdwards} as in Definition~\ref{def:weaksol} under the condition that    {$h^{2}=o(\Delta t)$}.
\end{theorem}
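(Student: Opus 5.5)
We will pass to the limit in the scheme tested with discrete approximations of smooth test functions. We use the weak limits \eqref{eq:uhweakconv}--\eqref{eq:Hhweakconv}, Lemma~\ref{lem:samelimits}, the strong convergences of Lemma~\ref{lem:convofutilde}, and Remark~\ref{rem:continuityr}. The condition $h^2=o(\Delta t)$ permits $\sigma=2$ in Lemmas~\ref{lem:timecontinuityutilde}--\ref{lem:convofutilde}, and it gives $\Delta t\to0$.

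\textbf{Step 1: strong convergence of $Q_h,\bar Q_h$ and of $r_h$.} Since $Q_h$ is bounded in $L^\infty(0,T;H^1_0)$ and converges strongly in $L^2$, interpolation gives $Q_h,\bar Q_h\to Q$ in $L^p(0,T;L^q)$ for all $q<6$ and $p<\infty$. Together with the Lipschitz continuity of $P$ and Lemma~\ref{lem:masslump2}, this gives $\Ih P(\bar Q_h)\to P(Q)$ strongly. To identify $r=r(Q)$, sum the nodal identity \eqref{eq:rsimplified}: it is a BDF2 discretization of $\partial_t r=P(Q):\partial_t Q$. The aim is to show that $r_h-\Ih r(Q_h)\to0$. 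We write $r(Q_h^{m+1})-r(\hQ_h^{m+1})-P(\hQ_h^{m+1}):(Q_h^{m+1}-\hQ_h^{m+1})$ via a Taylor remainder $O(|Q_h^{m+1}-\hQ_h^{m+1}|^2)$, nodewise. The energy estimate controls $\sum_m\norm{\Grad(Q^{m+1}_h-2Q^m_h+Q^{m-1}_h)}^2$ and $\norm{D_t^-Q_h}_{L^2L^{6/5}}$. The remaining mismatch, products like $\Delta t\, D^-_tQ_h$ paired against nodal quantities, is where the $L^{6/5}$ integrability of $\partial_tQ$ is too weak. Here we need an inverse inequality, costing powers of $h$ versus $\Delta t$, and this is where $h^2=o(\Delta t)$ enters a second time. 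I expect this identification, or at least its bookkeeping, to be delicate.

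\textbf{Step 2: strong convergence of $\Grad Q_h$ (the main obstacle).} We test \eqref{eq:Hdisc} with $Z=Q_h$, integrate in time and obtain
\[
L\norm{\Grad Q_h}^2_{L^2L^2}=-(\HH_h,Q_h)-\int_0^T(r_hP(\bar Q_h),Q_h)_h\,dt .
\]
On the right, $\HH_h\weak\HH$ weakly and $Q_h\to Q$ strongly, and $r_h\weakstar r$ while $\Ih(P(\bar Q_h):Q_h)$ converges strongly. The latter uses the mass-lumping error \eqref{eq:masslumpingLp}/Lemma~\ref{lem:masslump2} to pass between $(\cdot,\cdot)_h$ and $(\cdot,\cdot)$. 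Hence the right side converges to $-(\HH,Q)-\int(rP(Q),Q)$.

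Next we pass to the limit in \eqref{eq:Hdisc} with $Z=\Ih\varphi$ (or $\Pi^{SZ}_h\varphi$). This yields the limit equation $\HH=L\Delta Q-rP(Q)$ weakly. With $r=r(Q)$ this is exactly \eqref{eq:weakformH}, since $r(Q)P(Q)=V(Q)$. By density, extend the test functions to $Z=Q\in L^2(0,T;H^1_0)$. The resulting identity equals $L\norm{\Grad Q}^2$. Thus norms converge, and combined with weak convergence we get $\Grad Q_h\to\Grad Q$ strongly in $L^2$, and likewise for $\bar Q_h$ by \eqref{eq:QQbarconv}.

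\textbf{Step 3: limit equations and energy.} For $v\in C_c^\infty((0,T);C^\infty_{c,\Div})$ we test \eqref{eq:udisc} with $\Ih^u v$. We perform a discrete summation by parts in time on the BDF2 difference and use \eqref{eq:uubarconv} to replace $\tfrac32\tu_h$-combinations by $u_h$. The pressure term is $(\Grad p_h,\Ih^uv-v)$, because $(\Grad p_h,v)=0$ for divergence-free $v$. It is bounded by $C\Delta t^{-1}h^{2}\to0$, using the pressure bound from Lemma~\ref{lem:discenergyestimate}. The convective term $b(\hu_h,\tu_h,\cdot)$ converges by strong $L^2$ convergence of $\hu_h,\tu_h$ and weak $L^2H^1$ convergence of $\tu_h$. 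The terms $\sigma_h$ and $\HH_h\Grad\bar Q_h$ converge as weak$\times$strong products, using Steps 1--2. The $Q$-equation \eqref{eq:Qdisc} is handled with $\PMh Y$: $(\tu_h\cdot\Grad)\bar Q_h\to(u\cdot\Grad)Q$, and $s_h\to S(u,Q)$ weakly because $\Grad\tu_h\weak\Grad u$ with strongly convergent coefficients in $\bar Q_h$, and $\Div u=0$ kills the extra term in $s$. The regularity \eqref{eq:regularity} follows from the uniform bounds and \eqref{eq:timederQ}. For $\partial_tu\in L^2(X_6^*)$ we read off the limit equation. The symmetry and trace-free property follow from Lemma~\ref{lem:tracefreediscrete}.

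The energy inequality \eqref{eq:energyineq} comes from Lemma~\ref{lem:discenergyestimate}, redone without Gr\"onwall in the form \eqref{eq:prelimenergy}. We use weak lower semicontinuity, the identity $\tfrac12\norm{r}^2=\int\mathcal F_B(Q)+A_0|\dom|$, and convergence of the initial terms $\norm{r_h^0}_h^2\to\norm{r(Q_0)}^2$. Finally, the continuity at zero \eqref{eq:contatzero} follows from weak continuity (Lemma~\ref{lem:convofutilde}) combined with the energy inequality, since $\limsup_{t\to0}$ of the energy is at most the initial energy, which upgrades weak to strong convergence.
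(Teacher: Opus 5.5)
\textbf{Gap: the identification $r=r(Q)$.} Your Steps 2 and 3 essentially follow the paper: testing \eqref{eq:Hdisc} with $Q_h$, comparing with the limit identity tested with $Q$ (which only needs $r$ as the weak-$\ast$ limit of $r_h$), weak$\times$strong limits, and the pressure term bounded by $Ch^2/\Delta t$. The gap is Step 1. Without $r=r(Q)$ you get neither \eqref{eq:weakformH} nor the energy inequality in terms of $\mathcal{F}_B$, and the mechanism you sketch cannot close. The nodal BDF2 chain-rule defect $3r(Q^{m+1}_h)-4r(Q^m_h)+r(Q^{m-1}_h)-P(\hQ^{m+1}_h):(3Q^{m+1}_h-4Q^m_h+Q^{m-1}_h)$ is $O(|Q^{m+1}_h-Q^m_h|^2+|Q^m_h-Q^{m-1}_h|^2)$. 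It is quadratic in \emph{first} differences, since $Q^m_h-\hQ^{m+1}_h=Q^{m-1}_h-Q^m_h$. So the energy control of second differences does not help, and the accumulated error has size $\sum_m\norm{Q^m_h-Q^{m-1}_h}_{L^2}^2$. With $\norm{D_t^-Q_h}^2_{L^2(0,T;L^2)}\le C/\Delta t$ this is only $O(1)$. Passing from \eqref{eq:timederQ} to $L^2$ by an inverse inequality costs $h^{-2d/3}$ and gives $O(\Delta t\,h^{-2d/3})$. That would require $\Delta t=o(h^{2d/3})$, which is not implied by $h^2=o(\Delta t)$ and, for $d=3$, contradicts it. So the CFL hypothesis cannot enter where you place it.

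\textbf{How the paper does it, and how to repair your route.} The paper never identifies $r$ at the discrete level. After Step 2 it passes to the limit in \eqref{eq:rdisc} tested with $\Ih w$ and obtains $\int_0^T[(r,\partial_t w)+(P(Q):\partial_t Q,w)]\,dt=0$. The product converges because $P(\bar Q_h)\to P(Q)$ in $L^2(0,T;L^6)$, by the strong $L^2(0,T;H^1)$ convergence from Step 2, while $D_t^-Q_h\weak\partial_tQ$ in $L^2(0,T;L^{6/5})$. The nodal-interpolation error is bounded by $Ch\norm{\Grad\bar Q_h}_{L^2}\norm{D_t^-Q_h}_{L^2}\le Ch/\sqrt{\Delta t}$, which is the second place $h^2=o(\Delta t)$ is needed. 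The continuous chain rule (Lemma 5.2 of \cite{Gudibanda2022}) together with $r(0)=r(Q_0)$ then gives $r=r(Q)$. Alternatively, you can rescue your discrete route by moving it after Step 2 and dropping the inverse inequality. Convert BDF2 differences to first differences through the geometric recursion, then use $\sum_m\norm{Q^m_h-Q^{m-1}_h}^2_{L^2}\le\sum_m\norm{Q^m_h-Q^{m-1}_h}_{L^{6/5}}\norm{Q^m_h-Q^{m-1}_h}_{L^6}\le C\norm{D^-_tQ_h}_{L^2(0,T;L^{6/5})}\norm{\Grad Q_h-\Grad Q_h(\cdot-\Delta t)}_{L^2(\Delta t,T;L^2)}$. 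The right-hand side tends to $0$ by the strong convergence of $\Grad Q_h$.

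\textbf{Two smaller points.} First, replacing $\tu^{m+1}_h$ by $u^{m+1}_h$ in the BDF2 quotient must be done exactly through \eqref{eq:projection1}, which absorbs the mismatch into the pressure increment. Citing \eqref{eq:uubarconv} is not enough, because the mismatch is divided by $\Delta t$ and $\norm{\tu_h-u_h}_{L^2L^2}$ is only $O(\sqrt{\Delta t})$. Second, the right-hand side of \eqref{eq:prelimenergy} is $E^1_h$, which contains $\norm{2u^1_h-u^0_h}^2$ and similar terms. Showing $\norm{r^0_h}_h\to\norm{r(Q_0)}_{L^2}$ does not suffice; you need the Euler-step identity \eqref{eq:totalenergyfirststep}, $u^1_h\weak u_0$, and $(r^1_h-r^0_h,r^0_h)_h\to 0$, as in the paper.
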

\begin{proof}
	Using the definitions of the interpolations~\eqref{eq:defuh} -- \eqref{eq:defph}, we can rewrite the numerical scheme~\eqref{eq:step1fully} -- \eqref{eq:projectionfullydiscrete} as
	\begin{subequations}\label{eq:step1inter}
		\begin{align}
		\label{eq:udiscinterp}
		\left(D_t^- u_h,v\right)+ b(\hu_h,\tu_h,v)+\mu(\Grad \tu_h,\Grad v) &= -(\sigma(\bar{Q}_h,\HH_h),\Grad v)-(\HH_h\Grad \bar{Q}_h,v)-(\Grad p_h, v)+(f_h,v),\\
		\label{eq:Qdiscinterp}
		\left(D_t^- Q_h,Y\right)+((\tu_h\cdot\Grad) \bar{Q}_h,Y) &=  (s(\tu_h,\bar{Q}_h),Y)+M (\HH_h,Y),\\
			\label{eq:Hdiscinterp}
		(\HH_h,Z) &= -L(\Grad Q_h,\Grad Z)-(r_h P(\bar{Q}_h),Z)_h,\\
		\label{eq:rdiscinterp}
		\left(D_t^-r_h,w\right)_h & = (P(\bar{Q}_h):D^-_t Q_h,w)_h,	
		\end{align}
	\end{subequations}
	and 
	\begin{equation}
	\label{eq:divconstraintinterp}
	( u_h,\Grad q)=0,
	\end{equation}
	with the test functions in the same space as in~\eqref{eq:step1fully} -- \eqref{eq:projectionfullydiscrete}.
	Here, we used the notation~\eqref{eq:timediffdef1} and~\eqref{eq:timediffdef2} for the time differences again.
	By the previous considerations, weak convergences~\eqref{eq:uhweakconv} -- \eqref{eq:Hhweakconv}, Lemma~\ref{lem:discenergyestimate}, Lemma~\ref{lem:weakapproxcontinuity}, Lemma~\ref{lem:samelimits}, and Lemma~\ref{lem:convofutilde}, we can improve~\eqref{eq:uhweakconv} -- \eqref{eq:Hhweakconv} to
	\begin{align}
	u_h,\bar{u}_h,\widetilde{u}_h,\hu_h\to u,&\quad \text{in }\, L^2([0,T]\times\dom),\label{eq:uhstrongconv}\\
	\widetilde{u}_h\weak u,&\quad \text{in }\, L^2(0,T;H^1_0(\dom)),\label{eq:tuhweakconvagain}\\
	Q_h,\bar{Q}_h\to Q,&\quad \text{in }\, L^2([0,T]\times\dom),\\
	Q_h,\bar{Q}_h\weakstar Q,&\quad \text{in }\, L^\infty(0,T;H^1_0(\dom)),\\
	r_h\weakstar r,&\quad \text{in }\, L^\infty(0,T;L^2(\dom)),\\
	\mathcal{H}_h\weak \mathcal{H},&\quad \text{in }\, L^2([0,T]\times\dom).\label{eq:Hhweakconvagain}
	\end{align}
	Moreover, as noted previously, the limit $u$ is divergence free almost everywhere in $[0,T]\times\dom$ and $u,Q,r\in C(0,T;L^2(\dom)_w)$. Therefore, we also have for any $v\in L^2(\dom)$
	\begin{equation}\label{eq:uattainmentofinitiladata}
		(u_0,v)=\lim_{h,\Delta t\to 0}(u_0^h,v) = \lim_{h,\Delta t\to 0} (u_h(0),v) = (u(0),v),
	\end{equation}
	where the last equality is due to the weak convergence of $u_h(t)$ in $L^2(\dom)$ for any $t\in [0,T]$ (cf. Lemma~\ref{lem:weakapproxcontinuity}), and similarly for any $Y\in L^2(\dom)$ and any $w\in L^2(\dom)$,
	\begin{equation}\label{eq:Qattainmentinitialdata}
		(Q_0,Y)=\lim_{h,\Delta t\to 0}(Q_0^h,Y) = \lim_{h,\Delta t\to 0} (Q_h(0),Y) = (Q(0),Y),
	\end{equation}
	and
	\begin{equation}\label{eq:rattainmentinitialdata}
		(r_0,w)=\lim_{h,\Delta t\to 0}(r_0^h,w) = \lim_{h,\Delta t\to 0} (r_h(0),w) = (r(0),w).
	\end{equation}
	
	Now we take test functions $v, Y, Z,w\in C^\infty_c((0,T)\times \dom)$ with $v$ divergence free, integrate in time and rewrite~\eqref{eq:udiscinterp} as
		\begin{subequations}\label{eq:step1app}
		\begin{align}
		\label{eq:uapp}
		&\int_0^T\left[\left(D^-_t u_h,v\right)+b(\hu_h,\tu_h,v)+\mu(\Grad \tu_h,\Grad v) +(\sigma(\bar{Q}_h,\HH_h),\Grad v)+(\HH_h\Grad \bar{Q}_h,v)-(f_h,v)\right] dt\\
		&=\int_0^T\left[\left(D^-_t u_h,v-\Ih^u v\right)+ b(\hu_h,\tu_h,v-\Ih^u v)+\mu(\Grad \tu_h,\Grad (v-\Ih^u v))\right] dt\notag\\
		&\quad +\int_0^T\left[(\sigma(\bar{Q}_h,\HH_h),\Grad (v-\Ih^u v))+(\HH_h\Grad \bar{Q}_h,v-\Ih^u v)+(\Grad p_h, v-\Ih^u v)-(f_h,v-\Ih^u v)\right] dt,\notag\\
		\label{eq:Qapp}
		&\int_0^T\left[\left(D^-_t Q_h,Y\right)+((\tu_h\cdot\Grad) \bar{Q}_h,Y)-(s(\tu_h,\bar{Q}_h),Y)-M (\HH_h,Y)\right] dt\\
		&=\int_0^T\left[\left(D^-_t Q_h,Y-\Ih Y\right) + ((\tu_h\cdot\Grad) \bar{Q}_h,Y-\Ih Y) \right] dt\notag\\
		&\quad -\int_0^T\left[ (s(\tu_h,\bar{Q}_h),Y-\Ih Y) + M (\HH_h,Y-\Ih Y) \right] dt ,\notag\\
			\label{eq:Happ}
		&\int_0^T\left[ (\HH_h,Z)  + L(\Grad Q_h,\Grad Z) + (r_h P(\bar{Q}_h),\Ih Z)_h \right] dt\\
		&=\int_0^T\left[ (\HH_h,Z-\Ih Z)+L(\Grad Q_h,\Grad (Z-\Ih Z))\right] dt,	\notag\\
		\label{eq:rapp}
		&\int_0^T\left[ \left(D^-_t r_h,\Ih w\right)_h  -  (P(\bar{Q}_h):D^-_t Q_h,\Ih w)_h \right] dt=0, 
		\end{align}
	\end{subequations}
where $\Ih$ is the Lagrangian interpolation on $\Xh$ (and $\Mh$ and $\Mhz$ respectively),  and $\Ih^u$ is the interpolation operator on the finite element space $\Uh$, as defined for example in~\cite[Section 9.1]{Ern2021}. We choose $\Delta t$ and $h$ small enough such that $v(t,\cdot)=Y(t,\cdot)=w(t,\cdot)=0$ on $[T-2\Delta t,T]$ and on $[0,2\Delta t]$ and that $Z(\cdot,x)=0$ for $x\in K$ where $K$ is an element adjacent to the boundary.
We consider the limits $h,\Delta t\to 0$ in each of these equations. We start by rewriting the time differences as follows:
\begin{align*}
	\int_0^T\left(D^-_t u_h,v\right) dt & = \frac{1}{2\Delta t}\int_{2\Delta t}^T (3u_h(t)-4u_h(t-\Delta t)+ u_h(t-2\Delta t),v(t)) dt  \\
	& = \frac{1}{2\Delta t}\left( \int_{2\Delta t}^T 3 (u_h(t),v(t)) dt - \int_{\Delta t}^{T-\Delta t}4 (u_h(t),v(t+\Delta t)) dt +\int_{0}^{T-2\Delta t}(u_h(t),v(t+2\Delta t)) dt \right) \\
	&  = \frac{1}{2\Delta t}\left( \int_{0}^T 3 (u_h(t),v(t)) dt - \int_{0}^{T}4 (u_h(t),v(t+\Delta t)) dt +\int_{0}^{T}(u_h(t),v(t+2\Delta t)) dt \right),
\end{align*}
where we used that $v$ is compactly supported in time and zero on $[0,2\Delta t]$ and $[T-2\Delta t,T]$. Letting 
\begin{equation*}
	D_t^+ v(t) := \frac{-v(t+2\Delta t)+4v(t+\Delta t)-3 v(t)}{2\Delta t},
\end{equation*}
we can rewrite this as
\begin{equation*}
	\int_0^T\left(D^-_t u_h,v(t)\right) dt  = -\int_0^T (u_h(t),D_t^+ v(t)) dt. 
\end{equation*}
Similarly,
\begin{equation*}
		\int_0^T\left(D^-_t Q_h,Y(t)\right) dt = - \int_0^T (Q_h(t),D_t^+ Y(t)) dt,\quad 	\int_0^T\left(D^-_t r_h,\Ih w(t)\right)_h dt = - \int_0^T (r_h(t),D_t^+ \Ih w(t))_h dt.
\end{equation*}
Since $D^+_t v(t)\to \partial_t v(t)$ pointwise due to the smoothness of $v$ and $u_h\weak u$ in $L^\infty(0,T;L^2(\dom))$, we have
\begin{equation}\label{eq:timederconv}
	\begin{split}
	&\int_0^T\left(D^-_t u_h,v\right) dt  \stackrel{h,\Delta t\to 0}{\longrightarrow} -\int_0^T (u(t),\partial_t v(t)) dt, \\
	&\int_0^T\left(D^-_t Q_h,Y\right) dt  \stackrel{h,\Delta t\to 0}{\longrightarrow} -\int_0^T (Q(t),\partial_t Y(t)) dt= \int_0^T(\partial_t Q(t),Y(t)) dt,
	\end{split}
	\end{equation}
	where the last identity follows since $\norm{D_t^- Q_h}_{L^2(0,T;L^{6/5}(\dom))}\leq C$ uniformly in $h,\Delta t$ by~\eqref{eq:timederQ} and thus it is true for the weak limit also.
Using this, we proceed to studying the limit of~\eqref{eq:Qapp} as $h,\Delta t\to 0
$. Convergences~\eqref{eq:uhstrongconv} -- \eqref{eq:Hhweakconvagain} and~\eqref{eq:timederconv}  allow us to pass to the limit in all the terms on the left-hand side and replace the approximations by their corresponding limits. Notice that the correction term $\frac{2\xi}{d^2}\Div u I$ in $s$ vanishes since the limit $u$ is divergence-free. So we only have to show that the right-hand side vanishes. 
	For the first term on the right-hand side, we have, after summing by parts, and using interpolation estimates,
	\begin{equation*}
	\begin{split}
			\left|\int_0^T\left(D^-_t Q_h,Y-\Ih Y\right) dt \right|& =\left|\int_0^T\left( Q_h,D_t^+Y-\Ih D_t^+Y\right) dt \right| \\
			& \leq C h^2\norm{\partial_t Y}_{L^2(0,T;H^2(\dom))},
	\end{split}
		\end{equation*}
	which converges to zero as $h,\Delta t\to 0$. 
The second term on the right-hand side we can estimate as follows:
\begin{align*}
	\left|\int_0^T((\tu_h\cdot\Grad)\bar{Q}_h,Y-\Ih Y)dt\right|	&\leq\int_0^T\norm{\tu_h}_{L^2}\norm{\Grad \bar{Q}_h}_{L^2}\norm{\Ih Y-Y}_{L^\infty} dt\\
	& \leq C\norm{\tu_h}_{L^\infty(0,T;L^2(\dom))}\norm{\Grad \bar{Q}_h}_{L^\infty(0,T;L^2(\dom))} h \norm{Y}_{L^1(0,T;W^{1,\infty}(\dom))},
\end{align*}
where we used the discrete energy estimate, Lemma~\ref{lem:discenergyestimate}, and interpolation estimates.
Clearly, this term converges to zero as $h,\Delta t\to 0$. 
We continue to the third term: Using Sobolev embeddings,
\begin{align*}
	&	\left|\int_0^T(s(\tu_h,\bar{Q}_h),Y-\Ih Y) dt\right| \\
	& \leq C  \int_0^T\!\! \int_{\dom} \left(|\Grad\tu_h||\bar{Q}_h||\Ih Y-Y|+|\Grad\tu_h||\Ih Y-Y| +|\Grad\tu_h||\bar{Q}_h|^2 |\Ih Y-Y| \right)dx dt\\
	& \leq C \int_{0}^{T}\!\!\left(\norm{\Grad\tu_h}_{L^2}\norm{\bar{Q}_h}_{L^2}\norm{\Ih Y-Y}_{L^\infty}+ \norm{\Grad\tu_h}_{L^2}\norm{\Ih Y-Y}_{L^2}+\norm{\Grad\tu_h}_{L^2}\norm{\bar{Q}_h}_{L^4}^2\norm{\Ih Y-Y}_{L^\infty}    \right) dt\\
	& \leq C h\int_{0}^{T}\!\!\bigg(\norm{\Grad\tu_h}_{L^2}\norm{\bar{Q}_h}_{L^2}\norm{Y}_{W^{1,\infty}} + \norm{\Grad\tu_h}_{L^2}\norm{Y}_{W^{1,\infty}}\\
	&\qquad +\norm{\Grad\tu_h}_{L^2}\norm{\bar{Q}_h}_{L^4}^2\norm{Y}_{W^{1,\infty}} \bigg) dt\\
		& \leq C h \norm{\Grad \tu_h}_{L^2([0,T]\times\dom)}\left(1+\norm{\bar{Q}_h}_{L^\infty(0,T;H^1(\dom))}^2\right) \norm{Y}_{L^2(0,T;W^{1,\infty}(\dom))}\stackrel{h,\Delta t\to 0}{\longrightarrow} 0.
\end{align*}
Finally, for the fourth term, we have
 \begin{equation*}
	 \left|M\int_0^T(\HH_h,Y-\Ih Y)dt\right|\leq C h^2\norm{\HH_h}_{L^2([0,T]\times \dom)}\norm{Y}_{L^2(0,T;H^2(\dom))}\stackrel{h,\Delta t\to 0}{\longrightarrow} 0.
	 \end{equation*}
Thus all terms on the right-hand side of~\eqref{eq:Qapp} vanish or converge to zero and we obtain in the limit
\begin{equation}
	\label{eq:Qlim}
	\int_0^T\left[\left(\partial_t Q ,Y\right)+((u\cdot\Grad) Q,Y)-(s(u,Q),Y)-M (\HH,Y)\right] dt = 0.
\end{equation}
	Next, we consider the equation for $\HH_h$,~\eqref{eq:Happ}. Using the convergence properties~\eqref{eq:uhstrongconv} -- \eqref{eq:Hhweakconvagain}, we can pass to the limit in the first two terms on the left-hand side:
\begin{equation*}
	\lim_{h,\Delta t\to 0}\left[\int_0^T\left[(\HH_h,Z) +L(\Grad {Q}_h,\Grad Z)\right]dt \right] = \int_0^T\left[(\HH,Z) +L(\Grad Q,\Grad Z)\right]dt.
\end{equation*}
For the third term on the left-hand side of~\eqref{eq:Happ}, we use the properties of the discrete inner product $(\cdot,\cdot)_h$ stated in Lemmas~\ref{lem:masslumped},~\ref{lem:masslumpedLp} and~\ref{lem:masslump2}:
\begin{align*}
	&\lim_{h,\Delta t\to 0}\left|\int_0^T(r_h P(\bar{Q}_h),\Ih Z)_h dt-\int_0^T(r P(Q),Z)dt\right|\\
	&\quad \leq \lim_{h,\Delta t\to 0}\left|\int_0^T\left[(r_h P(\bar{Q}_h),\Ih Z)_h -(\Ih(r_h  P(\bar{Q}_h)), \Ih Z)\right] dt\right|\\
	&\qquad +\lim_{h,\Delta t\to 0}\left|\int_0^T\left[(\Ih(r_h P(\bar{Q}_h)),\Ih Z) -(r_h P(\bar{Q}_h), \Ih Z)\right]dt\right|\\
		&\qquad +\lim_{h,\Delta t\to 0}\left|\int_0^T\left[(r_h P(\bar{Q}_h),\Ih Z) -(r_h P(\bar{Q}_h),  Z)\right]dt\right|\\
	&\qquad +\lim_{h,\Delta t\to 0}\left|\int_0^T\left[ (r_h P(\bar{Q}_h), Z) -(r P(Q),Z)\right] dt\right|\\
		& \leq \lim_{h,\Delta t\to 0} C h \norm{\Ih(r_hP(\bar{Q}_h))}_{L^1([0,T]\times\dom)}\norm{\Grad\Ih Z}_{L^\infty([0,T]\times\dom)}\\
	&\qquad 	+ \lim_{h,\Delta t\to 0} \norm{r_h P(\bar{Q}_h) - \Ih(r_h P(\bar{Q}_h))}_{L^1([0,T]\times\dom)}\norm{\Ih Z}_{L^\infty([0,T]\times\dom)}\\
	&	\qquad + \lim_{h,\Delta t\to 0} \norm{r_h P(\bar{Q}_h)}_{L^1([0,T]\times\dom)}\norm{Z - \Ih Z}_{L^\infty([0,T]\times\dom)}\\
	&\qquad +\lim_{h,\Delta t\to 0}\left|\int_0^T\left[ (r_h P(\bar{Q}_h), Z) -(r P(Q),Z)\right] dt\right|\\
	& \leq \lim_{h,\Delta t\to 0} C h \norm{r_h}_{L^2([0,T]\times\dom)}\norm{ \bar{Q}_h}_{L^2([0,T]\times\dom)}\norm{\Grad Z}_{L^\infty([0,T]\times\dom)}\\
	&\qquad +\lim_{h,\Delta t\to 0} Ch \norm{r_h}_{L^2([0,T]\times\dom)}\norm{\Grad \bar{Q}_h}_{L^2([0,T]\times\dom)}\norm{Z}_{L^\infty([0,T]\times\dom)} \\
		&\qquad +\lim_{h,\Delta t\to 0} Ch \norm{r_h}_{L^2([0,T]\times\dom)}\norm{ \bar{Q}_h}_{L^2([0,T]\times\dom)}\norm{\Grad Z}_{L^\infty([0,T]\times\dom)} \\
	&\qquad +\lim_{h,\Delta t\to 0}\left|\int_0^T\left[ (r_h P(\bar{Q}_h), Z) -(r P(Q),Z)\right] dt\right|.
\end{align*}
The first three terms are zero thanks to the energy estimate and the last one is zero thanks to the weak convergence of $r_h$ and the strong convergence of $\bar{Q}_h$ in $L^2$.
For the two terms on the right-hand side of~\eqref{eq:Happ}, we have using interpolation estimates and the discrete energy estimate
\begin{equation*}
	\left|\int_0^T\left[ (\HH_h,Z-\Ih Z) +L(\Grad {Q}_h ,\Grad (Z-\Ih Z))\right] dt\right| \leq C h \norm{Z}_{L^2(0,T;H^2(\dom))},
\end{equation*}
which goes to zero as $h,\Delta t \to 0$.	
Thus we conclude that the limit $\HH$ satisfies
\begin{equation}\label{eq:Hprelimlimit}
	\int_0^T\left[(\HH,Z)+L(\Grad Q,\Grad Z)+(rP(Q), Z) \right] dt =0.
\end{equation}
Using this, we will show that $\Grad Q_h$ converges strongly in $L^2([0,T]\times\dom)$ up to a subsequence.
To do so, we first note that we can use a density argument to extend identity~\eqref{eq:Hprelimlimit} to test functions $Z\in L^2(0,T;H_0^1(\dom))$. Hence, we can take the limit $Q$ as a test function:
\begin{equation}
	\label{eq:QHidentity1}
	\int_0^T\left[(\HH,Q)+L(\Grad Q,\Grad Q)+(rP(Q),Q)\right] dt =0.
\end{equation}
On the other hand, we can take $Z ={Q}_h$ as a test function in~\eqref{eq:Happ}, which is in $\Mhz$ and hence the right-hand side vanishes:
\begin{equation}\label{eq:QhHhidentity}
	\int_0^T\left[(\HH_h,{Q}_h ) +L(\Grad {Q}_h,\Grad {Q}_h)+(r_h P(\bar{Q}_h),{Q}_h)_h\right] dt = 0.
\end{equation}
We take the limit $h,\Delta t\to 0$ in this identity. For the first term, due to the weak convergence of $\HH_h$ and the strong convergence of ${Q}_h$, we have
\begin{equation*}
	\lim_{h,\Delta t\to 0} \int_0^T(\HH_h,{Q}_h ) dt = \int_0^T(\HH,Q ) dt.
\end{equation*}
For the third term, we have using Lemmas~\ref{lem:masslumped},~\ref{lem:masslumpedLp},~\ref{lem:masslump2}, the discrete energy estimate, Lemma~\ref{lem:discenergyestimate}, the inverse estimate~\cite[Lemma 12.1, Exercise 12.4]{Ern2021},
\begin{equation}\label{eq:inverseestimate}
	\norm{v_h}_{L^p(\dom)}\leq C h^{d\left(\frac{1}{p}-\frac{1}{r}\right)}\norm{v_h}_{L^r},
\end{equation}
for $p=\infty$ and $r=6$, and the Sobolev inequality,
\begin{align*}
	&\left|\int_0^T  \left[(r_h P(\bar{Q}_h),{Q}_h)_h - (rP(Q),Q)\right] dt \right|\\
	&\leq  \left|\int_0^T  \left[(r_h P(\bar{Q}_h),{Q}_h)_h - (\Ih(r_h P(\bar{Q}_h)),{Q}_h)\right] dt \right|+ \left|\int_0^T  \left[(\Ih(r_h P(\bar{Q}_h)),{Q}_h) - (r_h P(\bar{Q}_h),Q_h)\right] dt \right|\\
	&\qquad +  \left|\int_0^T  \left[(r_h P(\bar{Q}_h),{Q}_h) - (r_h P(\bar{Q}_h),Q)\right] dt \right|+ \left|\int_0^T  \left[(r_h P(\bar{Q}_h),Q) - (r_h P(Q),Q)\right] dt \right|\\
	&\qquad + \left|\int_0^T  \left[(r_h P(Q),Q) - (r P(Q),Q)\right] dt \right|\\
	&\leq C h \norm{\Ih(r_h P(\bar{Q}_h))}_{L^2([0,T]\times\dom)}\norm{\Grad Q_h}_{L^2([0,T]\times\dom)}\\
	&\qquad + \norm{\Ih(r_hP(\bar{Q}_h))-r_h P(\bar{Q}_h)}_{L^1([0,T]\times\dom)}\norm{Q_h}_{L^\infty([0,T]\times\dom)}\\
	& \qquad +\norm{r_h}_{L^2([0,T]\times\dom)}\norm{P(\bar{Q}_h)}_{L^6([0,T]\times\dom)}\norm{Q_h-Q}_{L^3([0,T]\times\dom)}\\
	& \qquad + C\norm{r_h}_{L^2([0,T]\times\dom)}\norm{\bar{Q}_h-Q}_{L^3([0,T]\times\dom)}\norm{Q}_{L^6([0,T]\times\dom)} \\
	& \qquad +  \left|\int_0^T  \left[(r_h P(Q),Q) - (r P(Q),Q)\right] dt \right|\\
		&\leq C h \norm{\norm{\Ih(r_h P(\bar{Q}_h))}_{h,2}}_{L^2([0,T])}\norm{\Grad Q_h}_{L^2([0,T]\times\dom)}\\
	&\qquad + C h^{1-d/6} \norm{r_h}_{L^2([0,T]\times\dom)}\norm{\Grad \bar{Q}_h}_{L^2([0,T]\times\dom)}\norm{Q_h}_{L^\infty(0,T;H^1(\dom))}\\
	& \qquad +C\norm{r_h}_{L^2([0,T]\times\dom)}\norm{{Q}_h}_{L^\infty(0,T;H^1(\dom))}\norm{Q_h-Q}_{L^2([0,T]\times\dom)}^{1/3}\norm{Q_h-Q}_{L^4([0,T]\times\dom)}^{2/3}\\
	& \qquad + C\norm{r_h}_{L^2([0,T]\times\dom)}\norm{\bar{Q}_h-Q}_{L^2([0,T]\times\dom)}^{1/3}\norm{\bar{Q}_h-Q}_{L^4([0,T]\times\dom)}^{2/3}\norm{Q}_{L^\infty(0,T;H^1(\dom))} \\
	& \qquad +  \left|\int_0^T  \left[(r_h P(Q),Q) - (r P(Q),Q)\right] dt \right|\\
		&\leq C h \norm{r_h}_{L^2([0,T]\times\dom)}\norm{Q_h}_{L^\infty([0,T]\times\dom)}\\
	&\qquad + C h^{1-d/6} +C\norm{Q_h-Q}_{L^2([0,T]\times\dom)}^{1/3}+ C\norm{\bar{Q}_h-Q}_{L^2([0,T]\times\dom)}^{1/3}\\
	& \qquad +  \left|\int_0^T  \left[(r_h P(Q),Q) - (r P(Q),Q)\right] dt \right|\\
		&\leq C h^{1-d/6} \norm{r_h}_{L^2([0,T]\times\dom)}\norm{Q_h}_{L^\infty(0,T;H^1(\dom))}\\
	&\qquad + C h^{1-d/6} +C\norm{Q_h-Q}_{L^2([0,T]\times\dom)}^{1/3}+ C\norm{\bar{Q}_h-Q}_{L^2([0,T]\times\dom)}^{1/3}\\
	& \qquad +  \left|\int_0^T  \left[(r_h P(Q),Q) - (r P(Q),Q)\right] dt \right|.
\end{align*}
The third and the fourth term go to zero by the strong convergence of $Q_h$ and $\bar{Q}_h$ in $L^2$ and the last term goes to zero by the weak convergence of $r_h$.
Thus in the limit $h,\Delta t\to 0$, identity~\eqref{eq:QhHhidentity} becomes
\begin{equation}\label{eq:QhHhidentitylimit}
	\lim_{h,\Delta t\to 0} \int_0^T L(\Grad {Q}_h,\Grad {Q}_h) dt +\int_0^T\left[(\HH,Q )+(r P(Q),Q)\right] dt = 0.
\end{equation}
Subtracting this from~\eqref{eq:QHidentity1}, we obtain
\begin{equation*}
	\lim_{h,\Delta t\to 0} \int_0^T L(\Grad  {Q}_h,\Grad {Q}_h) dt = L \norm{\Grad Q}_{L^2([0,T]\times\dom)}^2,
\end{equation*}
i.e., the $L^2$-norm of  $\Grad {Q}_h$ converges. Combining it with the weak convergence of $\Grad{Q}_h$, this implies that  $\Grad {Q}_h$ converges strongly in $L^2([0,T]\times\dom)$ up to a subsequence. Combining it with Lemma~\ref{lem:samelimits}, we also obtain the strong convergence of $\bar{Q}_h$ in $L^2(0,T;H^1(\dom))$.

Next, we consider equation~\eqref{eq:uapp}. We have already shown the convergence of the first term in~\eqref{eq:timederconv}.
		 For the second term, we  use that $\hu_h,\tu_h\to u$ in $L^2([0,T]\times\dom)$ and $\tu_h\weak u$ in $L^2([0,T];H^1_0(\dom))$:
\begin{multline*}
	\int_0^T b(\hu_h,\tu_h,v ) dt =  \frac12\int_0^T\int_{\dom}(\hu_h\cdot \Grad)\tu_h\cdot v-( \hu_h\cdot\Grad)v\cdot \tu_h  dxdt\\
	\stackrel{h,\Delta t\to 0}{\longrightarrow}\frac12\int_0^T\int_{\dom} (u\cdot\Grad)u \cdot v-(u\cdot\Grad) v\cdot u dx dt =\int_0^T\int_{\dom} (u\cdot\Grad)u \cdot vdx dt,
\end{multline*}
using that $\Div u=0$ a.e. in $[0,T]\times\dom$.
For the third term, we use that $\Grad \tu_h\weak \Grad u$ in $L^2([0,T]\times\dom)$ and hence
\begin{equation*}
	\int_0^T(\Grad\tu_h,\Grad v) dt \stackrel{h,\Delta t\to 0}{\longrightarrow} \int_0^T (\Grad u,\Grad v) dt.
\end{equation*}
For the fourth term, we have 
\begin{equation*}
	\int_0^T(\sigma(\bar{Q}_h,\HH_h) ,\Grad v) dt \stackrel{h,\Delta t\to 0}{\longrightarrow}\int_0^T (\sigma(Q,\HH), \Grad v) dt
\end{equation*}	
using that $\bar{Q}_h$ converges strongly in $L^2([0,T];H^1(\dom))$ and $\HH_h$ converges weakly in $L^2([0,T]\times\dom)$.
For the fifth term we also use the strong convergence of $\bar{Q}_h$ in $L^2([0,T];H^1(\dom))$ and the weak convergence of $\HH_h$ in $L^2([0,T]\times\dom)$ to conclude that
\begin{equation*}
	\int_0^T(\HH_h\Grad\bar{Q}_h , v) dt \stackrel{h,\Delta t\to 0}{\longrightarrow}\int_0^T (\HH\Grad Q, v) dt.
\end{equation*}	
We consider the sixth term: We have
\begin{equation*}
	\begin{split}
		\int_0^T (f_h,v) dt &= \sum_{m=0}^{N-1}\int_{t^m}^{t^{m+1}} (\PUh f^{m+1},v) dt\\
		& = \sum_{m=0}^{N-1}\int_{t^m}^{t^{m+1}}(  f^{m+1},\PUh v) dt\\
		& = \sum_{m=0}^{N-1}\int_{t^m}^{t^{m+1}} \frac{1}{\Delta t}\int_{t^{m+1/2}}^{t^{m+3/2}}(  f(\tau),\PUh v(t)-v(t))d\tau dt + \sum_{m=0}^{N-1}\int_{t^m}^{t^{m+1}} \frac{1}{\Delta t}\int_{t^{m+1/2}}^{t^{m+3/2}}(  f(\tau), v(t)) d\tau dt\\
		& \stackrel{h,\Delta t\to 0}{\longrightarrow} \int_0^T (f,v) dt,
	\end{split}
\end{equation*}
since time averages converge in $L^2$ and $\norm{v-\PUh v}_{L^2(\dom)}\to 0$ as $h\to 0$. Hence the left-hand side of~\eqref{eq:uapp} converges to
\begin{equation*}
	\int_0^T\left[-( u ,\partial_t v)+ ((u\cdot \Grad) u, v)+\mu(\Grad u,\Grad v) +(\sigma(Q,\HH),\Grad v)+(\HH\Grad Q,v)-(f,v)\right] dt.
\end{equation*}
	In order to conclude that $u$ satisfies the weak formulation, we thus need to show that the right-hand side converges to zero.
	So, we consider the terms on the right-hand side: For  the first term, we rewrite it using summation by parts and then  interpolation estimates (e.g.~\cite[Theorem 11.13]{Ern2021}) 
\begin{align*}
	\left|\int_0^T (D_t^- u_h, v-\Ih^u  v ) dt \right| & = \left|\int_0^T (u_h,D_t^+ v-\Ih^u D_t^+ v ) dt\right|\\
	& \leq C h^{k+1}\norm{u_h}_{L^\infty(0,T;L^2(\dom))} \norm{D_t^+ v}_{L^1(0,T;H^{k+1}(\dom))}\\
	& \leq C h^{k+1} \norm{u_h}_{L^\infty(0,T;L^2(\dom))} \norm{\partial_t  v}_{L^1(0,T;H^{k+1}(\dom))},
\end{align*}
which vanishes as $\Delta t, h \to 0$.
Next,
\begin{equation*}
\begin{split}
&\left|\int_0^Tb(\hu_h,\tu_h,v-\Ih^u v) dt\right|\\
&\leq C\norm{\hu_h}_{L^\infty(0,T;L^2)}\norm{ \tu_h}_{L^2(0,T;H^1(\dom))}\norm{v-\Ih^u v}_{L^2([0,T];W^{1,\infty}(\dom))}\\
&\leq C h \norm{v }_{L^2(0,T;W^{2,\infty}(\dom))},
\end{split}
\end{equation*}
where we used interpolation estimates~\cite[Theorem 11.13]{Ern2021} and the energy estimate, Lemma~\ref{lem:discenergyestimate}. Thus also this term vanishes as $h,\Delta t \to 0$.
For the third term, we have
\begin{equation*}
\left|\mu\int_0^T(\Grad\tu_h,\Grad(v-\Ih^u v))dt\right|\leq \mu \norm{\Grad\tu_h}_{L^2([0,T]\times\dom)}\norm{\Grad(v-\Ih^u v)}_{L^2([0,T]\times\dom)}\leq C h\norm{v}_{L^2(0,T;H^2)},
\end{equation*}
where we used again interpolation estimates and the discrete energy estimate. For the fourth term, we have,
\begin{equation*}
\begin{split}
&\left|\int_0^T(\sigma(\bar{Q}_h,\HH_h),\Grad(v-\Ih^u v)) dt\right|  	\leq C \int_0^T\int_{\dom}\left[|\bar{Q}_h| |\HH_h|+|\HH_h|+|\bar{Q}_h|^2 |\HH_h|\right]|\Grad(v-\Ih^u v)| dt\\
&\qquad\leq C \int_0^T\left[
\norm{\bar{Q}_h}_{L^3}\norm{\HH_h}_{L^2}\norm{\Grad(v-\Ih^u v)}_{L^6} + \norm{\HH_h}_{L^2}\norm{\Grad(v-\Ih^u v)}_{L^2}+ \norm{\HH_h}_{L^2}\norm{\bar{Q}_h}_{L^6}^2\norm{\Grad(v-\Ih^u v)}_{L^6} \right]dt\\
&\qquad\leq C \int_0^T\left[
\norm{\bar{Q}_h}_{H^1}\norm{\HH_h}_{L^2}\norm{\Grad(v-\Ih^u v)}_{L^6} + \norm{\HH_h}_{L^2}\norm{\Grad(v-\Ih^u v)}_{L^2}+ \norm{\HH_h}_{L^2}\norm{\bar{Q}_h}_{H^1}^2\norm{\Grad(v-\Ih^u v)}_{L^6} \right]dt\\
&\qquad\leq C h \int_0^T\left[
\norm{\bar{Q}_h}_{H^1}\norm{\HH_h}_{L^2}\norm{v}_{W^{2,6}} + \norm{\HH_h}_{L^2}\norm{v}_{H^2}+ \norm{\HH_h}_{L^2}\norm{\bar{Q}_h}_{H^1}^2\norm{v}_{W^{2,6}} \right]dt\\
&\qquad\leq C h \Big[
\norm{\bar{Q}_h}_{L^\infty(0,T;H^1(\dom))}\norm{\HH_h}_{L^2([0,T]\times\dom)}\norm{v}_{L^2(0,T;W^{2,6}(\dom))} + \norm{\HH_h}_{L^2([0,T]\times\dom)}\norm{v}_{L^2(0,T;H^2(\dom))}\\
&\qquad\qquad+ \norm{\HH_h}_{L^2([0,T]\times\dom)}\norm{\bar{Q}_h}_{L^\infty(0,T;H^1(\dom))}^2\norm{v}_{L^2(0,T;W^{2,6}(\dom))} \Big]\\
&\qquad\leq C h\norm{v}_{L^2(0,T;W^{2,6}(\dom))}\stackrel{h,\Delta t \to 0}{\longrightarrow} 0,
\end{split}
\end{equation*}
	For the fifth term on the right-hand side of~\eqref{eq:uapp}, we have with~\cite[Theorem 11.13]{Ern2021}
	\begin{equation*}
	\begin{split}
	\left|\int_0^T(\HH_h\Grad\bar{Q}_h,v-\Ih^u v)dt\right| &\leq \int_0^T\norm{\HH_h}_{L^2}\norm{\Grad \bar{Q}_h}_{L^2}\norm{\Ih^u v-v}_{L^\infty} dt  \\
	&\leq C h \norm{\mathcal{H}_h}_{L^2([0,T]\times\dom)}\norm{ Q_h}_{L^\infty(0,T;H^1(\dom))}\norm{v}_{L^2(0,T;W^{1,\infty}(\dom))}  \\
&\leq C h \norm{v}_{L^2(0,T;W^{1,\infty}(\dom))}\stackrel{h,\Delta t\to 0}{\longrightarrow} 0.
	\end{split}
	\end{equation*}
	For the sixth term, we have
	\begin{equation}\label{eq:pressureestimate2}
	\begin{split}
	\left|\int_0^T(\Grad p_h,\Ih^u v-v) dt\right|&\leq  \norm{\Grad p_h}_{L^2([0,T]\times \dom)}\norm{ \Ih^u v-v}_{L^2([0,T]\times\dom)}\\
	&\leq  C h^{k+1}\norm{\Grad p_h}_{L^2([0,T]\times \dom)}\norm{v}_{L^2(0,T;H^{k+1}(\dom))}\\
	& \leq C \frac{h^{k+1}}{\Delta t}\norm{v}_{{L^2(0,T;H^{k+1}(\dom))}}
	\end{split}
	\end{equation}
	similar to the estimate~\eqref{eq:pressuretimecont}.  Clearly, also this term goes to zero if $h^{k+1}=o(\Delta t)$. 
	Finally, for the seventh term, we have, 
	\begin{equation*}
		\left|\int_0^T(f_h,v-\Ih^u v) dt\right|\leq \norm{f_h}_{L^2([0,T]\times\dom)}\norm{v-\Ih^u v}_{L^2([0,T]\times\dom)}\stackrel{h,\Delta t\to 0}{\longrightarrow} 0.
	\end{equation*}
	Thus we obtain in the limit
	\begin{equation}
	\label{eq:uprelimlimit}
	\int_0^T\left[-\left( u ,\partial_t v\right)+ b(u,u,v)+\mu(\Grad u,\Grad v) +(\sigma(Q,\HH),\Grad v)+({\HH\Grad Q},v)-(f,v)\right] dt = 0,
	\end{equation}
	as desired.

	It remains to consider the equation for $r_h$, i.e.,~\eqref{eq:rapp}. 
	For the first term on the left-hand side, we change the integration variable to get the time difference to be on the test function $w$:
	\begin{equation*}
	\int_0^T\left(D_t^- r_h,\Ih w\right)_h dt = 	-\int_0^T\left(r_h,D_t^+\Ih w\right)_h dt
		\end{equation*} 
	Next, we study its limit in a similar way as for the third term on the left-hand side of~\eqref{eq:Happ}:
	\begin{align*}
	&\lim_{h,\Delta t\to 0}\left|\int_0^T\left(D_t^- r_h,\Ih w\right)_h dt+ \int_0^T(r,\partial_t w)dt \right|\\
	&\quad \leq  \lim_{h,\Delta t\to 0}\underbrace{\left|\int_0^T \left[\left(r_h,D_t^+\Ih  w\right)_h - \left(r_h,D_t^+\Ih  w\right)\right] dt\right|}_{\textrm{I}}\\
	&\qquad +\lim_{h,\Delta t\to 0}\underbrace{\left|\int_0^T \left[\left(r_h,D_t^+\Ih w\right) - \left(r_h,D_t^+ w\right)\right] dt\right|}_{\textrm{II}}\\
	&\qquad +\lim_{h,\Delta t\to 0}\underbrace{\left|\int_0^T \left[\left(r_h,D_t^+w\right) - \left(r,\partial_t w\right)\right] dt\right|}_{\textrm{III}}. 
	\end{align*}
	For the first term, we use the properties of the discrete inner product, Lemma~\ref{lem:masslumped}, Lemma~\ref{lem:masslumpedLp}, the properties of the  interpolation operator $\Ih$ and the discrete energy estimate:
	\begin{equation*}
	\textrm{I}\leq C h \norm{r_h}_{L^\infty(0,T;L^2(\dom))}\norm{\Grad\left(D_t^+\Ih w\right)}_{L^1([0,T];L^2(\dom))}\leq C h \norm{\partial_t  w}_{L^1([0,T];H^2(\dom))}.	
	\end{equation*}
	For the second term, we use    interpolation estimates  and the discrete energy estimate:
	\begin{equation*}
	\textrm{II}\leq C h \norm{r_h}_{L^\infty(0,T;L^2(\dom))}\norm{ D_t^+ w}_{L^2([0,T];H^2(\dom))}\leq C h \norm{\partial_t  w}_{L^2([0,T];H^2(\dom))}.
	\end{equation*}
	The third term $\textrm{III}$ converges to zero thanks to the weak star convergence of $r_h$ and the strong convergence $ D_t^+ w \to \partial_t w$ almost everywhere and the regularity of $w$. 
	Next, let us consider the second term on the left-hand side of~\eqref{eq:rapp}:
	\begin{align*}
	& \left|\int_0^T (P(\bar{Q}_h):D^-_t Q_h,\Ih w)_h dt-\int_0^T(P(Q):\partial_t Q, w) dt\right|\\
	&\quad \leq \underbrace{ \left|\int_0^T \left[ (P(\bar{Q}_h):D^-_t Q_h,\Ih w)_h -  (\Ih(P(\bar{Q}_h):D^-_t Q_h),\Ih w) \right]dt\right|}_{\text{I}}\\
	&\qquad + \underbrace{ \left|\int_0^T \left[ (\Ih(P(\bar{Q}_h):D^-_t Q_h),\Ih w)  -  (\Ih(P(\bar{Q}_h):D^-_t Q_h),  w) \right]dt\right|}_{\text{II}}\\
	&\qquad + \underbrace{ \left|\int_0^T \left[ (P(\bar{Q}_h):D^-_t Q_h, w)  -  (\Ih(P(\bar{Q}_h):D^-_t Q_h),  w) \right]dt\right|}_{\text{III}}\\
	&\qquad + \underbrace{ \left|\int_0^T \left[ (P(\bar{Q}_h):D^-_t Q_h, w)  -  (P(Q):\partial_t Q,  w) \right]dt\right|}_{\text{IV}}.
	\end{align*}
	We show that each of the four terms on the right-hand side converges to zero:
	For the first term I, we use the properties of the discrete inner product once more, Lemma~\ref{lem:masslumped} and~\ref{lem:masslumpedLp}, the Lipschitz continuity of $P$, and the properties of the interpolation operator $\Ih$, and the estimate on the time difference of $Q_h$,~\eqref{eq:timederQ}:
\begin{align*}
		\text{I}& \leq Ch \norm{\Ih(P(\bar{Q}_h):D_t^- Q_h)}_{L^1}\norm{\Grad \Ih w}_{L^\infty}\\
		& \leq Ch \norm{\bar{Q}_h}_{L^6}\norm{D_t^- Q_h}_{L^{6/5}}\norm{w}_{L^\infty(0,T;W^{1,\infty}(\dom))}\\
		&\leq C h\norm{w}_{L^\infty(0,T;W^{1,\infty}(\dom))}.
	\end{align*}
	Clearly, this vanishes as $h\to 0$.
	For the second term II, we again use interpolation estimates, Lemma~\ref{lem:masslumpedLp}, the Lipschitz continuity of $P$, and the discrete energy estimate and~\eqref{eq:timederQ},
	\begin{align*}
	\text{II}& \leq Ch \norm{\bar{Q}_h}_{L^6}\norm{D_t^- Q_h}_{L^{6/5}}\norm{w}_{L^\infty(0,T;W^{1,\infty}(\dom))}\\
	&\leq C h\norm{w}_{L^\infty(0,T;W^{1,\infty}(\dom))}.
	\end{align*}
	To estimate term III, we first note that, by the energy estimate and~\eqref{eq:timederQ},
	\begin{align*}
		\norm{D_t^- Q_h}_{L^2([0,T]\times\dom)}^2& \leq \int_0^T \norm{D_t^- Q_h}_{L^{6/5}}\norm{D_t^- Q_h}_{L^6} dt \\
		& \leq C\norm{D_t^- Q_h}_{L^2([0,T];L^{6/5}(\dom))}\norm{D_t^-  Q_h}_{L^2(0,T;H^1(\dom))} \\
		& \leq \frac{C}{\Delta t}\norm{D_t^- Q_h}_{L^2([0,T];L^{6/5}(\dom))}\norm{ Q_h}_{L^\infty(0,T;H^1(\dom))}\\
			& \leq \frac{C}{\Delta t}.
	\end{align*}
	Using this, and  Lemmas~\ref{lem:masslumped},~\ref{lem:masslumpedLp},~\ref{lem:masslump2}, and the Lipschitz continuity of $P$, we get
\begin{align*}
	\text{III}& \leq  \norm{P(\bar{Q}_h):D_t^- Q_h - \Ih(P(\bar{Q}_h): D_t^- Q_h)}_{L^1}\norm{w}_{L^\infty}\\
	& \leq C h \norm{\Grad \bar{Q}_h}_{L^2}\norm{D_t^- Q_h}_{L^2}\norm{w}_{L^\infty}\\
	& \leq C \frac{h}{\sqrt{\Delta t}} \norm{ \Grad\bar{Q}_h}_{L^\infty(0,T;L^2(\dom))}\norm{w}_{L^\infty}.
\end{align*}
The terms on the right-hand side are bounded thanks to the energy inequality. Under the condition $h^2 = o(\Delta t)$, it converges to zero.
Finally, the fourth term IV converges to zero, since $D^-_t Q_h$ converges weakly in $L^2(0,T;L^{6/5}(\dom))$ by the uniform bound~\eqref{eq:timederQ} and  $\bar{Q}_h$ converges strongly in $L^2(0,T;H^1(\dom))$, as shown previously, which implies strong convergence in $L^2(0,T;L^6(\dom))$ by the continuous embedding of $H^1$ into $L^6$.
	Thus in the limit,
	\begin{equation}\label{eq:rprelimlimit}
	\int_0^T\left[\left(r,\partial_t w\right)  + (P(Q):\partial_t Q, w)\right] dt=0.
	\end{equation}
	Combining \eqref{eq:Qlim}, \eqref{eq:Hprelimlimit},~\eqref{eq:uprelimlimit}, and~\eqref{eq:rprelimlimit}, we see that $(u,Q,\mathcal{H},r)$ satisfies the distributional version of~\eqref{seq:BerisEdwardsIEQ}. Using Lemma 5.2 from~\cite{Gudibanda2022} about the equivalence of~\eqref{seq:BerisEdwards} and~\eqref{seq:BerisEdwardsIEQ}, we also obtain that $(u,Q,\HH)$ satisfies the distributional form of~\eqref{seq:BerisEdwards} stated in Definition~\ref{def:weaksol}. In~\cite{Gudibanda2022}, the time derivative of $Q$ satisfies $\partial_t Q\in L^2([0,T]\times\dom)$, whereas here we only have $\partial_t Q\in L^2(0,T;L^{6/5}(\dom))$, however, going through the proof of that lemma, one can see that the precise exponent of the spatial regularity does not matter, and the proof goes through with  $\partial_t Q\in L^2(0,T;L^{6/5}(\dom))$. The spatial regularity stated in~\eqref{eq:regularity} follows from the uniform bounds in the discrete energy estimate, Lemma~\ref{lem:discenergyestimate}. The time regularity for $Q$ follows from the uniform bound for the approximations in~\eqref{eq:timederQ}. The time regularity for $u$ follows from the density of smooth test functions in $L^2(0,T;W^{1,6}_0(\dom))$ and checking that~\eqref{eq:uprelimlimit} is valid for test functions in $L^2(0,T;X_6)$ due to the available integrability for all the terms appearing. 
	
			Next, we reconsider the discrete energy balance~\eqref{eq:prelimenergy}. Adding $2\Delta t$ times~\eqref{eq:totalenergyfirststep} to it and using~\eqref{eq:u0estimate}, we obtain
			
				\begin{multline}\label{eq:prelimlimitenergy}
				E^N_h + \sum_{m=1}^{N-1}\norm{u^{m+1}_h-2u^m_h+ u^{m-1}_h}_{L^2}^2+3 \sum_{m=1}^{N-1}\norm{\tu^{m+1}_h-u^{m+1}_h}_{L^2}^2 + 4\mu\Delta t \sum_{m=0}^{N-1}\norm{\Grad\tu_h^{m+1}}_{L^2}^2\\
				+ 4M\Delta t \sum_{m=0}^{N-1}\norm{\HH_h^{m+1}}_{L^2}^2 + L\sum_{m=1}^{N-1}\norm{\Grad Q_h^{m+1}-2\Grad Q^m_h+\Grad Q^{m-1}_h}_{L^2}^2+\sum_{m=1}^{N-1}\norm{r_h^{m+1}-2r^m_h+r^{m-1}_h}_{h}^2\\
				=  \norm{u_h^1}^2_{L^2}+\norm{2 u_h^1-u_h^0}_{L^2}^2 + L \norm{\Grad Q_h^1}_{L^2}^2+L\norm{2\Grad Q_h^1-\Grad Q_h^0}_{L^2}^2 \\
				+ \norm{r_h^1}_{h}^2 + \norm{2r_h^1 - r_h^0}^2_h + \frac{4\Delta t^2}{3}\norm{\Grad p_h^1}_{L^2}^2
				 +4 \Delta t \sum_{m=0}^{N-1}(f^{m+1},\tu_h^{m+1})\\
				- 2 \norm{u^1_h}_{L^2}^2 -2\norm{\tu^1_h-u^0_h}_{L^2}^2  + 2\norm{u^0_h}_{L^2}^2 
			 - 	2 \Delta t^2\norm{\Grad p^1_h}_{L^2}^2 +2 \Delta t^2\norm{\Grad p^0_h}_{L^2}^2\\
				-2\norm{r_h^1}_h^2 + 2 \norm{r_h^0}_h^2 - 2 \norm{r_h^1 - r_h^0}_h^2 
			-2L\norm{\Grad Q_h^1}_{L^2}^2 - 2 L \norm{\Grad (Q_h^1-Q^0_h)}_{L^2}^2 + 2 L \norm{\Grad Q_h^0}_{L^2}^2\\
			\leq 
			-\norm{u_h^1}^2_{L^2}+\norm{2 u_h^1-u_h^0}_{L^2}^2- L \norm{\Grad Q_h^1}_{L^2}^2+L\norm{2\Grad Q_h^1-\Grad Q_h^0}_{L^2}^2 \\
			- \norm{r_h^1}_{h}^2 + \norm{2r_h^1 - r_h^0}^2_h 
			+4 \Delta t \sum_{m=0}^{N-1}(f^{m+1},\tu_h^{m+1})
			+ 2\norm{u_0}_{L^2}^2 
			+ 2 \norm{r^0_h}_{h}^2
		 + 2 L \norm{\Grad Q_h^0}_{L^2}^2.
			\end{multline}			
	Rewriting this in terms of the piecewise constant functions $u_h$, $\bar{u}_h$, $\tu_h$ etc. and
	passing to the limit on the left-hand side and using Lemma~\ref{lem:weakapproxcontinuity} together with weak lower semi-continuity of the norm, we have for  every $t\in [0,T]$,
	\begin{multline}
		2\norm{u(t)}_{L^2}^2 + 4\mu \int_0^t \norm{\Grad u(\tau)}_{L^2}^2 d\tau+2 L\norm{\Grad Q(t)}^2_{L^2}+ 4 M \int_0^t\norm{\HH(\tau)}_{L^2}^2 d\tau +2\norm{r(t)}^2_{L^2}\\
		\leq \liminf_{h,\Delta t\to 0}\bigg(E_h(t)  	+\Delta t^{-1}\int_{\Delta t}^t\norm{u_h(\tau)-\bar{u}_h(\tau)}_{L^2}^2 d\tau + 3\Delta t^{-1}\int_{\Delta t}^t\norm{\tu_h(\tau)-u_h(\tau)}_{L^2}^2 d\tau
		+ 4 \mu\int_0^t\norm{\Grad \tu_h(\tau)}_{L^2}^2d\tau\\
		+ 4M \int_0^t \norm{\HH_h(\tau)}_{L^2}^2 d\tau
		+L \Delta t^{-1} \int_{\Delta t}^t \norm{\Grad Q_h(\tau) - \Grad \bar{Q}_h(\tau)}_{L^2}^2 d\tau + \Delta t^{-1} \int_{\Delta t}^t \norm{r_h(\tau) -2r_h(\tau-\Delta t)+r_h(\tau-2\Delta t) }^2_{h}d\tau \bigg).
	\end{multline}
	Let us consider the terms on the right-hand side. For the term involving $f$, we have
	\begin{equation*}
		4\int_0^t (f_h,\tu_h) d\tau \stackrel{\Delta t,h\to 0}{\longrightarrow} 4 \int_0^t (f,u) d\tau,
	\end{equation*}
	using the strong convergence of $\tu_h$ and $f_h$ in $L^2([0,T]\times\dom)$. We rewrite the terms involving approximations of $u$ as follows:
	\begin{equation*}
		-\norm{u_h^1}_{L^2}^2 + \norm{2 u_h^1 - u_h^0}_{L^2}^2 + 2 \norm{u_0}_{L^2}^2 = 3 \norm{u_h^1}_{L^2}^2 +\norm{u_h^0}_{L^2}^2 - 4(u_h^1,u_h^0)+ 2 \norm{u_0}_{L^2}^2. 
	\end{equation*}
	We have $u_h^0\to u_0$ strongly in $L^2(\dom)$ and $u_h^1\weak u_0$ in $L^2(\dom)$. Therefore
	\begin{equation*}
		\lim_{h,\Delta t\to 0}\left(	\norm{u_h^0}_{L^2}^2 - 4(u_h^1,u_h^0)+ 2 \norm{u_0}_{L^2}^2 \right) = -\norm{u_0}_{L^2}^2.
	\end{equation*}
	Similarly, 
		\begin{equation*}
		\lim_{h,\Delta t\to 0}\left(	\norm{\Grad Q_h^0}_{L^2}^2 - 4(\Grad Q_h^1,\Grad Q_h^0)+ 2 \norm{\Grad Q_0}_{L^2}^2 \right) = -\norm{\Grad Q_0}_{L^2}^2.
	\end{equation*}
	Next, we need to show
		\begin{equation}\label{eq:rlimitinitial}
		\lim_{h,\Delta t\to 0}\left(	3\norm{r_h^0}_{h}^2 - 4(r_h^1,r_h^0)_h \right) = -\norm{r_0}_{L^2}^2.
	\end{equation}
	To do so, we first recall that 
	\begin{equation*}
		\norm{r_0}_{L^2}^2 = 2\int_{\dom}\!\! (\mathcal{F}_B(Q_0)+A_0) dx,\quad \text{and}\quad \norm{r_h^0}_h^2 = 2\sum_{z\in \mathcal{N}_h} (\mathcal{F}_B(Q_h^0(z))+A_0)\!\!\int_{\dom}\varphi_z dx = 2\int_{\dom}\!\! (\Ih(\mathcal{F}_B(Q_h^0))+A_0) dx.
	\end{equation*}
	Therefore
	\begin{align*}
		\left|\norm{r_0}_{L^2}^2 - \norm{r_h^0}_h^2\right|& \leq 2\left| \int_{\dom} \mathcal{F}_B(Q_0)-\mathcal{F}_B(Q_h^0) dx\right| + 2\left|\int_{\dom} \mathcal{F}_B(Q_h^0) - \Ih(\mathcal{F}_B(Q_h^0)) dx\right|\\
		& \leq C\left(1+\norm{Q_0}^3_{L^4}+\norm{Q_h^0}^3_{L^4}  \right) \norm{Q_0-Q_h^0}_{L^4}  + 2\sum_{K\in \mathcal{T}_h}\int_K \left|\mathcal{F}_B(Q_h^0)-\Ih(\mathcal{F}_B(Q_h^0))\right| dx\\
		& \leq C \norm{Q_0-Q_h^0}_{L^4}  + C h^2\sum_{K\in \mathcal{T}_h}\int_K \left|\mathcal{F}''_B(Q_h^0)\right| |\Grad Q_h^0|^2 dx\\
		& \leq C \norm{Q_0-Q_h^0}_{L^4}  + C h^2\int_{\dom} \left(1+\left|Q_h^0\right|^2\right) |\Grad Q_h^0|^2 dx\\
		& \leq C \norm{Q_0-Q_h^0}_{L^4}  + C h^2 \left(1+\norm{Q_h^0}_{L^\infty}^2\right) \norm{\Grad Q_h^0}_{L^2}^2\\
		& \leq C \norm{Q_0-Q_h^0}_{L^4}  + C h^2 \left(1+\norm{Q_h^0}_{L^\infty}^2\right).
	\end{align*}
	where we used that $Q_0\in H^1(\dom)$, interpolation estimates for the Lagrangian interpolation, and that $Q_h^0$ is piecewise linear and therefore its element-wise second derivative is zero. Now since $Q_0\in H^1_0(\dom)$, the first term converges to zero, and for the second term, we use the inverse estimate~\eqref{eq:inverseestimate} to obtain
	\begin{equation}\label{eq:r0conv}
\left|\norm{r_0}_{L^2}^2 - \norm{r_h^0}_h^2\right| \leq  C \norm{Q_0-Q_h^0}_{L^4}  + C h^{2-d/3} \left(1+\norm{Q_h^0}_{H^1}^2\right)\stackrel{h\to 0}{\longrightarrow} 0.
	\end{equation}
	Similarly, we have
	\begin{align*}
		\norm{r_h^0-r_0}_{L^2 }& \leq \norm{\Ih(r(Q_h^0)) -r(Q_h^0)}_{L^2} + \norm{r(Q_h^0)-r(Q_0)}_{L^2}\\
		& \leq C h^2\left(\sum_{K\in \mathcal{T}_h} \int_K\left|\Grad^2 r(Q_h^0)\right|^2 dx\right)^{1/2} +\left(1+ \norm{Q_0}_{L^4} + \norm{Q_h^0}_{L^4}\right) \norm{Q_h^0-Q_0}_{L^4}\\
		& \leq C h^2\left(\sum_{K\in \mathcal{T}_h} \int_K\left| r''(Q_h^0)\right|^2|\Grad Q_h^0|^4\right)^{1/2} +C\norm{Q_h^0-Q_0}_{L^4}\\
		& \leq C h^2\left(\sum_{K\in \mathcal{T}_h} \int_K|\Grad Q_h^0|^4\right)^{1/2} +C\norm{Q_h^0-Q_0}_{L^4}.
			\end{align*}
			Here we used that $|r''(Q)|$ is bounded. Using inverse estimates, we can bound this by
			\begin{equation}\label{eq:initialrconv}
				\norm{r_h^0-r_0}_{L^2 }\leq C h^{2-d/2} \norm{\Grad Q_h^0}_{L^2}^2 +C\norm{Q_h^0-Q_0}_{L^4}\stackrel{h\to 0}{\longrightarrow} 0.
			\end{equation}
	Next, we show that $(r_h^1-r_h^0,r_h^0)_h\to 0$ which together with~\eqref{eq:r0conv} and~\eqref{eq:initialrconv} will imply~\eqref{eq:rlimitinitial}. Using the first step of the scheme,~\eqref{eq:rfirststep}, for a sufficiently smooth test function $w$, we have, using Lemmas~\ref{lem:masslumped} and~\ref{lem:masslumpedLp}, interpolation estimates, and Lemma~\ref{lem:weakapproxcontinuity},
\begin{align*}
		\left|(r_h^1-r_h^0,w)_h\right|& = \left|(P(Q_h^0):(Q_h^1-Q_h^0),w)_h\right|\\
		& \leq \left|(P(Q_h^0):(Q_h^1-Q_h^0),w) \right| + \left|(P(Q_h^0):(Q_h^1-Q_h^0),w)_h-(\Ih(P(Q_h^0):(Q_h^1-Q_h^0)),\Ih w)\right|\\
		&\qquad + \left|(\Ih(P(Q_h^0):(Q_h^1-Q_h^0)),\Ih w)-(P(Q_h^0):(Q_h^1-Q_h^0),\Ih w)\right|\\
	&\qquad + \left|(P(Q_h^0):(Q_h^1-Q_h^0),\Ih w)-(P(Q_h^0):(Q_h^1-Q_h^0), w)\right|\\
		&  \leq \norm{P(Q_h^0)}_{L^6}\norm{Q_h^1-Q_h^0}_{L^{6/5}}\norm{w}_{L^\infty} + C h \norm{\Ih(P(Q_h^0):(Q_h^1-Q_h^0))}_{L^1}\norm{w}_{W^{1,\infty}}\\
		& \qquad + \norm{\Ih(P(Q_h^0):(Q_h^1-Q_h^0)) -P(Q_h^0):(Q_h^1-Q_h^0)}_{L^1}\norm{w}_{L^\infty}\\
		&\qquad + Ch \norm{P(Q_h^0):(Q_h^1-Q_h^0)}_{L^1}\norm{w}_{W^{1,\infty}} \\
		& \leq C \sqrt{\Delta t} \norm{Q_h^0}_{L^6}\norm{w}_{L^\infty}+ C h \norm{Q_h^0}_{L^2}\norm{Q_h^1-Q_h^0}_{L^2}\norm{w}_{W^{1,\infty}}\\
		&\qquad + C h \norm{\Grad Q_h^0}_{L^2}\norm{Q_h^1-Q_h^0}_{L^2}\norm{w}_{L^\infty}+ C h \norm{Q_h^0}_{L^2}\norm{Q_h^1-Q_h^0}_{L^2}\norm{w}_{W^{1,\infty}}\\
		& \leq C(h+\sqrt{\Delta t})\norm{w}_{W^{1,\infty}}.
		\end{align*}
	Thus,
		\begin{equation}\label{eq:smoothw}
				\left|(r_h^1-r_h^0,w)_h\right| \leq C (\sqrt{\Delta t}+ h)\norm{w}_{W^{1,\infty}},
		\end{equation}
		which goes to zero as $h,\Delta t\to 0$. 
		Now let $r_0^\delta$, $\delta>0$, be a smooth approximation of the initial data $r_0$ which satisfies $\norm{r_0^\delta- r_0}_{L^2} \leq  \delta$.
	Then given $\epsilon>0$ arbitrary, we have using~\eqref{eq:smoothw},~\eqref{eq:initialrconv} and Lemma~\ref{lem:masslumped},
	\begin{align*}
			\left|(r_h^1-r_h^0,r^0_h)_h\right| &\leq 	\left|(r_h^1-r_h^0,r^0_h-r_0^\epsilon)_h\right| +	\left|(r_h^1-r_h^0,r_0^\epsilon)_h\right| \\
			& \leq \norm{r_h^1-r_h^0}_h\norm{r_h^0-r_0^\epsilon}_h + C (\sqrt{\Delta t}+ h)\norm{r_0^\epsilon}_{W^{1,\infty}}\\
			& \leq \sqrt{d+2}\norm{r_h^1-r_h^0}_h\left(\norm{r_h^0-r_0^\epsilon}_{L^2} +\norm{r_0^\epsilon-\Ih(r_0^\epsilon)}_{L^2}\right) + C (\sqrt{\Delta t}+ h)\norm{r_0^\epsilon}_{W^{1,\infty}}\\
			& \leq \sqrt{d+2}\norm{r_h^1-r_h^0}_h\left(\norm{r_h^0-r_0}_{L^2}+ \norm{r_0-r_0^\epsilon}_{L^2}+\norm{r_0^\epsilon-\Ih(r_0^\epsilon)}_{L^2}\right) + C (\sqrt{\Delta t}+ h)\norm{r_0^\epsilon}_{W^{1,\infty}}\\
				& \leq \sqrt{d+2}\norm{r_h^1-r_h^0}_h\left(\norm{r_h^0-r_0}_{L^2}+ \epsilon+ C h^2 \norm{r_0^\epsilon}_{H^2}\right) + C (\sqrt{\Delta t}+ h)\norm{r_0^\epsilon}_{W^{1,\infty}}.
	\end{align*}
	Sending $h,\Delta t\to 0$, $\norm{r_h^0-r_0}_{L^2}\to 0$ by~\eqref{eq:initialrconv}, and so we obtain
	\begin{equation*}
	\limsup_{h,\Delta t \to 0}	\left|(r_h^1-r_h^0,r_h^0)_h\right| \leq C \epsilon,
	\end{equation*}
	where $\epsilon>0$ was arbitrary. Thus~\eqref{eq:rlimitinitial}  follows.
	Next, noticing that $2\Delta t |(f^1, \tilde u_h^1)| \leq \Delta t \|f^1\|_{L^2}^2 + \Delta t \|\tilde u_h^1\|_{L^2}^2 \to 0$  as $\Delta t \to 0$, we pass to the limit in $\Delta t$ times equation~\eqref{eq:totalenergyfirststep}, to obtain
	\begin{equation*}
		\limsup_{h,\Delta t\to 0}\left(\norm{u_h^1}_{L^2}^2+L\norm{\Grad Q_h^1}_{L^2}^2 + \norm{r_h^1}_{h}^2 \right)\leq \norm{u_0}_{L^2}^2+L\norm{\Grad Q_0}_{L^2}^2+\norm{r_0}_{L^2}^2.
	\end{equation*}
	Combining these estimates, we obtain that the right-hand side of~\eqref{eq:prelimlimitenergy} satisfies
	\begin{multline*}
		\limsup_{h,\Delta t\to 0}\bigg( 	-\norm{u_h^1}^2_{L^2}+\norm{2 u_h^1-u_h^0}_{L^2}^2- L \norm{\Grad Q_h^1}_{L^2}^2+L\norm{2\Grad Q_h^1-\Grad Q_h^0}_{L^2}^2 \\
		- \norm{r_h^1}_{h}^2 + \norm{2r_h^1 - r_h^0}^2_h 
		+4 \Delta t \sum_{m=0}^{N-1}(f^{m+1},\tu_h^{m+1})
		+ 2\norm{u_0}_{L^2}^2 
		+ 2 \norm{r^0_h}_{h}^2
	 + 2 L \norm{\Grad Q_0}_{L^2}^2\bigg) \\
		\leq 2 \norm{u_0}_{L^2}^2+2L\norm{\Grad Q_0}_{L^2}^2+2\norm{r_0}_{L^2}^2 + 4 \int_0^t (f,u)d\tau, 
	\end{multline*}
	which implies that 
	the limit $(u,Q,r,\HH)$ satisfies for every $t\in [0,T]$,
	\begin{multline}\label{eq:energylimit}
		\norm{u(t)}_{L^2(\dom)}^2 +L\norm{\Grad Q(t)}_{L^2(\dom)}^2+\norm{r(t)}_{L^2}^2 +2\mu\int_0^t\norm{\Grad u(\tau)}_{L^2(\dom)}^2 d\tau+2M\int_0^t\norm{\HH(\tau)}_{L^2}^2 d\tau \\
		 \leq \norm{u_0}_{L^2(\dom)}^2 + L\norm{\Grad Q_0}_{L^2(\dom)}^2 + \norm{r_0}_{L^2(\dom)}^2 + 2\int_0^t(f(\tau),u(\tau)) d\tau,
	\end{multline}
	which is the energy inequality~\eqref{eq:energyineq} after using that $r=r(Q)$. 
	
	Now the continuity at zero will follow. We write
	\begin{equation*}
		\norm{u(t)-u_0}_{L^2(\dom)}^2 = \norm{u(t)}_{L^2(\dom)}^2 + \norm{u_0}_{L^2(\dom)}^2 - 2 (u(t),u_0).
	\end{equation*}
	Since $u$ is weakly continuous in time with values in $L^2_{\Div}(\dom)$, i.e., the mapping
	\begin{equation*}
		t\mapsto (u(t),v),
	\end{equation*}
	is continuous for any $v\in L^2_{\Div}(\dom)$ (see Lemma~\ref{lem:convofutilde}), we have that
	\begin{equation*}
		\lim_{t\to 0} (u(t),u_0) = \norm{u_0}_{L^2(\dom)}^2.
	\end{equation*}
	Similarly, $t\mapsto (r(t),w)$ is continuous for $w\in L^2(\dom)$ (see Remark~\ref{rem:continuityr}), and so we have
	\begin{equation*}
		\lim_{t\to 0} (r(t),r_0) = \norm{r_0}_{L^2}^2.
	\end{equation*}
	For the variable $Q$, we observe that since $\partial_t Q\in L^2([0,T];L^{6/5}(\dom))$, we have that $\partial_t \Grad Q\in L^2([0,T],(W^{1,6}(\dom))^*)$, and since $\Grad Q \in L^\infty(0,T;L^2(\dom))$, we obtain with Lemma II.5.9 in~\cite{Boyer2013} that $t\mapsto (\Grad Q(t),Y)$ is continuous for all $Y\in L^2(\dom)$. Thus
		\begin{equation*}
	\lim_{t\to 0} (\Grad Q(t),\Grad Q_0) = \norm{\Grad Q_0}_{L^2}^2.
	\end{equation*}
	Furthermore, using weak lower semi-continuity of the $L^2$-norm and that $u,\Grad Q,r\in C(0,T;L^2(\dom)_w)$, we obtain that
	\begin{equation*}
		\norm{u_0}_{L^2(\dom)}^2 \leq \liminf_{t\to 0}\norm{u(t)}_{L^2(\dom)}^2, 	\quad\norm{\Grad Q_0}_{L^2(\dom)}^2 \leq \liminf_{t\to 0}\norm{\Grad Q(t)}_{L^2(\dom)}^2,\quad 	\norm{r_0}_{L^2(\dom)}^2 \leq \liminf_{t\to 0}\norm{r(t)}_{L^2(\dom)}^2. 
	\end{equation*}
	On the other hand, sending $t\to 0$ in the energy inequality~\eqref{eq:energylimit}, we obtain that
	\begin{equation*}
		\limsup_{t\to 0}\left(\norm{u(t)}_{L^2(\dom)}^2+\norm{r(t)}_{L^2}^2+L\norm{\Grad Q(t)}_{L^2}^2 \right)\leq \norm{u_0}_{L^2(\dom)}^2+\norm{r_0}_{L^2}^2 + L \norm{\Grad Q_0}_{L^2}^2.
	\end{equation*}
	Thus
	\begin{equation*}
		\lim_{t\to 0}\left( \norm{u(t)}_{L^2(\dom)}^2+ L\norm{\Grad Q(t)}_{L^2(\dom)}^2+ \norm{r(t)}_{L^2(\dom)}^2\right) = \norm{u_0}_{L^2(\dom)}^2 + L\norm{\Grad Q_0}_{L^2(\dom)}^2+ \norm{r_0}_{L^2(\dom)}^2
	\end{equation*}
	and we obtain that (using the Poincar\'e inequality)
	\begin{equation*}
		\lim_{t\to 0}\norm{u(t)-u_0}_{L^2(\dom)}^2 = 0,\quad \lim_{t\to 0}\norm{Q(t)-Q_0}_{H^1(\dom)}^2 = 0,\quad \lim_{t\to 0}\norm{r(t)-r_0}_{L^2(\dom)}^2 = 0.
	\end{equation*}
	Furthermore, by Lemma~\ref{lem:tracefreediscrete}, $\HH_h$, $Q_h$ and $\bar{Q}_h$ are trace-free and symmetric, therefore, using the self-adjointness of $\Pi$ (cf. Lemma~\ref{lem:propertiesofPi} (a)), we have
	\begin{equation*}
	\int_{0}^T(\HH,Z) dt=	\lim_{h,\Delta t\to 0}\int_{0}^T(\HH_h,Z) dt = 	\lim_{h,\Delta t\to 0}\int_{0}^T(\HH_h,\Pi Z) dt =  \int_{0}^T(\HH,\Pi Z) dt = \int_{0}^T(\Pi \HH,Z) dt, 
	\end{equation*} 
	i.e., $\HH$ is symmetric and trace-free almost everywhere. In the same way it follows that $Q$ is symmetric and trace-free.
	We conclude that $(u,Q,\HH)$ is a weak solution of~\eqref{seq:BerisEdwards} as in Definition~\ref{def:weaksol}.
\end{proof}

\section{Computational results}\label{sec:computations}
In this section we illustrate the performance of our numerical scheme presented in \autoref{sec:spatial-discretization} in 2D domains.
Our implementation utilizes the open-source finite element library \texttt{Firedrake} \cite{FiredrakeUserManual}, and can be retrieved at \url{https://github.com/gobenavides/BDF2-Chorin-BerisEdwards}.
For all experiments, we consider the lowest-order Taylor--Hood element $\mathrm{P}^2$-$\mathrm{P}^1$ for the velocity-pressure pair $(\Uh,\Ph)$, the lowest-order Lagrange finite element space in its tensor- and scalar-valued versions for, respectively, $\Mh$ and $\Xh$.
We strongly enforce the symmetry of the Q-tensor $Q$ and molecular field $\HH$ on the space $\Mh$.
Recall that $\Yh = \Uh + \nabla \Ph$.
For computational simplicity, we utilize the Lagrange interpolant for the initialization of the discrete Q-tensor $Q$ in lieu of the Scott--Zhang operator (cf.~\eqref{eq:initdataapprox}).
Also, since the source terms $f$ we consider are smooth, we utilize pointwise in-time evaluation instead of local averages \eqref{eq:smSigmamsemi};
this is justified as $f(t) - \frac{1}{\Delta t} \int_{t-\frac{\Delta t}{2}}^{t+\frac{\Delta t}{2}} f(\tau) d\tau = \mathcal{O}((\Delta t)^2)$, which is, formally, of the same order as the scheme \eqref{eq:step1fully}--\eqref{eq:projectionfullydiscrete}.
Finally, we mention that non-homogenous Dirichlet conditions are implemented in the traditional way.

Recall that a Q-tensor $Q$ is said to be \emph{uniaxial} if it can be expressed as $Q = S(m \otimes m - \frac{1}{d}I)$, where the unit-length \emph{director} field $m$ represents the average orientation of the liquid crystal particles, and the \emph{order parameter} $S$ measures the anisotropy of the liquid crystal, with $S=0$ representing the isotropic state.
In the bi-dimensional case (i.e.~$d=2$), all Q-tensors $Q$ are inherently uniaxial and satisfy $S = 2\sqrt{Q_{11}^2 + Q_{12}^2}$.
In contrast, three-dimensional Q-tensors are allowed to be \emph{biaxial} (i.e.~not uniaxial).
A reasonable representation of the average orientation of the liquid crystal molecules is then given by the leading eigenvector of $Q$.

\subsection{Spatial convergence test}\label{sec:spatial-test}
In this test, we investigate the convergence behavior of the scheme as the spatial parameter $h \to 0$.
For simplicity, we consider modeling parameters $a=1$, $b=0$, $c=1$, $A_0 = 1$, $\mu=1$, $\xi = 1$, $M=1$ and $L=1$.
The computational time is $T=1$ and we fix $\Delta t = 1/1000$.
Our computational domain $\Omega = (0,2)^2$ is triangulated with uniform triangles of size $h$.
We consider manufactured solutions
\begin{gather*}
u(t,x,y) = \pi e^{-t} \Big( (1-\cos(\pi x))\sin(\pi y), -(1-\cos(\pi y))\sin(\pi x) \Big), \qquad
p(t,x,y) = (x-1)^3 \sin(2\pi t\, y),\\
Q = m \otimes m - \frac{1}{2} |m|^2 I, \qquad
r \stackrel{\eqref{eq:defr}}{=} r(Q), \qquad
\HH \stackrel{\eqref{eq:defHIEQ}}{=} L \Delta Q - r P(Q)
\end{gather*}
where $m(t,x,y) = \big( \cos(\pi t x)\cos(0.5 \pi y t), \sin(\pi t x)\sin(0.5 \pi t y)  \big)$.
We modify the momentum and Q-tensor kinematic equations with appropriate forcing terms so that the aforementioned functions satisfy the resulting equations.
The non-vanishing Dirichlet condition for the Q-tensor $Q$ is incorporated in the discrete scheme following customary techniques.

For two generic error norms $\|\mathrm{e}(h)\|$ and $\|\mathrm{e}(\hat h)\|$ corresponding to two consecutive mesh sizes $h$ and $\hat h$, the experimental spatial rate of convergence is given by $\log(\|\mathrm{e}(h)\|/\|\mathrm{e}(\hat h)\|) / \log(h / \hat h)$.
We present our numerical results in \autoref{tab:spatial_convergence}.

\begin{table}
    \setlength{\tabcolsep}{3pt}
    \begin{tabular}{c c c c c c c c c c c}
    \toprule
    $h$ & $\|u - u_h\|_{L^2}$ & Rate & $\|p - p_h\|_{L^2}$ & Rate & $\|Q - Q_h\|_{L^2}$ & Rate & $\|\HH - \HH_h\|_{L^2}$ & Rate & $\|r - r_h\|_{L^2}$ & Rate\\
    \midrule
    1.4142 & 8.0187\texttt{E}-01 & --    & 7.6283\texttt{E}+00 & --    & 8.6521\texttt{E}-01 & --    & 2.7176\texttt{E}+01 & --    & 1.8965\texttt{E}-01 & --    \\
    0.7071 & 5.1675\texttt{E}-01 &  0.63 & 7.9748\texttt{E}+00 & -0.06 & 3.9236\texttt{E}-01 &  1.14 & 1.4500\texttt{E}+01 &  0.91 & 6.6628\texttt{E}-02 &  1.51 \\
    0.3536 & 1.2973\texttt{E}-01 &  1.99 & 2.8067\texttt{E}+00 &  1.51 & 1.6783\texttt{E}-01 &  1.23 & 3.9975\texttt{E}+00 &  1.86 & 2.8783\texttt{E}-02 &  1.21 \\
    0.1768 & 3.1450\texttt{E}-02 &  2.04 & 9.6550\texttt{E}-01 &  1.54 & 4.5949\texttt{E}-02 &  1.87 & 8.7308\texttt{E}-01 &  2.19 & 1.0026\texttt{E}-02 &  1.52 \\
    0.0884 & 8.3653\texttt{E}-03 &  1.91 & 2.5794\texttt{E}-01 &  1.90 & 1.1784\texttt{E}-02 &  1.96 & 2.0982\texttt{E}-01 &  2.06 & 2.6346\texttt{E}-03 &  1.93 \\
    0.0442 & 2.1252\texttt{E}-03 &  1.98 & 6.5026\texttt{E}-02 &  1.99 & 2.9633\texttt{E}-03 &  1.99 & 5.1864\texttt{E}-02 &  2.02 & 6.6658\texttt{E}-04 &  1.98 \\
    \bottomrule
\end{tabular}
    \centering
    \caption{Spatial convergence rates in $L^2$ at final time $T=1$.
    The results suggest quadratic convergence behavior.}
    \label{tab:spatial_convergence}
\end{table}

\subsection{Temporal convergence test}

In this test, we study the convergence behavior of the scheme as the time step $\Delta t \to 0$.
We consider a fixed uniform triangulation of $\Omega = (0,2)^2$ of size $h=\sqrt{2}/64$, and the same modeling parameters and manufactured solutions as in \autoref{sec:spatial-test}.
We partition the time interval $(0,1)$ with a uniform time step $\Delta t = (5k-1)^{-1}$, $k=2,\dotsc,7$.

For two generic error norms $\|\mathrm{e}(\Delta t)\|$ and $\|\mathrm{e}(\widehat {\Delta t})\|$ corresponding to two consecutive time steps $\Delta t$ and $\widehat{\Delta t}$, the experimental temporal rate of convergence is given by $\log(\|\mathrm{e}(\Delta t)\|/\|\mathrm{e}(\widehat{\Delta t})\|) / \log(\Delta t / \widehat{\Delta t})$.
We summarize our findings in \autoref{tab:temporal_convergence}.

\begin{table}
    \setlength{\tabcolsep}{3pt}
    \begin{tabular}{c c c c c c c c c c c}
    \toprule
    $\Delta t$ & $\|u - u_h\|_{L^2}$ & Rate & $\|p - p_h\|_{L^2}$ & Rate & $\|Q - Q_h\|_{L^2}$ & Rate & $\|\HH - \HH_h\|_{L^2}$ & Rate & $\|r - r_h\|_{L^2}$ & Rate\\
    \midrule
    0.1111 & 4.5726\texttt{E}-01 & --   & 1.1521\texttt{E}+01 & --   & 1.6107\texttt{E}-01 & --   & 4.6537\texttt{E}+00 & --   & 1.4151\texttt{E}+00 & --   \\
    0.0714 & 1.6563\texttt{E}-01 & 2.30 & 7.7953\texttt{E}+00 & 0.88 & 8.7020\texttt{E}-02 & 1.39 & 1.9398\texttt{E}+00 & 1.98 & 3.3523\texttt{E}-01 & 3.26 \\
    0.0526 & 9.2796\texttt{E}-02 & 1.90 & 4.3173\texttt{E}+00 & 1.93 & 4.2043\texttt{E}-02 & 2.38 & 1.0600\texttt{E}+00 & 1.98 & 1.4672\texttt{E}-01 & 2.71 \\
    0.0417 & 6.0012\texttt{E}-02 & 1.87 & 2.5195\texttt{E}+00 & 2.31 & 2.3711\texttt{E}-02 & 2.45 & 6.6199\texttt{E}-01 & 2.02 & 7.7651\texttt{E}-02 & 2.72 \\
    0.0345 & 4.2064\texttt{E}-02 & 1.88 & 1.6457\texttt{E}+00 & 2.25 & 1.5534\texttt{E}-02 & 2.23 & 4.5413\texttt{E}-01 & 1.99 & 4.5893\texttt{E}-02 & 2.78 \\
    0.0294 & 3.1314\texttt{E}-02 & 1.86 & 1.1751\texttt{E}+00 & 2.12 & 1.1217\texttt{E}-02 & 2.05 & 3.3469\texttt{E}-01 & 1.92 & 2.9354\texttt{E}-02 & 2.81 \\
    \bottomrule
\end{tabular}
    \centering
    \caption{Temporal convergence rates in $L^2$ at final time $T=1$.
    The results suggest quadratic convergence for $p$, $Q$ and $\HH$.
    In turn, $u$ and $r$ seem to converge at rates $\approx 1.8$ and $\approx 2.8$, respectively.}
    \label{tab:temporal_convergence}
\end{table}

\subsection{Simulation: defect dynamics}
It is well-known that liquid crystal defects can travel in time, split, merge or annihilate based on their topological charges (defect degrees).
In this experiment we demonstrate the capability of our scheme of replicating this phenomenon.
Highly inspired by \cite[Section 4.2]{Zhao2017}, we consider a computational domain $\Omega = (0,2)^2$ uniformly triangulated with triangles of size $h=\sqrt{2}/64$, a computational time $T=250$ and uniform time step $\Delta t = 1/500$.
Physical parameters are $a=-0.2$, $b=1$, $c=1$, $A_0 = 500$, $\mu = 1$, $\xi = 0$, $M = 10$ and $L = 10^{-3}$.
We consider the no-slip boundary condition $u = 0$ on $\partial\Omega$ and the non-zero Dirichlet condition $Q = Q_{\text{bc}}$ on $\partial\Omega$ for the Q-tensor $Q$, where
\begin{equation*}
Q_{\text{bc}} = \tilde Q_{\text{bc}} - \frac{\tr(\tilde Q_{\text{bc}})}{d} I, \qquad
\tilde Q_{\text{bc}} = \frac{m_{\text{bc}} \otimes m_{\text{bc}}}{|m_{\text{bc}}|^2 + \varepsilon}, \qquad
m_{\text{bc}}(x,y) = (x-1, y-1),
\end{equation*}
and $\varepsilon := 10^{-10}$ is a regularization parameter to avoid division by $0$.
Regarding initial conditions, we consider $u = 0$, $Q = Q_{\text{in}}$ and $r = r(Q_{\text{in}})$ at $t = 0$, where $Q_{\text{in}}$ is the uniaxial Q-tensor given by
\begin{gather*}
Q_{\text{in}} = \tilde Q_{\text{in}} - \frac{\tr(\tilde Q_{\text{in}})}{d} I, \qquad
\tilde Q_{\text{in}} = \frac{m_{\text{in}} \otimes m_{\text{in}}}{|m_{\text{in}}|^2 + \varepsilon}, \qquad
m_{\text{in}}(x,y) = (x-0.5, y-0.5).
\end{gather*}
Discrete initial conditions are computed according to \eqref{eq:initdataapprox} (with the Scott--Zhang operator replaced by the Lagrange interpolant) and \eqref{eq:zerothstep}, whereas discrete variables at $t = \Delta t$ are determined following Remark \ref{rem:1ststep}.
We report some of our findings in \autoref{fig:exp3-1} and \autoref{fig:exp3-2}, which qualitatively agree with \cite[Section 4.2]{Zhao2017}.

\begin{figure}
    \centering
    \begin{subfigure}[b]{0.3\textwidth}
        \includegraphics[width=\textwidth]{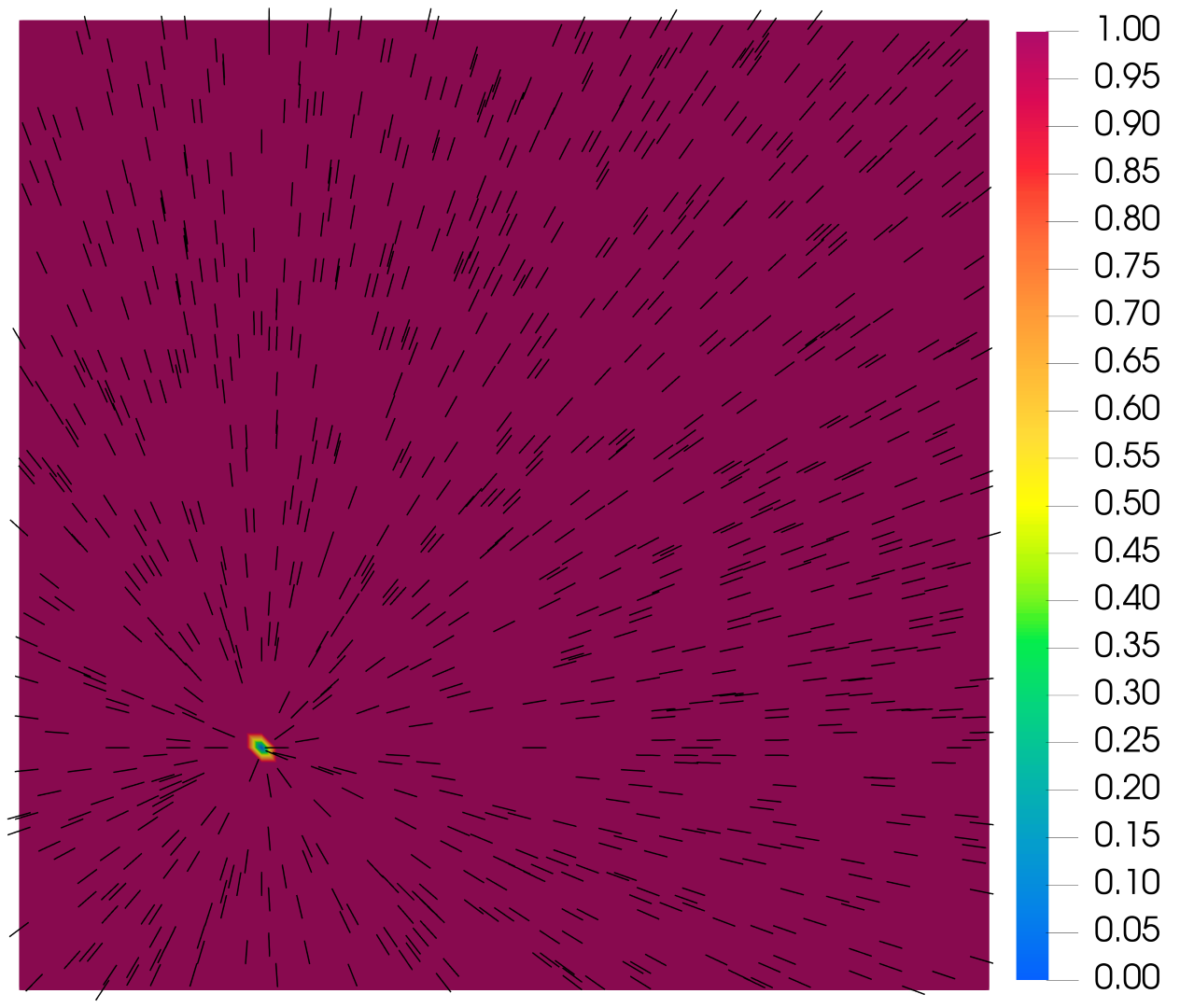}
        \caption{$t = 0$}
    \end{subfigure}
    \hfill
    \begin{subfigure}[b]{0.3\textwidth}
        \includegraphics[width=\textwidth]{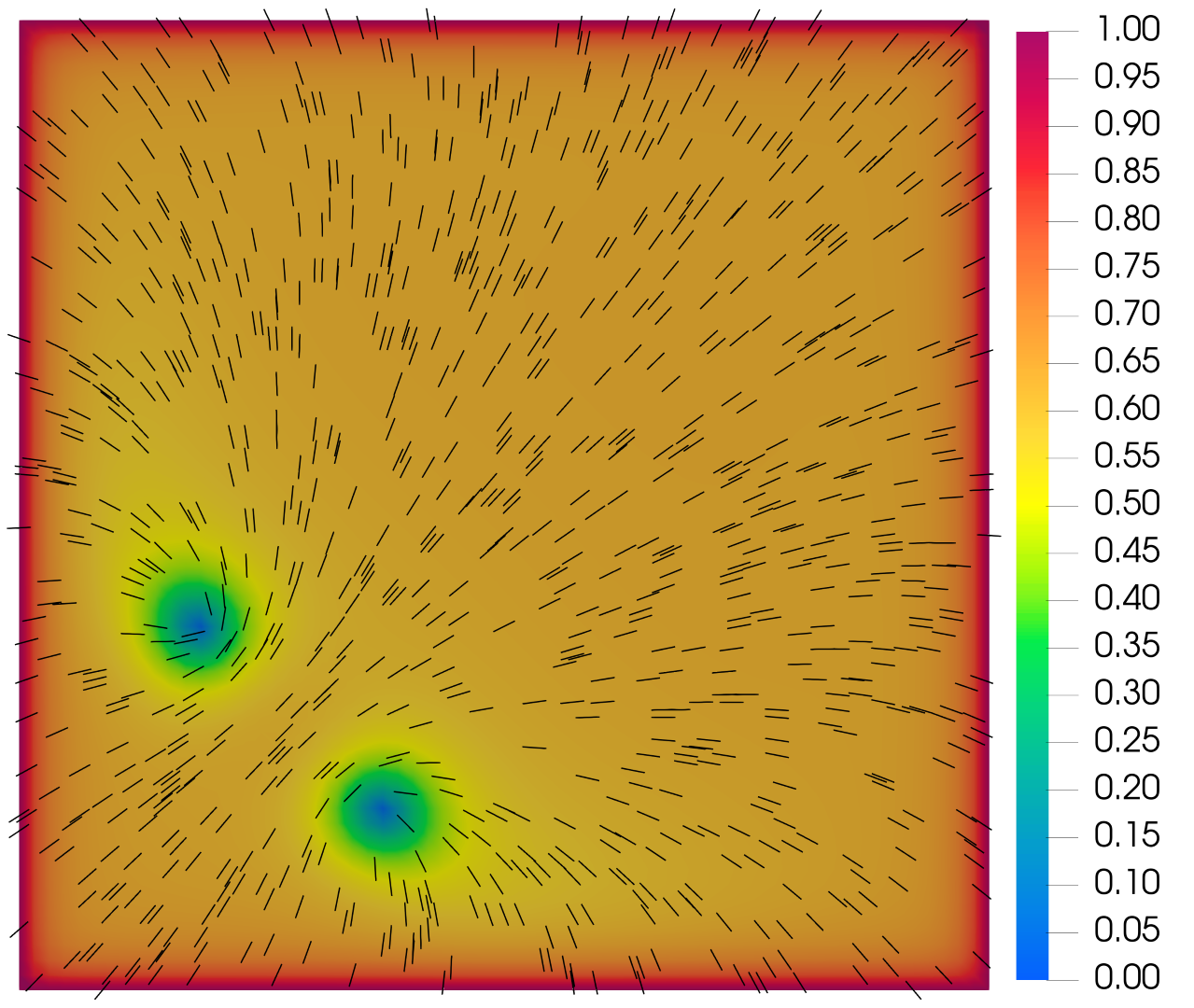}
        \caption{$t = 5.602$}
    \end{subfigure}
    \hfill
    \begin{subfigure}[b]{0.3\textwidth}
        \includegraphics[width=\textwidth]{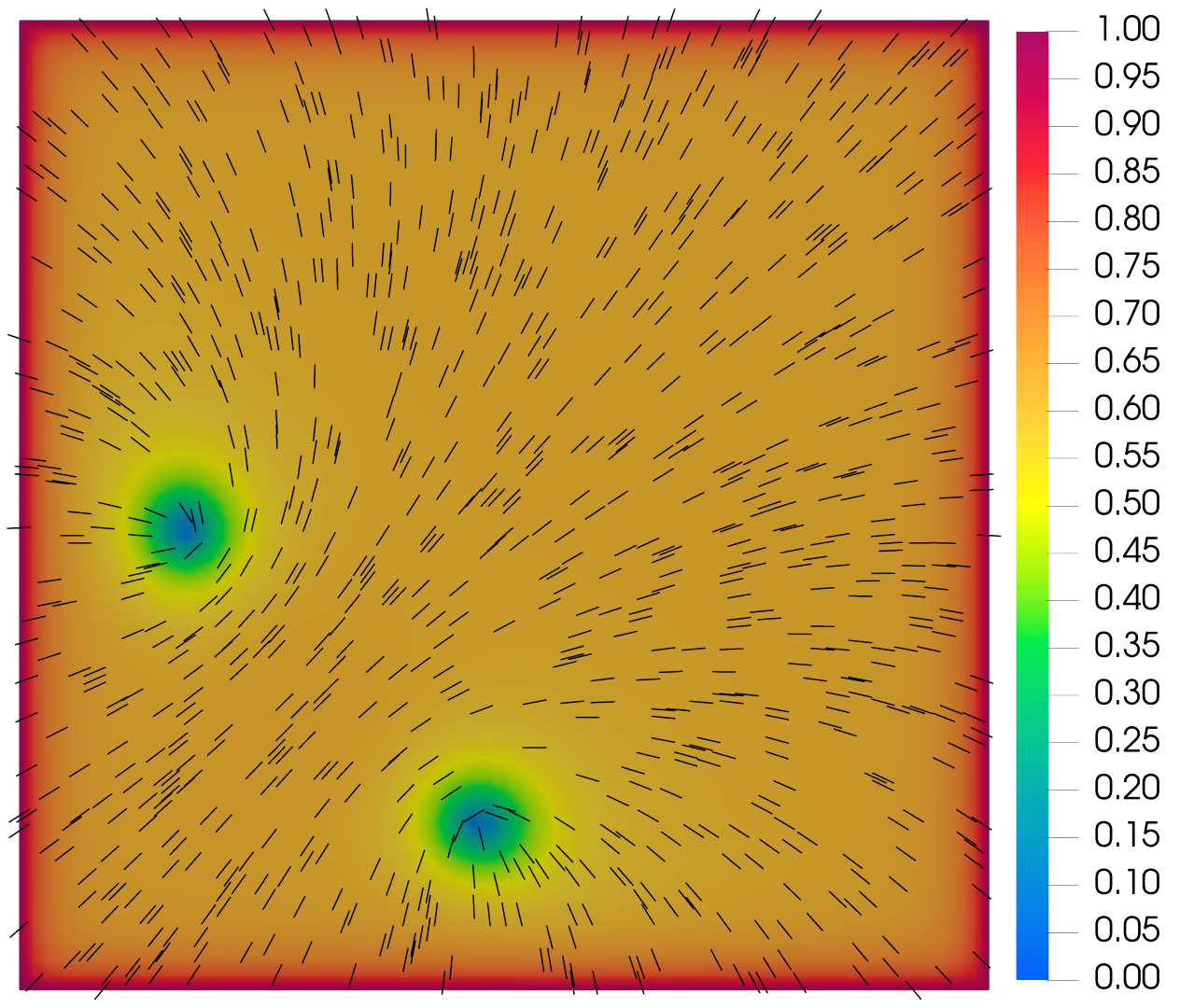}
        \caption{$t = 10.402$}
    \end{subfigure}
    
    \vspace{0.2cm}
    
    \begin{subfigure}[b]{0.3\textwidth}
        \includegraphics[width=\textwidth]{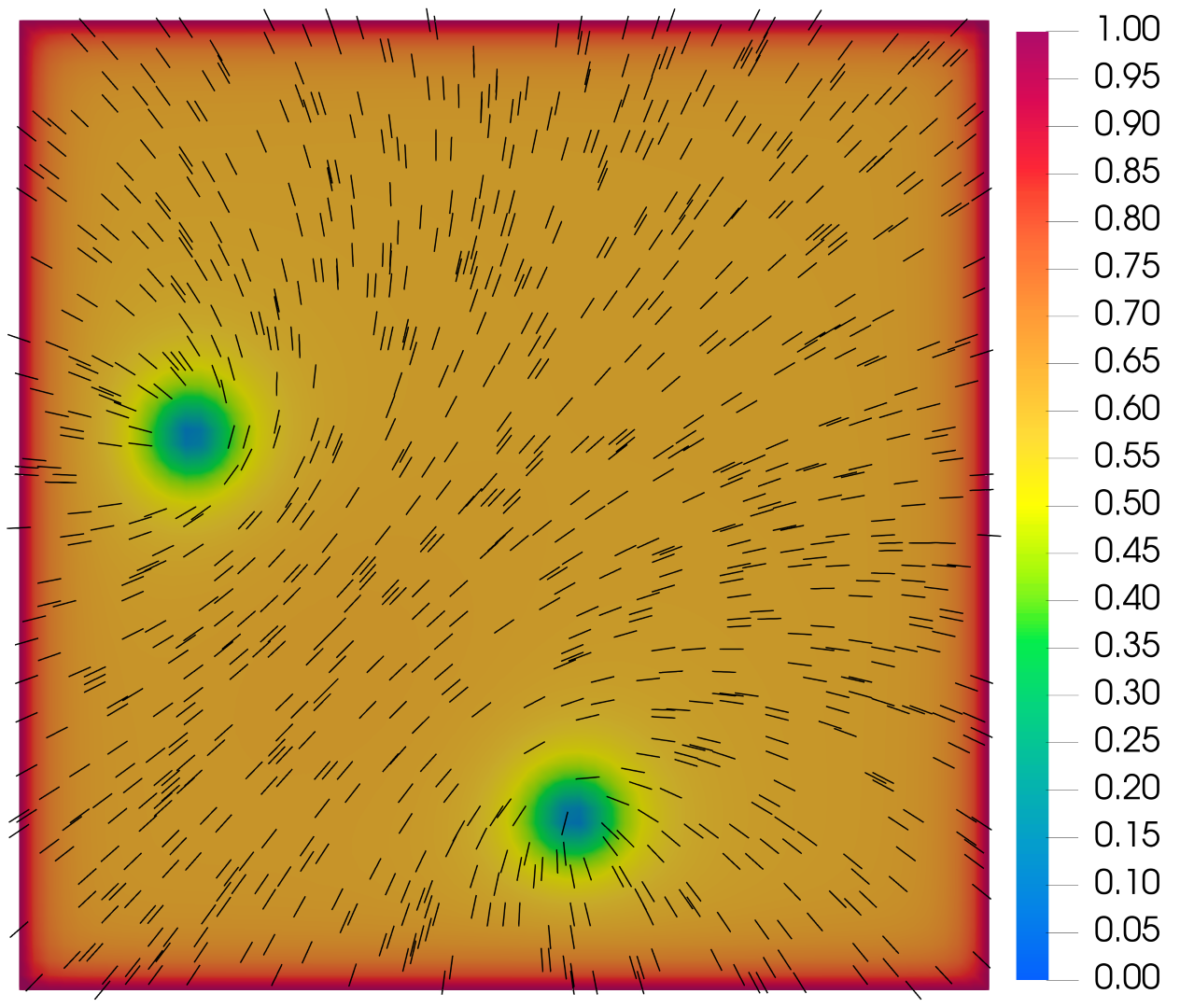}
        \caption{$t = 20.002$}
    \end{subfigure}
    \hfill
    \begin{subfigure}[b]{0.3\textwidth}
        \includegraphics[width=\textwidth]{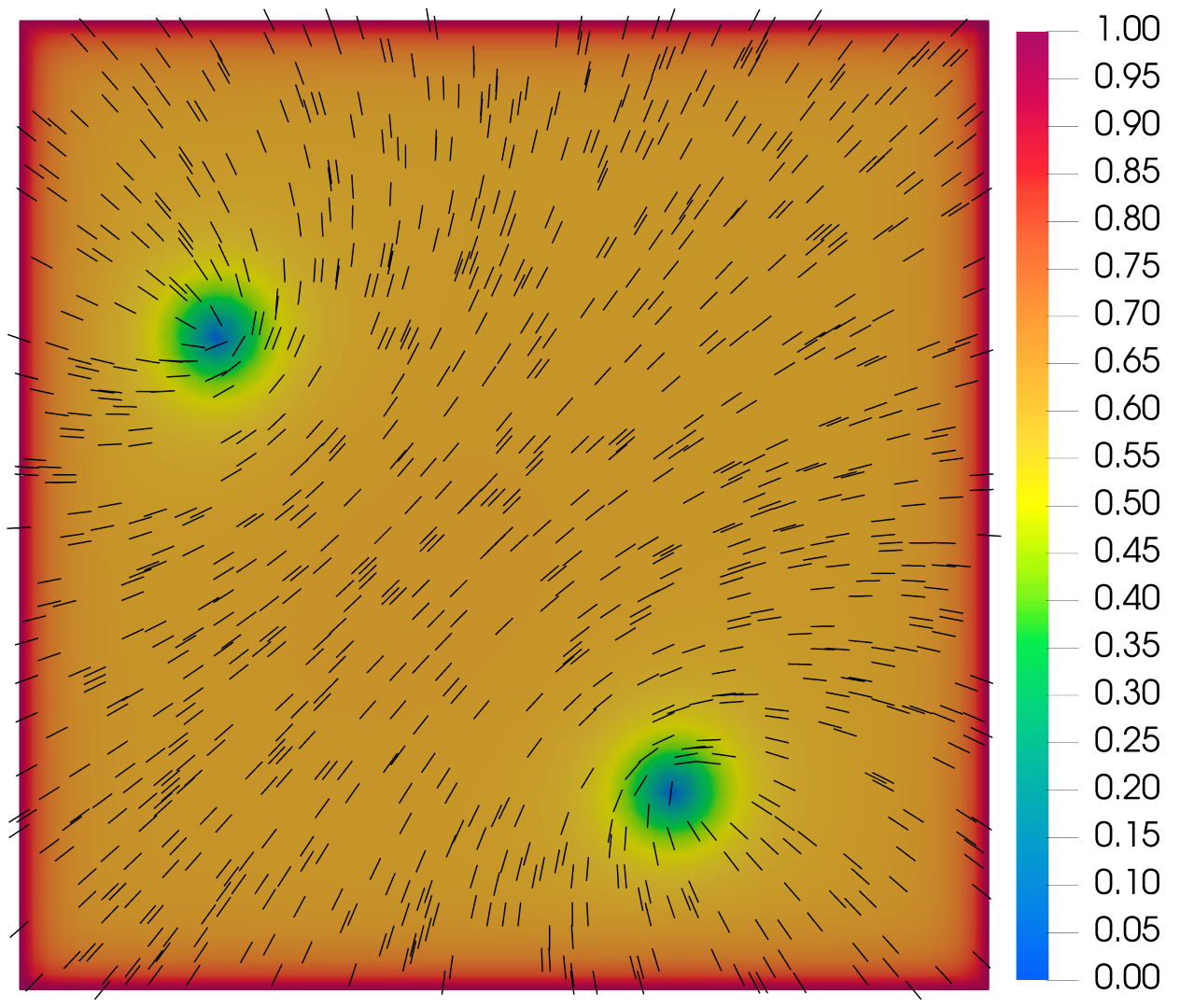}
        \caption{$t = 50.402$}
    \end{subfigure}
    \hfill
    \begin{subfigure}[b]{0.3\textwidth}
        \includegraphics[width=\textwidth]{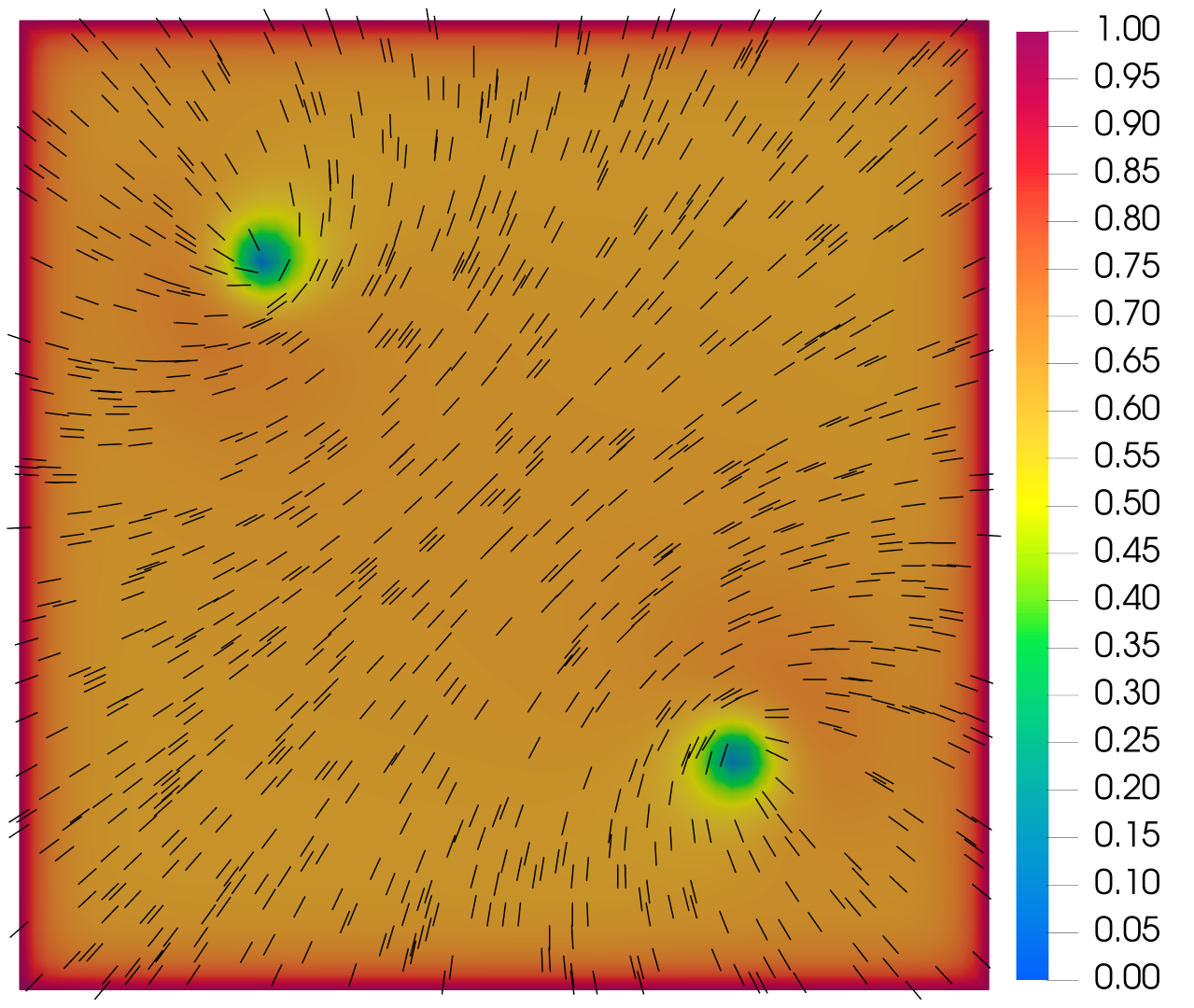}
        \caption{$t = 250$}
    \end{subfigure}
    
    \caption{\emph{Evolution of the major director field $m$ and order parameter $S$:}
    the initial +1 point defect is split into two +1/2 point defects \cite[Section 10]{Ball2017}.}.
    \label{fig:exp3-1}
\end{figure}

\begin{figure}
    \centering
    \begin{subfigure}[b]{0.4\textwidth}
        \includegraphics[width=\textwidth]{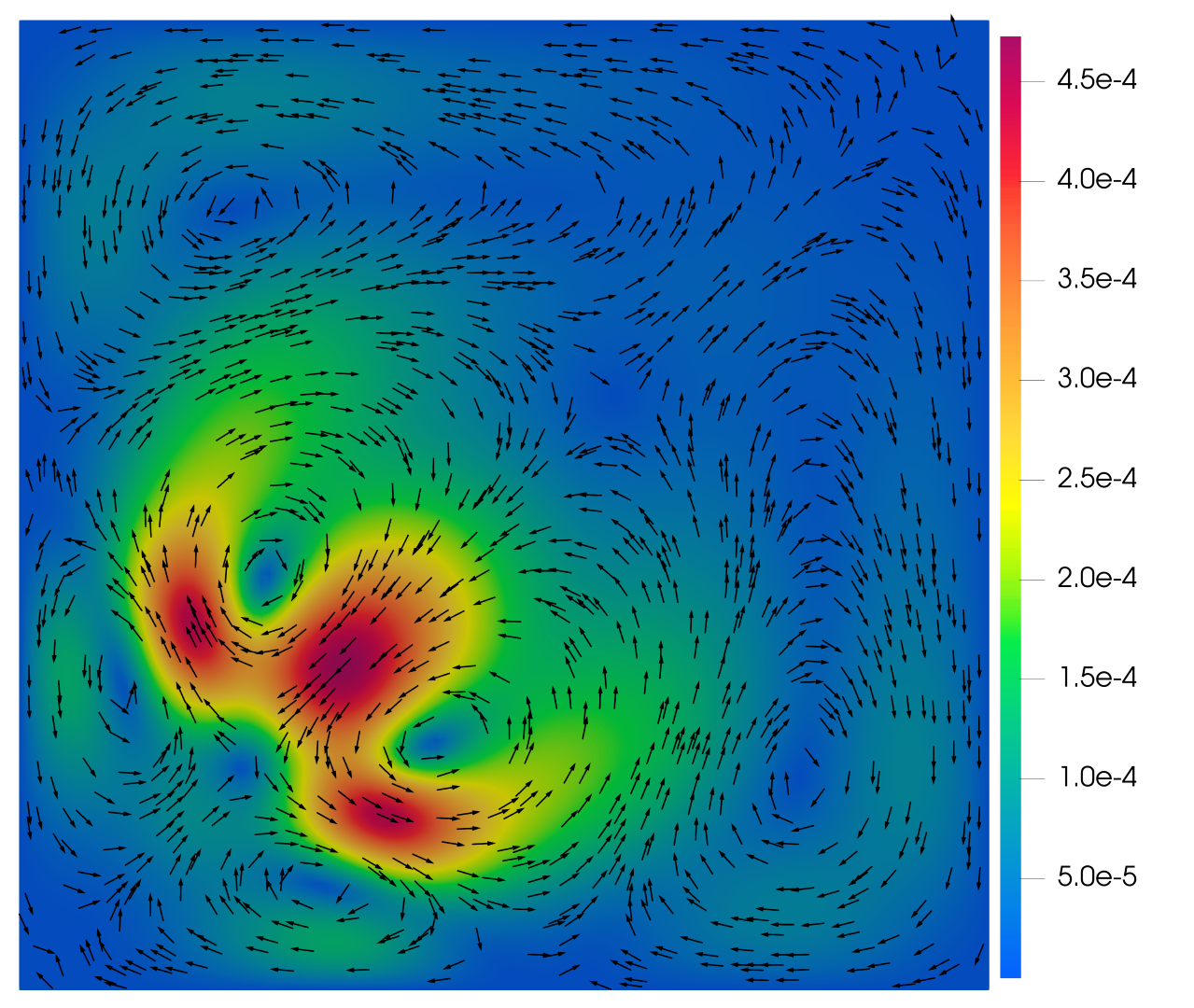}
        \caption{$t = 5.602$}
    \end{subfigure}
    \hfill
    \begin{subfigure}[b]{0.4\textwidth}
        \includegraphics[width=\textwidth]{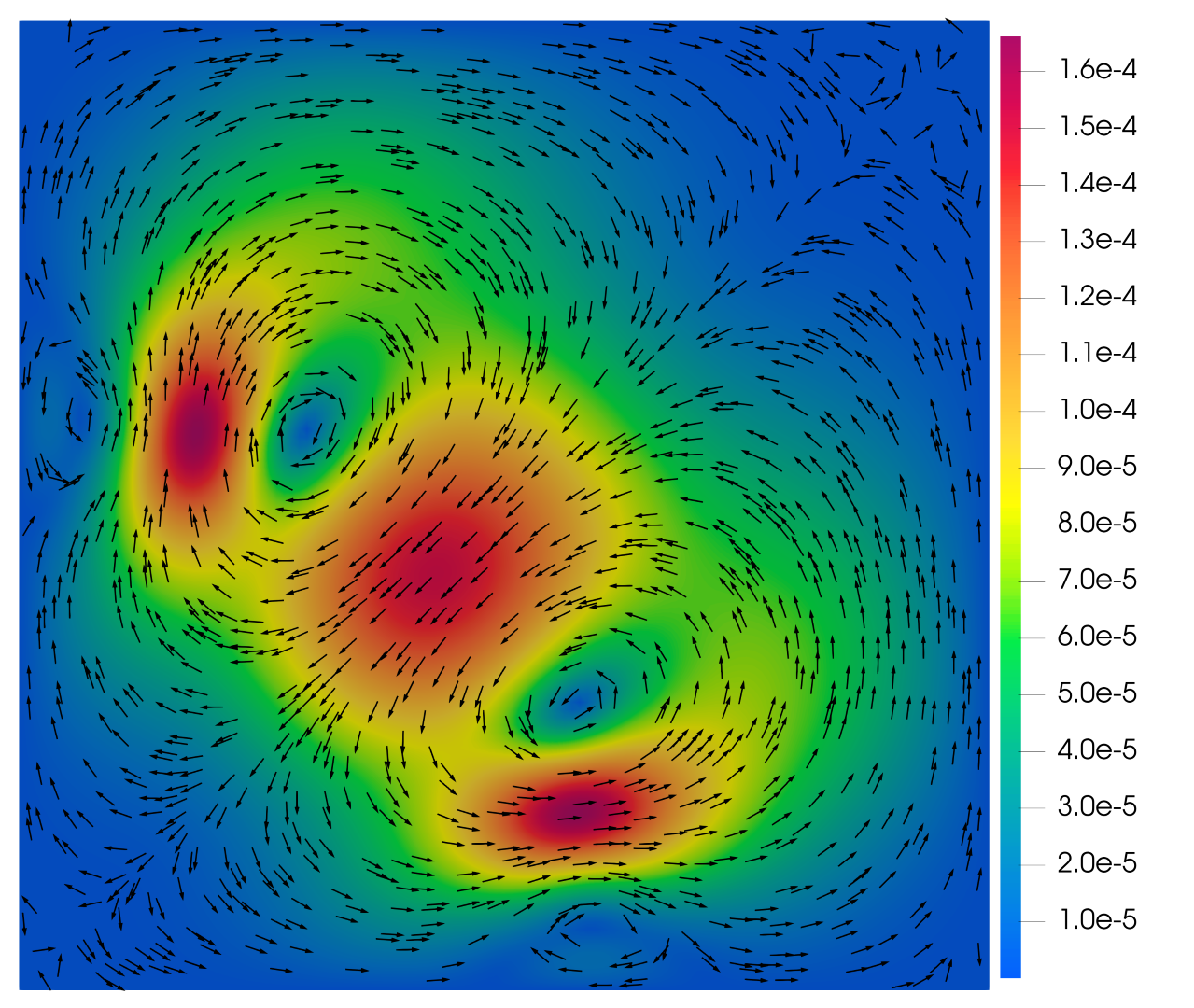}
        \caption{$t = 20.002$}
    \end{subfigure}
    
    \vspace{0.2cm}

        \begin{subfigure}[b]{0.4\textwidth}
        \includegraphics[width=\textwidth]{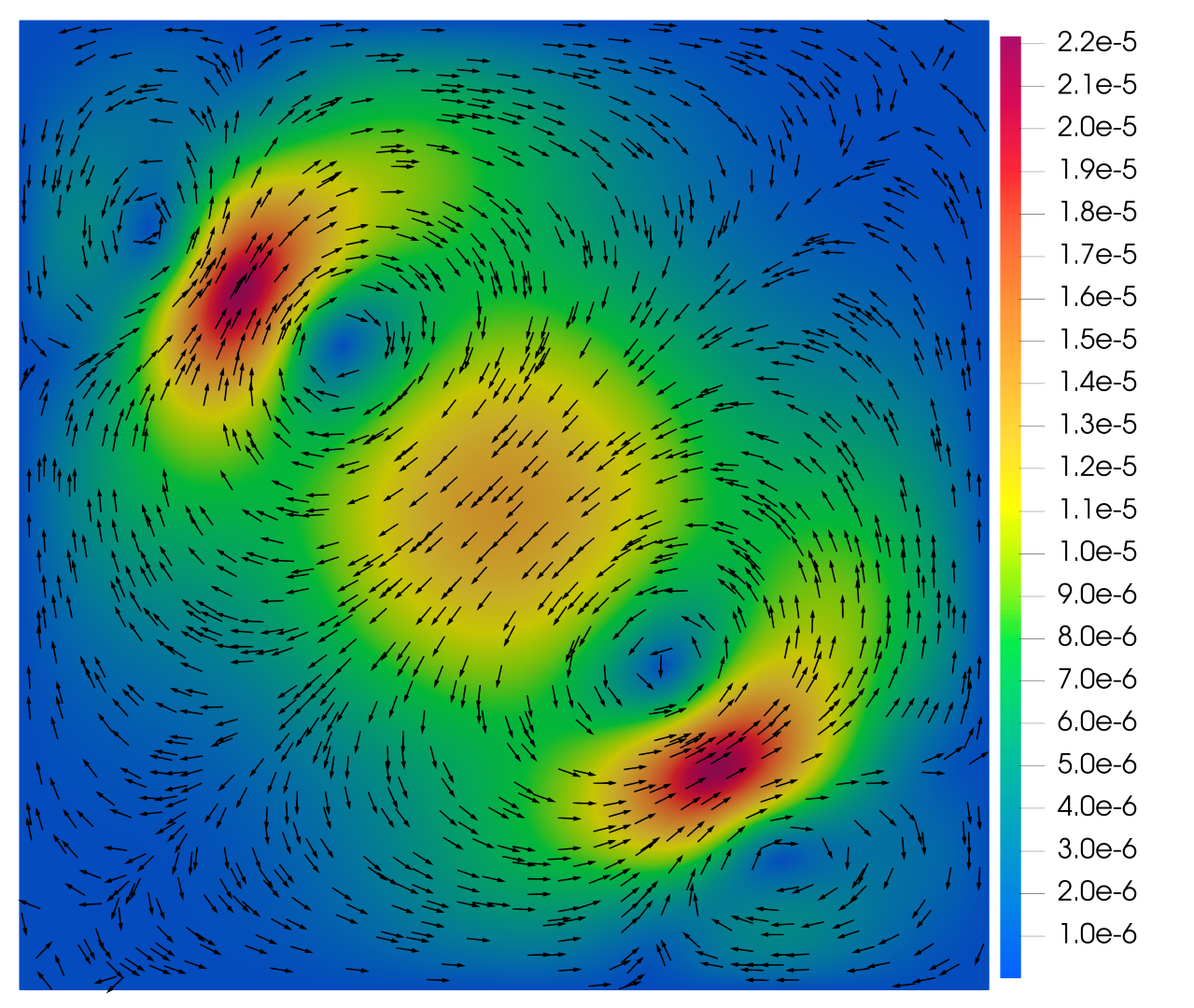}
        \caption{$t = 100$}
    \end{subfigure}
    \hfill
    \begin{subfigure}[b]{0.4\textwidth}
        \includegraphics[width=\textwidth]{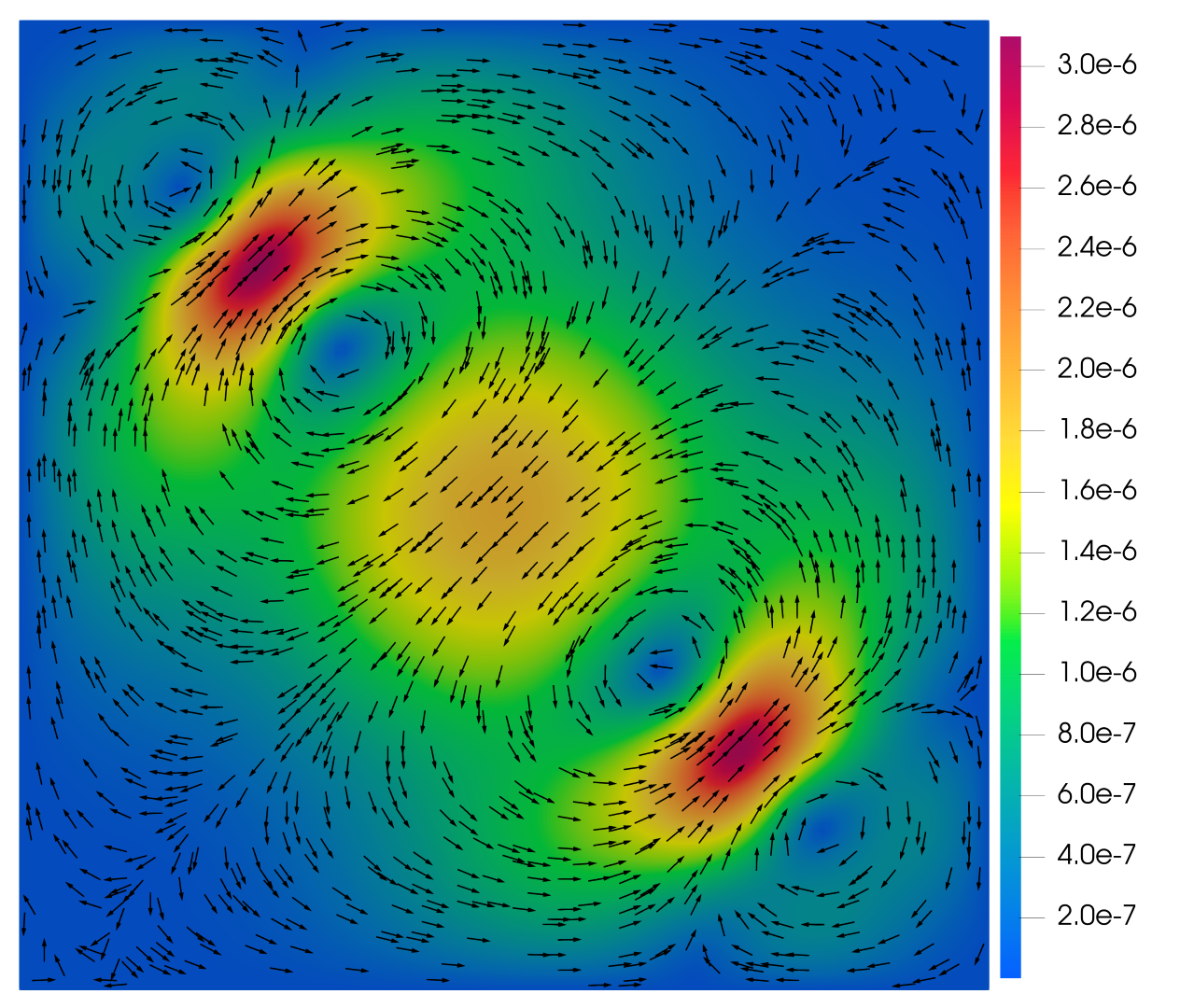}
        \caption{$t = 250$}
    \end{subfigure}
    
    \caption{\emph{Evolution of the velocity field:} notice that the magnitude is higher in the vicinity of the two +1/2 point defects.}
    \label{fig:exp3-2}
\end{figure}

\subsection{2.5D simulation: skyrmion dynamics}
A skyrmion, in the context of nematic liquid crystals, is a continuous configuration that cannot smoothly transition to a uniform state \cite{Ackerman2017,Bogdanov2003}.
Skyrmions' unique properties make them applicable as information carriers, and therefore highly attractive to be used in the development of data storage, sensing and optical communication \cite{Asilehan2025,Han2022,Fert2017,Shen2024}.
Inspired by the computational and laboratory experiments found in \cite{Coelho2023,Ackerman2017}, we investigate the hydrodynamical behavior of a ``baby skyrmion'' located in long channel that is subject to a constant pressure gradient.
More precisely, we consider $\Omega = (0,L_x) \times (0,L_y)$ with $L_x = L_y = 112$, triangulated with a uniform structured mesh of size $h = \sqrt{2}$, that is periodic on its left and right edges.
The computational time is $T = 900$, the time step is $\Delta t = 0.05$ and we set $A_0 = 500$.
As in \cite[Supplemental Material]{Coelho2023}, physical parameters are given by $a = 0.00256 \cdot 2$, $b = 0.208896 \cdot 3$, $c = 0.1152 \cdot 4$, $\mu = 0.909$, $\xi = 0.82$, $M = 0.474$, $L = 1.0226 \cdot 10^{-6}$ (the multiplying constants in $a$, $b$ and $c$ are due to the different scaling of $\mathcal{F}_B$ \eqref{eq:FBFE} in \cite{Coelho2023}).
Physical parameters in \cite{Coelho2023} that are not part of our (simpler) model are simply neglected.
Since out-of-plane orientations are essential for describing skyrmion dynamics, we allow the Q-tensor field $Q$ (and the molecular field $\HH$) to take values on $\R^{3\times3}$ while keeping the velocity field $u$ confined to $\R^2$ (this justifies the ``2.5D'' label);
the resulting system of partial differential equations can be seen as a simplification of the 3D Beris--Edwards system \eqref{seq:BerisEdwardsIEQ} under the assumptions that the velocity field is of the form $u = (u_1, u_2, 0)$, and that all variables are independent of $x_3$.
We stress that our analysis does not immediately apply to this ``hybrid'' setting.
We model the constant pressure gradient via a forcing term $f$ that induces a fluid flow from left to right given by the Poiseuille relation:
$f = (\beta,0)$, where $\beta = \frac{8\mu u_{\text{max}}}{L_y^2}$ and $u_{\text{max}} = 1.5$;
due to the coupling with $Q$, the maximum velocity may differ from $u_{\text{max}}$.
At the top and bottom of the domain, we consider the no-slip boundary condition $u=0$, and weakly anchor $Q$ to $Q_\text{wall} = e_3 \otimes e_3 - \frac{1}{3}I$ by considering the Rapini--Papoular (natural) boundary condition $L \nabla Q \cdot n = - W (Q-Q_{\text{wall}})$ on the top and bottom of the domain, where $W = 10^{-2}$.
At time $t=0$, we set $u = 0$ and $Q = Q_{\text{in}}$ and $r = r(Q_{\text{in}})$, where $Q_{\text{in}} = m_{\text{in}} \otimes m_{\text{in}} - \frac{1}{3}I$ is the out-of-plane uniaxial Q-tensor that describes the ``baby skyrmion'' \cite[page 3]{Coelho2023}\cite{Coelho2022}:
\begin{gather*}
m_{\text{in}} = (\sin(\tilde a)\sin(\omega \tilde b + g), \sin(\tilde a)\cos(\omega \tilde b + g), -\cos(\tilde a)),\\
\tilde a = \frac{\pi}{2}\left(1 - \tanh\left(\frac{B}{2}(\rho - R)\right)\right), \qquad \tilde b = \operatorname{atan2}\left(x-C_x,y-C_y\right),
\qquad \rho = \sqrt{(x-C_x)^2 + (y-C_y)^2},
\end{gather*}
where $\omega=1$, $g = \frac{\pi}{2}$, $R = 0.7 \cdot 16$, $B = 0.5$, $C_x = \frac{L_x}{2}$, $C_y = \frac{L_y}{2}$.
In order to fairly compare our results with the numerical experiments found in \cite[Figures 2, 3 and 5]{Coelho2023}, we report the evolution of the magnitude of the third component $m_3$ of the leading eigenvector $m$ of $Q$ in \autoref{fig:exp4-1}.
Albeit we do not provide snapshots of the velocity field, we find it important to mention that the constant pressure gradient was effective at inducing a parabolic Poiseuille-like profile.
Finally, we stress that unlike the experiments in \cite{Coelho2023} that are performed in fully periodic domains, we observe the formation of a small boundary layer in the $Q$ and $\HH$ tensor fields, that slowly propagates from the top and bottom of the domain to its interior, eventually inducing non-physical behavior.
Our computational time $T = 900$ is small enough to prevent such phenomena.

\begin{figure}
    \centering
    \begin{subfigure}[b]{0.3\textwidth}
        \includegraphics[width=\textwidth]{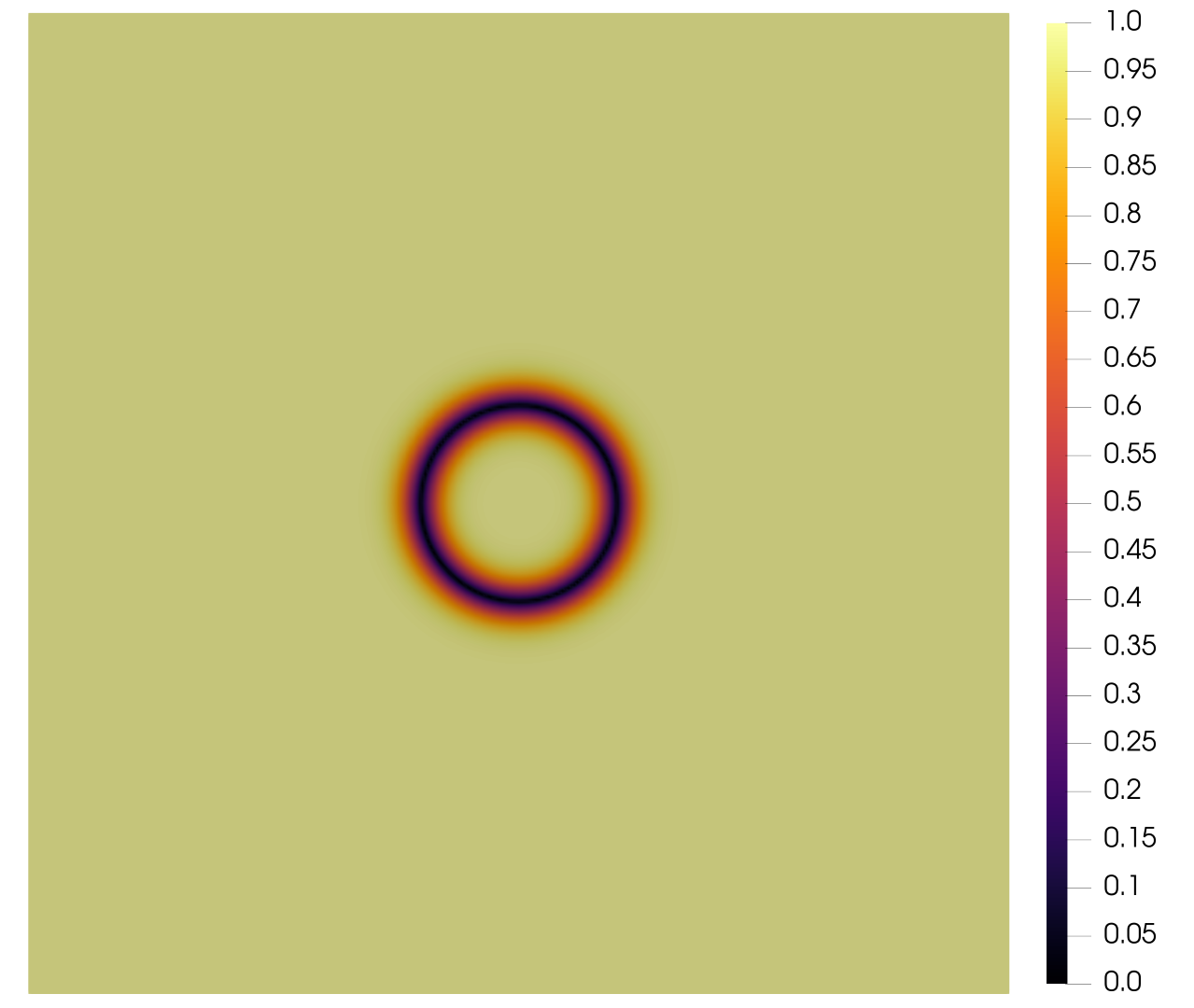}
        \caption{$t = 0$}
        \label{fig:exp2_nz_0}
    \end{subfigure}
    \hfill
    \begin{subfigure}[b]{0.3\textwidth}
		\includegraphics[width=\textwidth]{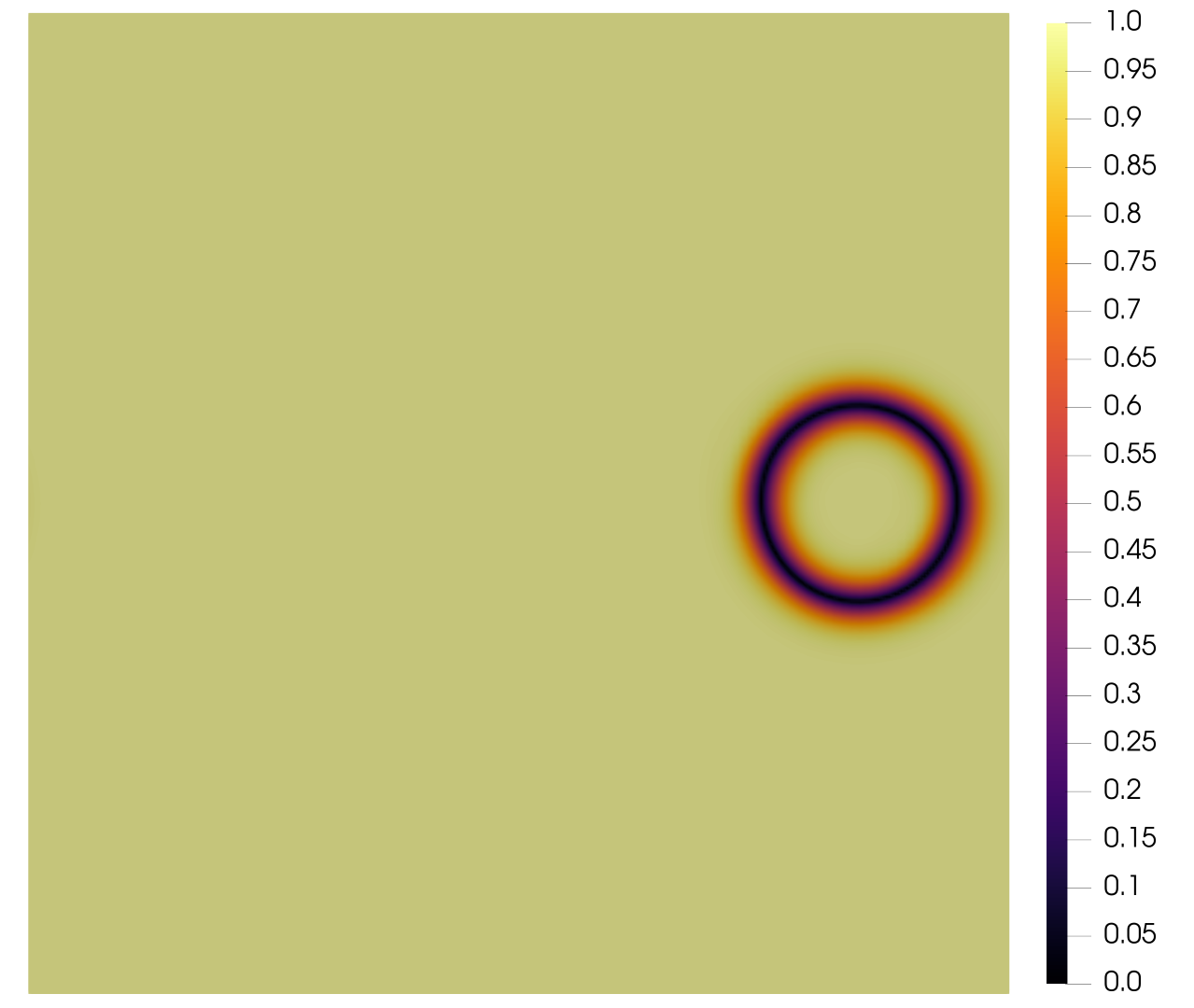}
        \caption{$t = 300$}
        \label{fig:exp2_nz_300}
    \end{subfigure}
   	\hfill
    \begin{subfigure}[b]{0.3\textwidth}
        \includegraphics[width=\textwidth]{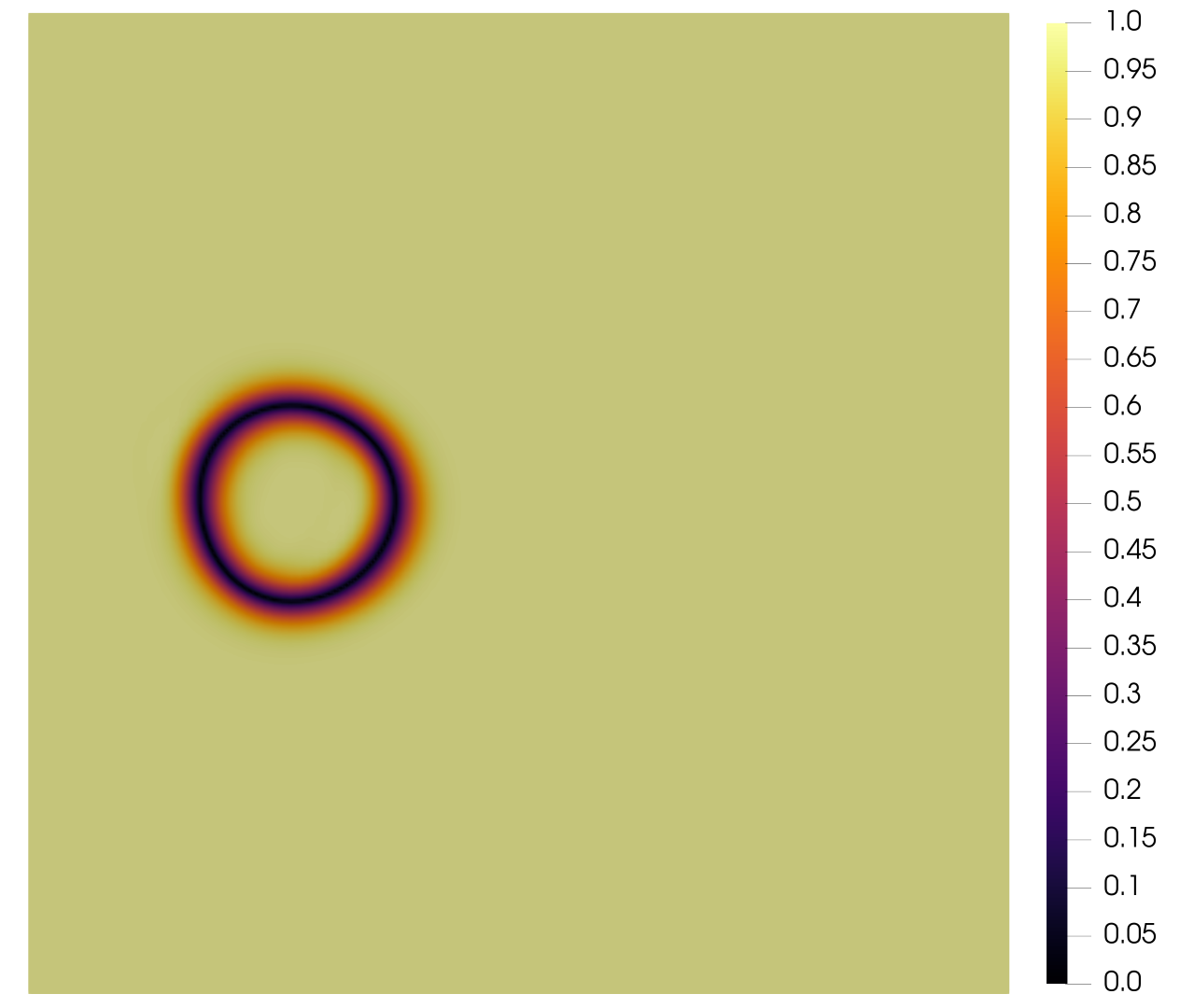}
        \caption{$t = 450$}
        \label{fig:exp2_nz_450}
    \end{subfigure}
    
    \vspace{0.2cm}

    \begin{subfigure}[b]{0.3\textwidth}
        \includegraphics[width=\textwidth]{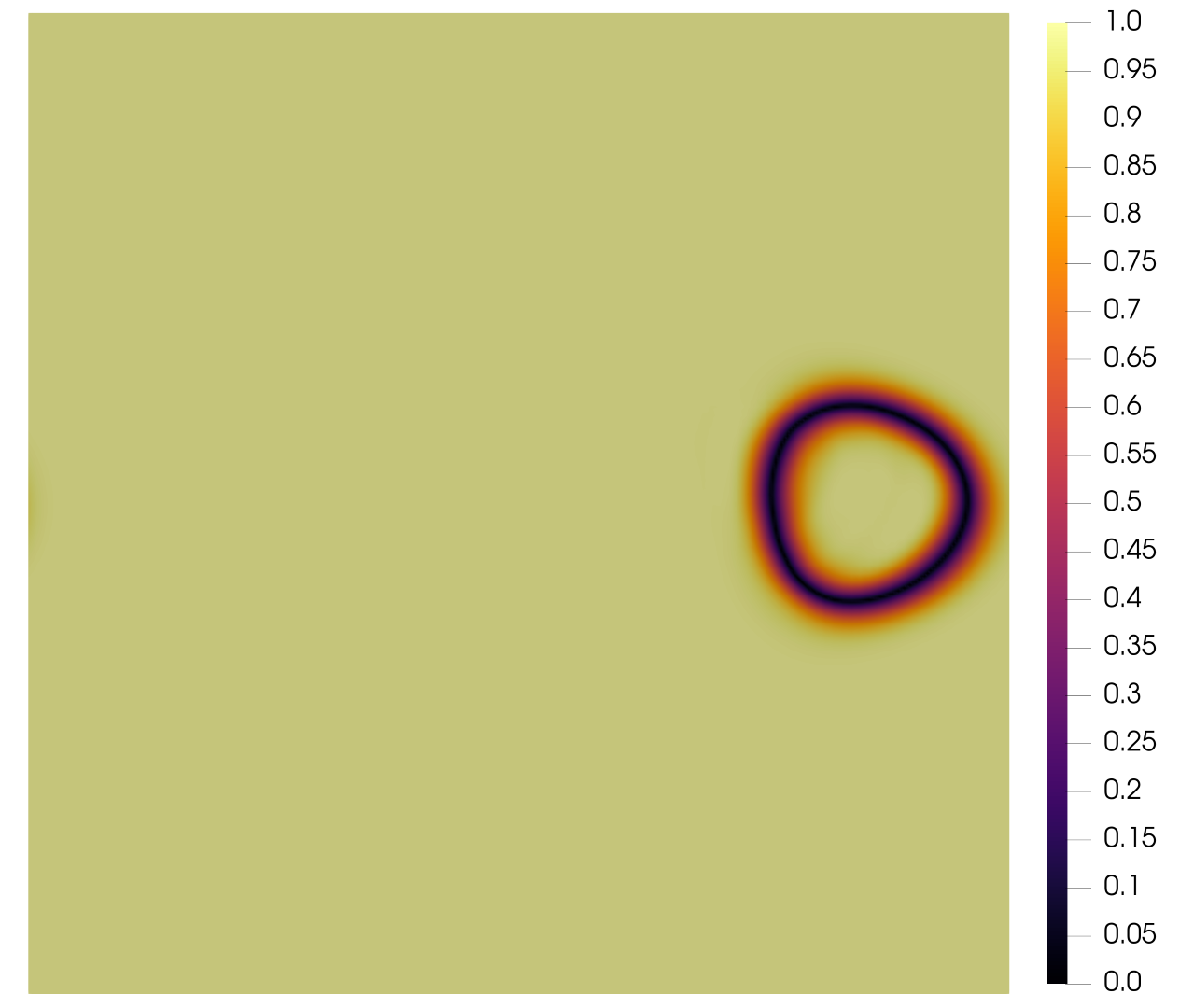}
        \caption{$t = 600$}
        \label{fig:exp2_nz_600}
    \end{subfigure}
    \hfill
    \begin{subfigure}[b]{0.3\textwidth}
        \includegraphics[width=\textwidth]{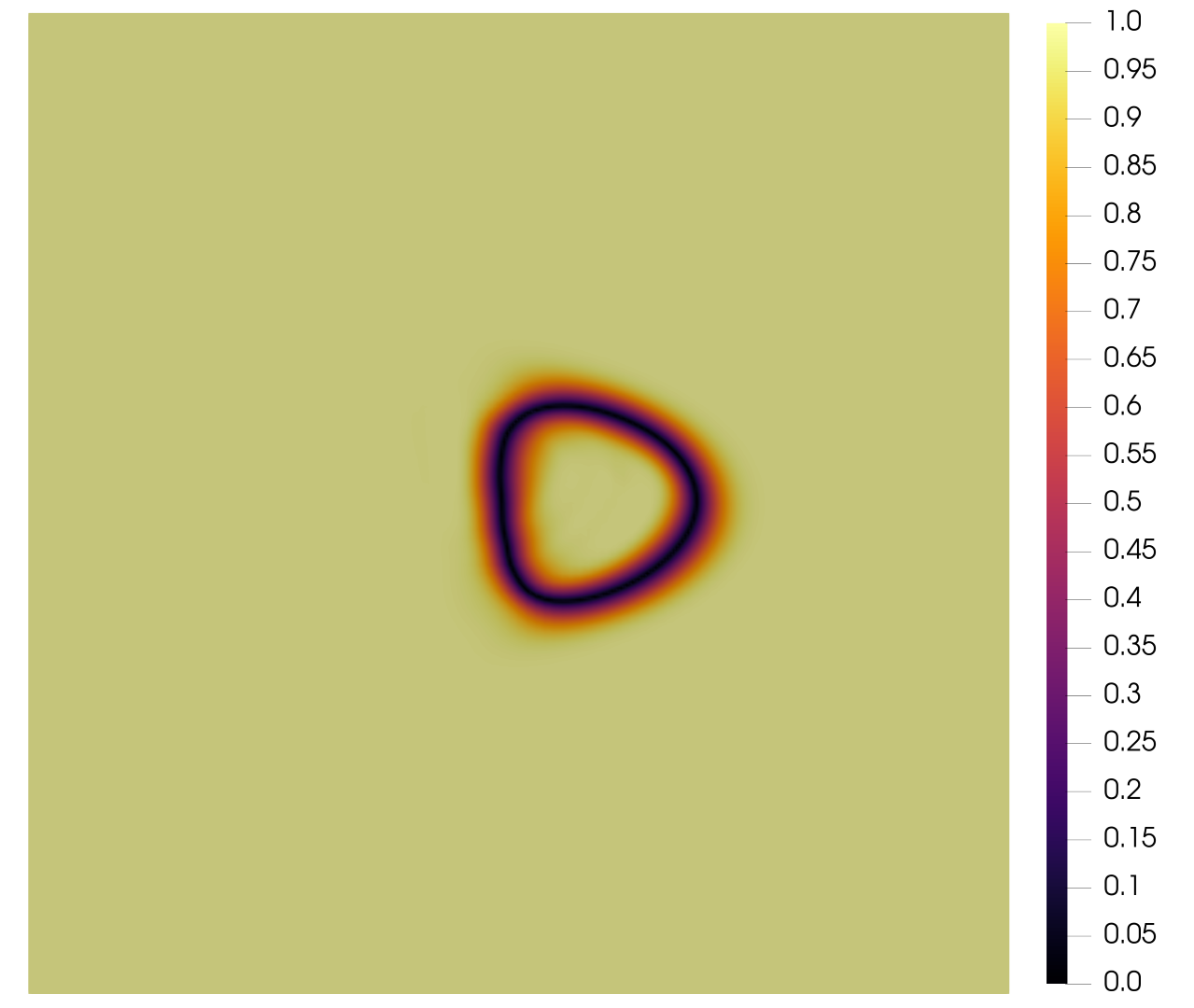}
        \caption{$t = 750$}
        \label{fig:exp2_nz_750}
    \end{subfigure}
    \hfill
    \begin{subfigure}[b]{0.3\textwidth}
        \includegraphics[width=\textwidth]{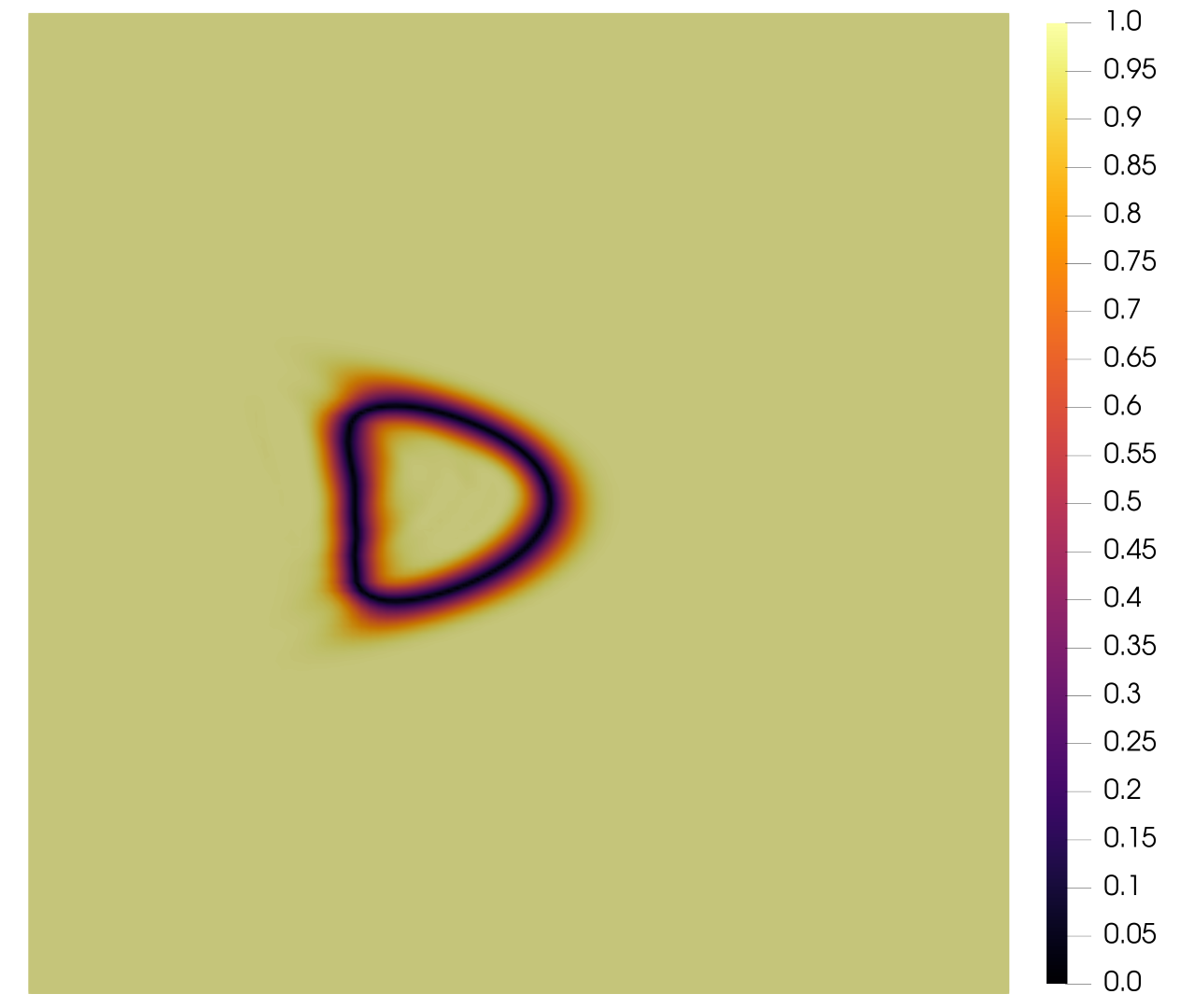}
        \caption{$t = 900$}
        \label{fig:exp2_nz_900}
    \end{subfigure}
    
    \caption{\emph{Evolution of the magnitude of the third component $n_3$ of the leading eigenvector $n$ of $Q$:}
    The skyrmion is transported from left to right and deformed by the action of the fluid flow.
    Our results are in qualitative agreement with \cite{Coelho2023}.
    }
    \label{fig:exp4-1}
\end{figure}

\section*{Acknowledgments}
F.W.~thanks Lucas Bouck for helpful discussions on the subject of this article. 
Claude (Anthropic) was used to assist with literature search and to check proofs of an earlier draft, while Gemini (Google) was used for support in implementing our numerical experiments.
ChatGPT (OpenAI) was used to review the final version and correct minor mistakes and inconsistencies.
All mathematical content was created and verified by the authors.

\appendix
\section{Discrete Gr\"onwall inequality}\label{app:gronwall}

				The following lemma from~\cite[Proposition 4.1]{Emmrich1999} is useful for proving an energy bound in the case that the external source in the fluid equation is nonzero.
\begin{lemma}[\cite{Emmrich1999}]
	\label{lem:discretegronwall}
	Let $\{a_n\}_{n\in \N}\geq 0$, and $\{b_n\}_{n\in \N}\geq 0$ be two nonnegative sequences satisfying
	\begin{equation}\label{eq:assgronwall}
		a_{n+1} \leq b_{n+1} + \nu\Delta t\sum_{j=1}^{n+1} a_j,\quad n=0,1,2,\dots
	\end{equation}
	for parameters $\nu,\Delta t\geq 0$ and assume that $1-\nu\Delta t>0$.
	Then $a_{n+1}$ satisfies
	\begin{equation}\label{eq:discgronwall}
		a_{n+1}\leq b_{n+1} +\frac{\nu\Delta t}{1-\nu\Delta t}\sum_{j=0}^n\left(\frac{1}{1-\nu\Delta t}\right)^{n-j} b_{j+1}.
	\end{equation}
	Moreover, if $\{b_n\}_{n\in\N}$ is monotonically increasing, it follows
	\begin{equation*}
		a_n \leq b_n \left(\frac{1}{1-\nu\Delta t}\right)^n.
	\end{equation*}
\end{lemma}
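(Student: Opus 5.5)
The plan is to reduce the implicit inequality~\eqref{eq:assgronwall}, whose right-hand side contains $a_{n+1}$ itself, to an explicit one-step recursion for the partial sums. Then we unroll that recursion by induction and substitute it back. We write $q:=(1-\nu\Delta t)^{-1}>0$, which is well defined since $1-\nu\Delta t>0$, and note the identity $\nu\Delta t\, q = q-1$. We define $S_0:=0$ and $S_n:=\sum_{j=1}^n a_j$ for $n\geq 1$. All quantities are nonnegative.

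\textbf{Step 1 (recursion for $S_n$).} Since $a_{n+1}=S_{n+1}-S_n$, assumption~\eqref{eq:assgronwall} reads $S_{n+1}-S_n\leq b_{n+1}+\nu\Delta t\, S_{n+1}$. Hence $(1-\nu\Delta t)S_{n+1}\leq S_n+b_{n+1}$, that is,
\begin{equation*}
S_{n+1}\leq q\,S_n+q\,b_{n+1},\qquad n=0,1,2,\dots
\end{equation*}
This is the only place where $1-\nu\Delta t>0$ is used: dividing by it preserves the inequality.

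\textbf{Step 2 (unrolling).} By induction on $n$, starting from $S_0=0$, we obtain
\begin{equation*}
S_{n+1}\leq \sum_{j=0}^n q^{\,n+1-j}\, b_{j+1}.
\end{equation*}
The case $n=0$ is Step 1 with $S_0=0$. For the inductive step, insert the bound for $S_n$ into $S_{n+1}\leq qS_n+qb_{n+1}$ and use $q>0$.

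\textbf{Step 3 (back-substitution).} Returning to~\eqref{eq:assgronwall} in the form $a_{n+1}\leq b_{n+1}+\nu\Delta t\,S_{n+1}$, Step 2 gives
\begin{equation*}
a_{n+1}\leq b_{n+1}+\nu\Delta t\sum_{j=0}^n q^{\,n+1-j} b_{j+1}.
\end{equation*}
Since $\nu\Delta t\, q^{n+1-j}=\frac{\nu\Delta t}{1-\nu\Delta t}\,q^{\,n-j}$, this is exactly~\eqref{eq:discgronwall}.

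\textbf{Monotone case.} If $b_n$ is increasing, then $b_{j+1}\leq b_{n+1}$ for all $j\leq n$, so $a_{n+1}\leq b_{n+1}\bigl(1+\nu\Delta t\sum_{k=1}^{n+1}q^k\bigr)$. Using $\nu\Delta t\,q^k=(q-1)q^{k-1}$, the sum telescopes: $\sum_{k=1}^{n+1}(q-1)q^{k-1}=q^{n+1}-1$. Thus $a_{n+1}\leq b_{n+1}q^{n+1}$, which is the claim with $n$ replaced by $n+1$, valid for all indices $n\geq 1$.

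There is no real obstacle in this argument. The only points needing care are the index bookkeeping, namely matching $q^{n+1-j}$ with $\frac{\nu\Delta t}{1-\nu\Delta t}q^{n-j}$, and the sign condition $1-\nu\Delta t>0$ used in Step 1.
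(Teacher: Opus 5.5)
Your proof is correct and complete. The partial-sum recursion $(1-\nu\Delta t)S_{n+1}\le S_n+b_{n+1}$, its unrolling to $S_{n+1}\le\sum_{j=0}^n q^{n+1-j}b_{j+1}$, and the identity $\nu\Delta t\,q^{k}=(q-1)q^{k-1}$ give both \eqref{eq:discgronwall} and the monotone bound $a_n\le b_n q^n$ for every $n\ge 1$.

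The paper gives no argument of its own and simply defers to \cite[Proposition 4.1]{Emmrich1999}. Your write-up therefore supplies a self-contained version of the standard partial-sum proof, not an alternative to a proof in the text.

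One small observation: your argument never uses $a_n,b_n\ge 0$, only $\nu\Delta t\ge 0$ and $1-\nu\Delta t>0$. Nonnegativity matters only when the lemma is applied. In Lemma~\ref{lem:discenergyestimate}, for instance, the sum $\sum_{m=2}^{N}E^m_h$ has to be enlarged to one starting at $m=1$ to match \eqref{eq:assgronwall}, and that step uses $E^1_h\ge 0$.
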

\begin{proof}
	For the proof, see \cite[Proposition 4.1]{Emmrich1999}.
\end{proof}
\section{Properties of the mass-lumped inner product}\label{app:masslumped}
Here, we prove Lemma~\ref{lem:masslumpedLp}. We restate it for convenience:

\lplumping* 
\begin{proof}
	We first prove the first inequality in~\eqref{eq:hhoelder}. Assume that $1<p,p'<\infty$. We  use H\"older's inequality and plug in the definition
	\begin{align*}
		|(f,g)_h | &= \left|\int_{\dom}\sum_{z\in \mathcal{N}_h} f(z)\varphi_z^{1/p}  g(z) \varphi_z^{1/p'} dx \right|\\
		& \leq \left(\int_{\dom}\sum_{z\in \mathcal{N}_h} |f(z)|^p\varphi_z dx \right)^{1/p}\left(\int_{\dom}\sum_{z\in \mathcal{N}_h} |g(z)|^{p'}\varphi_z dx \right)^{1/p'}\\
		& = \norm{f}_{h,p}\norm{g}_{h,p'}.
	\end{align*}
	The case $p=\infty$ follows by taking the maximum over all $z\in \mathcal{N}_h$ in the first identity.
	Now the second inequality will follow if we can prove~\eqref{eq:masslumpedstability}. If $p=\infty$, then $\norm{f}_{h,\infty}=\norm{f}_{L^\infty}$, since the maximum of a function in $\mathcal{S}^1(\mathcal{T}_h)$ is attained at a node. So we assume without loss of generality that $p<\infty$. 
	We first note that since $f\in \mathcal{S}^1(\mathcal{T}_h)$, we have for any $x\in \dom$
	\begin{equation*}
			|f(x)| = \left| \sum_{z\in \mathcal{N}_h} f(z)\varphi_z\right|,
	\end{equation*}
	thus with Jensen's inequality, and using that $0\leq \varphi_z\leq 1$, and therefore
	 $\varphi_z(x)^p\leq \varphi_z(x)$
	\begin{equation*}
		|f(x)|^p = \left| \sum_{z\in \mathcal{N}_h} f(z)\varphi_z\right|^p\leq \sum_{z\in \mathcal{N}_h} |f(z)|^p|\varphi_z| =\sum_{z\in \mathcal{N}_h} |f(z)|^p\varphi_z .
	\end{equation*}
	Integrating over the domain and using that $\varphi_z$ are piecewise linear, we get
	\begin{equation*}
		\norm{f}_{L^p}^p\leq \sum_{z\in\mathcal{N}_h} |f(z)|^p \int_{\dom} \varphi_z dx = \norm{f}_{h,p}^p.
 	\end{equation*}
	This proves the lower bound. To prove the upper bound, we consider a simplex $K$ and  first note that the value of a linear function $f$ at its barycenter $b$ is equal to its average of the values at the nodes:
	\begin{equation*}
		f(b)  = \frac{1}{d+1} \sum_{z\in \mathcal{N}_h\cap K} f(z).
	\end{equation*}
	Here
	\begin{equation*}
		b = \frac{z_0+z_1+\dots + z_d}{d+1}
	\end{equation*}
	where $z_0,z_1,\dots, z_d$ are the nodes that are adjacent to $K$. In particular, the basis functions satisfy
	\begin{equation*}
		\varphi_y(b) = \frac{1}{d+1}\sum_{z\in \mathcal{N}_h\cap K}\varphi_y(z) = \begin{cases}
			0,&\quad \text{if }\, y\notin K,\\
			\frac{1}{d+1},&\quad \text{if }\, y\in K. 
		\end{cases}
	\end{equation*}
	So if we write $f(x) = \sum_{z\in \mathcal{N}_h} f(z)\varphi_z(x)$, and compute
	\begin{equation}\label{eq:mean}
		\frac{1}{|K|}\int_K f(x) dx = \sum_{z\in \mathcal{N}_h} f(z)\frac{1}{|K|}\int_K\varphi_z(x) dx = \frac{1}{d+1}\sum_{i=0}^d f(z_i) = f(b),
	\end{equation}
	thus the linear function $f$ at the barycenter of $K$ is equal to the average over $K$. Now fix a node $z_0$ and consider the simplex $S$ resulting by replacing the node $z_0$ by the barycenter $b$. Then  $S$ has the barycenter $c$,
	\begin{equation*}
		c = \frac{1}{d+1}(z_1+z_2+\dots + b) = \frac{(d+2)b-z_0}{d+1}.
	\end{equation*}
	In other words $z_0 = (d+2)b - (d+1)c$. Since $f$ is linear on $K$, we therefore have using~\eqref{eq:mean} twice, once for $K$ and $b$ and once for $S$ and $c$,
	\begin{equation*}
		f(z_0) = (d+2)f(b)-(d+1)f(c) = \frac{d+2}{|K|}\int_K f(x)dx - \frac{d+1}{|S|}\int_Sf(x) dx.
	\end{equation*}
	Since $\varphi_{z_0}(z_0)=1$ and $\varphi_{z_0}(b)=1/(d+1)$, it follows that $|S| = |K|/(d+1)$ (the simplexes have the same base and the heights are rescaled by a factor $1/(d+1)$), therefore, we can estimate $|f(z_0)|$ by
	\begin{equation*}
		|f(z_0)| \leq  \left[(d+2)+(d+1)^2\right]\fint_K \left|f(x)\right|dx .
	\end{equation*}
	We take the $p$-th power, multiply with $\varphi_{z_0}$ (which is nonnegative) and sum over all nodes ($z_0$ and $K$ were arbitrary), to obtain using Jensen's inequality,
	\begin{align*}
		\sum_{z\in \mathcal{N}_h} |f(z)|^p \varphi_z &\leq \left[(d+2)+(d+1)^2\right]^p\sum_{z\in \mathcal{N}_h}\left|\fint_{K_z} |f(x)|dx\right|^p \varphi_z\\
		&\leq \left[(d+2)+(d+1)^2\right]^p\sum_{z\in \mathcal{N}_h}\fint_{K_z} |f(x)|^pdx \,\varphi_z.
	\end{align*}
	(here we used $K_z$ to indicate that the node $z$ is adjacent to $K$).
	Let $\omega_z$ denote the ``patch' of the node $z$.
	We integrate over the domain, and note that the left-hand side is equal to $\norm{f}_{h,p}^p$,
	\begin{align*}
		\norm{f}_{h,p}^p& = \sum_{z\in \mathcal{N}_h} |f(z)|^p \int_{\dom}\varphi_z dx \\
		&  \leq \left[(d+2)+(d+1)^2\right]^p \sum_{z\in \mathcal{N}_h}\fint_{K_z} |f(x)|^pdx  \int_{\dom}\varphi_z dx \\
		& = \frac{((d+2)+(d+1)^2)^p}{d+1}\sum_{z\in \mathcal{N}_h} \frac{|\omega_z|}{|K_z|}\int_{K_z} |f(x)|^pdx  \\
		& \leq C_{d,p}\norm{f}_{L^p}^p,
	\end{align*}
	where we have used that $\frac{|\omega_z|}{|K_z|} \leq C$ due to quasi-uniformity and shape-regularity, and that $\sum_{z \in \mathcal{N}_h} \int_{K_z} |f|^p \leq (d+1) \|f\|_{L^p}^p$ because each element can be selected at most by its $(d+1)$ vertices.
	This proves the second inequality in~\eqref{eq:masslumpedstability}.
The proof of~\eqref{eq:masslumpingLp} follows~\cite[Lemma 3.9]{Bartels2015book} closely.	We have 
	\begin{align*}
		|(f,g)_h-(f,g)| & = \left|\int_{\dom} \left(\Ih(f\cdot g) - f\cdot g\right) dx \right|\\
		& \leq \int_{\dom} \left|\Ih(f\cdot g) - f\cdot g\right| dx\\
		& = \sum_{K\in \mathcal{T}_h} \int_{K} \left|\Ih (f\cdot g) - f\cdot g\right| dx \\
		& \leq C h^2 \sum_{K\in \mathcal{T}_h} \int_K \left|\Grad^2 (f\cdot g)\right| dx \\
		& \leq C h^2  \sum_{K\in \mathcal{T}_h} \int_K |\Grad f| |\Grad g| dx,
	\end{align*}
	where we used that $f,g\in \mathcal{S}^1(\mathcal{T}_h)$ and therefore their second derivatives are zero on each $K$. Now, we use H\"older's inequality,
	\begin{equation*}
			|(f,g)_h-(f,g)| \leq C h^2 \int_{\dom} |\Grad f||\Grad g| dx \leq C h^2 \norm{\Grad f}_{L^p}\norm{\Grad g}_{L^{p'}}.
	\end{equation*}
	This proves the estimate for $\ell=1$. To prove the $\ell=0$ estimate, we use the inverse estimate.
	
\end{proof}
Next, we prove Lemma~\ref{lem:masslump2}:
\lplumperror*
\begin{proof}
	We write
	\begin{equation*}
		\norm{v_h F(w_h)-\Ih(v_hF(w_h))}_{L^1(\dom)} \leq \sum_{K\in \mathcal{T}_h} \int_K \sum_{z\in \mathcal{N}_h}\varphi_z(x)\left|v_h(z)F(w_h(x))-v_h(z)F(w_h(z))\right|dx.
	\end{equation*}
	Let $1<p<\infty$. Then we use H\"older's inequality and then the Lipschitz continuity of $F$ and that $w_h$ is affine on each $K$ to estimate
	\begin{align*}
		&\norm{v_h F(w_h)-\Ih(v_hF(w_h))}_{L^1(\dom)}\\
		&\leq \left(\sum_K\int_K \sum_{z\in \mathcal{N}_h}\varphi_z(x)|v_h(z)|^p dx \right)^{1/p} \left(\sum_K\int_K \sum_{z\in \mathcal{N}_h}\varphi_z(x)|F(w_h(x))-F(w_h(z))|^{p'} dx \right)^{1/p'} \\
		& \leq \norm{v_h}_{h,p}Ch \left(\sum_K\int_K \sum_{z\in \mathcal{N}_h}\varphi_z(x)|\Grad w_h(x)|^{p'} dx \right)^{1/p'} \\
		& = Ch \norm{v_h}_{h,p}\norm{\Grad w_h}_{L^{p'}}.
	\end{align*}
	If $p\in \{1,\infty\}$, we replace the integral in the corresponding term by a supremum over all nodes when applying H\"older's inequality. Now using~\eqref{eq:masslumpedstability}, the result follows.
	
\end{proof}
\section{A technical consequence following from the Rellich theorem}
\label{app:rellich}
\rellich*
\begin{proof}
	Let $\{e_k\}_{k=1}^\infty$ be an $L^2$-orthonormal basis of eigenfunctions of the Dirichlet Laplacian on $\dom$, i.e., $e_k$ solve
	\begin{align*}
		-\Delta e_k & = \lambda_k e_k,\quad \text{in }\,\dom,\\
		e_k & = 0,\quad \text{on }\, \partial \dom,
	\end{align*}
	with $(e_k,e_j)=\delta_{kj}$ and $0<\lambda_1\leq \lambda_2\leq \dots \leq \lambda_k\leq\dots$, where $\lambda_k\to \infty$. $e_k$ can be chosen such that $e_k\in H^1_0(\dom)\cap C^\infty(\dom)$~\cite[Theorem 9.31]{Brezis2011}. 
	So we can write $v = \sum_{k=1}^\infty v_k e_k$, where $v_k = (v,e_k)$. Now consider a function $w\in H^1_0(\dom)$. Then 
	\begin{equation*}
		\norm{w}_{H^{1}}^2 = \norm{w}_{L^2}^2 + \norm{\Grad w}_{L^2}^2 = \sum_{k=1}^\infty (1+\lambda_k)(w,e_k)^2.
	\end{equation*}
	Since $v\in L^2(\dom)$, we have $\sum_{k=1}^\infty (v,e_k)^2 <\infty$ and we can estimate
	\begin{align*}
		\norm{v}_{H^{-1}}& = \sup_{0\neq w\in H^1_0(\dom)}\frac{|(v,w)|}{\norm{w}_{H^1}}\\
		&  = \sup_{0\neq w\in H^1_0(\dom)}\frac{\left|\sum_{k=1}^\infty (v,e_k)(w,e_k)\right|}{\sqrt{\sum_{k=1}^\infty (1+\lambda_k)(w,e_k)^2}}\\
		&= \sup_{0\neq w\in H^1_0(\dom)}\frac{\left|\sum_{k=1}^\infty \frac{(v,e_k)}{\sqrt{1+\lambda_k}}\sqrt{1+\lambda_k}(w,e_k)\right|}{\sqrt{\sum_{k=1}^\infty (1+\lambda_k)(w,e_k)^2}}\\
		& \leq \sup_{0\neq w\in H^1_0(\dom)}\frac{\left(\sum_{k=1}^\infty \frac{(v,e_k)^2}{{1+\lambda_k}}\right)^{1/2}\left(\sum_{k=1}^\infty (1+\lambda_k)(w,e_k)^2\right)^{1/2}}{\sqrt{\sum_{k=1}^\infty (1+\lambda_k)(w,e_k)^2}}\\
		& = \sqrt{\sum_{k=1}^\infty \frac{(v,e_k)^2}{1+\lambda_k}}<\infty.
	\end{align*}
	This is an upper bound on the $H^{-1}$-norm of $v$.
	For the lower bound, suppose that $v \neq 0$ (otherwise the identity is immediate) and choose $w_0:= \sum_{k=1}^\infty \frac{(v,e_k)}{1+\lambda_k}e_k$.
	Then,
	\begin{equation*}
	\sqrt{\sum_{k=1}^\infty \frac{(v,e_k)^2}{1+\lambda_k}} = 	\frac{|(v,w_0)|}{\norm{w_0}_{H^1(\dom)}} \leq \norm{v}_{H^{-1}}.
	\end{equation*}
	Thus
	\begin{equation*}
		\norm{v}_{H^{-1}}\leq 	\sqrt{\sum_{k=1}^\infty \frac{(v,e_k)^2}{1+\lambda_k}} \leq \norm{v}_{H^{-1}}.
	\end{equation*}
	Now pick $N\in \N$ large enough such that $(1+\lambda_{N})^{-1}<\eta^2$. Then
	\begin{align*}
		\norm{v}_{H^{-1}}^2 &=\sum_{k=1}^N \frac{(v,e_k)^2}{1+\lambda_k} + \sum_{k=N+1}^\infty  \frac{(v,e_k)^2}{1+\lambda_k}\\
		& \leq \sum_{k=1}^N(v,e_k)^2 + \frac{1}{1+\lambda_{N+1}}\sum_{k=N+1}^\infty (v,e_k)^2\\
			& \leq \sum_{k=1}^N(v,e_k)^2  + \eta^2 \norm{v}^2_{L^2}.
	\end{align*}
	Choosing $\phi_k= e_k$, this proves the result.
\end{proof}

\bibliographystyle{abbrv}
\bibliography{BerisEdwardsfdbib}

\end{document}